\theoremstyle{plain}
\newtheorem{theorem}{Theorem}[section]
\newtheorem{lemma}[theorem]{Lemma}
\newtheorem{corollary}[theorem]{Corollary}
\newtheorem{proposition}[theorem]{Proposition}
\newtheorem{question}[theorem]{Question}
\theoremstyle{definition}\newtheorem{definition}[theorem]{Definition}
\theoremstyle{definition}
\theoremstyle{definition}\newtheorem{remark}[theorem]{Remark}
\numberwithin{equation}{section}
\newcommand{\R}{\mathbb{R}}
\newcommand{\Q}{\mathbb{Q}}
\newcommand{\N}{\mathbb{N}}
\newcommand{\Z}{\mathbb{Z}}
\newcommand{\rank}{\qopname\relax o{rk}}
\newcommand{\iB}{\mathcal{B}}
\newcommand{\iF}{\mathcal{F}}
\newcommand{\iU}{\mathcal{U}}
\newcommand{\iV}{\mathcal{V}}
\newcommand{\al}{\alpha}
\newcommand{\la}{\lambda}
\newcommand{\om}{\omega}
\newcommand{\si}{\sigma}
\newcommand{\eps}{\varepsilon}
\newcommand{\RR}{\mathbb{R}}
\newcommand{\bd}{\begin{definition}}
\newcommand{\ed}{\end{definition}}
\newcommand{\solc}{\mathop{sc}}
\newcommand{\lab}{\label}
\begin{document}

\title{Ranks on the Baire class $\xi$ functions}

\author[M\'arton Elekes]{M\'arton Elekes$^\ast$}
\thanks{$^\ast$Partially supported by the
Hungarian Scientific Foundation grant no.~83726.}

\author[Viktor Kiss]{Viktor Kiss$^\dag$}
\thanks{$^\dag$Partially supported by the
Hungarian Scientific Foundation grant no.~105645.}

\author[Zolt\'an Vidny\'anszky]{Zolt\'an Vidny\'anszky$^\ddag$}
\thanks{$^\ddag$Partially supported by the
Hungarian Scientific Foundation grant no.~104178.}

\subjclass[2010]{Primary 26A21; Secondary 03E15, 54H05.}
\keywords{Baire class $\xi$ functions, ordinal ranks, 
  descriptive set theory.}

\begin{abstract}
In 1990 Kechris and Louveau developed the theory of three very natural ranks on the Baire class $1$ functions. A rank is a function assigning countable ordinals to certain objects, typically measuring their complexity. We extend this theory to the case of Baire class $\xi$ functions, and generalize most of the results from the Baire class $1$ case. We also show that their assumption of the compactness of the underlying space can be eliminated. As an application, we solve a problem concerning the so called solvability cardinals of systems of difference equations, arising from the theory of geometric decompositions. We also show that certain other very natural generalizations of the ranks of Kechris and Louveau surprisingly turn out to be bounded in $\omega_1$.
Finally, we prove a general result showing that all ranks satisfying some natural properties coincide for bounded functions.
\end{abstract}

\maketitle

\section{Introduction}

A real-valued function defined on a complete metric space is called \emph{Baire class $1$} if it is the pointwise limit of a sequence of continuous functions. It is well-known that a function is of Baire class $1$ iff the inverse image of every open set is $F_\si$ iff there is a point of continuity relative to every non-empty closed set \cite{K}. Baire class $1$ functions play a central role in various branches of mathematics, most notably in Banach space theory, see e.g. \cite{AGR} or \cite{HOR}. A fundamental tool in the analysis of Baire class $1$ functions is the theory of ranks, that is, maps assigning countable ordinals to Baire class $1$ functions, typically measuring their complexity. In their seminal paper \cite{KL}, Kechris and Louveau systematically investigated three very important ranks on the Baire class $1$ functions. We will recall the definitions in Section \ref{s:KL} below, and only note here that they correspond to above three equivalent definitions of Baire class 1 functions. One can 
easily see 
that the theory has no straightforward generalization to the case of Baire class $\xi$ functions. (Recall that $f$ is of \emph{Baire class $\xi$} if there exist sequences $\xi_n < \xi$ and $f_n$ such that $f_n$ is of Baire class $\xi_n$ and $f_n \to f$ pointwise.)

Hence the following very natural but somewhat vague question arises.

\begin{question}
\lab{q:KL}
Is there a natural extension of the theory of Kechris and Louveau to the case of Baire class $\xi$ functions?
\end{question}

There is actually a very concrete version of this question that was raised by Elekes and Laczkovich in \cite{EL}. In order to be able to formulate this we need some preparation. For $\theta, \theta' < \om_1$ let us define the relation 
  $\theta \lesssim \theta'$ if $\theta' \le \om^\eta \implies \theta \le \om^\eta$ for every $1 \le \eta < \om_1$ (we use ordinal exponentiation here).
  Note that $\theta \le \theta'$ implies $\theta \lesssim \theta'$, while $\theta \lesssim \theta'$, $\theta'>0$ implies $\theta \le \theta' \cdot \om$.
  We will also use the notation $\theta \approx \theta'$ if $\theta \lesssim \theta'$ and 
  $\theta' \lesssim \theta$. Then $\approx$ is an equivalence relation.
  Let us denote the set of Baire class $\xi$ functions defined on $\R$ by
  $\iB_\xi(\R)$. The characteristic function of a set
  $H$ is denoted by $\chi_H$. A set is called \emph{perfect} if it is closed and has no isolated points. Define the translation map $T_t : \R \to \R$ by $T_t(x) = x+t$ for every $x \in \R$.

\begin{question}(\cite[Question 6.7]{EL})
\lab{q:EL}
Is there a map $\rho : \iB_\xi(\R) \to \om_1$ such that
\begin{itemize}
\item
$\rho$ is unbounded in $\om_1$, moreover,
for every non-empty perfect set $P \subseteq \R$ and ordinal $\zeta <
\omega_1$ there is a function $f \in \iB_\xi(\R)$ such that $f$ is $0$
outside of $P$ and $\rho(f) \ge \zeta$,
\item
$\rho$ is \textit{translation-invariant}, i.e., $\rho(f \circ T_t) = \rho(f)$ for every $f \in \iB_\xi(\R)$ and $t \in \R$,
\item
$\rho$ is \textit{essentially linear}, i.e., $\rho(cf) \approx \rho(f)$ and $\rho(f + g) \lesssim \max\{\rho(f), \rho(g)\}$ for every $f,g \in \iB_\xi(\R)$ and $c \in \R \setminus \{0\}$,
\item
$\rho(f \cdot \chi_F) \lesssim \rho(f)$ for every closed set $F \subseteq \R$ and $f \in \iB_\xi(\R)$?
\end{itemize}
\end{question}

The problem is not formulated in this exact form in \cite{EL}, but a careful examination of the proofs there reveals that this is what they need for their results to go through. Actually, there are numerous equivalent formulations, for example we may simply replace $\lesssim$ by $\le$ (indeed, just replace $\rho$ satisfying the above properties by $\rho'(f) = \min\{\om^\eta : \rho(f) \le \om^\eta\}$). However, it turns out, as it was already also the case in \cite{KL}, that $\lesssim$ is more natural here.
  
Their original motivation came from the theory of paradoxical geometric decompositions (like the Banach-Tarski paradox, Tarski's problem of circling the square, etc.). It has turned out that the solvability of certain systems of difference equations plays a key role in this theory.

\bd
Let $\R^\R$ denote the set of functions from $\R$ to $\R$. A \emph{difference operator} is a mapping $D:\RR^\RR\to\RR^\RR$ of the form
\[
(Df)(x)=\sum_{i=1}^n a_i f(x+b_i),
\]
where $a_i$ and $b_i$ are fixed real numbers.
\ed

\bd
A \emph{difference equation} is a functional equation
\[
Df=g,
\]
where $D$ is a difference operator,
$g$ is a given function and $f$ is the unknown. 
\ed

\bd
A \emph{system of difference equations} is 
\[
D_i f=g_i \ \ (i\in I),
\]
where $I$ is an arbitrary set of indices. 
\ed

It is not very hard to show that a system of difference
equations is solvable iff every \emph{finite} subsystem is solvable. 
But if we are interested in continuous solutions then this result is
no longer true. However, if every \emph{countable} subsystem of a 
system has a continuous solution the the whole system has a
continuous solution as well. This motivates the following definition,
which has turned out to be a very useful tool for finding necessary
conditions for the existence of certain solutions.

\bd
Let $\mathcal{F}\subset \RR^\RR$ be a class of real functions. The
\emph{solvability cardinal} of $\mathcal{F}$ is the minimal cardinal
$\solc(\mathcal{F})$ with the property that if every subsystem of size
less than $\solc(\mathcal{F})$ of a system of difference equations has a
solution in $\mathcal{F}$ then the whole system has a solution in
$\mathcal{F}$.
\ed

It was shown in \cite{EL} that the behavior of $\solc (\iF )$ is rather erratic.
For example, $\solc (\text{polynomials})=3$ but $\solc (\text{trigonometric
polynomials})=\om _1$,
$\solc (\{ f: f\ \text{is continuous}\}) = \om_1$ but $\solc (\{ f: f\ 
\text{is Darboux}\}) =(2^\om )^+$, and $\solc (\RR^\RR )=\om$.

It is also proved in their paper that $\om_2 \le \solc ({\{ f: f\ \text{is Borel}\}}) \le (2^\om )^+$,
therefore if we assume the Continuum Hypothesis then $\solc ({\{ f: f\ \text{is Borel}\}}) = \om_2$.
Moreover, they obtained that $\solc(\iB_\xi) \le (2^\om )^+$ for every $2 \le \xi < \om_1$, 
and asked if $\om_2 \le \solc(\iB_\xi)$. They noted that a positive answer to Question
\ref{q:EL} would yield a positive answer here.

For more information on the connection between ranks, solvability cardinals, systems of 
difference equations, liftings, and paradoxical decompositions consult \cite{EL}, \cite{L}, \cite{L1} and the references therein. 

In order to be able to answer the above questions we need to address one more problem. This is slightly unfortunate for us, but Kechris and Louveau have only worked out their theory in compact metric spaces, while it is really essential for our purposes to be able to apply the results in arbitrary Polish spaces.

\begin{question}
\lab{q:KL2}
Does the theory of Kechris and Louveau generalize from compact metric spaces to arbitrary Polish spaces? 
\end{question}

Now we describe our results and say a few words about the organization of the paper. First we review the results of Kechris and Louveau in quite some detail in Section \ref{s:KL}, and also answer Question \ref{q:KL2} in the affirmative. Most of the results in this section are not considered to be new, we only have to check that the proofs in \cite{KL} work in non-compact Polish spaces as well. A notable exception is Theorem \ref{a=b=g_bounded} stating that the three ranks essentially coincide for bounded Baire class $1$ functions, since our highly non-trivial proof for the case of general Polish spaces required completely new ideas. 
Next, in Section \ref{s:neg}, we propose numerous very natural ranks on the Baire class $\xi$ functions that surprisingly turn out to be bounded in $\om_1$! Then we answer Question \ref{q:KL} and Question \ref{q:EL} in the affirmative in Section \ref{s:pos}. We actually define four ranks on every $\iB_\xi$, but two of these turn out to be essentially equal, and the resulting three ranks are very good analogues of the original ranks of Kechris and Louveau. We are actually able to generalize most of their results to these new ranks. As a corollary, we also obtain that $\om_2 \le \solc(\iB_\xi)$, 
and hence if we assume the Continuum Hypothesis then $\solc (\iB_\xi) = \om_2$ for every $2 \le \xi < \om_1$. 

In Section \ref{s:uni} we prove that if a rank has certain natural
properties then it 
coincides with $\alpha,\beta$ and $\gamma$ on the bounded Baire class $1$
functions. We also indicate how one could generalize this to the bounded Baire class $\xi$ case. 

Finally, we collect the open questions in Section \ref{s:open}.

\section{Preliminaries}
  
  Most of the following notations and facts can be found in \cite{K}.

  Throughout the paper, let $(X, \tau)$ be an uncountable \emph{Polish} 
  space, that is, a separable and completely metrizable topological space. We denote a compatible, complete metric for $(X, \tau)$ 
  by $d$. A \emph{Polish group} is a topological group whose topology is Polish.
  
  ${\boldsymbol \Sigma^0_\xi}$, ${\boldsymbol \Pi^0_\xi}$ and 
  ${\boldsymbol \Delta^0_\xi}$ stand for the \emph{$\xi$th additive, 
  multiplicative and ambiguous classes} of the Borel hierarchy. 
  We say that a set $H$ is \emph{ambiguous} if 
  $H \in {\boldsymbol \Delta^0_2}$.
  
If $\tau'$ is a topology on $X$ then we denote the family of real valued
functions defined on $X$ that are of Baire class $\xi$ with respect to $\tau'$
by $\mathcal{B}_\xi(\tau')$. In particular, $\mathcal{B}_\xi =
\mathcal{B}_\xi(\tau)$. If $Y$ is another Polish space (whose
topology is clear from the context) then we also use the notation
$\mathcal{B}_\xi(Y)$ for the family of Baire class $\xi$ functions defined on
$Y$. Similarly, ${\boldsymbol \Sigma^0_\xi} (\tau')$ and ${\boldsymbol
  \Sigma^0_\xi} (Y)$ are both the set of ${\boldsymbol \Sigma^0_\xi}$ subsets,
with respect to $\tau'$, and in $Y$, respectively. We use the analogous notation
for all the other pointclasses.
  
  If $Y$ is a Polish space then a subset $P \subseteq Y$ is 
  \emph{perfect} if it is closed and has no isolated points. 
  A non-empty perfect subset of a Polish space with the subspace 
  topology is an uncountable Polish space. 
  
  For a real valued function $f$ on $X$ and a real number $c$, 
  we let 
  $\{f < c\} = \{ x \in X  : f(x) < c \}$. We use the notations 
  $\{f > c\}$, $\{f \le c\}$, $\{f \ge c\}$ and $\{f \neq c\}$ 
  analogously.
  
  It is well-known that a function is of Baire class $\xi$ iff the inverse image of every open set is in ${\boldsymbol \Sigma^0_{\xi+1}}$ iff 
  $\{f < c\}$ and $\{f > c\}$ are in ${\boldsymbol \Sigma^0_{\xi+1}}$ for every $c \in \R$.  Moreover, the family of Baire class $\xi$ functions is closed under uniform limits.
   
  For a set $H$ we denote the characteristic function, closure and 
  complement of $H$ by $\chi_H$, $\overline{H}$, and $H^c$, 
  respectively. For a set $H \subseteq X \times Y$ and an element 
  $x \in X$ we denote the $x$-section of $H$ by 
  $H^x = \{ y \in Y : (x, y) \in H\}$. 
  
  If $\mathcal{H}$ is a family of sets then 
  \begin{equation*}
  \mathcal{H}_\sigma = 
    \left\{ \bigcup_{n \in \N} H_n : H_n \in \mathcal{H} \right\} 
  \text{ and } \mathcal{H}_\delta = 
    \left\{ \bigcap_{n \in \N} H_n : H_n \in \mathcal{H} \right\}. 
  \end{equation*}  
  
 For $\theta, \theta' < \om_1$ we use the relation 
  $\theta \lesssim \theta'$ if $\theta' \le \om^\eta \implies \theta \le \om^\eta$ for every $1 \le \eta < \om_1$ (we use ordinal exponentiation here).
  Note that $\theta \le \theta'$ implies $\theta \lesssim \theta'$ and $\theta \lesssim \theta'$, $\theta'>0$ implies $\theta \le \theta' \cdot \om$. 
  We write $\theta \approx \theta'$ if $\theta \lesssim \theta'$ and 
  $\theta' \lesssim \theta$. Then $\approx$ is an equivalence relation.
  For every ordinal $\theta$ we have $2\theta < \theta + \omega$, and since $\om^\eta$ is a limit
ordinal for every $\eta \ge 1$ we obtain that $2\theta \approx \theta$ for every ordinal $\theta$.
   
A rank $\rho : \iB_\xi \to \om_1$ is called \emph{additive} if $\rho(f + g) \le \max\{\rho(f), \rho(g)\}$ for every $f,g \in \iB_\xi$. It is called \emph{linear} if it is additive and $\rho(cf) = \rho(f)$ for every $f \in \iB_\xi$ and $c \in \R \setminus \{0\}$. If $X$ is a Polish group then the left and right translation operators are defined as $L_{x_0}(x) = x_0 \cdot x$ $(x \in X)$ and $R_{x_0}(x) = x \cdot x_0$ $(x \in X)$. A rank $\rho : \iB_\xi \to \om_1$ is called \emph{translation-invariant} if $\rho(f \circ L_{x_0}) = \rho(f \circ R_{x_0}) = \rho(f)$ for every $f \in \iB_\xi$ and $x_0 \in X$. We say that it is \emph{essentially additive}, \emph{essentially linear}, and \emph{essentially translation-invariant} if the corresponding inequalities and equations hold with $\lesssim$ and $\approx$. Moreover, $\rho$ is additive, essentially additive etc. \emph{for bounded functions}, if the corresponding relations hold whenever $f$ and $g$ are bounded.
   
    Let $(F_{\eta})_{\eta < \la}$ be a (not necessarily strictly) decreasing sequence of sets. Let us assume that $F_0 = X$ and that the sequence is \emph{continuous}, that     is, $F_{\eta} = \bigcap_{\theta < \eta} F_{\theta}$ for every limit $\eta$ and if $\lambda$ is a limit then $\bigcap_{\eta < \lambda} F_\eta = \emptyset$. We also use the convention that $F_{\eta} = \emptyset$ if $\eta \ge \la$.
  We say that a set $H$ is the \emph{transfinite difference of $(F_{\eta})_{\eta < \la}$} if $H = 
  \bigcup_{\begin{subarray}{c} \eta < \lambda \\ \eta \text{ even} \end{subarray}} 
 (F_{\eta} \setminus F_{\eta + 1})$. It is well-known that a set is in ${\boldsymbol \Delta^0_{\xi+1}}$ iff it is a transfinite difference of ${\boldsymbol \Pi^0_\xi}$ sets see e.g.~\cite[22.27]{K}. We have to point out here that the monograph \cite{K} does \emph{not} assume that the decreasing sequences are continuous, but when proving that every set in ${\boldsymbol \Delta^0_{\xi+1}}$ has a representation as a transfinite difference they actually construct continuous sequences, hence this issue causes no difficulty here. 
  
  The set of sequences of length $k$ whose terms are elements of the set $\{0, \dots, n-1\}$ is denoted by $n^k$. 
  For $s \in n^k$ we denote the $i$-th term of $s$ by 
  $s(i)$. If $l \in \{0, \dots, n-1\}$ then $s^\wedge l$ denotes 
  the sequence in $n^{k + 1}$ whose first $k$ terms agree with those of $s$ and whose $k+1$st term is $l$.
  %For a finite sequence $s$ we denote the length of $s$ by $\length(s)$. 

\section{Ranks on the Baire class 1 functions without compactness}
\lab{s:KL}

In this section we summarize some results concerning ranks on the 
Baire class 1 functions, following the work of Kechris and 
Louveau. We do not consider the results in this section as original, 
we basically just carefully check that the results of Kechris and Louveau
hold without the assumption of compactness of $X$. This is inevitable,
since they assumed compactness throughout their paper but we will need
these results in Section \ref{s:pos} for arbitrary Polish spaces.

A notable exception is Theorem \ref{a=b=g_bounded} stating that the three ranks essentially coincide for bounded Baire class $1$ functions.
Since our highly non-trivial proof for the case of general Polish spaces required completely new ideas, we consider this result as original in the non-compact case.

The definitions of the ranks will use the notion of a \emph{derivative 
operation}. 
\begin{definition}
  A \emph{derivative} on the closed subsets of $X$ is a map 
  $D: {\boldsymbol \Pi^0_1}(X) \to {\boldsymbol \Pi^0_1}(X)$ 
  such that $D(A) \subseteq A$ and 
  $A \subseteq B \Rightarrow D(A) \subseteq D(B)$ for every $A, B \in {\boldsymbol \Pi^0_1}(X)$.
\end{definition}
\begin{definition}
  For a derivative $D$ we define the \emph{iterated derivatives} 
  of the closed set $F$ as follows: 
  \begin{align*}
    D^0(F) &= F, \\
    D^{\eta + 1}(F) &= D(D^{\eta}(F)), \\
    D^{\eta}(F) &= \bigcap_{\theta < \eta} D^{\theta}(F) 
      \text{ if $\eta$ is a limit.}
  \end{align*}
\end{definition}
\begin{definition}
  Let $D$ be a derivative. The \emph{rank} of $D$ is the smallest 
  ordinal $\eta$, such that $D^{\eta}(X) = \emptyset$, if such 
  ordinal exists, $\omega_1$ otherwise. 
  We denote the rank of $D$ by $\rank(D)$.
\end{definition}
\begin{remark}
  \label{derivative_countable} 
  In all our applications $D$ satisfies $D(F) \subsetneqq F$ for every 
  non-empty closed set $F$, and since in a Polish space there is no 
  strictly decreasing sequence of closed sets of length $\omega_1$ (see e.g.~\cite[6.9]{K}), 
  the rank of a derivative is always a countable ordinal.
\end{remark}
\begin{proposition}
  \label{two_derivatives}
  If the derivatives $D_1$ and $D_2$ satisfy 
  $D_1(F) \subseteq D_2(F)$ for every closed subset $F \subseteq X$ 
  then $\rank(D_1) \le \rank(D_2)$. 
\end{proposition}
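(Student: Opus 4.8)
The plan is to prove by transfinite induction on $\eta$ that $D_1^\eta(X) \subseteq D_2^\eta(X)$ for every ordinal $\eta$; once this is established, the conclusion is immediate. Indeed, if $\rank(D_2) = \eta_0$, then $D_2^{\eta_0}(X) = \emptyset$, so by the inclusion $D_1^{\eta_0}(X) \subseteq D_2^{\eta_0}(X) = \emptyset$, whence $D_1^{\eta_0}(X) = \emptyset$ and therefore $\rank(D_1) \le \eta_0 = \rank(D_2)$. (If $\rank(D_2) = \om_1$ there is nothing to prove.)

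For the induction itself, the base case $\eta = 0$ is trivial since $D_1^0(X) = X = D_2^0(X)$. For the successor step, assume $D_1^\eta(X) \subseteq D_2^\eta(X)$. Applying the monotonicity property of $D_2$ (the second defining clause of a derivative) to this inclusion gives $D_2(D_1^\eta(X)) \subseteq D_2(D_2^\eta(X)) = D_2^{\eta+1}(X)$, and applying the pointwise inequality $D_1(A) \subseteq D_2(A)$ with $A = D_1^\eta(X)$ gives $D_1^{\eta+1}(X) = D_1(D_1^\eta(X)) \subseteq D_2(D_1^\eta(X))$; chaining the two yields $D_1^{\eta+1}(X) \subseteq D_2^{\eta+1}(X)$. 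For the limit step, if $\eta$ is a limit ordinal and $D_1^\theta(X) \subseteq D_2^\theta(X)$ for all $\theta < \eta$, then $D_1^\eta(X) = \bigcap_{\theta < \eta} D_1^\theta(X) \subseteq \bigcap_{\theta < \eta} D_2^\theta(X) = D_2^\eta(X)$ directly from the definition of the iterated derivative at limits.

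There is essentially no obstacle here: the statement is a soft consequence of the two axioms defining a derivative (shrinking and monotonicity), and the only mild point to be careful about is that in the successor step one genuinely needs \emph{both} hypotheses — the hypothesis $D_1(F)\subseteq D_2(F)$ alone does not transfer along the iteration without the monotonicity of $D_2$, and conversely monotonicity alone is not enough. The ranks being potentially $\om_1$ is handled trivially as noted above, and Remark \ref{derivative_countable} is not even needed for this statement.
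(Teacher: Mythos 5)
Your proof is correct and follows essentially the same route as the paper: transfinite induction establishing $D_1^\eta(X) \subseteq D_2^\eta(X)$ for all $\eta$, with the base and limit cases handled identically. The only (immaterial) difference is in the successor step, where you apply the hypothesis at $D_1^\eta(X)$ and then the monotonicity of $D_2$, while the paper applies the monotonicity of $D_1$ and then the hypothesis at $D_2^\eta(X)$; both chains are valid.
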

\begin{proof}
  It is enough to prove that $D_1^\eta(X) \subseteq D_2^\eta(X)$ for 
  every ordinal $\eta$. We prove this by transfinite induction on 
  $\eta$. For $\eta = 0$ this is obvious, since 
  $ D_1^0(X) = D_2^0(X) = X$. Now suppose this holds for $\eta$ and 
  we prove it for $\eta + 1$. Since 
  $D_1^\eta(X) \subseteq D_2^\eta(X)$ and $D_1$ is a derivative, we 
  have $D_1(D_1^\eta(X)) \subseteq D_1(D_2^\eta(X))$. Using this 
  observation and the condition of the proposition for the closed 
  set $D_2^\eta(X)$, we have 
  $D_1^{\eta + 1}(X) = D_1(D_1^\eta(X)) \subseteq D_1(D_2^\eta(X)) 
  \subseteq D_2(D_2^\eta(X)) = D_2^{\eta + 1}(X)$.
  
  For limit $\eta$ the claim is an easy consequence of the continuity 
  of the sequences, hence the proof is complete. 
\end{proof}

\begin{proposition}
  \label{ugly_lemma}
  Let $n \ge 1$ and $D$, $D_0, \dots, D_{n - 1}$ be 
  derivative operations on the closed subsets of $X$. 
  Suppose that they satisfy the following conditions 
  for arbitrary closed sets $F$ and $F'$:
  \begin{equation} \label{lemma1}
    D(F) \subseteq \bigcup_{k = 0}^{n - 1} D_k(F), 
  \end{equation}
  \begin{equation} \label{lemma2}
    D(F \cup F') \subseteq 
    D(F) \cup D(F').
  \end{equation}
  Then for these derivatives
  \begin{equation}
    \rank (D) \lesssim 
    \max_{k < n} \rank (D_k). 
  \end{equation}
\end{proposition}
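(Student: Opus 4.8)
The plan is to prove the stronger, self-improving inclusion: for every closed $F \subseteq X$ and every $1 \le \eta < \om_1$,
\begin{equation*}
  D^{\om^\eta}(F) \subseteq \bigcup_{k<n} D_k^{\om^\eta}(F).
\end{equation*}
This suffices: choosing $\eta$ with $\max_{k<n}\rank(D_k) \le \om^\eta$ and applying the inclusion with $F = X$, we get, since iterated derivatives form a $\subseteq$-decreasing sequence, $D_k^{\om^\eta}(X) \subseteq D_k^{\rank(D_k)}(X) = \emptyset$ for every $k$, hence $D^{\om^\eta}(X) = \emptyset$ and $\rank(D) \le \om^\eta$. As this holds for every such $\eta$, we obtain $\rank(D) \lesssim \max_{k<n}\rank(D_k)$.

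The case $\eta = 1$ is the heart of the matter. First, a straightforward induction on $m$, using \eqref{lemma2} to distribute $D$ over a finite union and then \eqref{lemma1}, gives
\begin{equation*}
  D^m(F) \subseteq \bigcup_{s \in n^m} D_{s(m-1)} \circ \cdots \circ D_{s(0)}(F).
\end{equation*}
Fix $x \in D^\om(F) = \bigcap_m D^m(F)$. Since a derivative shrinks its argument, the composition indexed by $s$ is contained in the one indexed by any initial segment of $s$; hence the set $T$ of those $s \in n^{<\om}$ for which $x$ lies in the corresponding composition applied to $F$ is a tree, it is finitely branching, and by the displayed inclusion it has a node on every level. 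By König's lemma $T$ has an infinite branch $\sigma \in n^\om$. Some value $k^*$ occurs infinitely often along $\sigma$, say at positions $i_0 < i_1 < \cdots$, and an easy induction on $r$ — using monotonicity of $D_{k^*}$ together with the fact that the partial compositions along $\sigma$ decrease — shows that the composition of the first $i_r + 1$ maps is contained in $D_{k^*}^{r+1}(F)$. Therefore $x \in \bigcap_r D_{k^*}^{r+1}(F) = D_{k^*}^\om(F)$, which proves $D^\om(F) \subseteq \bigcup_{k<n} D_k^\om(F)$.

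For the transfinite induction I will use three facts, each obtained by a routine transfinite induction. (a) For any derivative satisfying \eqref{lemma2}, $D^\zeta(F \cup F') \subseteq D^\zeta(F) \cup D^\zeta(F')$ for all $\zeta$; the only non-formal step is the limit case, where one uses that for $\subseteq$-decreasing sequences $\bigcap_\zeta(A_\zeta \cup B_\zeta) = \bigcap_\zeta A_\zeta \cup \bigcap_\zeta B_\zeta$. (b) For any derivative $D$ and ordinal $\al$ the map $F \mapsto D^\al(F)$ is again a derivative, and $D^{\al+\beta} = D^\beta \circ D^\al$, so in particular $(D^\al)^\om = D^{\al \cdot \om}$. (c) For finitely many $\subseteq$-decreasing sequences $(A^k_i)_i$ $(k<n)$ one has $\bigcap_i \bigcup_{k<n} A^k_i = \bigcup_{k<n} \bigcap_i A^k_i$ (again by the pigeonhole used in (a)). Now induct on $\eta$; the case $\eta = 1$ is done. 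If $\eta = \zeta + 1$, then by the inductive hypothesis and (a) the derivatives $\hat D := D^{\om^\zeta}$ and $\hat D_k := D_k^{\om^\zeta}$ (derivatives by (b)) satisfy \eqref{lemma1} and \eqref{lemma2}, so applying the case $\eta = 1$ to them and invoking (b),
\begin{equation*}
  D^{\om^{\zeta+1}}(F) = \hat D^\om(F) \subseteq \bigcup_{k<n} \hat D_k^\om(F) = \bigcup_{k<n} D_k^{\om^{\zeta+1}}(F).
\end{equation*}
If $\eta$ is a limit, choose ordinals $\eta_i < \eta$ with $\sup_i \eta_i = \eta$; then by continuity of the iterated derivatives, the inductive hypothesis, and (c),
\begin{equation*}
  D^{\om^\eta}(F) = \bigcap_i D^{\om^{\eta_i}}(F) \subseteq \bigcap_i \bigcup_{k<n} D_k^{\om^{\eta_i}}(F) = \bigcup_{k<n} \bigcap_i D_k^{\om^{\eta_i}}(F) = \bigcup_{k<n} D_k^{\om^\eta}(F).
\end{equation*}

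The main obstacle is the base inclusion $D^\om(F) \subseteq \bigcup_{k<n} D_k^\om(F)$: the key realization is that at this level one has an honest set-inclusion, not merely a rank inequality, and this inclusion is precisely what makes the induction self-improving. Extracting it requires the tree/König/pigeonhole argument above; after that, the ascent to all $\om^\eta$ is bookkeeping, the only delicate point being that the limit step really goes through because intersections of $\subseteq$-decreasing families distribute over finite unions.
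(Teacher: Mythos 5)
Your proof is correct, and it establishes exactly the same key inclusion as the paper — $D^{\om^\eta}(F) \subseteq \bigcup_{k<n} D_k^{\om^\eta}(F)$ by transfinite induction on $\eta$ — but the induction is organized differently. The paper's base case is $\eta=0$ (which is just \eqref{lemma1}), and its successor step is done ``in place'': it builds the tree of sets $F_{s^\wedge k}=D_k^{\om^\eta}(F_s)$, proves $D^{\om^\eta\cdot m}(F)\subseteq\bigcup_{s\in n^m}F_s$, and then applies a \emph{finite} pigeonhole to sequences $s\in n^{m\cdot n}$ (some $k$ occurs at least $m$ times) to get $D^{\om^\eta\cdot m\cdot n}(F)\subseteq\bigcup_k D_k^{\om^\eta\cdot m}(F)$, finishing with the same decreasing-sequence/infinite-pigeonhole observation you isolate as fact (c). You instead prove the level-$\om$ inclusion once, using K\"onig's lemma on the tree of compositions plus an infinite pigeonhole along the branch, and then make the induction self-improving by noting that $F\mapsto D^{\om^\zeta}(F)$ is again a derivative inheriting \eqref{lemma1} (by the inductive hypothesis) and \eqref{lemma2} (your fact (a)), together with $(D^{\al})^{\om}=D^{\al\cdot\om}$. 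What your route buys is modularity: the combinatorics is done exactly once at the bottom level and the successor step becomes pure bookkeeping; what it costs is the extra (routine but necessary) verifications (a), (b), and the appeal to K\"onig's lemma where the paper gets by with a purely finitary count. You are also more explicit than the paper about the limit step, which silently uses your fact (c). All the auxiliary claims you invoke do hold and are proved correctly, so there is no gap.
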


\begin{proof}
  We will prove by induction on $\eta$ that 
  \begin{align}
    \label{induction1}
    D^{\omega^\eta}(F) \subseteq 
      \bigcup_{k = 0}^{n - 1} D_k^{\omega^\eta}(F)
  \end{align}
  for every closed set $F$. It is easy to see that proving \eqref{induction1} is enough, 
  since if $\eta$ is an ordinal satisfying 
  $\rank(D_k) \le \omega^\eta$ for 
   	  every $k < n$ then we have 
   	  $\rank(D) \le \omega^\eta$.
  
  Now we prove \eqref{induction1}. The case $\eta = 0$ is exactly \eqref{lemma1}.
  For limit $\eta$ the statement is obvious, since the sequences 
  are decreasing and continuous. Hence, it remains to prove \eqref{induction1} 
  for $\eta + 1$ if it holds for $\eta$. For this it is 
  enough to show that for every $m \in \omega$ 
  \begin{equation}
    \label{induction2}
    D^{\omega^\eta \cdot m \cdot n}(F) \subseteq 
      \bigcup_{k = 0}^{n - 1} D_k^{\omega^\eta \cdot m}(F),
  \end{equation}
  indeed, 
\[
      D^{\omega^{\eta + 1}}(F) = \bigcap_{m \in \om} D^{\omega^\eta \cdot m \cdot n}(F) \subseteq 
      \bigcap_{m \in \om} \left(\bigcup_{k = 0}^{n - 1} D_k^{\omega^\eta \cdot m}(F) \right),
\]  
  hence $x \in D^{\omega^{\eta + 1}}(F)$ implies that without loss of generality $x \in D_0^{\omega^\eta \cdot m}(F)$ for infinitely many $m$, but the sequence $D_0^{\omega^\eta \cdot m}(F)$ is decreasing, hence $x \in \bigcap_{m \in \om} D_0^{\omega^\eta \cdot m}(F) = D_0^{\omega^{\eta + 1}}(F)$. 
  
  Now we prove \eqref{induction2}. Let $F_\emptyset = F$, and for $m \in \N$, $s \in n^m$ and $k< n$ let 
  \begin{equation*}
    F_{s^\wedge k} = 
      D_k^{\omega^\eta}(F_s).
  \end{equation*}
  
  It is enough that for $m \ge 1$ 
  \begin{equation}
    \label{induction3}
    D^{\omega^\eta \cdot m}(F) \subseteq 
    \bigcup_{s \in n^{m}} F_s,
  \end{equation}
  since it is easy to see that 
  \begin{equation*}
    \bigcup_{s \in n^{m \cdot n}} F_s \subseteq 
      \bigcup_{k = 0}^{n - 1} \bigcup \{F_s : s \in n^{m \cdot n} \text{ and } 
      |\{i : s(i) = k \}| \ge m \} ,
  \end{equation*}
  yielding \eqref{induction2}, as
  \begin{equation*}
    \bigcup \{F_s : s \in n^{m \cdot n} \text{ and } 
      |\{i : s(i) = k \}| \ge m \} \subseteq 
    D_k^{\omega^\eta \cdot m}(F).
  \end{equation*}
  
  It remains to prove \eqref{induction3}  by induction on $m$.  
  For $m = 1$, this is only the induction hypothesis of 
  \eqref{induction1} for $\eta$. By supposing \eqref{induction3} 
  for $m$, we have 
  \begin{equation*}
  \begin{split}
    D^{\omega^\eta \cdot (m + 1)}(F) &=  
      D^{\omega^\eta} \left( 
      D^{\omega^\eta \cdot m}(F) \right) 
      \subseteq D^{\omega^\eta} \left( 
      \bigcup_{s \in n^{m}} F_s \right) \subseteq \\
    &\subseteq \bigcup_{s \in n^{m}} 
      D^{\omega^\eta}(F_s)
      \subseteq \bigcup_{s \in n^{m + 1}} F_s ,
  \end{split}
  \end{equation*}
  where we used \eqref{lemma2} $\omega^\eta$ many times for the 
  second containment, and for the last one we used the induction
  hypothesis, that is \eqref{induction1} for $\eta$. 
  This finishes the proof.
\end{proof}

\subsection{The separation rank}
  This rank was first introduced by Bourgain \cite{B}.
  \begin{definition}
    Let $A$ and $B$ be two subsets of $X$. We associate a derivative 
    with them by 
    \begin{equation}
      \label{alpha_derivative}
      D_{A, B}(F) = \overline{F \cap A} \cap \overline{F \cap B}.
    \end{equation}
  \end{definition}
  It is easy to see that $D_{A, B}(F)$ is closed, 
  $D_{A, B}(F) \subseteq F$ and 
  $D_{A, B}(F) \subseteq D_{A, B}(F')$ for every pair of sets $A$ and 
  $B$ and every pair of closed sets 
  $F \subseteq F'$, hence $D_{A, B}$ is a derivative. We use 
  the notation $\alpha(A, B) = \rank( D_{A, B} )$. 
  \begin{definition}
    The \emph{separation rank} of a Baire class 1 function $f$ 
    is defined as 
    \begin{equation} 
      \label{alpha_def}
      \alpha(f) = 
        \sup_{\begin{subarray}{c} p < q \\ p, q \in \Q \end{subarray}}
        \alpha(\{f \le p\}, \{f \ge q \}).
    \end{equation}
  \end{definition}
  
  \begin{remark}
    \label{alpha_def_real}
   Actually, 
    \begin{equation*}
      \alpha(f) = 
        \sup_{\begin{subarray}{c} x < y \\ x, y \in \R \end{subarray}}
        \alpha(\{f \le x\}, \{f \ge y \}),
    \end{equation*}
    since if $x < p < q < y$ then 
    $\alpha(\{f \le x\}, \{f \ge y\}) \le 
    \alpha(\{f \le p\}, \{f \ge q\})$, since any set 
    $H \in {\boldsymbol \Delta^0_2}(X)$ separating the level sets 
    $\{f \le p\}$ and  $\{f \ge q\}$ also separates 
    $\{f \le x\}$ and  $\{f \ge y\}$.
  \end{remark}

  \begin{proposition}
    \label{alpha_le_omega_1}
    If $f$ is a Baire class 1 function then $\alpha(f) < \omega_1$. 
  \end{proposition}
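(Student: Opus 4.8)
The plan is to reduce to a single pair of rational levels and then use the Baire category characterization of Baire class $1$ functions. Fix rationals $p < q$, put $A = \{f \le p\}$ and $B = \{f \ge q\}$ (disjoint sets), and consider the derivative $D = D_{A,B}$, which we already know is a derivative. The iterated derivatives $(D^{\eta}(X))_{\eta}$ form a weakly decreasing transfinite sequence of closed subsets of the Polish space $X$; since there is no strictly decreasing $\om_1$-sequence of closed sets in a Polish space (see~\cite[6.9]{K}), this sequence must stabilize: there is a countable ordinal $\eta_0$ with $D^{\eta_0}(X) = D^{\eta}(X)$ for all $\eta \ge \eta_0$. Write $F = D^{\eta_0}(X)$, so in particular $D(F) = D^{\eta_0 + 1}(X) = F$.

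The heart of the argument is to show $F = \emptyset$, since then $\rank(D_{A,B}) \le \eta_0 < \om_1$, i.e. $\alpha(\{f \le p\}, \{f \ge q\}) < \om_1$. Suppose $F \neq \emptyset$. Then $F$, being a non-empty closed subset of $X$, is itself a Polish space in the subspace topology, and $f|_F$ is of Baire class $1$ on $F$; consequently $f|_F$ has a point of continuity $x_0 \in F$ (the set of such points is even comeager in $F$). On the other hand, $F = D(F) = \overline{F \cap A} \cap \overline{F \cap B}$, so $x_0 \in \overline{F \cap A}$ and $x_0 \in \overline{F \cap B}$. Hence every neighbourhood of $x_0$ in $F$ meets $F \cap A$, where $f \le p$, and also meets $F \cap B$, where $f \ge q$; thus the oscillation of $f|_F$ at $x_0$ is at least $q - p > 0$, contradicting continuity of $f|_F$ at $x_0$. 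Therefore $F = \emptyset$.

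Finally, since $\{(p,q) \in \Q^2 : p < q\}$ is countable and each $\alpha(\{f \le p\}, \{f \ge q\})$ is a countable ordinal by the previous paragraph, $\alpha(f) = \sup_{p<q,\ p,q\in\Q} \alpha(\{f \le p\}, \{f \ge q\})$ is a supremum of countably many countable ordinals, hence countable. This gives $\alpha(f) < \om_1$.

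The only delicate point — and the one place where the hypothesis that $f$ is Baire class $1$, rather than merely Borel, is genuinely used — is the passage from ``$D_{A,B}$ stabilizes'' to ``$D_{A,B}$ reaches $\emptyset$''. Stabilization alone is free in any Polish space, but without a Baire-type theorem producing a continuity point of $f$ on the stable closed set $F$, the derivative could perfectly well have a non-empty fixed point (as it does, e.g., for $A = \Q$, $B = \R \setminus \Q$, which are level sets of a Baire class $2$ function), and the rank would then be $\om_1$. So the combinatorial/topological bookkeeping is routine; the substantive input is exactly the point-of-continuity characterization of Baire class $1$ functions applied on the closed subspace $F$.
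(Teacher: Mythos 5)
Your proof is correct and is essentially the paper's argument: both rest on the fact that a Baire class $1$ function restricted to a non-empty closed set $F$ has a continuity point, which forces $\overline{F \cap A} \cap \overline{F \cap B} \neq F$. The paper packages this as ``$D_{A,B}(F) \subsetneq F$ for every non-empty closed $F$'' and invokes Remark \ref{derivative_countable}, while you phrase it as ``the stabilized fixed point must be empty''; the two are the same argument.
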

  \begin{proof}
    From the definition of the rank and Remark \ref{derivative_countable}
    it is enough to prove that for any pair of rational numbers 
    $p < q$ and non-empty closed set $F \subseteq X$, $D_{A, B}(F) \subsetneq F$, 
    where $A = \{f \le p\}$ and $B = \{f \ge q\}$. 
    Since $f$ is of Baire class 1, it has a point of continuity 
    restricted to $F$, hence $A$ and $B$ cannot be both dense in $F$. 
    Consequently, $D_{A, B}(F) = \overline{F \cap A} \cap \overline{F \cap B} 
    \subsetneq F$, proving the proposition. 
  \end{proof}

  Next we prove that $\alpha(A, B) < \omega_1$ iff $A$ and $B$ can be 
  separated by a transfinite difference of closed sets.
 
  \begin{definition}
    If the sets $A$ and $B$ can be separated by a transfinite 
    difference of closed sets then let 
    $\alpha_1(A, B)$ denote the length of the shortest such sequence,
    otherwise let $\alpha_1(A, B) = \omega_1$. 
    We define the \emph{modified separation rank} of a 
    Baire class 1 function $f$ as 
    \begin{equation}
      \label{alpha_1_def}
      \alpha_1(f) = 
        \sup_{\begin{subarray}{c} p < q \\ p, q \in \Q \end{subarray}}
        \alpha_1(\{f \le p\}, \{f \ge q \}).
    \end{equation}
  \end{definition}

  \begin{proposition}
    \label{alpha_1_equals_alpha_for_sets}
    Let $A$ and $B$ two subsets of $X$. Then
    $$
      \alpha(A, B) \leq \alpha_1(A, B) \leq 2\alpha(A, B)
      \text{, hence $\alpha(A, B) \approx \alpha_1(A, B)$.}
    $$
  \end{proposition}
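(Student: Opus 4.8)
The plan is to prove the two inequalities separately, and the key will be to relate the transfinite-difference representation to the iterated derivatives of $D_{A,B}$. Throughout write $D = D_{A,B}$ and $\lambda = \alpha(A,B) = \rank(D)$, so that $D^\lambda(X) = \emptyset$ and the sets $F_\eta := D^\eta(X)$ form a continuous decreasing sequence of closed sets with $F_0 = X$ and $\bigcap_{\eta<\lambda} F_\eta = \emptyset$ (continuity at limits is immediate from the definition of iterated derivatives, and $F_\lambda = \emptyset$).

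\textbf{Step 1: $\alpha_1(A,B) \le 2\alpha(A,B)$.} Here I would exhibit an explicit separating transfinite difference built from the sequence $(F_\eta)_{\eta<\lambda}$. The natural candidate is $H = \bigcup_{\eta \text{ even}} (F_\eta \setminus F_{\eta+1}) \cap \text{(something)}$; more precisely, the point is that on each ``layer'' $F_\eta \setminus F_{\eta+1}$, one of the two closures $\overline{F_\eta \cap A}$, $\overline{F_\eta \cap B}$ must miss a relatively open piece, since $D(F_\eta) = F_{\eta+1} = \overline{F_\eta\cap A}\cap\overline{F_\eta\cap B}$. I would refine the sequence $(F_\eta)$ to a sequence of length at most $2\lambda$ by inserting, between $F_\eta$ and $F_{\eta+1}$, an intermediate closed set such as $F_\eta \cap \overline{F_\eta\cap A}$ (or the analogous set with $B$), and then check that the transfinite difference of this refined continuous sequence of closed sets separates $A$ from $B$: on the portion of $F_\eta$ outside $\overline{F_\eta\cap A}$ we are disjoint from $A$, on the portion inside we still have to peel off $\overline{F_\eta\cap B}$, and iterating this dichotomy down the sequence exhausts $X$ because $F_\lambda=\emptyset$. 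Care is needed at limit stages to keep the refined sequence continuous, but since the original sequence is continuous this should go through by taking intersections. This yields a separating difference of length $\le 2\lambda$, and since $2\lambda \approx \lambda$ this also gives the $\approx$ claim once the other inequality is established. (Alternatively one shows length $\le \lambda\cdot 2$ or $\le \lambda + \lambda$, any of which is $\approx \lambda$.)

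\textbf{Step 2: $\alpha(A,B) \le \alpha_1(A,B)$.} Suppose $A$ and $B$ are separated by the transfinite difference of a continuous decreasing sequence $(C_\eta)_{\eta<\mu}$ of closed sets with $C_0 = X$; say $A \subseteq H = \bigcup_{\eta \text{ even}}(C_\eta\setminus C_{\eta+1})$ and $B \cap H = \emptyset$. I would prove by transfinite induction on $\eta$ that $D^\eta(X) \subseteq C_\eta$, which immediately gives $\rank(D) \le \mu$ and hence $\alpha(A,B) \le \alpha_1(A,B)$. The successor step is the heart of the matter: assuming $D^\eta(X)\subseteq C_\eta$, I must show $D^{\eta+1}(X) = \overline{D^\eta(X)\cap A}\cap\overline{D^\eta(X)\cap B} \subseteq C_{\eta+1}$. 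Fix $x$ in the left-hand set; it lies in $C_\eta$, and I claim it cannot lie in $C_\eta\setminus C_{\eta+1}$. Indeed $C_\eta\setminus C_{\eta+1}$ is relatively open in $C_\eta$, so if $x$ were in it then a neighbourhood of $x$ in $C_\eta$ would be contained in $C_\eta\setminus C_{\eta+1}$; but $x\in\overline{D^\eta(X)\cap A}$ and $x\in\overline{D^\eta(X)\cap B}$ force that neighbourhood to meet both $A$ and $B$ (using $D^\eta(X)\subseteq C_\eta$), and depending on the parity of $\eta$, points of $C_\eta\setminus C_{\eta+1}$ belong entirely to $H$ or entirely to $H^c$, contradicting either $B\cap H=\emptyset$ or $A\subseteq H$. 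Hence $x\in C_{\eta+1}$. The limit step is routine from continuity of both sequences: $D^\eta(X) = \bigcap_{\theta<\eta}D^\theta(X)\subseteq\bigcap_{\theta<\eta}C_\theta = C_\eta$.

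\textbf{Main obstacle.} I expect the bookkeeping in Step 1 — constructing the refined sequence of length $\le 2\alpha(A,B)$ and verifying it is both continuous and genuinely separating — to be the delicate part, particularly getting the parity of the ``even'' indices to line up so that the inserted intermediate sets land on the correct side. Step 2 is conceptually cleaner; its only subtlety is the relative-openness argument at successor stages, which is exactly where the hypothesis $A\subseteq H$, $B\cap H=\emptyset$ gets used. Everything else is transfinite induction with continuity doing the work at limits.
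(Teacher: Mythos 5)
Your proposal is correct and takes essentially the same approach as the paper: the first inequality via transfinite induction showing $D_{A,B}^{\eta}(X)\subseteq C_{\eta}$, and the second by interleaving intermediate closed sets between consecutive iterated derivatives. The paper resolves the one point you leave open by choosing the $B$-side, setting $F_{2\eta}=D_{A,B}^{\eta}(X)$ and $F_{2\eta+1}=\overline{D_{A,B}^{\eta}(X)\cap B}$, so that the even rings directly form a separating set witnessing length $2\alpha(A,B)$ with no parity adjustment needed.
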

 \begin{proof}
    For the first inequality we can assume that 
    $\alpha_1(A, B) < \omega_1$, so $A$ and $B$ can be separated by a 
    transfinite difference of closed sets. Let $(F_{\eta})_{\eta<\lambda}$ be such a 
    sequence, where $\lambda = \alpha_1(A, B)$. Now we have 
    $$
      A \subseteq \bigcup_{
        \begin{subarray}{c} \eta < \lambda \\ \eta 
          \text{ even} \end{subarray}} 
        (F_{\eta} 
      \setminus F_{\eta + 1}) \subseteq B^c.
    $$
    It is enough to prove that 
    $D_{A, B}^{\eta}(X) \subseteq F_{\eta}$ for every $\eta$. We prove this by 
    induction. 
    For $\eta = 0$ this is obvious, since $D_{A, B}^0(X) = F_0 = X$. 
    
    Now suppose that $D_{A, B}^{\eta}(X) \subseteq F_{\eta}$. We 
    show that $D_{A, B}^{\eta + 1}(X) = 
      \overline{D_{A, B}^{\eta}(X) \cap A} \cap 
      \overline{D_{A, B}^{\eta}(X) \cap B} \subseteq 
      F_{\eta + 1}$. 
    If $\eta$ is even then 
    $$ 
      D^{\eta}_{A, B}(X) \setminus F_{\eta + 1} \subseteq F_{\eta} \setminus F_{\eta + 1} \subseteq B^c,
    $$
    hence 
    $D^{\eta}_{A, B}(X) \cap B \subseteq F_{\eta + 1}$. Since 
    $F_{\eta + 1}$ is closed, we obtain 
    $\overline{D^{\eta}_{A, B}(X) \cap B} \subseteq F_{\eta + 1}$, 
    hence $D_{A, B}^{\eta + 1} \subseteq F_{\eta + 1}$. 
    If $\eta$ is odd then $F_{\eta} \setminus F_{\eta + 1}$ is disjoint from $\bigcup_{
        \begin{subarray}{c} \eta < \lambda \\ \eta 
          \text{ even} \end{subarray}} 
        (F_{\eta} 
      \setminus F_{\eta + 1})$, hence 
    $F_{\eta} \setminus F_{\eta + 1} \subseteq A^c$, and an argument analogous to the above one yields  
    $\overline{D^{\eta}_{A, B}(X) \cap A} \subseteq F_{\eta + 1}$, 
    hence $D_{A, B}^{\eta + 1} \subseteq F_{\eta + 1}$. 
    
    If $\eta$ is limit and 
    $D_{A, B}^{\theta}(X) \subseteq F_{\theta}$ 
    for every $\theta < \eta$ then 
    $D_{A, B}^{\eta}(X) \subseteq F_{\eta}$ because the sequences 
    $D_{A, B}^{\eta}(X) $ and $ F_{\eta}$ are continuous. 

    For the second inequality we suppose that 
    $\alpha(A, B) < \omega_1$, that is, the sequence $D_{A, B}^{\eta}(X)$ terminates at the empty set at some countable ordinal.
    Let 
    $$
      F_{2\eta} = D_{A, B}^{\eta}(X), \quad 
      F_{2\eta + 1} = \overline{D_{A, B}^{\eta}(X) \cap B}. 
    $$
    Clearly, $F_0 = X$ and $F_{2\eta} \supseteq F_{2\eta+1}$ for every $\eta$. It is easily seen from the definition of $D_{A, B}^{\eta+1}(X)$ that $F_{2\eta+1} \supseteq F_{2\eta+2}$ for every $\eta$. Moreover, the sequence $F_{2\eta} = D_{A, B}^{\eta}(X)$ is continuous. This implies that the sequence formed by the $F_{\eta}$'s is decreasing and continuous.
    
    Now we show that the transfinite difference of this sequence separates $A$ and $B$.
  
    Every ring of the form $F_{2\eta} \setminus F_{2\eta + 1}$ 
    is disjoint from $B$, so we only need to prove that $A$ is contained 
    in the union of these rings. We show that $A$ is disjoint from the 
    complement of this union by proving that
    $$
      \left( F_{2\eta + 1} \setminus F_{2\eta + 2}\right) \cap A = 
      \left( \overline{D_{A, B}^{\eta}(X) \cap B} \setminus 
         D_{A, B}^{\eta + 1}(X) \right) \cap A = \emptyset
    $$
    for every $\eta$. From the 
    definition of the derivative,
    $D_{A, B}^{\eta + 1}(X) = 
      \overline{D_{A, B}^{\eta}(X) \cap A} \cap 
      \overline{D_{A, B}^{\eta}(X) \cap B}$.
    Using the fact that $D_{A, B}^{\eta}(X)$ is closed, for a point 
    $x \in A \cap \overline{D_{A, B}^{\eta}(X) \cap B}$ we have 
    $x \in \overline{D_{A, B}^{\eta}(X) \cap A}$, hence 
    $x \in D_{A, B}^{\eta + 1}(X)$.
  \end{proof}
  
  \begin{remark}
    It is claimed in \cite{KL} that if $X$ is compact and
    $\alpha(A, B) = \lambda + n$ with $\lambda$ limit and $0 < n \in \om$ then 
    $\alpha_1(A, B)$ is either $\lambda + 2n$ or $\lambda + 2n - 1$. However, this does not seem to be true. 
    For a counterexample, let $X$ be the $2n + 1$-dimensional cube in 
    $\R^{2n + 1}$. 
    Let $A = (F_0 \setminus F_1) \cup (F_2 \setminus F_3) \cup 
    \dots \cup (F_{2n} \setminus F_{2n + 1})$, where $F_i$ is a 
    $(2n + 1 - i)$-dimensional face of $X$, and 
    $F_{i + 1} \subseteq F_i$ for $i \le 2n$. Let $B = X \setminus A$. 
    The definition of $A$ shows that $\alpha_1(A, B) \le 2n + 2$. 
    
    Now $D^0_{A, B}(X) = X = F_0$, and by induction, 
    $D^i_{A, B}(X) = F_i$ for $0 \le i \le 2n + 1$, since 
    $D^i_{A, B}(X) = D(D^{i - 1}_{A, B}(X)) = D_{A, B}(F_{i - 1}) = 
      \overline{F_{i - 1} \cap A} \cap \overline{F_{i - 1} \cap B} = 
      F_i$. 
    Now we have $D^{2n + 2}_{A, B}(X) = 
      D_{A, B}(D^{2n + 1}_{A, B}(X)) = D_{A, B}(F_{2n + 1}) = 
      \emptyset$, proving that in this case $\alpha(A, B) = 2n + 2$. 
    Using Proposition \ref{alpha_1_equals_alpha_for_sets} this shows 
    that $\alpha_1(A, B) = \alpha(A, B) = 2n + 2$.
  \end{remark}
  We leave the proof of the following corollary to the reader.
  \begin{corollary}
    \label{alpha_1_equals_alpha}
    If $f$ is a Baire class 1 function then 
    $$
      \alpha(f) \leq \alpha_1(f) \leq 2\alpha(f) \text{, hence }
      \alpha(f) \approx \alpha_1(f).
    $$      
  \end{corollary}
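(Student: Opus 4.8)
The plan is simply to take suprema over rational pairs $p<q$ in Proposition \ref{alpha_1_equals_alpha_for_sets}. Fix a Baire class $1$ function $f$, and for rationals $p<q$ abbreviate $A_{p,q}=\{f\le p\}$ and $B_{p,q}=\{f\ge q\}$, so that by definition $\alpha(f)=\sup_{p<q}\alpha(A_{p,q},B_{p,q})$ and $\alpha_1(f)=\sup_{p<q}\alpha_1(A_{p,q},B_{p,q})$, the suprema being taken over $p,q\in\Q$ with $p<q$.

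For the first inequality, Proposition \ref{alpha_1_equals_alpha_for_sets} gives $\alpha(A_{p,q},B_{p,q})\le\alpha_1(A_{p,q},B_{p,q})\le\alpha_1(f)$ for every such pair, and taking the supremum over $p<q$ yields $\alpha(f)\le\alpha_1(f)$. For the second inequality, Proposition \ref{alpha_1_equals_alpha_for_sets} gives $\alpha_1(A_{p,q},B_{p,q})\le 2\alpha(A_{p,q},B_{p,q})$ for every pair; since $\alpha(A_{p,q},B_{p,q})\le\alpha(f)$ and ordinal multiplication on the left is monotone, each term is bounded by $2\alpha(f)$, hence $\alpha_1(f)=\sup_{p<q}\alpha_1(A_{p,q},B_{p,q})\le 2\alpha(f)$.

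Finally, $\alpha(f)\le\alpha_1(f)$ gives $\alpha(f)\lesssim\alpha_1(f)$, while $\alpha_1(f)\le 2\alpha(f)$ gives $\alpha_1(f)\lesssim 2\alpha(f)$, and since $2\theta\approx\theta$ for every ordinal $\theta$ (as recorded in the Preliminaries) we get $2\alpha(f)\approx\alpha(f)$, so $\alpha_1(f)\lesssim\alpha(f)$; combining the two $\lesssim$-relations yields $\alpha(f)\approx\alpha_1(f)$. The only point that deserves a second of care is the interaction of the supremum with the factor $2$ in $\alpha_1(f)\le 2\alpha(f)$: this is why I bound each individual $\alpha_1(A_{p,q},B_{p,q})$ by $2\alpha(f)$ via monotonicity of ordinal multiplication, rather than attempting to extract the $2$ from the supremum. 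Otherwise the corollary is a routine consequence of Proposition \ref{alpha_1_equals_alpha_for_sets}, so there is no substantial obstacle.
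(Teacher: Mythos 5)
Your proof is correct and is precisely the argument the paper intends: the corollary is explicitly left to the reader as a direct consequence of Proposition \ref{alpha_1_equals_alpha_for_sets}, obtained by taking suprema over rational pairs $p<q$. Your handling of the only delicate point --- bounding each term $\alpha_1(\{f\le p\},\{f\ge q\})$ by $2\alpha(f)$ via monotonicity of ordinal multiplication rather than trying to pull the factor $2$ out of the supremum --- is exactly right.
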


  \begin{corollary}
    If $f$ is a Baire class 1 function then $\alpha_1(f) < \omega_1$. 
  \end{corollary}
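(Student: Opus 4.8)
The plan is to read this off directly from the two results just proved, with no new work needed. By Corollary~\ref{alpha_1_equals_alpha}, every Baire class $1$ function $f$ satisfies $\alpha_1(f)\le 2\alpha(f)$; by Proposition~\ref{alpha_le_omega_1}, $\alpha(f)<\omega_1$. Since $2\theta<\theta+\om$ for every ordinal $\theta$ (as recorded in the Preliminaries), we have $2\theta<\omega_1$ whenever $\theta<\omega_1$, hence $\alpha_1(f)\le 2\alpha(f)<\omega_1$. That is the whole argument.

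Accordingly there is no real obstacle; the content sits entirely in the two cited statements. For the record, the genuine input is (i) that a Baire class $1$ function has a point of continuity relative to every non-empty closed set $F$, so that for rationals $p<q$ the sets $A=\{f\le p\}$ and $B=\{f\ge q\}$ cannot both be dense in $F$, forcing $D_{A,B}(F)\subsetneq F$ and hence, by Remark~\ref{derivative_countable}, $\alpha(A,B)=\rank(D_{A,B})<\omega_1$; and (ii) Proposition~\ref{alpha_1_equals_alpha_for_sets}, which converts the countability of $\alpha(A,B)$ into the countability of $\alpha_1(A,B)$. Taking the supremum over the countably many rational pairs $p<q$ — a countable supremum of countable ordinals — then yields $\alpha_1(f)<\omega_1$, which is just a longer way of packaging the one-line proof above. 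One could also phrase the matter without ranks at all, since each of these pairs of disjoint level sets is separable by a transfinite difference of closed sets of countable length; but invoking Corollary~\ref{alpha_1_equals_alpha} is the most economical route, and is presumably why the authors state this as a corollary.
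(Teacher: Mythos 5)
Your proof is correct and is essentially the paper's own argument: the paper also derives this corollary directly from Corollary~\ref{alpha_1_equals_alpha} together with Proposition~\ref{alpha_le_omega_1}. The extra unpacking you give (the point-of-continuity argument behind $\alpha(f)<\omega_1$ and the countable supremum over rational pairs) is accurate but not needed beyond the two citations.
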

  \begin{proof}
    It is an easy consequence of the previous corollary and Proposition 
    \ref{alpha_le_omega_1}.
  \end{proof}

\subsection{The oscillation rank}
  This rank was investigated by numerous authors, see e.g. \cite{HOR}.
  
  First, we define the oscillation of a function, then turn to 
  the oscillation rank.
  
  \begin{definition}
    The \emph{oscillation} of a function $f:X \to \R$ at a point 
    $x \in X$ restricted to a closed set $F \subseteq X$ is
    \begin{equation}
      \label{beta_osc}
      \omega(f, x, F) = \inf \left\{ \sup_{x_1, x_2 \in U \cap F} 
        |f(x_1) - f(x_2)|: \text{$U$ open, $x \in U$} \right\}. 
    \end{equation}
  \end{definition}
  \begin{definition}
	  For each $\varepsilon > 0$ consider the derivative defined by
	  \begin{equation}
	    \label{beta_derivative}
	    D_{f, \varepsilon}(F) = 
	      \left\{ x \in F : \omega(f, x, F) \ge \varepsilon \right\}.
	  \end{equation}
	\end{definition}
	It is obvious that $D_{f, \varepsilon}(F)$ is closed, 
	$D_{f, \varepsilon}(F) \subseteq F$ and 
	$D_{f, \varepsilon}(F) \subseteq D_{f, \varepsilon}(F')$ for every 
	function $f : X \to \R$, every $\varepsilon > 0$ and every pair of 
	closed sets $F \subseteq F'$, hence $D_{f, \varepsilon}$ is a 
	derivative. Let us denote the rank of $D_{f, \varepsilon}$ by 
	$\beta(f, \varepsilon)$. 
	\begin{definition}
	  The \emph{oscillation rank} of a function $f$ is 
	  \begin{equation}
	    \label{beta_def}
	    \beta(f) = \sup_{\varepsilon > 0} \beta(f, \varepsilon).
	  \end{equation}
  \end{definition}

  \begin{proposition}
    If $f$ is a Baire class 1 function then $\beta(f) < \omega_1$.
  \end{proposition}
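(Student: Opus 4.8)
The plan is to follow the same two-step pattern as the proof of Proposition \ref{alpha_le_omega_1}: first bound $\beta(f,\varepsilon)$ for each fixed $\varepsilon$, then deal with the supremum over $\varepsilon$ separately.

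First I would fix $\varepsilon > 0$ and show $\beta(f,\varepsilon) < \omega_1$. By Remark \ref{derivative_countable} it suffices to verify that $D_{f,\varepsilon}(F) \subsetneq F$ for every non-empty closed $F \subseteq X$. Here the only real input is the classical characterization recalled in the introduction: a Baire class $1$ function has a point of continuity relative to every non-empty closed set. So pick $x \in F$ at which $f|_F$ is continuous and choose an open $U \ni x$ with $|f(y) - f(x)| < \varepsilon/3$ for all $y \in U \cap F$; then $\sup_{x_1, x_2 \in U \cap F} |f(x_1) - f(x_2)| \le 2\varepsilon/3 < \varepsilon$, so $\omega(f,x,F) < \varepsilon$ and hence $x \in F \setminus D_{f,\varepsilon}(F)$. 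Thus $D_{f,\varepsilon}$ strictly decreases every non-empty closed set, so by Remark \ref{derivative_countable} its rank $\beta(f,\varepsilon)$ is a countable ordinal.

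For the supremum, I would observe that $\varepsilon_1 \le \varepsilon_2$ implies $D_{f,\varepsilon_2}(F) \subseteq D_{f,\varepsilon_1}(F)$ for every closed $F$, so Proposition \ref{two_derivatives} gives $\beta(f,\varepsilon_2) \le \beta(f,\varepsilon_1)$; that is, $\varepsilon \mapsto \beta(f,\varepsilon)$ is non-increasing. Consequently $\beta(f) = \sup_{\varepsilon > 0} \beta(f,\varepsilon) = \sup_{n \in \N,\, n \ge 1} \beta(f, 1/n)$ is a countable supremum of countable ordinals, hence itself a countable ordinal, which is exactly the claim.

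There is no substantial obstacle here; the proof is essentially bookkeeping within the derivative framework already established. The one genuinely non-trivial ingredient is the existence of a point of continuity of $f$ on an arbitrary non-empty closed set — precisely the property of Baire class $1$ functions that fails for higher Baire classes, which is why the naive extension of the oscillation rank to $\iB_\xi$ does not work and motivates the later sections.
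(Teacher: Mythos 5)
Your proof is correct and follows essentially the same route as the paper: the key point in both is that a Baire class $1$ function has a point of continuity relative to every non-empty closed set, so $D_{f,\varepsilon}$ strictly shrinks every non-empty closed set and Remark \ref{derivative_countable} applies. Your extra observation that $\varepsilon \mapsto \beta(f,\varepsilon)$ is non-increasing, so that the supremum over all $\varepsilon>0$ reduces to the countable supremum over $\varepsilon = 1/n$, is left implicit in the paper and is a welcome bit of additional care.
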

  \begin{proof}
    Using Remark \ref{derivative_countable}, it is enough to prove 
    $D_{f, \varepsilon}(F) \subsetneq F$ for every $\varepsilon > 0$ 
    and every non-empty closed set $F \subseteq X$. 
    And this is easy, since $f$ restricted to $F$ is continuous at a 
    point $x \in F$, and thus $x \not \in D_{f, \varepsilon}(F)$, 
    hence $D_{f, \varepsilon}(F) \subsetneq F$. 
  \end{proof}

\subsection{The convergence rank}
  Now we turn to the convergence rank following Zalcwasser \cite{Z} 
  and Gillespie and Hurwitz \cite{GH}. 
  \begin{definition}
    Let $(f_n)_{n \in \N}$ be a sequence of real valued continuous functions 
    on $X$. The \emph{oscillation} of this sequence at a point $x$ 
    restricted to a closed set $F \subseteq X$ is 
    \begin{equation}
      \label{gamma_osc}
       \omega((f_n)_{n \in \N}, x, F) = 
        \inf_{
          \begin{subarray}{c} x \in U \\ \text{$U$ open} 
          \end{subarray}} 
        \inf_{N \in \N} 
        \sup \left\{ |f_m(y) - f_n(y)| : 
        n, m \ge N,\; y \in U \cap F \right\}.     
    \end{equation}
  \end{definition}
  
  \begin{definition}
	  Consider a sequence $(f_n)_{n \in \N}$ of real valued continuous 
	  functions, and for each $\varepsilon > 0$, define a derivative as 
	  \begin{equation}
      \label{gamma_derivative}
	    D_{(f_n)_{n \in \N}, \varepsilon}(F) = \left\{ x \in F : 
	      \omega((f_n)_{n \in \N}, x, F) \ge \varepsilon \right\}.
	  \end{equation}
	\end{definition}
  It is easy to see that $D_{(f_n)_{n \in \N}, \varepsilon}(F)$ is 
  closed, 
  $D_{(f_n)_{n \in \N}, \varepsilon}(F) \subseteq F$ and 
  $D_{(f_n)_{n \in \N}, \varepsilon}(F) \subseteq 
    D_{(f_n)_{n \in \N}, \varepsilon}(F')$ 
  for every sequence of continuous functions $(f_n)_{n \in \N}$, 
  every $\varepsilon > 0$ and every pair of closed sets 
  $F \subseteq F'$, hence $D_{(f_n)_{n \in \N}, \varepsilon}$ is a 
  derivative.
	Let us denote the rank of $D_{(f_n)_{n \in \N}, \varepsilon}$ by 
	$\gamma((f_n)_{n \in \N}, \varepsilon)$. 
  \begin{definition}
  	  For a Baire class 1 function $f$ let the \emph{convergence rank} 
    of $f$ be defined by
    \begin{equation}
      \label{gamma_def}    
	    \gamma(f) = 
	      \min \left\{
	        \sup_{\varepsilon > 0} \gamma((f_n)_{n \in \N}, \varepsilon) : 
	        \forall n \text{ $f_n$ is continuous and $f_n \to f$ 
	        pointwise} 
	      \right\}.
    \end{equation}

  \end{definition}

  \begin{proposition}
    If $f$ is a Baire class 1 function then $\gamma(f) < \omega_1$.
  \end{proposition}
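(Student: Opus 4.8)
The plan is to fix once and for all a sequence $(f_n)_{n \in \N}$ of continuous functions with $f_n \to f$ pointwise (which exists since $f$ is of Baire class $1$) and to show that $\sup_{\varepsilon > 0} \gamma((f_n)_{n \in \N}, \varepsilon) < \omega_1$; this immediately yields $\gamma(f) < \omega_1$. Observe first that if $0 < \varepsilon' < \varepsilon$ then $D_{(f_n)_{n \in \N}, \varepsilon}(F) \subseteq D_{(f_n)_{n \in \N}, \varepsilon'}(F)$ for every closed $F$, so by Proposition \ref{two_derivatives} the ordinal $\gamma((f_n)_{n \in \N}, \varepsilon)$ is non-increasing in $\varepsilon$, whence $\sup_{\varepsilon > 0} \gamma((f_n)_{n \in \N}, \varepsilon) = \sup_{k \in \N} \gamma((f_n)_{n \in \N}, 1/k)$ is a countable supremum of countable ordinals. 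Hence it is enough to show that $\gamma((f_n)_{n \in \N}, \varepsilon)$ is countable for each fixed $\varepsilon > 0$, and by Remark \ref{derivative_countable} this follows once we verify that $D_{(f_n)_{n \in \N}, \varepsilon}(F) \subsetneq F$ for every non-empty closed set $F \subseteq X$.

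To establish this, fix $\varepsilon > 0$ and a non-empty closed set $F$, and note that $F$, with the subspace topology, is itself a Polish space. For $N \in \N$ let
\[
  A_N = \bigcap_{m, n \ge N} \{ x \in F : |f_m(x) - f_n(x)| \le \varepsilon / 2 \}.
\]
Each $A_N$ is closed in $F$ because the $f_n$ are continuous, and $F = \bigcup_{N \in \N} A_N$ since $(f_n(x))_{n \in \N}$ converges, hence is Cauchy, at every point $x$. By the Baire category theorem applied in the Polish space $F$, some $A_N$ has non-empty interior relative to $F$, i.e. there is an open set $U \subseteq X$ with $\emptyset \neq U \cap F \subseteq A_N$. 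Then for any $x \in U \cap F$ the open neighbourhood $U$ of $x$ witnesses
\[
  \omega((f_n)_{n \in \N}, x, F) \le \sup \{ |f_m(y) - f_n(y)| : m, n \ge N,\ y \in U \cap F \} \le \varepsilon / 2 < \varepsilon,
\]
so $x \notin D_{(f_n)_{n \in \N}, \varepsilon}(F)$, and therefore $D_{(f_n)_{n \in \N}, \varepsilon}(F) \subsetneq F$, as desired.

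The only genuinely non-routine ingredient is the Baire category argument inside the (possibly non-compact, but still Polish) closed set $F$: it produces a point at which the sequence $(f_n)_{n \in \N}$ is locally uniformly Cauchy relative to $F$, and everything else is bookkeeping. In particular, compactness of $X$ is never used, so the argument goes through verbatim in an arbitrary Polish space, which is precisely what the later sections require.
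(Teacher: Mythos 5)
Your proof is correct and uses essentially the same idea as the paper: both reduce to showing $D_{(f_n)_{n\in\N},\varepsilon}(F)\subsetneq F$ for every non-empty closed $F$ and then apply the Baire category theorem in the Polish space $F$ to the tail-Cauchy sets of the sequence (the paper phrases this dually, via the dense open sets $G_N=\{x\in F:\exists n,m\ge N\ |f_n(x)-f_m(x)|>\varepsilon/2\}$ and a contradiction with pointwise convergence). Your extra bookkeeping about the monotonicity of $\gamma((f_n)_{n\in\N},\varepsilon)$ in $\varepsilon$ and the countability of the resulting supremum is a correct elaboration of what the paper leaves implicit.
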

  \begin{proof}
    It suffices to show that  
    $D_{(f_n)_{n \in \N}, \varepsilon}(F) \subsetneq F$ for every 
    $\varepsilon > 0$, every non-empty closed set $F \subseteq X$ and every
    sequence of pointwise convergent continuous functions $(f_n)_{n \in \N}$. 
    Suppose the contrary, then for every $N$ the set 
    $G_N = \{x \in F : \exists n,m \ge N \; |f_n(x) - f_m(x)| > \frac{\varepsilon}{2}\}$ is 
    dense in $F$. It is also open in $F$, hence by the Baire category 
    theorem there is a point $x \in F$ such that $x \in G_N$ for every 
    $N \in \N$, hence the sequence $(f_n)_{n \in \N}$ does not converge 
    at $x$, contradicting our assumption.
  \end{proof}

\subsection{Properties of the ranks}
  
  \begin{theorem}
    \label{alpha_le_beta_le_gamma}
    If $f$ is a Baire class 1 function then
    $\alpha(f) \le \beta(f) \le \gamma(f)$.
  \end{theorem}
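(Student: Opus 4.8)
The plan is to reduce both inequalities to the monotonicity principle of Proposition~\ref{two_derivatives}: for each relevant parameter I will exhibit a pointwise inclusion between the corresponding derivative operations, and then read off the inequality between their ranks.

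For $\alpha(f)\le\beta(f)$, I would fix rationals $p<q$, set $A=\{f\le p\}$, $B=\{f\ge q\}$ and $\eps=q-p>0$, and check that $D_{A,B}(F)\subseteq D_{f,\eps}(F)$ for every closed $F$. This is immediate from the definitions: a point $x\in\overline{F\cap A}\cap\overline{F\cap B}$ lies in $F$ and has, in every open neighborhood $U$, a point of $F\cap A$ (where $f\le p$) and a point of $F\cap B$ (where $f\ge q$), so $\sup_{y_1,y_2\in U\cap F}|f(y_1)-f(y_2)|\ge q-p=\eps$; since $U$ is arbitrary, $\omega(f,x,F)\ge\eps$. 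Proposition~\ref{two_derivatives} then gives $\alpha(\{f\le p\},\{f\ge q\})=\rank(D_{A,B})\le\rank(D_{f,\eps})=\beta(f,\eps)\le\beta(f)$, and taking the supremum over $p<q$ finishes this half.

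For $\beta(f)\le\gamma(f)$, I would fix an arbitrary sequence $(f_n)_{n\in\N}$ of continuous functions converging pointwise to $f$ and show $\beta(f)\le\sup_{\eps>0}\gamma((f_n)_{n\in\N},\eps)$; since $\gamma(f)$ is the minimum of these quantities over all such sequences, this suffices. The \emph{key inclusion} is $D_{f,\eps}(F)\subseteq D_{(f_n)_{n\in\N},\eps'}(F)$ for a suitable $\eps'$ comparable to $\eps$, which I would prove by contraposition. If the oscillation of the sequence at $x$ relative to $F$ is below $\eps'$, then on some open neighborhood $U$ of $x$ and for some $N$ the tail is uniformly within $\eps'$ on $U\cap F$; letting the index tend to infinity, $|f_N(y)-f(y)|\le\eps'$ on $U\cap F$; using continuity of $f_N$ to find $V\ni x$, $V\subseteq U$, on which $f_N$ varies by less than $\eps'$, the triangle inequality gives $|f(x_1)-f(x_2)|\le 3\eps'$ for $x_1,x_2\in V\cap F$, hence $\omega(f,x,F)\le 3\eps'$. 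Choosing $\eps'=\eps/4$ (any fixed factor $<1/3$ works; the slack is needed so that $3\eps'<\eps$ \emph{strictly}) yields the inclusion, and Proposition~\ref{two_derivatives} gives $\beta(f,\eps)\le\gamma((f_n)_{n\in\N},\eps/4)\le\sup_{\delta>0}\gamma((f_n)_{n\in\N},\delta)$; taking the supremum over $\eps$ and then the minimum over sequences gives $\beta(f)\le\gamma(f)$.

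I do not anticipate a serious obstacle: both steps are short once Proposition~\ref{two_derivatives} is available. The only points demanding a little care are in the second step: the naive factor $\eps/3$ fails to yield a strict inequality after passing to the supremum over pairs of points, so one must leave a bit of room (e.g. work with $\eps/4$), and one must recall that continuity of the approximating functions $f_n$ on $X$ forces their oscillation relative to the closed set $F$ to vanish at every point of $F$.
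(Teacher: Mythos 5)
Your proposal is correct and follows essentially the same route as the paper: both halves are reduced to Proposition~\ref{two_derivatives} via the pointwise inclusions $D_{A,B}(F)\subseteq D_{f,q-p}(F)$ and $D_{f,\eps}(F)\subseteq D_{(f_n)_{n\in\N},\eps'}(F)$, exactly as in the paper's proof. The only cosmetic difference is in the second inclusion, where the paper keeps the factor $\eps/3$ and places the needed strict slack in the choice of the neighborhood $V$ on which $f_N$ oscillates by less than $\eps/6$ (giving the bound $\tfrac56\eps<\eps$), whereas you shrink the factor to $\eps/4$; both devices work.
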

  \begin{proof}
    For the first inequality, it is enough to prove that for every 
    $p, q \in \Q$, $p < q$  we can find $\varepsilon > 0$ such 
    that $\alpha(\{f \le p\}, \{f \ge q\}) \le \beta(f, \varepsilon)$. 
    Let $A = \{f \le p\}$, $B = \{f \ge q\}$ and 
    $\varepsilon = p - q$. 
    Using Proposition \ref{two_derivatives} it suffices to show that $D_{A, B}(F) \subseteq D_{f, \varepsilon}(F)$ 
    for every $F \in {\boldsymbol \Pi^0_1}(X)$. 
    If $x \in F \setminus D_{f, \varepsilon}(F)$ then $x$ has a 
    neighborhood $U$  such that $\sup_{x_1, x_2 \in U \cap F} 
    |f(x_1) - f(x_2)| < \eps = p - q$, hence $U$ cannot intersect both 
    $A$ and $B$. 
    So $x \not\in D_{A, B}(F)$, proving the first inequality. 

    For the second inequality, let $(f_n)_{n \in \N}$ be a sequence of 
    continuous functions converging pointwise to a function $f$. 
    It is enough to show that 
    $\beta(f, \varepsilon) \le 
      \gamma((f_n)_{n \in \N}, \varepsilon / 3)$. 
    As in the first paragraph we show that 
    $D_{f, \varepsilon}(F) \subseteq 
      D_{(f_n)_{n \in \N}, \varepsilon / 3}(F)$ 
    for every $F \in {\boldsymbol \Pi^0_1}(X)$.
It is enough to show that if 
    $x \in F \setminus D_{(f_n)_{n \in \N}, \varepsilon / 3}(F)$ then  
    $x \notin D_{f, \varepsilon}(F)$. 
    For such an $x$ there is a neighborhood $U$ of $x$ and an 
    $N \in \N$ such that for all $n, m \ge N$ and 
    $x' \in F \cap U$, $|f_n(x') - f_m(x')| < \varepsilon / 3$. 
    Letting $m \to \infty$ we get 
    $|f_n(x') - f(x')| \le \varepsilon / 3$ for all $n \ge N$ and 
    $x' \in F \cap U$. Let $V \subseteq U$ be a neighborhood of $x$ 
    for which $\sup_V f_N - \inf_V f_N < \varepsilon / 6$. 
    Now for every $x', x'' \in V \cap F$ we have 
    $$
      |f(x') - f(x'')| \le 
        |f_{N}(x') - f_{N}(x'')| + 2\frac{\varepsilon}{3} < \frac56 \eps <
        \varepsilon, 
    $$
    showing that $x \not\in D_{f, \varepsilon}(F)$.
  \end{proof}

  \begin{proposition}
    \label{translation_invariance}
    If $X$ is a Polish group then the ranks $\alpha$, $\beta$ and $\gamma$ are translation 
    invariant.
  \end{proposition}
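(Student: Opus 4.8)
The plan is to reduce everything to the single observation that left and right translations $L_{x_0}$ and $R_{x_0}$ are homeomorphisms of $X$, and that each of the derivatives underlying $\alpha$, $\beta$, $\gamma$ transforms naturally under a homeomorphism. Concretely, I would first record the following: if $\varphi : X \to X$ is a homeomorphism, $A, B \subseteq X$, $f : X \to \R$ and $(f_n)_{n \in \N}$ is a sequence of continuous real valued functions, then for every closed $F \subseteq X$
\begin{align*}
D_{\varphi^{-1}(A), \varphi^{-1}(B)}(\varphi^{-1}(F)) &= \varphi^{-1}\bigl(D_{A,B}(F)\bigr), \\
D_{f \circ \varphi, \varepsilon}(\varphi^{-1}(F)) &= \varphi^{-1}\bigl(D_{f, \varepsilon}(F)\bigr), \\
D_{(f_n \circ \varphi)_{n \in \N}, \varepsilon}(\varphi^{-1}(F)) &= \varphi^{-1}\bigl(D_{(f_n)_{n \in \N}, \varepsilon}(F)\bigr).
\end{align*}
(Note $\varphi^{-1}(F)$ is closed since $\varphi$ is continuous, so the left-hand sides make sense.)

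For the first identity one uses that $\varphi^{-1}$ commutes with finite intersection and, since $\varphi$ is a homeomorphism, also with closure, so that $D_{\varphi^{-1}(A),\varphi^{-1}(B)}(\varphi^{-1}(F)) = \overline{\varphi^{-1}(F \cap A)} \cap \overline{\varphi^{-1}(F \cap B)} = \varphi^{-1}\bigl(\overline{F \cap A} \cap \overline{F \cap B}\bigr)$. For the second and third identities one first checks the pointwise equalities $\omega(f \circ \varphi, x, \varphi^{-1}(F)) = \omega(f, \varphi(x), F)$ and $\omega((f_n \circ \varphi)_{n}, x, \varphi^{-1}(F)) = \omega((f_n)_{n}, \varphi(x), F)$, which hold because $U \mapsto \varphi(U)$ is a bijection between the open neighbourhoods of $x$ and those of $\varphi(x)$ and because $\varphi(U \cap \varphi^{-1}(F)) = \varphi(U) \cap F$; the displayed identities for the derivatives are then immediate from the definitions \eqref{beta_derivative} and \eqref{gamma_derivative}.

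Next, since $\varphi$ also maps closed sets onto closed sets, a routine transfinite induction on $\eta$ gives $D_{\varphi^{-1}(A),\varphi^{-1}(B)}^{\eta}(X) = \varphi^{-1}\bigl(D_{A,B}^{\eta}(X)\bigr)$ for all $\eta$: it is trivial at $\eta = 0$, at successors one applies the corresponding displayed identity to the closed set $D_{A,B}^{\eta}(X)$, and at limits one uses the continuity of the iterated-derivative sequences. The same induction works for $D_{f,\varepsilon}$ and $D_{(f_n)_{n},\varepsilon}$. As $\varphi^{-1}$ is a bijection, $D_{\varphi^{-1}(A),\varphi^{-1}(B)}^{\eta}(X) = \emptyset$ iff $D_{A,B}^{\eta}(X) = \emptyset$, whence $\alpha(\varphi^{-1}(A),\varphi^{-1}(B)) = \alpha(A,B)$, and likewise $\beta(f \circ \varphi, \varepsilon) = \beta(f, \varepsilon)$ and $\gamma((f_n \circ \varphi)_{n}, \varepsilon) = \gamma((f_n)_{n}, \varepsilon)$.

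Finally I would specialize $\varphi$ to $L_{x_0}$ and to $R_{x_0}$. Since $\{f \circ L_{x_0} \le p\} = L_{x_0}^{-1}(\{f \le p\})$ and analogously for $\{f \ge q\}$ and for $R_{x_0}$, the identity for $\alpha$ on pairs of sets gives $\alpha(\{f \circ L_{x_0} \le p\},\{f \circ L_{x_0} \ge q\}) = \alpha(\{f \le p\},\{f \ge q\})$ for all rationals $p < q$, and taking the supremum yields $\alpha(f \circ L_{x_0}) = \alpha(f)$; the same for $R_{x_0}$. For $\beta$, taking the supremum over $\varepsilon > 0$ gives $\beta(f \circ L_{x_0}) = \beta(f \circ R_{x_0}) = \beta(f)$. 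For $\gamma$ one notes in addition that $(f_n)_{n} \mapsto (f_n \circ \varphi)_{n}$ is a bijection from the set of sequences of continuous functions converging pointwise to $f$ onto the set of such sequences converging pointwise to $f \circ \varphi$, with inverse $(g_n)_{n} \mapsto (g_n \circ \varphi^{-1})_{n}$; combining this with $\gamma((f_n \circ \varphi)_{n}, \varepsilon) = \gamma((f_n)_{n}, \varepsilon)$, taking $\sup_{\varepsilon}$ and then the minimum over the witnessing sequences gives $\gamma(f \circ L_{x_0}) = \gamma(f \circ R_{x_0}) = \gamma(f)$. The only point requiring a little care is this last matching of witnessing sequences for $\gamma$; the rest is direct verification, so I do not expect a genuine obstacle.
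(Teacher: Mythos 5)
Your proof is correct and follows essentially the same route as the paper, which simply observes that $L_{x_0}$ and $R_{x_0}$ are homeomorphisms and that the definitions of $\alpha$, $\beta$, $\gamma$ depend only on the topology; you have merely written out in full the derivative-level identities and the transfinite induction that the paper leaves implicit.
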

  \begin{proof}
    Note first that for a Baire class 1 function $f$ and 
    ${x_0} \in X$ the functions $f \circ L_{x_0}$ and $f \circ R_{x_0}$ are also of Baire class 1. 
    Since the topology of a topological group is translation invariant, and the 
    the definitions of the ranks depend only on the topology of the 
    space, the proposition easily follows.
  \end{proof}  
  \begin{theorem}
    \label{ranks_not_bounded}
    The ranks are unbounded in $\omega_1$, actually unbounded already on the characteristic functions.
  \end{theorem}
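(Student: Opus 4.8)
The plan is to reduce everything to the separation rank $\al$ on characteristic functions, and then, for each $\zeta<\om_1$, to build a concrete $H$ whose characteristic function has $\al(\chi_H)\ge\zeta$. First, by Theorem~\ref{alpha_le_beta_le_gamma} we have $\al(f)\le\beta(f)\le\gamma(f)$, so it suffices to make $\al$ large. For an ambiguous set $H$ the function $\chi_H$ is of Baire class $1$, and among all pairs $p<q$ only $0\le p<q\le 1$ is relevant, because for every other pair one of the level sets $\{\chi_H\le p\}$, $\{\chi_H\ge q\}$ equals $\emptyset$ or $X$ and the associated derivative empties $X$ in one step; for $0\le p<q\le 1$ we have $\{\chi_H\le p\}=X\setminus H$ and $\{\chi_H\ge q\}=H$. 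Since $D_{A,B}$ is symmetric in $A,B$, this gives $\al(\chi_H)\ge\al(X\setminus H,H)=\rank(D_{H,\,X\setminus H})$, so the task becomes: for every $\zeta<\om_1$ produce an ambiguous $H\subseteq X$ with $\rank(D_{H,\,X\setminus H})\ge\zeta$.

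Second, I would localize the construction. Fix $\zeta$; since $X$ is an uncountable Polish space it contains a closed copy of the Cantor set, hence a closed subspace $W\subseteq X$ homeomorphic to the ordinal $\om^\zeta+1$ with the order topology. For closed $K\subseteq W$ one has $D_{H,\,X\setminus H}(K)=\overline{K\cap H}\cap\overline{K\setminus H}=D^{W}_{H,\,W\setminus H}(K)$, where the superscript means closures and derivative are computed inside $W$: indeed $W$ is closed in $X$ so these closures agree with the ones in $X$, and $K\setminus H=K\cap(W\setminus H)$ as $K\subseteq W$. Combining this with the monotonicity of the derivative, a short transfinite induction yields $\bigl(D_{H,\,X\setminus H}\bigr)^\eta(X)\supseteq\bigl(D^{W}_{H,\,W\setminus H}\bigr)^\eta(W)$ for all $\eta$, hence $\rank(D_{H,\,X\setminus H})\ge\rank\bigl(D^{W}_{H,\,W\setminus H}\bigr)$. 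So it remains to find $H\subseteq W$ for which $\rank\bigl(D^{W}_{H,\,W\setminus H}\bigr)$ is as large as the Cantor--Bendixson rank of $W$, namely $\zeta+1$.

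Third, the example: let $H$ consist of the points of $W$ whose Cantor--Bendixson rank inside $W$ is even (limit ranks counted as even), i.e.\ $H=\bigcup_{\eta\ \mathrm{even}}\bigl(W^{(\eta)}\setminus W^{(\eta+1)}\bigr)$, where $W^{(\eta)}$ is the $\eta$-th Cantor--Bendixson derivative of $W$. Since $W$ is countable, $H$ is $F_\sigma$, and $X\setminus H=(X\setminus W)\cup(W\setminus H)$ is a union of an open set and a countable set, hence also $F_\sigma$; thus $H\in\boldsymbol\Delta^0_2$ and $\chi_H$ is of Baire class $1$. I would then prove $\bigl(D^{W}_{H,\,W\setminus H}\bigr)^\eta(W)=W^{(\eta)}$ for every $\eta$ by transfinite induction. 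The inclusion $\subseteq$ is exactly the argument in the proof of Proposition~\ref{alpha_1_equals_alpha_for_sets}. The reverse inclusion is where the content lies: it uses that $W$, and hence every $W^{(\eta)}$, is scattered, so the isolated points of $W^{(\eta)}$ — which are precisely $W^{(\eta)}\setminus W^{(\eta+1)}$ — are dense in $W^{(\eta)}$. Assuming the identity for $\eta$, say with $\eta$ even, $W^{(\eta)}\setminus W^{(\eta+1)}\subseteq H$ is dense in $W^{(\eta)}$, forcing $\overline{W^{(\eta)}\cap H}=W^{(\eta)}$, while $W^{(\eta+1)}\setminus W^{(\eta+2)}\subseteq W\setminus H$ is dense in $W^{(\eta+1)}$ and $W^{(\eta)}\cap(W\setminus H)\subseteq W^{(\eta+1)}$, forcing $\overline{W^{(\eta)}\setminus H}=W^{(\eta+1)}$; together these give $\bigl(D^{W}_{H,\,W\setminus H}\bigr)^{\eta+1}(W)=W^{(\eta+1)}$ (the case $\eta$ odd is symmetric, and the limit stages are immediate from continuity of both sequences). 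Hence $\rank\bigl(D^{W}_{H,\,W\setminus H}\bigr)=\zeta+1$, so $\al(\chi_H)\ge\zeta$, and since $\zeta<\om_1$ was arbitrary, $\al$ — and therefore $\beta$ and $\gamma$ — is unbounded in $\om_1$ already on the characteristic functions.

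The step I expect to be the main obstacle is the identity $\bigl(D^{W}_{H,\,W\setminus H}\bigr)^\eta(W)=W^{(\eta)}$: one must check that the separation-rank derivative reproduces the Cantor--Bendixson derivative exactly (not merely up to an inclusion), which requires careful parity bookkeeping and verifying that each closure comes out precisely right — all of this ultimately rests on the single fact that in a scattered space the isolated points are dense. The localization to $W$ in the second step is a secondary point, but it is mechanical once one observes that $D_{H,\,X\setminus H}$ and $D^{W}_{H,\,W\setminus H}$ agree on closed subsets of $W$.
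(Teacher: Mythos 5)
Your proof is correct, but it takes a genuinely different route from the paper's. The paper postpones this theorem and derives it from the more general Theorem \ref{alpha_not_bounded}, which is proved non-constructively: for each $\zeta<\om_1$ one builds a $\boldsymbol{\Delta}^0_{\xi+1}$ set universal for the family of sets $H$ with $\al_\xi(H,H^c)<\zeta$, and concludes from the non-existence of a universal $\boldsymbol{\Delta}^0_{\xi+1}$ set that this family is a proper subfamily of $\boldsymbol{\Delta}^0_{\xi+1}$; the Baire class $1$ statement then follows via $\al\approx\al_1$ (Corollary \ref{alpha_1_equals_alpha}) and $\al\le\beta\le\gamma$. You instead exhibit an explicit witness: a closed copy $W$ of $\om^\zeta+1$ inside $X$ and the set $H$ of points of even Cantor--Bendixson rank, for which the derivative $D_{H,X\setminus H}$ reproduces the Cantor--Bendixson derivative of $W$ exactly, so that $\al(\chi_H)\ge\zeta+1$. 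All the steps check out: the reduction to $\al(H^c,H)$, the localization to the closed subspace $W$, the identity $\bigl(D^{W}_{H,W\setminus H}\bigr)^\eta(W)=W^{(\eta)}$ (which indeed rests only on the density of the isolated points in a scattered space plus the parity bookkeeping), and the verification that $H$ is ambiguous because $W$ is countable. What your construction buys is an exact computation of the rank and a self-contained, elementary argument; what the paper's diagonalization buys is uniformity in $\xi$ --- it establishes unboundedness of $\al_\xi$ for every $1\le\xi<\om_1$ at once, which is precisely why the authors postpone the Baire class $1$ case --- together with the localization to arbitrary perfect sets in Corollary \ref{alpha_unbounded_on_perfect}, though your argument localizes just as easily since every non-empty perfect set contains a closed copy of $2^\om$.
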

  We postpone the proof, since later we will prove the more general 
  Theorem \ref{alpha_not_bounded}. 

  \begin{proposition}
    \label{smaller_class}
    If $f$ is continuous then $\alpha(f) = \beta(f) = \gamma(f) = 1$. 
  \end{proposition}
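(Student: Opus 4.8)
The plan is to use the inequality $\alpha(f) \le \beta(f) \le \gamma(f)$ already established in Theorem \ref{alpha_le_beta_le_gamma} (it applies, since a continuous function is of Baire class $1$) and to squeeze all three ranks between $1$ and $1$.

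First I would record the trivial uniform lower bound: for \emph{any} derivative $D$ we have $D^0(X) = X$, and $X \ne \emptyset$ because $X$ is an uncountable Polish space; hence $\rank(D) \ge 1$. Applying this to $D_{A,B}$ with $A = \{f \le p\}$, $B = \{f \ge q\}$ gives $\alpha(\{f \le p\}, \{f \ge q\}) \ge 1$ for all rationals $p < q$, so $\alpha(f) \ge 1$.

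Next I would prove $\gamma(f) \le 1$. Since $f$ is continuous, the constant sequence $f_n = f$ is an admissible sequence of continuous functions converging pointwise to $f$, so it may be used as a witness in the definition \eqref{gamma_def} of $\gamma(f)$. For this sequence $|f_m(y) - f_n(y)| = 0$ for all $m,n$ and all $y$, hence $\omega((f_n)_{n \in \N}, x, F) = 0$ for every $x \in X$ and every closed $F \subseteq X$; therefore $D_{(f_n)_{n \in \N}, \varepsilon}(F) = \emptyset$ for every $\varepsilon > 0$ and every closed $F$. In particular $D_{(f_n)_{n \in \N}, \varepsilon}(X) = \emptyset$ while $D_{(f_n)_{n \in \N}, \varepsilon}^0(X) = X \ne \emptyset$, so $\gamma((f_n)_{n \in \N}, \varepsilon) = 1$ for every $\varepsilon > 0$, and thus $\gamma(f) \le \sup_{\varepsilon > 0} \gamma((f_n)_{n \in \N}, \varepsilon) = 1$.

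Combining the two bounds with Theorem \ref{alpha_le_beta_le_gamma} yields $1 \le \alpha(f) \le \beta(f) \le \gamma(f) \le 1$, so $\alpha(f) = \beta(f) = \gamma(f) = 1$. (One could just as well bound $\beta$ directly: since $f$ restricted to any closed $F$ is continuous, $\omega(f, x, F) = 0$ for all $x \in F$, whence $D_{f,\varepsilon}(F) = \emptyset$ and $\beta(f,\varepsilon) = 1$ for every $\varepsilon > 0$.) There is no real obstacle in this argument; the only points deserving a word of care are the uniform lower bound $\rank(D) \ge 1$, which relies on $X \ne \emptyset$, and the fact that the constant sequence is a legitimate witness in the definition of $\gamma$.
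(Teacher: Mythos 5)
Your proof is correct and essentially matches the paper's: the paper verifies all three ranks directly (disjoint closed level sets for $\alpha$, zero oscillation for $\beta$, and the constant witness sequence for $\gamma$), whereas you verify only the trivial lower bound $\alpha(f)\ge 1$ and the upper bound $\gamma(f)\le 1$ via the same constant-sequence witness, then squeeze $\alpha$ and $\beta$ using Theorem \ref{alpha_le_beta_le_gamma}. This is a harmless organizational streamlining; the key computations are identical, and your parenthetical direct argument for $\beta$ is exactly the paper's.
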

  \begin{proof}
    In order to prove $\alpha(f) = 1$, consider the derivative 
    $D_{\{f \le p \}, \{ f \ge q\}}$, where $p < q$ is a pair of 
    rational numbers. Since the level sets $\{f \le p\}$ and 
    $\{f \ge q\}$ are disjoint closed sets,
    $D_{\{f \le p \}, \{ f \ge q\}}  (X)= \emptyset$.
    
    For $\beta(f) = 1$, note that a continuous function $f$ has 
    oscillation 0 at every point restricted to every set, hence 
    $D_{f, \varepsilon}(X) = \emptyset$ for every $\varepsilon > 0$. 
    
    And finally for $\gamma(f) = 1$ consider the sequence of 
    continuous functions $(f_n)_{n \in \N}$, for which $f_n = f$ for 
    every $n \in \N$. It is easy to see that 
    $\omega((f_n)_{n \in \N}, x, F) = 0$ for every point $x \in X$ 
    and every closed set $F \subseteq X$. Now we have that 
    $D_{(f_n)_{n \in \N}, \varepsilon}(X) = \emptyset$ for every 
    $\varepsilon > 0$, hence $\gamma(f) = 1$. 
  \end{proof}
  
  \begin{theorem}
    
    \label{product_characteristic} 
    If $f$ is a Baire class 1 function and $F \subseteq X$ is closed 
    then $\alpha(f \cdot \chi_F) \le 1 + \alpha(f)$,  
    $\beta(f \cdot \chi_F) \le 1 + \beta(f)$ and 
    $\gamma(f \cdot \chi_F) \le 1 + \gamma(f)$.
  \end{theorem}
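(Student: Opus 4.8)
The plan is to handle the three ranks separately, but in each case reduce to a statement about the relevant derivative operation applied to closed subsets of $X$, and then exploit the fact that multiplying by $\chi_F$ only changes behavior ``one step in''. The unifying idea: for a closed set $F$, the function $g = f \cdot \chi_F$ agrees with $f$ on the interior-relevant part of $F$ and is identically $0$ off $F$, so after one application of the derivative we should land inside $F$ (or inside a set on which $g$ behaves like $f$), and from then on the iterated derivatives of $g$ should be dominated by those of $f$ (suitably relativized to $F$). This is exactly the shape that produces a $1 + (\cdot)$ bound.

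First I would treat $\beta$. For $\varepsilon > 0$ I claim $D_{g,\varepsilon}(X) \subseteq F$: if $x \notin F$ then, since $F$ is closed, $x$ has a neighborhood $U$ disjoint from $F$, on which $g \equiv 0$, so $\omega(g,x,X) = 0 < \varepsilon$. Hence $D_{g,\varepsilon}(X) \subseteq F$. Next, for any closed $G \subseteq F$ I would check $D_{g,\varepsilon}(G) \subseteq D_{f,\varepsilon}(G)$: on $G \subseteq F$ we have $g = f$, and the oscillation of $g$ at a point $x \in G$ relative to $G$ can only be computed using values of $g$ on $G$, where $g = f$, so $\omega(g,x,G) = \omega(f,x,G)$. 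Combining, $D_{g,\varepsilon}^{1+\eta}(X) \subseteq D_{f,\varepsilon}^{\eta}(F) \subseteq D_{f,\varepsilon}^{\eta}(X)$ by transfinite induction (using continuity of the sequences at limits and the derivative property $D_{f,\varepsilon}(A) \subseteq D_{f,\varepsilon}(B)$ for $A \subseteq B$ at successors). Therefore $\beta(g,\varepsilon) \le 1 + \beta(f,\varepsilon) \le 1 + \beta(f)$, and taking the supremum over $\varepsilon$ gives $\beta(g) \le 1 + \beta(f)$.

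For $\alpha$, fix rationals $p < q$ and set $A = \{g \le p\}$, $B = \{g \ge q\}$; similarly $A' = \{f \le p\}$, $B' = \{f \ge q\}$. I would split into the cases $0 \le p$ and $p < 0$ (and the symmetric data for $q$); in either case exactly one of $A, B$ contains $F^c$ while the other is contained in $F$ up to the part where $f$ already satisfies the inequality. Concretely, if $p \ge 0$ then $g \le p$ everywhere off $F$, so $B = \{g \ge q\} \subseteq F$ (as $q > p \ge 0$), and hence $D_{A,B}(X) = \overline{X \cap A} \cap \overline{X \cap B} \subseteq \overline{B} = B \subseteq F$; moreover for closed $G \subseteq F$ we have $G \cap A = G \cap A'$ and $G \cap B = G \cap B'$ since $g = f$ on $F$, so $D_{A,B}(G) = D_{A',B'}(G)$. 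The case $p < 0$ is symmetric with the roles of $A,B$ interchanged (now $A \subseteq F$ after noting $g \ge 0 > p$ off $F$ forces $A \subseteq F$ only if also... one checks $\{g \le p\}\subseteq F$ when $p<0$). Then, exactly as for $\beta$, transfinite induction gives $D_{A,B}^{1+\eta}(X) \subseteq D_{A',B'}^{\eta}(F) \subseteq D_{A',B'}^{\eta}(X)$, so $\alpha(A,B) \le 1 + \alpha(A',B') \le 1 + \alpha(f)$; taking the supremum over $p < q$ yields $\alpha(g) \le 1 + \alpha(f)$.

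For $\gamma$, I would start from a sequence $(f_n)$ of continuous functions with $f_n \to f$ pointwise and $\sup_\varepsilon \gamma((f_n), \varepsilon) = \gamma(f)$, and manufacture continuous functions converging pointwise to $g = f \cdot \chi_F$. The natural device is to pick continuous $\varphi_n : X \to [0,1]$ with $\varphi_n \to \chi_F$ pointwise — e.g. $\varphi_n(x) = \max\{0, 1 - n\, d(x,F)\}$, which is continuous, equals $1$ on $F$, and tends to $0$ off $F$ — and set $g_n = f_n \cdot \varphi_n$. Then $g_n$ is continuous and $g_n \to f \cdot \chi_F = g$ pointwise. The point to verify is that $D_{(g_n),\varepsilon}(X) \subseteq F$ for each $\varepsilon$: off $F$, $\varphi_n \to 0$ uniformly on a small enough neighborhood (since $d(\cdot, F)$ is bounded below there), and $f_n$ is a convergent, hence locally bounded-in-the-limit sequence, so the oscillation of $(g_n)$ vanishes there — this needs a little care because $f_n$ need not be uniformly bounded, so I would shrink the neighborhood to where $f$ is finite and use local boundedness of the convergent sequence $f_n$ at the point together with $\varphi_n \to 0$. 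And on a closed $G \subseteq F$ one has $\varphi_n \equiv 1$, so $g_n = f_n$ on $G$ and $\omega((g_n),x,G) = \omega((f_n),x,G)$, giving $D_{(g_n),\varepsilon}(G) = D_{(f_n),\varepsilon}(G)$. The same transfinite induction then gives $\gamma((g_n),\varepsilon) \le 1 + \gamma((f_n),\varepsilon)$, hence $\gamma(g) \le \sup_\varepsilon \gamma((g_n),\varepsilon) \le 1 + \sup_\varepsilon \gamma((f_n),\varepsilon) = 1 + \gamma(f)$.

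The main obstacle I anticipate is the $\gamma$ case: controlling the oscillation of $(g_n) = (f_n \varphi_n)$ off $F$ when the $f_n$ are not uniformly bounded. Everywhere else the arguments are a routine ``one-step-in'' transfinite induction of the type already used in the proof of Theorem \ref{product_characteristic}'s predecessors, but the interaction of the unbounded $f_n$ with the vanishing $\varphi_n$ near the boundary of $F$ is where the estimate must be done honestly (locally, at each point off $F$, using that a pointwise convergent sequence of continuous functions is locally uniformly bounded near any point — by Baire category, or just by continuity plus convergence at that one point).
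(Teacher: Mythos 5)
Your proposal follows the paper's proof essentially step for step: for each rank you show that the first derivative of $f\cdot\chi_F$ lands inside $F$, that on closed subsets of $F$ the derivative for $f\cdot\chi_F$ coincides with the one for $f$, and you conclude $\overline{D}^{\,1+\eta}(X)\subseteq D^{\eta}(X)$ by transfinite induction; for $\gamma$ you even use the same auxiliary functions $\varphi_n(x)=\max\{0,1-n\,d(x,F)\}=1-\min\{1,n\,d(x,F)\}$ as the paper.

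Two corrections, one cosmetic and one substantive. In the $\alpha$ case you write $\overline{B}=B$; the level set $B=\{g\ge q\}$ of a Baire class $1$ function need not be closed, but this is harmless since $B\subseteq F$ and $F$ closed already give $\overline{B}\subseteq F$. More importantly, the ``main obstacle'' you flag in the $\gamma$ case is not an obstacle, and the patch you propose for it is false: a pointwise convergent sequence of continuous functions need \emph{not} be locally uniformly bounded near every point (for instance $f_n(x)=n\max\{0,\,1-n^2|x-1/n|\}$ on $\R$ converges pointwise to $0$ but satisfies $f_n(1/n)=n$, so it is unbounded on every neighbourhood of $0$; Baire category only gives such a point in every closed set, not every point). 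Fortunately no boundedness of the $f_n$ is needed: for $x\notin F$ choose a neighbourhood $U$ with $\inf_{y\in U}d(y,F)\ge\delta>0$; then for every $n>1/\delta$ one has $\varphi_n\equiv 0$ on $U$, hence $g_n=f_n\varphi_n\equiv 0$ on $U$ for all large $n$, and the oscillation of the sequence $(g_n)$ at $x$ relative to $X$ is $0$ outright, with no estimate on $f_n$ required. With this observation (which is exactly what the paper uses) your argument is complete.
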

  \begin{proof}
    First we prove the statement for the ranks $\alpha$ and $\beta$. 
    Let $D$ be a derivative 
    either of the form $D_{A, B}$ or of the form $D_{f, \varepsilon}$ 
    where $A = \{f \le p\}$ and $B = \{f \ge q\}$ for a pair of 
    rational numbers $p < q$ and $\varepsilon > 0$. 
    Let $\overline{D}$ be the corresponding 
    derivative for the function $f \cdot \chi_F$, i.e.~$\overline{D} = 
    D_{A', B'}$ or $\overline{D} = D_{f \cdot \chi_F, \varepsilon}$, 
    where $A' = \{f \cdot \chi_F \le p\}$ and 
    $B' = \{f \cdot \chi_F \ge q\}$. 

    Since the function 
    $f \cdot \chi_F$ is constant $0$ on the open set $X \setminus F$, 
    it is easy to check that in both cases 
    $\overline{D}(X) \subseteq F$. 
    And since the functions $f$ and $f \cdot \chi_F$ agree on $F$, 
    we have by transfinite induction that 
    $\overline{D}^{1 + \eta}(X) \subseteq D^{\eta}(X)$ for every 
    countable ordinal $\eta$, implying that  
    $\alpha(f \cdot \chi_F) \le 1 + \alpha(f)$ and also 
    $\beta(f \cdot \chi_F) \le 1 + \beta(f)$.
    
    Now we prove the statement for $\gamma$. Let $(f_n)_{n \in \N}$ be 
    a sequence of continuous functions converging pointwise to $f$ 
    with $\sup_{\varepsilon > 0} \gamma((f_n)_{n \in \N}, \varepsilon) 
    = \gamma(f)$. Let $g_n(x) = 1 - \min\{1, n \cdot d(x, F)\}$ and 
    set $f_n'(x) = f_n(x) \cdot g_n(x)$. It is easy to check that 
    for every $n$ the function $f_n'$ is continuous and 
    $f_n' \to f \cdot \chi_F$ pointwise. For every 
    $x \in X \setminus F$ there is a neighborhood of $x$ such that 
    for large enough $n$ the function $f'_n$ is $0$ on this 
    neighborhood, hence 
    $D_{(f_n')_{n \in \N}, \varepsilon}(X) \subseteq F$ for every 
    $\varepsilon > 0$. From this point on 
    the proof is similar to the previous 
    cases, since the sequences of functions $(f_n)_{n \in \N}$ and 
    $(f_n')_{n \in \N}$ agree on $F$, hence, by transfinite induction  
    $D_{(f_n')_{n \in \N}, \varepsilon}^{1 + \eta}(X) \subseteq 
    D_{(f_n)_{n \in \N}, \varepsilon}^{\eta}(X)$ for every 
    $\varepsilon > 0$. From this we have 
    $\gamma((f_n')_{n \in \N}, \varepsilon) 
    \le 1 + \gamma((f_n)_{n \in \N}, \varepsilon)$ for every 
    $\varepsilon > 0$, hence 
    $\gamma(f \cdot \chi_F) \le 1 + \gamma(f)$. 
    Thus the proof of the theorem is complete. 
  \end{proof}
  
  \begin{theorem}
    \label{beta_gamma_sum}
    The ranks $\beta$ and $\gamma$ are essentially linear.
  \end{theorem}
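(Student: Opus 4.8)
The plan is to prove each of the required relations --- $\beta(cf)\approx\beta(f)$, $\beta(f+g)\lesssim\max\{\beta(f),\beta(g)\}$, and the two corresponding statements for $\gamma$ --- by the same strategy used throughout the section: exhibit inequalities between the relevant derivative operations and then invoke Proposition \ref{two_derivatives} and Proposition \ref{ugly_lemma}. The scalar statement is the easy half. For $c\neq 0$ one checks directly that $\omega(cf,x,F)=|c|\,\omega(f,x,F)$, hence $D_{cf,\eps}(F)=D_{f,\eps/|c|}(F)$ for every closed $F$; taking ranks gives $\beta(cf,\eps)=\beta(f,\eps/|c|)$, and taking the supremum over $\eps>0$ yields $\beta(cf)=\beta(f)$, so in particular $\beta(cf)\approx\beta(f)$. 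The same computation with the sequence oscillation $\omega((f_n),x,F)$ shows $\gamma((cf_n),\eps)=\gamma((f_n),\eps/|c|)$; since $(cf_n)$ runs over exactly the continuous sequences converging to $cf$ as $(f_n)$ runs over those converging to $f$, we get $\gamma(cf)=\gamma(f)$.

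For additivity of $\beta$, fix $\eps>0$. The key elementary inequality is that $\omega(f+g,x,F)\le\omega(f,x,F)+\omega(g,x,F)$, so if neither $f$ nor $g$ has oscillation $\ge\eps/2$ at $x$ (relative to $F$) then $f+g$ has oscillation $<\eps$ there; equivalently
\[
D_{f+g,\eps}(F)\subseteq D_{f,\eps/2}(F)\cup D_{g,\eps/2}(F)
\]
for every closed $F$. Moreover $D_{f+g,\eps}$ satisfies the ``$\cup$-subadditivity'' hypothesis \eqref{lemma2}: if $x\in D_{f+g,\eps}(F\cup F')$ then $f+g$ oscillates by $\ge\eps$ in every neighbourhood of $x$ relative to $F\cup F'$, and a short argument (distinguishing whether $x$ lies in $\overline{F}$, in $\overline{F'}$, or, if in neither, noting the point is then not in the derivative at all) shows $x\in D_{f+g,\eps}(F)\cup D_{f+g,\eps}(F')$. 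Applying Proposition \ref{ugly_lemma} with $n=2$, $D=D_{f+g,\eps}$, $D_0=D_{f,\eps/2}$, $D_1=D_{g,\eps/2}$ gives $\beta(f+g,\eps)\lesssim\max\{\beta(f,\eps/2),\beta(g,\eps/2)\}\le\max\{\beta(f),\beta(g)\}$. Taking the supremum over $\eps$ and using that $\lesssim$ is preserved under suprema (or rather: $\beta(f+g)\le\om^\eta$ whenever $\beta(f),\beta(g)\le\om^\eta$) yields $\beta(f+g)\lesssim\max\{\beta(f),\beta(g)\}$.

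For $\gamma$ the argument is parallel. Choose continuous $(f_n)\to f$ and $(g_n)\to g$ witnessing $\sup_\eps\gamma((f_n),\eps)=\gamma(f)$ and $\sup_\eps\gamma((g_n),\eps)=\gamma(g)$; then $(f_n+g_n)$ is a continuous sequence converging pointwise to $f+g$, so $\gamma(f+g)\le\sup_\eps\gamma((f_n+g_n),\eps)$. The inequality $\omega((f_n+g_n),x,F)\le\omega((f_n),x,F)+\omega((g_n),x,F)$ gives $D_{(f_n+g_n),\eps}(F)\subseteq D_{(f_n),\eps/2}(F)\cup D_{(g_n),\eps/2}(F)$, and $D_{(f_n+g_n),\eps}$ again satisfies \eqref{lemma2}; Proposition \ref{ugly_lemma} then yields $\gamma((f_n+g_n),\eps)\lesssim\max\{\gamma((f_n),\eps/2),\gamma((g_n),\eps/2)\}$, and taking the supremum over $\eps$ finishes the proof. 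The main point requiring care --- the only place where anything beyond a one-line estimate is needed --- is verifying hypothesis \eqref{lemma2} for the oscillation-type derivatives, i.e.\ that $D_{f,\eps}(F\cup F')\subseteq D_{f,\eps}(F)\cup D_{f,\eps}(F')$; this is where one must be slightly careful about neighbourhoods meeting $F$ versus $F'$, but it follows because a neighbourhood basis at a point $x\in\overline{F\cup F'}=\overline F\cup\overline{F'}$ can be intersected with whichever of the two closed sets $x$ belongs to without changing the relevant suprema in a way that matters for the $\ge\eps$ threshold.
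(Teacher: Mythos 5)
Your treatment of $\gamma$ and of the scalar multiples is fine, but the $\beta$ part has a genuine gap: hypothesis \eqref{lemma2} is \emph{false} for the derivative $D_{h,\eps}$ built from the two-point oscillation $\omega(h,x,F)=\inf_U\sup_{x_1,x_2\in U\cap F}|h(x_1)-h(x_2)|$. Take $X=\R$, $F=\{0\}\cup\{1/(2k):k\ge 1\}$ and $F'=\{0\}\cup\{1/(2k+1):k\ge 1\}$ (both closed), and $h$ with $h(0)=0$, $h\equiv 1$ on $F\setminus\{0\}$, $h\equiv -1$ on $F'\setminus\{0\}$. Then $\omega(h,0,F)=\omega(h,0,F')=1$ while $\omega(h,0,F\cup F')=2$, so for $\eps=3/2$ we get $0\in D_{h,\eps}(F\cup F')$ but $D_{h,\eps}(F)=D_{h,\eps}(F')=\emptyset$. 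The cross terms $|h(x_1)-h(x_2)|$ with $x_1\in F$, $x_2\in F'$ are controlled only by the \emph{sum} of the two one-sided oscillations, so in general all one gets is $D_{h,\eps}(F\cup F')\subseteq D_{h,\eps/2}(F)\cup D_{h,\eps/2}(F')$; this weakened form is useless here, because Proposition \ref{ugly_lemma} applies \eqref{lemma2} transfinitely many times to the \emph{same} derivative $D$, and you cannot afford to halve $\eps$ at each application. Your closing remark that one may ``intersect with whichever of the two closed sets $x$ belongs to without changing the relevant suprema'' is precisely the false step: when $x\in F\cap F'$, passing from $F\cup F'$ to $F$ or to $F'$ discards the cross terms and can halve the oscillation.

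The paper repairs this by first replacing $\beta$ with the equivalent rank $\beta_0$ defined from the pointed oscillation $\inf_U\sup_{x_1\in U\cap F}|f(x)-f(x_1)|$ (one checks $\beta_0(f,\eps)\le\beta(f,\eps)\le\beta_0(f,\eps/2)$, hence $\beta_0=\beta$), and then runs exactly your argument for $\beta_0$: since the pointed oscillation is anchored at the single value $f(x)$, the neighbourhoods witnessing smallness on $F$ and on $F'$ can be intersected with no loss, and \eqref{lemma2} holds on the nose. (For $\gamma$ no such modification is needed, because the supremum in \eqref{gamma_osc} is over a single point $y$ at a time, so there are no cross terms; your argument there coincides with the paper's.) With this one change your proof goes through.
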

  \begin{proof}
    It is easy to see that $\beta(cf) = \beta(f)$ and $\gamma(cf) = \gamma(f)$ for every $c \in \R \setminus \{0\}$, hence it suffices to show that $\beta$ and $\gamma$ are essentially additive.
   
    First we consider a modification of the definition of the rank $\beta$ as follows. 
    Let $\beta_0$ be the rank obtained by simply replacing $\sup_{x_1, x_2 \in U \cap F} 
        |f(x_1) - f(x_2)|$ in \eqref{beta_osc} by
    $\sup_{x_1 \in U \cap F} 
        |f(x) - f(x_1)|$ in the definition of $\beta$. Clearly, $\beta_0(f, \varepsilon) \le \beta(f, \varepsilon) \le 
    \beta_0(f, \varepsilon / 2)$, hence actually $\beta_0 = \beta$.
    Therefore it is sufficient to prove the theorem for 
    $\beta_0$.
    
    To prove the theorem for $\beta_0$, let 
    $D_0 = D_{f, \varepsilon / 2}$, 
    $D_1 = D_{g, \varepsilon / 2}$ and 
    $D = D_{f + g, \varepsilon}$ (we use here the derivatives defining 
     $\beta_0$). 
    We show that the conditions of Proposition \ref{ugly_lemma} 
    hold for these derivatives. 
    
    For condition \eqref{lemma1}, let 
    $x \in D_{f + g, \varepsilon}(F)$. 
    Since $\omega(f + g, x, F) \ge \varepsilon$, we have 
    $\omega(f, x, F)$ or $\omega(g, x, F) \ge \varepsilon / 2$, 
    hence 
    $x \in D_{f, \varepsilon / 2}(F) \cup D_{g, \varepsilon / 2}(F)$. 
    
    Condition \eqref{lemma2} is similar, let 
    $x \in (F \cup F') \setminus \left( D_{f + g, \varepsilon}(F) \cup 
    D_{f + g, \varepsilon}(F') \right)$. Since 
    $x \not \in D_{f + g, \varepsilon}(F)$, there is a neighborhood 
    $U$ of $x$ with 
    $|(f + g)(x) - (f + g)(x')| < \varepsilon' < \varepsilon$ for 
    $x' \in U \cap F$. And similarly, there is a neighborhood $U'$ 
    with $|(f + g)(x) - (f + g)(x')| < \varepsilon'' < \varepsilon$ 
    for $x' \in U' \cap F'$. Now the neighborhood $U \cap U'$ shows 
    that $\omega({f + g}, x, F \cup F') < \varepsilon$, proving that 
    $x \not \in D_{f + g, \varepsilon}(F \cup F')$. 
    
    The proposition yields that 
    $\beta_0(f + g, \varepsilon) \lesssim \max 
      \{ \beta_0(f, \varepsilon / 2), \beta_0(g, \varepsilon / 2)\}$, 
    hence $\beta_0(f + g) \lesssim \max \{ \beta_0(f), \beta_0(g)\}$. 
    This proves the statement for $\beta_0$, hence for $\beta$. 
    
    For $\gamma$, we do the same, prove the conditions of the 
    proposition for $D_0 = D_{(f_n)_{n \in \N}, \varepsilon / 2}$, 
    $D_1 = D_{(g_n)_{n \in \N}, \varepsilon / 2}$ and 
    $D = D_{(f_n + g_n)_{n \in \N}, \varepsilon}$, and use 
    the conclusion of the proposition to finish the proof. 
    
    For condition \eqref{lemma1}, let 
    $x \in F \setminus 
      \left(D_{(f_n)_{n \in \N}, \varepsilon / 2}(F) \cup 
      D_{(g_n)_{n \in \N}, \varepsilon}(F) \right)$. 
    Now we can choose a common open set $x \in U$  and a common $N \in \N$ 
    such that for all $n, m \ge N$ and $y \in U \cap F$ we have 
    $|f_n(y) - f_m(y)| \le \varepsilon' < \varepsilon / 2$ and 
    $|g_n(y) - g_m(y)| \le \varepsilon' < \varepsilon / 2$ 
    (again, with a common $\varepsilon' < \varepsilon / 2$). 
    But from this we have $|(f_n + g_n)(y) - (f_m + g_m)(y)| \le 
      2\varepsilon' < \varepsilon$ for all $n, m \ge N$ and 
    $y \in U \cap F$, so 
    $x \not \in D_{(f_n + g_n)_{n \in \N}, \varepsilon}(F)$,  
    yielding \eqref{lemma1}.
    
    For \eqref{lemma2} let 
    $x \in (F \cup F') \setminus 
      \left(D_{(f_n + g_n)_{n \in \N}, \varepsilon}(F) 
      \cup D_{(f_n + g_n)_{n \in \N}, \varepsilon}(F') \right)$. 
    For this $x$ we have a neighborhood $U$ of $x$, $N \in \N$ and 
    $\varepsilon' < \varepsilon$, such that 
    $|(f_n + g_n)(y) - (f_m + g_m)(y)| \le \varepsilon'$ for every 
    $n, m \ge N$ and 
    $y \in U \cap F$. Similarly, we can find a neighborhood $U'$, 
    $N' \in \N$ and $\varepsilon'' < \varepsilon$, such that 
    $|(f_n + g_n)(y) - (f_m + g_m)(y)| \le \varepsilon''$ for every 
    $n, m \ge N'$ and $y \in U' \cap F'$. 
    From this, 
    $\omega((f_n + g_n)_{n \in \N}, x, F \cup F') \le 
      \max\{\varepsilon', \varepsilon''\} < \varepsilon$, hence 
    $x \not \in D_{(f_n + g_n)_{n \in \N}, \varepsilon}(F \cup F')$. 

    Therefore the proof of the theorem is complete. 
  \end{proof}
  
  \begin{remark}
    \label{alpha_not_sum}
    The analogous result does not hold for the rank $\alpha$. To see this note first 
    that $\alpha(A, A^c)$ can be arbitrarily large below $\om_1$ when $A$ ranges over ${\boldsymbol \Delta^0_2}(X)$. This is a classical 
    fact and we prove a more general result in Corollary 
    \ref{alpha_unbounded_on_perfect}.
    
    First we check that for every 
    $A \in {\boldsymbol \Delta^0_2}(X)$ the characteristic function $\chi_A$ can be written 
    as the difference of two upper semicontinuous (usc) functions. 
    Indeed, let $(K_n)_{n \in \om}$ and $(L_n)_{n \in \om}$ 
    be increasing sequences of closed sets with $A = \bigcup_n K_n$ 
    and $A^c = \bigcup_n L_n$, and let
    \begin{equation*}
      f_0 = \left\{ 
      \begin{array}{cl} 
        0 &  \text{on $K_0 \cup L_0$,} \\
        -n & \text{on $(K_n \cup L_n) \setminus 
          (K_{n - 1} \cup L_{n - 1})$ for $n \ge 1$} \\
      \end{array}
      \right.
    \end{equation*}
    and 
    \begin{equation*}
      f_1 = \left\{ 
      \begin{array}{cl} 
        0 &  \text{on $L_0$,} \\
        -1 & \text{on $(K_0 \cup L_1) \setminus L_0$,} \\
        -n & \text{on $(K_{n - 1} \cup L_n) \setminus 
          (K_{n - 2} \cup L_{n - 1})$ for $n \ge 2$.} \\
      \end{array}
      \right.
    \end{equation*}
    Then $f_0$ and $f_1$ are usc functions with $\chi_A = f_0 - f_1$. 
    
    Now we complete the remark by showing that $\alpha(f) \le 2$ for every 
    usc function $f$. For $p < q$ let $A = \{f \le p\}$ and 
    $B = \{f \ge q\}$. Then $B$ is closed, so 
    $D_{A, B}(X) = \overline{X \cap A} \cap \overline{X \cap B} = 
    \overline{X \cap A} \cap B \subseteq B$. 
    Hence $D_{A, B}^2(X) \subseteq D_{A, B}(B) = 
      \overline{A \cap B} \cap B = \emptyset \cap B = \emptyset$.
  \end{remark}

  \begin{remark}
  \label{multiplicativity} 
   One can easily deduce from Theorem \ref{beta_gamma_sum} that $\beta(f\cdot g)
   \lesssim \max\{\beta(f),\beta(g)\}$ whenever $f$ and $g$ are \textit{bounded} Baire class $1$ functions, and
   similarly for $\gamma$. However, we do not know if this holds for arbitrary Baire class $1$ functions.
 \end{remark}
 
 \begin{question}
   Are the ranks $\beta$ and $\gamma$ \emph{essentially multiplicative} on the 
   Baire class $1$ functions, that is, does $\beta(f\cdot g)
   \lesssim \max\{\beta(f),\beta(g)\}$ and $\gamma(f\cdot g)
   \lesssim \max\{\gamma(f),\gamma(g)\}$ hold whenever $f$ and $g$ are Baire class $1$ functions?
  \end{question}
 
  \begin{proposition}
    \label{beta_uniform_limit}
    If the sequence of Baire class 1 functions $f_n$ converges 
    uniformly to $f$ then $\beta(f) \le \sup_n \beta(f_n)$.
  \end{proposition}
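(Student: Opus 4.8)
The plan is to compare, for a fixed nonempty closed set $F$, the oscillation of $f$ with that of $f_n$ when $f_n$ is uniformly close to $f$, and then to invoke Proposition \ref{two_derivatives}. First I would record the elementary estimate that if $\sup_{x} |f(x) - f_n(x)| < \delta$, then $\omega(f, x, F) \le \omega(f_n, x, F) + 2\delta$ for every $x \in X$ and every closed $F \subseteq X$: indeed, for any open $U \ni x$ and any $x_1, x_2 \in U \cap F$ we have $|f(x_1) - f(x_2)| \le |f_n(x_1) - f_n(x_2)| + 2\delta$, so taking the supremum over $x_1, x_2 \in U \cap F$ and then the infimum over open $U \ni x$ gives the claim.

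Next, fix $\varepsilon > 0$. Using uniform convergence, choose $n$ with $\sup_x |f(x) - f_n(x)| < \varepsilon/4$ and set $\delta = \varepsilon/4$, so that $\varepsilon - 2\delta = \varepsilon/2 > 0$. By the estimate above, every $x \in D_{f, \varepsilon}(F)$ satisfies $\omega(f_n, x, F) \ge \varepsilon - 2\delta = \varepsilon/2$, hence $x \in D_{f_n, \varepsilon/2}(F)$. Thus $D_{f, \varepsilon}(F) \subseteq D_{f_n, \varepsilon/2}(F)$ for every closed $F \subseteq X$, and Proposition \ref{two_derivatives} yields $\beta(f, \varepsilon) = \rank(D_{f, \varepsilon}) \le \rank(D_{f_n, \varepsilon/2}) = \beta(f_n, \varepsilon/2) \le \beta(f_n) \le \sup_m \beta(f_m)$.

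Finally, since the bound $\beta(f, \varepsilon) \le \sup_m \beta(f_m)$ holds for every $\varepsilon > 0$, taking the supremum over $\varepsilon > 0$ gives $\beta(f) = \sup_{\varepsilon > 0} \beta(f, \varepsilon) \le \sup_m \beta(f_m)$, as desired. Note that $f$ is itself of Baire class $1$, being a uniform limit of Baire class $1$ functions, so all the ranks involved are genuinely defined, and if $\sup_m \beta(f_m) = \omega_1$ there is nothing to prove. The only point requiring any care is keeping the second parameter of the derivative strictly positive after subtracting $2\delta$, which is precisely why $n$ is chosen in terms of $\varepsilon$; I do not expect any serious obstacle here.
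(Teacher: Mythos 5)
Your proof is correct and follows essentially the same route as the paper: compare $\omega(f,x,F)$ with $\omega(f_n,x,F)$ using the uniform bound to get a containment $D_{f,\varepsilon}(F) \subseteq D_{f_n,\varepsilon'}(F)$ for a smaller positive $\varepsilon'$, then apply Proposition \ref{two_derivatives} and take suprema. The paper uses $|f - f_n| < \varepsilon/3$ and lands in $D_{f_n,\varepsilon/3}(F)$ where you use $\varepsilon/4$ and $D_{f_n,\varepsilon/2}(F)$, which is an immaterial difference of constants.
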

  
  \begin{proof}
    If $|f- f_n| < \varepsilon / 3$ then $|\om(f, x, F) - \om(f_n, x, F)| \le \frac23 \eps$
    for every $x$ and $F$. Therefore $D_{f, \eps} (F) \subseteq D_{f_n, \eps/3} (F)$ for every $F$, which in turn implies
    $\beta(f, \varepsilon) \le \beta(f_n, \varepsilon / 3)$, from which
    the proposition easily follows.
  \end{proof}

  \begin{proposition}
    \label{gamma_uniform_limit}
    If the sequence of Baire class 1 functions $f_n$ converges 
    uniformly to $f$ then $\gamma(f) \lesssim \sup \limits_n \gamma(f_n)$. 
  \end{proposition}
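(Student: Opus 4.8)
The strategy is to build, from the hypotheses, an explicit sequence of continuous functions converging pointwise to $f$ whose convergence rank is controlled, and then to extract the bound from Proposition \ref{ugly_lemma}. By the definition of $\lesssim$ it suffices to show that for every $\eta\ge1$, if $\gamma(f_n)\le\om^\eta$ for all $n$ then $\gamma(f)\le\om^\eta$, so I would fix such an $\eta$. Passing to a subsequence I may assume $\|f-f_n\|_\infty<2^{-n}$ for all $n$ (sup norm). Set $d_0=f_0$ and $d_n=f_n-f_{n-1}$ for $n\ge1$, so that $f=\sum_n d_n$ uniformly, $\|d_n\|_\infty<2^{-n+2}$ for $n\ge1$, and, by the essential linearity of $\gamma$ (Theorem \ref{beta_gamma_sum}), $\gamma(d_n)\le\om^\eta$ for every $n$. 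For each $n$ I would fix continuous functions $d_{n,k}\to d_n$ ($k\to\infty$) pointwise with $\sup_{\varepsilon>0}\gamma((d_{n,k})_k,\varepsilon)=\gamma(d_n)\le\om^\eta$, and, after replacing each $d_{n,k}$ ($n\ge1$) by its truncation at level $\|d_n\|_\infty$ --- a $1$-Lipschitz post-composition, which preserves continuity and pointwise convergence to $d_n$ and, by Proposition \ref{two_derivatives}, does not increase the rank --- I may assume $\|d_{n,k}\|_\infty\le\|d_n\|_\infty$.

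Next I would put $g_N=\sum_{n=0}^N d_{n,N}$, which is continuous, and verify $g_N\to f$ pointwise: splitting $g_N(x)-f(x)=\sum_{n=0}^M(d_{n,N}(x)-d_n(x))+(f_M(x)-f(x))+\sum_{n=M+1}^N d_{n,N}(x)$, the second summand has absolute value at most $2^{-M}$ and the third at most $\sum_{n>M}\|d_n\|_\infty$ uniformly in $N$ (by the truncation), while the first tends to $0$ as $N\to\infty$ for fixed $M$; one chooses $M$ large, then lets $N\to\infty$.

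The crux will be to bound $\gamma((g_N)_N,\varepsilon)$ for each fixed $\varepsilon>0$. Given $\varepsilon$, pick $M_0=M_0(\varepsilon)$ with $\sum_{n>M_0}\|d_n\|_\infty<\varepsilon/8$ and set $\varepsilon'=\varepsilon/(4(M_0+1))$. I would prove the inclusion
\[
  D_{(g_N)_N,\varepsilon}(F)\subseteq\bigcup_{n=0}^{M_0}D_{(d_{n,k})_k,\varepsilon'}(F)
\]
for every closed $F\subseteq X$: if $x$ lies outside the right-hand side, choose for each $n\le M_0$ an open $U_n\ni x$ and $N_n\in\N$ with $|d_{n,N}(y)-d_{n,M}(y)|<\varepsilon'$ whenever $N,M\ge N_n$ and $y\in U_n\cap F$; then on $U=\bigcap_{n\le M_0}U_n$, for $N,M\ge\max(M_0,\max_{n\le M_0}N_n)$ and $y\in U\cap F$, the telescoping decomposition of $g_N(y)-g_M(y)$ (say $N\ge M$, the other case being symmetric) into a head $\sum_{n=0}^{M_0}(d_{n,N}(y)-d_{n,M}(y))$, of absolute value $\le(M_0+1)\varepsilon'=\varepsilon/4$, and two tails controlled via $\|d_{n,k}\|_\infty\le\|d_n\|_\infty$ and the choice of $M_0$, gives $|g_N(y)-g_M(y)|<\varepsilon$, hence $\om((g_N)_N,x,F)<\varepsilon$ and $x\notin D_{(g_N)_N,\varepsilon}(F)$. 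This is hypothesis \eqref{lemma1} of Proposition \ref{ugly_lemma} with $D=D_{(g_N)_N,\varepsilon}$ and the $M_0+1$ derivatives $D_{(d_{n,k})_k,\varepsilon'}$; hypothesis \eqref{lemma2}, that $D_{(g_N)_N,\varepsilon}(F\cup F')\subseteq D_{(g_N)_N,\varepsilon}(F)\cup D_{(g_N)_N,\varepsilon}(F')$, follows from the same neighborhood argument used for $\gamma$ in the proof of Theorem \ref{beta_gamma_sum}. Proposition \ref{ugly_lemma} then yields $\gamma((g_N)_N,\varepsilon)\lesssim\max_{n\le M_0}\gamma((d_{n,k})_k,\varepsilon')\le\max_{n\le M_0}\gamma(d_n)\le\om^\eta$, i.e. $\gamma((g_N)_N,\varepsilon)\le\om^\eta$; taking the supremum over $\varepsilon$ and using $g_N\to f$ pointwise gives $\gamma(f)\le\sup_{\varepsilon>0}\gamma((g_N)_N,\varepsilon)\le\om^\eta$, as desired.

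I expect the main obstacle to be precisely the choice of the diagonal sequence $(g_N)_N$: one must secure pointwise convergence to $f$ and the rank estimate simultaneously, and it is the telescoping-plus-truncation device --- keeping every tail uniformly small, so that at scale $\varepsilon$ the oscillation of $(g_N)_N$ is governed by only finitely many of the oscillations of the sequences $(d_{n,k})_k$ --- that makes Proposition \ref{ugly_lemma} applicable.
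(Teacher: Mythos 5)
Your proof is correct and follows essentially the same route as the paper's: pass to a rapidly converging subsequence, telescope into uniformly small differences whose ranks are controlled by Theorem \ref{beta_gamma_sum}, truncate the approximating continuous functions, form the diagonal sums, and apply Proposition \ref{ugly_lemma} to finitely many of the derivative operations after absorbing the tails. The only (immaterial) differences are that you keep $f_0$ inside the telescoping sum rather than splitting it off, and you phrase the conclusion directly via the $\omega^\eta$ characterization of $\lesssim$.
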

  \begin{proof}
    By taking a subsequence we can suppose that 
    $|f_n(x) - f(x)| \le \frac{1}{2^n}$ for every $n \in \N$ and every $x \in X$. 
    With $g_n(x) = f_{n}(x) - f_{n - 1}(x)$ 
    we have $|g_n(x)| \le \frac{3}{2^n}$, hence $\sum_{n = 1}^\infty g_n(x)$ is uniformly convergent, 
    and $f(x) = f_0(x) + \sum_{n = 1}^\infty g_n(x)$. 
    Using Theorem \ref{beta_gamma_sum} we have 
    $\gamma(g_n) \lesssim \max\{\gamma(f_n), \gamma(f_{n - 1})\}$, hence 
    $\sup_n \gamma(g_n) \lesssim \sup_n \gamma(f_n)$. 
    It is enough to prove that for $g = \sum_{n = 1}^\infty g_n$ 
    we have $\gamma(g) \lesssim \sup_n \gamma(g_n)$, 
    since Theorem \ref{beta_gamma_sum} yields 
    $\gamma(f) \lesssim \max\{ \gamma(f_0), \gamma(g)\}$. 
    
    Now for every $n \in \N$ let $(\varphi^k_n)_{k \in \N}$ be a sequence of 
    continuous functions converging pointwise to $g_n$ with 
    $\sup_{\varepsilon > 0} \gamma((\varphi^k_n)_{k \in \N}, \varepsilon) = 
    \gamma(g_n)$. It is easy to see that we can suppose 
    $|\varphi^k_n(x)| \le \frac{3}{2^n}$ for every $n\in \N$ and $k \in \N$, 
    since by replacing $(\varphi^k_n)_{k \in \N}$ with 
    $\left(\max\left(\min\left(\varphi^k_n, \frac{3}{2^n}\right), 
    -\frac{3}{2^n}\right)\right)_{k \in \N}$ we have a 
    sequence of continuous functions satisfying this, and the sequence is still 
    converging pointwise to $g_n$, while 
    $\gamma((\varphi^k_n)_{k \in \N}, \varepsilon)$ is not increased. 
    
    Let $\phi_k = \sum_{n = 0}^k \varphi^k_n$. We show that $(\phi_k)_{k \in \N}$ 
    converges pointwise to $g$ and also that 
    $\gamma(g) \le \sup_{\varepsilon > 0} \gamma((\phi_k)_{k \in \N}, \varepsilon) 
    \lesssim \sup_n \sup_{\varepsilon > 0} \gamma((\varphi^k_n)_{k \in \N}, \varepsilon) =  
    \sup_n \gamma(g_n)$, which finishes the proof. To prove pointwise convergence, 
    let $\varepsilon > 0$ be arbitrary and fix $K \in \N$ with 
    $\frac{6}{2^{K}} < \varepsilon$. For $k > K$ we have 
    \begin{equation*}
      \left|\phi_k(x) - g(x)\right| = 
      \left|\sum_{n = 0}^k \varphi^k_n(x) - g(x) \right| \le
      \left|\sum_{n = 0}^K \varphi^k_n(x) - g(x) \right| + 
        \left|\sum_{n = K + 1}^k \varphi^k_n(x) \right|, 
    \end{equation*}
    where the first term of the last expression tends to 
    $\left| \sum_{n = 0}^K g_n(x) - g(x)\right| \le \frac{3}{2^{K}}$, 
    while the second is at most $\frac{3}{2^{K}}$. Hence 
    $\limsup_{k \to \infty} \left|\phi_k(x) - g(x)\right| \le 2\frac{3}{2^K} < \varepsilon$ 
    for every $\varepsilon > 0$, showing that $\phi_k(x) \to g(x)$. 
    
    Now fix an $\varepsilon > 0$ and $K \in \N$ as before, it is enough to show that 
    $\gamma((\phi_k)_{k \in \N}, 3\varepsilon) \lesssim 
    \sup_n \sup_{\varepsilon > 0} \gamma((\varphi^k_n)_{k \in \N}, \varepsilon)$. 
    
    For any $x \in X$ and $k, l > K$ we have 
    \begin{equation}
    \begin{split}
      \label{e:gamma_limit}
      \left|\phi_k(x) - \phi_l(x)\right| = 
      \left|\sum_{n = 0}^k \varphi^k_n(x) - \sum_{n = 0}^l \varphi^l_n(x) \right| \\
      \le \sum_{n = 0}^K\left|\varphi^k_n(x) - \varphi^l_n(x) \right| + 
        \left|\sum_{n = K + 1}^k \varphi^k_n(x) \right| + 
        \left|\sum_{n = K + 1}^l \varphi^l_n(x) \right|.
    \end{split}
    \end{equation}
    As before, the sum of the last two terms is at most $\varepsilon$. 
    We want to use Proposition \ref{ugly_lemma} for the derivatives 
    $D = D_{(\phi_k)_{k \in \N}, 3\varepsilon}$ and 
    $D_n = D_{(\varphi^k_n)_{k \in \N}, \frac{\varepsilon}{K + 1}}$ for $n \le K$. 
    To check condition \eqref{lemma1}, let 
    $x \in F \setminus \bigcup_{n = 0}^K D_{(\varphi^k_n)_{k \in \N}, \frac{\varepsilon}{K + 1}}(F)$. 
    Then we have a neighborhood $U$ of $x$ and an $N \in \N$ such that 
    $\left|\varphi^k_n(y) - \varphi^l_n(y)\right| < \frac{\varepsilon}{K + 1}$ 
    for every $n \le K$, every $y \in U \cap F$ and every $k, l \ge N$. This observation and 
    \eqref{e:gamma_limit} yields that $\left|\phi_k(y) - \phi_l(y)\right| \le 2\varepsilon$ for
    every $y \in U \cap F$ and $k,l \ge N$ showing that 
    $x \not \in D_{(\phi_k)_{k \in \N}, 3\varepsilon}(F)$. 
    
    Condition \eqref{lemma2} is similar, and it can be seen as in the 
    proof of Theorem \ref{beta_gamma_sum}. Now Proposition 
    \ref{ugly_lemma} gives 
    \begin{equation*}
      \gamma((\phi_k)_{k \in \N}, 3\varepsilon) \lesssim 
      \max_{n \le K} \gamma\left((\varphi^k_n)_{k \in \N}, \frac{\varepsilon}{K + 1}\right) \le 
      \sup_n \sup_{\varepsilon > 0} \gamma((\varphi^k_n)_{k \in \N}, \varepsilon), 
    \end{equation*}
    completing the proof. 
  \end{proof}

\begin{theorem}
  \label{a=b=g_bounded}
  If $f$ is a bounded Baire class 1 function then 
  $\alpha(f) \approx \beta(f) \approx \gamma(f)$. 
\end{theorem}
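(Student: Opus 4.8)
By Theorem~\ref{alpha_le_beta_le_gamma} we already have $\alpha(f)\le\beta(f)\le\gamma(f)$, so the only thing to prove is $\gamma(f)\lesssim\alpha(f)$ for bounded $f$, and I would first reduce this to the case $f=\chi_A$ with $A\in{\boldsymbol\Delta^0_2}(X)$. Fix $\eps>0$ and pick rationals $p_0<q_0<p_1<q_1<\dots<p_N$ with $p_0<\inf f$, $p_N>\sup f$ and $p_{i+1}-p_i<\eps$. For each $i$, the disjoint level sets $\{f\le p_i\}$ and $\{f\ge q_i\}$ are separated by a transfinite difference of closed sets of length $\alpha_1(\{f\le p_i\},\{f\ge q_i\})\le\alpha_1(f)$; let $H_i$ be a separating set realizing the \emph{shortest} such length, so that by Proposition~\ref{alpha_1_equals_alpha_for_sets} and Corollary~\ref{alpha_1_equals_alpha}
\[
\alpha(H_i,H_i^c)\ \le\ \alpha_1(H_i,H_i^c)\ \le\ \alpha_1(\{f\le p_i\},\{f\ge q_i\})\ \le\ \alpha_1(f)\ \approx\ \alpha(f).
\]
Then $\emptyset=H_{-1}\subseteq H_0\subseteq\dots\subseteq H_N=X$, and the simple function $g_\eps=\sum_{i=0}^N p_i(\chi_{H_i}-\chi_{H_{i-1}})$ satisfies $\|f-g_\eps\|_\infty\le 2\eps$. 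Letting $\eps=1/m\to 0$, Proposition~\ref{gamma_uniform_limit} gives $\gamma(f)\lesssim\sup_m\gamma(g_{1/m})$, while the essential linearity of $\gamma$ (Theorem~\ref{beta_gamma_sum}), applied to the finitely many summands of each $g_\eps$, gives $\gamma(g_\eps)\lesssim\max_i\gamma(\chi_{H_i})$. Hence it suffices to prove that $\gamma(\chi_A)\lesssim\alpha(A,A^c)$ for every $A\in{\boldsymbol\Delta^0_2}(X)$. (Note also that $\alpha(\chi_A)=\beta(\chi_A)=\alpha(A,A^c)$, because for $0<\eps\le 1$ the oscillation derivative $D_{\chi_A,\eps}$ coincides with $D_{A,A^c}$, and the latter is the only non-trivial separation derivative attached to $\chi_A$.)

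\textbf{Step 2: the core inequality via a layered construction.}
By Proposition~\ref{alpha_1_equals_alpha_for_sets} fix a continuous decreasing sequence $(F_\eta)_{\eta<\la}$ of closed sets with $F_0=X$, of length $\la=\alpha_1(A,A^c)\approx\alpha(A,A^c)$, and with $A=\bigcup_{\eta\text{ even}}(F_\eta\setminus F_{\eta+1})$. For $x\in X$ let $\eta(x)$ be the largest $\eta$ with $x\in F_\eta$, so that $x\in A$ iff $\eta(x)$ is even. The goal is to build continuous functions $g_k\to\chi_A$ (pointwise) with $\sup_{\eps>0}\gamma((g_k),\eps)\le\om^\zeta$ whenever $\la\le\om^\zeta$, by transfinite induction on $\zeta$; the case $\zeta=0$ is trivial. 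The model case is $\zeta=1$, i.e.\ $\la\le\om$: writing $d_n(x)=d(x,F_n)$ set
\[
\phi_{n,k}(x)=\max\{0,\,1-k\,d_n(x)\}\cdot\min\{1,\,k\,d_{n+1}(x)\},\qquad g_k=\min\Big\{1,\ \textstyle\sum_{n\text{ even},\,n<k}\phi_{n,k}\Big\}.
\]
Each $g_k$ is continuous. If $\eta(x)=m$, then $\phi_{n,k}(x)=0$ for $n<m$ (as $x\in F_{n+1}$), $\phi_{n,k}(x)=0$ for $n>m$ and $k$ large (as $d_n(x)>0$), and $\phi_{m,k}(x)\to 1$, so $g_k(x)$ tends to $0$ or $1$ according to the parity of $m$, i.e.\ to $\chi_A(x)$. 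Moreover, given $x$ with $\eta(x)=m$, a small enough ball $U$ around $x$ misses $F_{m+1}$, and then the same observations show that $g_k$ is eventually \emph{constant}, equal to $\chi_A(x)$, on $U\cap F_m$; hence $x\notin D_{(g_k),\eps}(F_m)$ for every $\eps>0$. Since the derivative is monotone, it follows by induction on $m$ that $D^m_{(g_k),\eps}(X)\subseteq F_m$ for all $m$, whence $D^{\om}_{(g_k),\eps}(X)\subseteq\bigcap_m F_m=\emptyset$ and $\gamma((g_k),\eps)\le\om$. The inductive step repeats this one level up: when $\la\le\om^{\zeta_0+1}$, at stage $k$ one resolves the initial segment of the sequence of length $\om^{\zeta_0}\cdot k$ by inserting, inside the strips $F_{\om^{\zeta_0}\cdot j}\setminus F_{\om^{\zeta_0}\cdot(j+1)}$ for $j\le k$, the continuous approximations provided recursively by the $\zeta_0$-case applied to each strip --- which requires running the whole argument also for sequences starting from a closed subset of $X$ rather than from $X$ --- and one arranges $D^{\om^{\zeta_0}\cdot m}_{(g_k),\eps}(X)\subseteq F_{\om^{\zeta_0}\cdot m}$, so that the derivative collapses at step $\om^{\zeta_0}\cdot\om=\om^{\zeta_0+1}$. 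At a limit $\zeta$ one argues similarly, letting stage $k$ resolve a segment of length $\om^{\zeta_k}$ for some $\zeta_k\nearrow\zeta$. Proposition~\ref{ugly_lemma}, used exactly as in the proof of Theorem~\ref{beta_gamma_sum}, handles the $\lesssim$-bookkeeping in these steps.

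\textbf{The main obstacle.}
Step~1 and the difference-hierarchy reformulation are routine given the material of Section~\ref{s:KL}; essentially all the work is in Step~2, namely in making the transfinite layered construction precise and checking the rank estimate. The crux is the interleaving at the limit stages of the induction: one must glue infinitely many lower-level approximations into a \emph{single} sequence $(g_k)$ that consists of continuous functions, has the property that at each point of $X$ only finitely many ``layers'' are ever active (so that $g_k$ becomes locally constant on each $F_\eta$, which is precisely what forces $D^\eta_{(g_k),\eps}(X)\subseteq F_\eta$), and has convergence rank within a fixed factor of $\la$. In the compact setting of Kechris and Louveau such a gluing can be done far more cheaply, because closed sets are then compact and each stage can be reduced to finite data (finitely many small balls, finite subcovers); in a general Polish space no such reduction is available, and one is forced into the explicit, metric-controlled choice of scales above. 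Organizing this choice of scales coherently along a transfinite recursion --- while keeping every $g_k$ continuous and the rank within a constant of $\la$ --- is where the genuinely new difficulty lies.
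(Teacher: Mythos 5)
Your Step 1 is fine: the reduction to $\gamma(\chi_A)\lesssim\alpha(A,A^c)$ via nested separating sets $H_i$ (the nesting $H_i\subseteq\{f<q_i\}\subseteq\{f\le p_{i+1}\}\subseteq H_{i+1}$ is indeed automatic), essential additivity of $\gamma$, and Proposition \ref{gamma_uniform_limit} is essentially the same route the paper takes, just organized in the opposite order. The genuine gap is in Step 2: you only prove the core inequality for $\la\le\om$, and you yourself identify the transfinite gluing as ``where the genuinely new difficulty lies'' --- but identifying the difficulty is not the same as resolving it. As described, the recursion has concrete unresolved problems: the strips $F_{\om^{\zeta_0}\cdot j}\setminus F_{\om^{\zeta_0}\cdot(j+1)}$ are not closed, so the recursive call is not literally available even after relativizing to closed subsets; at stage $k$ you glue $k$ lower-level approximations, so the number of pieces is unbounded and Proposition \ref{ugly_lemma}, which is stated for a \emph{fixed finite} family of derivatives, does not apply ``exactly as in the proof of Theorem \ref{beta_gamma_sum}''; and pointwise convergence of the interleaved sequence at every point of $X$ requires a coherent choice of scales across all levels of the recursion that you never specify. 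So the proposal proves the theorem only when every separating difference has length $\le\om$.

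The paper's proof shows that the entire recursion on the Cantor normal form of $\la$ can be avoided. Enumerate $\la$ in order type $\le\om$ as $(\eta_n)$ (ignoring the ordinal order), let $f^k_\eta$ be an Urysohn function that is $1$ on $F_\eta$ and vanishes where $d(x,F_\eta)\ge\frac{1}{k+1}$, and set
\[
f_k=\sum_{\begin{subarray}{c} n\le k\\ \eta_n\text{ even}\end{subarray}} \bigl(f^k_{\eta_n}-f^k_{\eta_n+1}\bigr).
\]
For $x\in F_{\eta_m}\setminus F_{\eta_m+1}$ the summands with $\eta_n<\eta_m$ cancel (both factors are $1$ on $F_{\eta_m}\subseteq F_{\eta_n+1}$), the summands with $\eta_n>\eta_m$ vanish once $d(x,F_{\eta_m+1})\ge\frac{1}{k+1}$ (since $F_{\eta_n}\subseteq F_{\eta_m+1}$), and the remaining term tends to $\chi_A(x)$; the same cancellation shows $f_k$ is eventually constant on a relative neighborhood of $x$ in $F_{\eta_m}$, which gives $D^\eta_{(f_k)_{k\in\N},\eps}(X)\subseteq F_\eta$ by a single induction on $\eta$ and hence $\gamma(\chi_A)\le\la$. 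This one-shot construction is exactly the missing ingredient in your Step 2; with it, your Step 1 completes the proof.
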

\begin{proof}
Using Theorem \ref{alpha_le_beta_le_gamma}, it is enough to prove that 
$\gamma(f) \lesssim \alpha(f)$. 
First, we prove the theorem for characteristic functions.
\begin{lemma}
  Suppose that $A \in {\boldsymbol\Delta^0_2}$. Then $\gamma(\chi_A) \lesssim \alpha(\chi_A)$. 
\end{lemma}
\begin{proof}
  In order to prove this, first we have to produce a sequence of continuous
functions converging pointwise to $\chi_A$. 

For this let $(F_\eta)_{\eta<\lambda}$ be a continuous transfinite
decreasing sequence of closed sets, so that \[A=\bigcup_{\begin{subarray}{c}
\eta < \lambda \\ \eta \text{ even} \end{subarray}} (F_\eta \setminus
F_{\eta+1})\] and $\lambda \approx \alpha(\chi_A)$ given by Corollary \ref{alpha_1_equals_alpha}. 
 We can assume that the last element of the sequence $(F_\eta)_{\eta<\lambda}$ is $\emptyset$, hence every
$x \in X$ is contained in a unique set of the form $F_\eta \setminus F_{\eta+1}$.

 For each $k \in \omega$ and $\eta<\lambda$ let $f^k_\eta:X \to [0,1]$ be a
continuous function so that $f^k_\eta | F_\eta \equiv 1$, and whenever $x \in X$
and $d(x,F_\eta) \geq \frac{1}{k + 1}$ then $f^k_\eta(x)=0$. Such a function exists
by Urysohn's lemma, since the sets $F_\eta$ and $\{x \in X: d(x,F_\eta)
\geq \frac{1}{k + 1}\}$ are disjoint closed sets.

Now let $(\eta_n)$ be an enumeration of $\lambda$ in type $\leq \omega$. Let us
define \[f_k=\sum_{\begin{subarray}{c}
n \leq k \\ \eta_n \text{ even} \end{subarray}} f^k_{\eta_n}-f^k_{\eta_n+1}.\]
 
Since the functions $f_k$ are finite sums of continuous functions, they are
continuous. We claim that $f_k \to \chi_A$ as $k \to \infty$. 

To see this, first let
$x \in X$ be arbitrary. Then there exists a unique $m$ so that $x \in F_{\eta_m}
\setminus F_{\eta_m+1}$. Choose $k \in \omega$ so that $k
\geq m$ and $d(x,F_{\eta_{m}+1}) \geq \frac{1}{k + 1}$. 

Then if $x \in A$ then
$\eta_m$ even and

\[f_k(x)=\sum_{\begin{subarray}{c}
n \leq k \\ \eta_n \text{ even} \end{subarray}}
f^k_{\eta_n}(x)-f^k_{\eta_n+1}(x)=\]
\[=\left(\sum_{\begin{subarray}{c}
n \leq k \\ \eta_n \text{ even} \\ \eta_n<\eta_m \end{subarray}}
f^k_{\eta_n}(x)-f^k_{\eta_n+1}(x)\right)+\left(\sum_{\begin{subarray}{c}
n \leq k \\ \eta_n \text{ even} \\ \eta_n>\eta_m  \end{subarray}}
f^k_{\eta_n}(x)-f^k_{\eta_n+1}(x)\right)+f^k_{\eta_m}(x)-f^k_{\eta_m+1}(x).\]

The first sum is clearly $0$ since $f^k_{\eta_n} \equiv 1$ on $F_{\eta_m}$ if
$\eta_m>\eta_n$. This is also true for the second one, since if
$d(x,F_{\eta_n}) \geq \frac{1}{k + 1}$ then $f^k_{\eta_n}(x)=0$. 
Finally, $f_{\eta_m}(x)=1$ and
$f_{\eta_m+1}(x)=0$, so $f_k(x)=1$. 

If $x \not \in A$ then $\eta_m$ is odd and

\[f_k(x)=\sum_{\begin{subarray}{c}
n \leq k \\ \eta_n \text{ even} \end{subarray}}
f^k_{\eta_n}(x)-f^k_{\eta_n+1}(x)=\]
\[=\sum_{\begin{subarray}{c}
n \leq k \\ \eta_n \text{ even} \\ \eta_n<\eta_m \end{subarray}}
f^k_{\eta_n}(x)-f^k_{\eta_n+1}(x)+\sum_{\begin{subarray}{c}
n \leq k \\ \eta_n \text{ even} \\ \eta_n>\eta_m  \end{subarray}}
f^k_{\eta_n}(x)-f^k_{\eta_n+1}(x).\]

Now the previous argument gives $f_k(x)=0$.

So $f_k \to \chi_A$ holds. Next we prove by induction on $\eta$ that for every
$\eta<\lambda$ and every $\varepsilon>0$ we have \[D^\eta_{(f_k)_{k \in 
\mathbb{N}},\varepsilon}(X) \subset F_\eta.\]
This will clearly complete the proof. 

For $\eta=0$ we have 
\[D^0_{(f_k)_{k \in 
\mathbb{N}},\varepsilon}(X) =X=F_0.\]

If $\eta$ is a limit ordinal, the statement is clear, since the sequence of derivatives as well as
$(F_\eta)_{\eta<\lambda}$ are continuous.

Now let $\eta=\theta+1$ and $D^\theta_{(f_k)_{k \in 
\mathbb{N}},\varepsilon}(X) \subset F_\theta$. For some $m$ we have
$\theta=\eta_m$. Let $x \in F_{\eta_m} \setminus F_{\eta_m+1}$. Then it is
enough to prove that $x \not \in D^\eta_{(f_k)_{k \in
\mathbb{N}},\varepsilon}(X).$ Let $k$ be so that $d(x,
F_{\eta_m+1})\geq \frac{2}{k + 1}$. 

If $d(x,y)<\frac{1}{k + 1}$ and $y \in
D^\theta_{(f_k)_{k \in
\mathbb{N}},\varepsilon}(X)$, then $y \in F_{\eta_m} \setminus F_{\eta_m+1}$.
From this, $l_1,l_2 \geq k$ implies that
$f^{l_1}_\eta(y)=f^{l_2}_\eta(y)=1$ if $\eta \leq \eta_{m}$, and
$f^{l_1}_\eta(y)=f^{l_2}_\eta(y)=0$ if $\eta>\eta_m$. Hence
$f_{l_1}(y)-f_{l_2}(y)=0.$ 

So the sequence $f_k$ is eventually
constant on a relative neighborhood of $x$ in $F_{\eta_m} $. 
Therefore $x \not \in D^\eta_{(f_k)_{k \in
\mathbb{N}},\varepsilon}(X)$, which finishes the proof. 
\end{proof}

Next we prove that $\gamma(f) \lesssim \alpha(f)$ for every step function $f$. We still need the 
    following lemma.
    \begin{lemma}
      If $A$ and $B$ are ambiguous sets then 
      $$\alpha\left(\chi_{A \cap B}\right) \lesssim 
        \max\left\{\alpha\left(\chi_A\right), 
        \alpha\left(\chi_B\right) \right\}.
      $$
    \end{lemma}
    \begin{proof}
      It is enough to prove this for $\beta$ since the previous 
      lemma and Theorem \ref{alpha_le_beta_le_gamma} yields that the 
      ranks essentially agree on characteristic functions. Theorem 
      \ref{beta_gamma_sum} gives 
      $\beta(\chi_A + \chi_B) \lesssim 
      \max\{ \beta(\chi_A), \beta(\chi_B)\}$, hence it suffices to prove that $\beta \left(\chi_{A \cap B}\right) \le \beta(\chi_A + \chi_B)$.
      But this easily follows, since one can readily check that
      for every $\varepsilon < 1$  and $F$ we have 
      $D_{\chi_{A \cap B}, \varepsilon}(F) \subseteq 
      D_{\chi_A + \chi_B, \varepsilon}(F)$, finishing the proof.      
    \end{proof}
    
    Now let $f$ be a step function, so 
    $f = \sum_{i = 1}^{n} c_i \chi_{A_i}$, where the $A_i$'s are
     disjoint ambiguous sets covering $X$, and 
    we can also suppose that the $c_i$'s form a strictly increasing 
    sequence of real numbers. 
    \begin{lemma}
      \label{small_lemma}
      $\max_i \{\alpha(\chi_{A_i})\} \lesssim \alpha(f)$.
    \end{lemma}
    \begin{proof}
      Let $H_i = \bigcup_{j = 1}^i A_j$. By the definition of 
      the rank $\alpha$, for every $i$ we have
      \begin{equation}
        \label{alpha_something}
        \alpha(H_i, H_i^c) \le \alpha(f).
      \end{equation} 
      This shows that 
      $\alpha(\chi_{A_1}) \lesssim \alpha(f)$, and together with the 
      previous lemma, for $i > 1$ 
      \begin{equation*}
      \begin{split}
        \alpha(\chi_{A_i}) &= \alpha(\chi_{H_i \setminus H_{i - 1}}) = 
          \alpha(\chi_{H_i \cap H^c_{i - 1}}) \lesssim
          \max\{\alpha(\chi_{H_i}), \alpha(\chi_{H^c_{i - 1}})\} \\ 
        &= \max\{\alpha(H_i, H^c_i), \alpha(H_{i - 1}, H^c_{i - 1}) 
          \} \le \alpha(f),
      \end{split} 
      \end{equation*}
      where the last but one inequality follows from the above lemma and
      the last inequality from \eqref{alpha_something}.
    \end{proof}
    
    Now we have 
    \begin{equation*}
      \gamma(f) \lesssim \max_i \{\gamma(\chi_{A_i})\} 
      \approx \max_i \{ \alpha(\chi_{A_i})\} \lesssim \alpha(f), 
    \end{equation*}
    where we used Theorem \ref{beta_gamma_sum}, this theorem for 
    characteristic functions and 
    Lemma \ref{small_lemma}, proving the theorem for step functions.
   
    In particular, $\alpha(f) \leq  \beta(f) \leq \gamma(f)$ (Theorem \ref{alpha_le_beta_le_gamma}) gives the following corollary.
   
   \begin{corollary}
    \label{c:r_stepf} If $f=\sum_{i = 1}^{n} c_i \chi_{A_i}$, where the $A_i$'s are
     disjoint ambiguous sets covering $X$ and the $c_i$'s are distinct then \[\alpha(f) \approx\max_i \{\alpha(\chi_{A_i})\}\] and similarly for $\beta$ and $\gamma$.
   \end{corollary}

    Now let $f$ be an arbitrary bounded Baire class 1 function.
    \begin{lemma}
      \label{f_n_uniformly_conv}
      There is a sequence $f_n$ of step functions converging 
      uniformly to $f$, satisfying 
      $\sup_n \alpha(f_n) \lesssim \alpha(f)$.
    \end{lemma}

    \begin{proof}%[Proof of Lemma \ref{f_n_uniformly_conv}]
      Let $p_{n, k} = k / 2^n$ for all $k \in \Z$ and $n \in \N$. The 
      level sets $\{f \le p_{n, k}\}$ and $\{f \ge p_{n, k + 1}\}$ are 
      disjoint ${\boldsymbol \Pi^0_2}$ sets, hence they can be separated 
      by a $H_{n, k} \in {\boldsymbol \Delta^0_2}(X)$ 
      (see e.g. \cite[22.16]{K}). We can choose $H_{n, k}$ to 
      satisfy $\alpha_1(H_{n, k}, H^c_{n, k}) \le 2\alpha(f)$ using 
      Proposition \ref{alpha_1_equals_alpha_for_sets}. 
      
      Since $f$ is bounded, for fixed $n$ there are only finitely 
      many $k \in \Z$ for which 
      $H_{n, k + 1} \setminus H_{n, k} \neq \emptyset$. Set 
      \begin{equation*}
        f_n = \sum_{k \in \Z} p_{n, k} \cdot 
          \chi_{H_{n, k + 1} \setminus H_{n, k}}.
      \end{equation*}
      Now for each $n$, $f_n$ is a step function with 
      $|f - f_n| \le 2^{n - 1}$. Hence $f_n \to f$ uniformly. Since 
      the level sets of a function $f_n$ are of the form $H_{n, k}$ or 
      $H^c_{n, k}$ for some $k \in \Z$, we have 
      $\alpha(f_n) \le 2\alpha(f)$, proving the lemma. 
    \end{proof}
    
    Let $f_n$ be a sequence of step functions given by this lemma. 
    Using Proposition \ref{gamma_uniform_limit} and this theorem for 
    step functions, we have 
    $\gamma(f) \lesssim \sup_n\gamma(f_n) \lesssim \sup_n\alpha(f_n) 
    \lesssim \alpha(f)$, completing the proof.
\end{proof}

We have seen above that $\alpha$ is not essentially additive on the Baire class $1$ functions but $\beta$ and $\gamma$ are, therefore $\alpha$ cannot essentially coincide with $\beta$ or $\gamma$. However, in view of the above theorem the following question arises. 

  \begin{question}
  \lab{q:b=g}
   Does $\beta \approx \gamma$ hold for arbitrary Baire class $1$ functions? 
  \end{question}

%  \begin{question}
%    Does $\beta \approx \gamma$ hold for \emph{bounded} Baire class $1$ functions?
%  \end{question}

%Actually, we do not even know of an unbounded counterexample.

  \begin{proposition}
    \label{alpha_uniform_limit}
    If the sequence of Baire class 1 functions $f_n$ converges 
    uniformly to $f$ then $\alpha(f) \lesssim \sup\limits_n 
    \alpha(f_n)$.
  \end{proposition}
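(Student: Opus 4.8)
The plan is to reduce the statement, as usual, to a claim about the derivatives $D_{A,B}$, exploiting that $D_{A,B}(F)$ is monotone in the pair $(A,B)$: if $A \subseteq A'$ and $B \subseteq B'$ then $D_{A,B}(F) = \overline{F\cap A}\cap\overline{F\cap B} \subseteq \overline{F\cap A'}\cap\overline{F\cap B'} = D_{A',B'}(F)$ for every closed $F$, whence $\alpha(A,B) \le \alpha(A',B')$ by Proposition \ref{two_derivatives}. So all I need is: for each pair of rationals $p<q$, an index $n$ and a pair $p'<q'$ with $\{f\le p\}\subseteq\{f_n\le p'\}$ and $\{f\ge q\}\subseteq\{f_n\ge q'\}$.

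This is exactly where uniform convergence enters. Fixing $p<q$, I would choose $N$ so large that $\eps := \sup_{x\in X}|f(x)-f_N(x)| < (q-p)/2$, and then pick rationals $p',q'$ with $p+\eps < p' < q' < q-\eps$; this is possible precisely because $p+\eps < q-\eps$. If $f(x)\le p$ then $f_N(x)\le f(x)+\eps\le p+\eps<p'$, so $\{f\le p\}\subseteq\{f_N\le p'\}$; symmetrically $\{f\ge q\}\subseteq\{f_N\ge q'\}$. By the monotonicity observation above, $\alpha(\{f\le p\},\{f\ge q\})\le\alpha(\{f_N\le p'\},\{f_N\ge q'\})\le\alpha(f_N)\le\sup_n\alpha(f_n)$. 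Taking the supremum over all rational $p<q$ then yields $\alpha(f)\le\sup_n\alpha(f_n)$, which is in fact stronger than the asserted $\alpha(f)\lesssim\sup_n\alpha(f_n)$.

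I do not expect a real obstacle here: unlike the $\gamma$ case of Proposition \ref{gamma_uniform_limit}, no telescoping of the approximating sequence and no appeal to Proposition \ref{ugly_lemma} is required, since the separation rank only sees the level sets, and uniform convergence pushes a level set of $f$ inside a nearby level set of $f_N$ in a completely explicit way. The only points worth spelling out are that $N$ can indeed be chosen so that the error is below half the gap $q-q$— immediate from uniformity — and that if $\sup_n\alpha(f_n)=\om_1$ the statement is vacuous, so one may freely assume it countable (although the argument never uses this).
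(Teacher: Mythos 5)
Your argument is correct, and it takes a genuinely different and more elementary route than the paper. The paper deduces the bounded case from the (highly non-trivial) equivalence $\alpha \approx \beta \approx \gamma$ for bounded functions (Theorem \ref{a=b=g_bounded}) together with the uniform-limit property of $\beta$ (Proposition \ref{beta_uniform_limit}), and then handles unbounded $f$ by composing with $\arctan$ and invoking Remark \ref{alpha_def_real}. You instead argue directly on the derivatives: uniform convergence nests the level sets, $\{f\le p\}\subseteq\{f_N\le p'\}$ and $\{f\ge q\}\subseteq\{f_N\ge q'\}$ for suitable $N$ and rationals $p+\eps<p'<q'<q-\eps$, and the monotonicity of $D_{A,B}$ in the pair $(A,B)$ combined with Proposition \ref{two_derivatives} gives $\alpha(\{f\le p\},\{f\ge q\})\le\alpha(\{f_N\le p'\},\{f_N\ge q'\})\le\alpha(f_N)$. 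Every step checks out (this is the same monotonicity the paper itself uses in Remark \ref{alpha_def_real}), and it treats bounded and unbounded $f$ uniformly. Your approach buys two things: it is self-contained, bypassing the heavy machinery of Theorem \ref{a=b=g_bounded}; and it yields the sharper conclusion $\alpha(f)\le\sup_n\alpha(f_n)$ with $\le$ rather than $\lesssim$, exactly parallel to what Proposition \ref{beta_uniform_limit} gives for $\beta$ --- a strengthening relevant to the paper's open question of whether its $\lesssim$ statements can be upgraded to $\le$. The only blemishes are cosmetic: the typo ``$q-q$'' for $q-p$, and the remark about $\sup_n\alpha(f_n)=\om_1$ is moot since a countable supremum of countable ordinals is countable (and, as you note, the argument never needs this anyway).
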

  \begin{proof}
    If $f$ is bounded (hence without loss of generality the $f_n$ are 
    also bounded) this is an easy consequence of 
     Theorem \ref{a=b=g_bounded} and Proposition 
    \ref{beta_uniform_limit}. 
    
    For an arbitrary function $g$ let $g' = \arctan \circ g$. 
    It is easy to show that $\alpha(g') = \alpha(g)$ using 
    Remark \ref{alpha_def_real}. 
    
    If the functions $f$ and $f_n$ are given such that $f_n \to f$ 
    uniformly then $f'_n \to f'$ uniformly, and these are bounded 
    functions, so we have 
    $\alpha(f) = \alpha(f') \lesssim \sup\limits_n \alpha(f'_n) = 
    \sup\limits_n \alpha(f_n)$. 
  \end{proof}

%  \begin{question}
%  Suppose that $f_n$ is a sequence of Baire class $1$ functions converging
%    uniformly to $f$. Does $\gamma(f) \lesssim \sup\limits_n 
%    \gamma(f_n)$ hold?
%  \end{question}

\section{Ranks on the Baire class $\xi$ functions exhibiting strange phenomena}
\lab{s:neg}

\subsection{The separation rank and the linearized separation rank}
  The only rank out of the ones discussed above that has straightforward generalization to the Baire class $\xi$ case is the 
  rank $\alpha_1$. However, this generalization does not answer Question \ref{q:EL}, since, similarly to the original $\al_1$, it is not linear. 
  After discussing this, we will propose a very natural modification that transforms an arbitrary rank into a linear one, but we will see that this modified rank is bounded in $\om_1$ for characteristic functions! 
  
  \begin{definition}
    Let $A$ and $B$ be disjoint ${\boldsymbol \Pi^0_{\xi + 1}}$ sets. Then they can be 
    separated by a ${\boldsymbol \Delta^0_{\xi + 1}}$ set 
    (see e.g. \cite[22.16]{K}). Since every ${\boldsymbol \Delta^0_{\xi + 1}}$ set is the transfinite 
    difference of ${\boldsymbol \Pi^0_\xi}$ sets, $A$ and $B$ can be separated
    by the transfinite difference of such a sequence. Let $\alpha_{\xi}(A,B)$ denote the length of the shortest such 
    sequence. 
  \end{definition}

  \begin{definition}
    Let $f$ be a Baire class $\xi$ function, and $p < q \in \Q$. Then $\{f \le p\}$ and $\{f \ge q\}$ are disjoint ${\boldsymbol \Pi^0_{\xi + 1}}$ sets. Let the \emph{separation rank} of $f$ be 
    $$
      \alpha_\xi(f) = \sup_{\begin{subarray}{c} p < q \\ p, q \in \Q 
      \end{subarray}} \alpha_\xi(\{f \le p\}, \{f \ge q\}).
    $$
  \end{definition}
  Note that this really extends the definition of $\alpha_1$.
  
  \begin{theorem}
    \label{alpha_not_bounded}
    For every $1 \le \xi < \omega_1$ the rank $\alpha_\xi$ is unbounded in $\omega_1$ on the characteristic Baire class 
    $\xi$ functions.
  \end{theorem}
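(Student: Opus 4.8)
The plan is to reduce the statement to a fact about a single fixed uncountable Polish space, and then build characteristic functions of arbitrarily high separation rank by an explicit transfinite construction. First I would reduce to the case $\xi = 1$: if $A \in {\boldsymbol \Delta^0_2}(X)$ has large $\alpha_1(A, A^c)$, then, pulling $A$ back along a suitable Baire class $\xi$ map (or more simply, observing that a ${\boldsymbol \Delta^0_2}$ set is also ${\boldsymbol \Delta^0_{\xi+1}}$ and that any representation as a transfinite difference of ${\boldsymbol \Pi^0_\xi}$ sets is at least as long as one using ${\boldsymbol \Pi^0_1}$ sets, since ${\boldsymbol \Pi^0_1} \subseteq {\boldsymbol \Pi^0_\xi}$), one gets $\alpha_\xi(\chi_A) \ge \alpha_1(\chi_A)$ — wait, the inclusion goes the wrong way, so instead I would work directly at level $\xi$: construct, inside a fixed copy of a perfect set, a decreasing continuous transfinite sequence of ${\boldsymbol \Pi^0_\xi}$ sets of prescribed length $\zeta$ whose transfinite difference $A$ genuinely requires that length. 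Then $\chi_A$ is Baire class $\xi$ (its level sets are ${\boldsymbol \Delta^0_{\xi+1}}$, hence ${\boldsymbol \Sigma^0_{\xi+1}}$), and $\alpha_\xi(\chi_A) \ge \zeta$ by construction, which gives unboundedness since $\zeta < \om_1$ was arbitrary.

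The heart of the matter is therefore the combinatorial/topological construction of, for each countable ordinal $\zeta$, a decreasing continuous sequence $(F_\eta)_{\eta < \zeta}$ of ${\boldsymbol \Pi^0_\xi}$ sets in some fixed uncountable Polish space such that the transfinite difference $A = \bigcup_{\eta \text{ even}} (F_\eta \setminus F_{\eta+1})$ cannot be separated from its complement by a shorter transfinite difference of ${\boldsymbol \Pi^0_\xi}$ sets. For $\xi = 1$ this is the classical fact alluded to in Remark \ref{alpha_not_sum} and the remark following Proposition \ref{alpha_1_equals_alpha_for_sets}: take nested faces of a high-dimensional cube, or, to get past $\om$, nested scattered closed subsets of a countable compact ordinal space like $\om^\zeta + 1$ with the order topology (whose Cantor--Bendixson derivatives give a decreasing continuous sequence of closed sets of length $\zeta + 1$), and let $A$ be the union of the even-indexed "levels". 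The key point is a lower bound: I would argue that if $A$ were separable by a shorter difference of closed sets, then, via Proposition \ref{alpha_1_equals_alpha_for_sets} translating back to the derivative $D_{A, A^c}$, the Cantor--Bendixson-type derivative would have to stabilize too early, contradicting the choice of the scattered space. For general $\xi$, I would instead spread the construction across the space in a way that forces ${\boldsymbol \Pi^0_\xi}$-genuine complexity at each successor stage — e.g. at stage $\eta \to \eta+1$, carving out within $F_\eta$ a relatively ${\boldsymbol \Sigma^0_\xi}$-complete piece — so that no shorter sequence of ${\boldsymbol \Pi^0_\xi}$ sets can produce the same difference.

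The main obstacle I anticipate is precisely this lower bound at level $\xi$: it is one thing to exhibit $A$ as the transfinite difference of a length-$\zeta$ sequence of ${\boldsymbol \Pi^0_\xi}$ sets, and quite another to show $\zeta$ is optimal. I expect the cleanest route is to appeal to a suitable version of the Hausdorff--Kuratowski theorem together with a Wadge/Baire-hierarchy non-collapse argument: arrange that $A$ sits at a proper level of the difference hierarchy over ${\boldsymbol \Pi^0_\xi}$ — for instance by taking $A$ to be (Wadge-)complete for the $\zeta$-th level $D_\zeta({\boldsymbol \Pi^0_\xi})$ of that hierarchy on a perfect set — so that any shorter representation would collapse the hierarchy, which is impossible. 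Since the excerpt says the proof of the earlier Theorem \ref{ranks_not_bounded} is postponed to this more general theorem, I would expect the authors to actually do something a bit more hands-on and self-contained than invoking Wadge determinacy, likely an explicit iterated construction on $2^\N$ or on a countable ordinal space whose derivatives are computed by hand; but the Wadge-completeness viewpoint is the conceptual skeleton of why the rank must be unbounded, so that is the approach I would present, filling in the explicit construction at level $\xi$ as the remaining routine (if lengthy) verification.
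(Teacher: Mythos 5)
You have correctly reduced the theorem to the properness of the difference hierarchy over ${\boldsymbol \Pi^0_\xi}$ inside ${\boldsymbol \Delta^0_{\xi+1}}$, and you were right to abandon the attempted reduction to $\xi=1$ (a set requiring a long difference of closed sets may well be a short difference of ${\boldsymbol \Pi^0_\xi}$ sets). But the proposal has a genuine gap exactly where you anticipate one: the lower bound is never established. Your first route --- an explicit construction on a scattered ordinal space or a cube, then ``carving out a relatively ${\boldsymbol \Sigma^0_\xi}$-complete piece'' at each successor stage for general $\xi$ --- is too vague to check, and verifying that a concretely given set admits \emph{no} shorter representation as a transfinite difference of ${\boldsymbol \Pi^0_\xi}$ sets is precisely the hard part, not a routine verification. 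Your second route defers to the non-collapse of the difference hierarchy (via $D_\zeta({\boldsymbol \Pi^0_\xi})$-complete sets), which would indeed suffice, but that is the very statement being proved here rather than a black box the paper quotes; its standard proof is a universal-set diagonalization.

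That diagonalization is what the paper actually does, self-containedly and without any appeal to completeness or Wadge theory. Starting from a set $\mathcal{U} \in {\boldsymbol \Pi^0_\xi}(2^\om \times X)$ universal for ${\boldsymbol \Pi^0_\xi}(X)$, it codes a $\zeta$-indexed sequence of parameters into a single $y \in 2^\om$ via a bijection $p : \zeta \times \N \to \N$, and from the resulting sets $\mathcal{U}_\eta$ builds a single set $\mathcal{U}' \in {\boldsymbol \Delta^0_{\xi+1}}(2^\om \times X)$ that is universal for the class $\Gamma_\zeta = \{ H : \alpha_\xi(H, H^c) < \zeta \}$. Since $X$ is uncountable it contains a homeomorphic copy of $2^\om$, so there is no set in $2^\om \times X$ universal for ${\boldsymbol \Delta^0_{\xi+1}}(X)$; hence $\Gamma_\zeta \subsetneq {\boldsymbol \Delta^0_{\xi+1}}(X)$ for every $\zeta < \om_1$, i.e., for each $\zeta$ some $A \in {\boldsymbol \Delta^0_{\xi+1}}$ has $\alpha_\xi(A, A^c) \ge \zeta$, and $\chi_A$ witnesses unboundedness. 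To repair your write-up, you should either carry out this universal-set argument or give a genuine proof of the completeness claim for a specific $A$; as it stands, the key step is named but not proved.
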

  \begin{proof}
  Let 
    $\mathcal{U} \in {\boldsymbol \Pi^0_\xi}(2^\omega \times X)$ 
    be a universal set for ${\boldsymbol \Pi^0_\xi}(X)$ sets, that is, 
    for every $F \subseteq X$, $F \in {\boldsymbol \Pi^0_\xi}(X)$ 
    there exists a $y \in 2^\omega$ such that $\mathcal{U}^y = F$. 
    For the existence of 
    such a set see \cite[22.3]{K}. Let us use the notation $\Gamma_\zeta(X)$ 
    for the the family of sets 
    $H \subseteq X$ satisfying $\alpha_\xi (H, H^c) < \zeta$. 
    From \cite[22.27]{K} we have 
    $\Gamma_\zeta(X) \subseteq {\boldsymbol \Delta^0_{\xi + 1}(X)}$. 
    We will show that there exists a 
    ${\boldsymbol \Delta^0_{\xi + 1}}$ set for every 
    $\zeta < \omega_1$ which is universal for the family of 
    $\Gamma_\zeta$ sets. Since $X$ is uncountable, there is a 
    continuous embedding of $2^\omega$ into $X$ (\cite[6.5]{K}), hence 
    no universal 
    set exists in $2^\omega \times X$ for the family of 
    ${\boldsymbol \Delta^0_{\xi + 1}}(X)$ 
    sets (easy corollary of \cite[22.7]{K}). 
    This implies for every $\zeta < \omega_1$ that 
    $\Gamma_\zeta \neq {\boldsymbol \Delta^0_{\xi + 1}}$, hence the rank 
    is really unbounded.

    Let $p:\zeta \times \mathbb{N} \to \mathbb{N}$ be a bijection.
    For $\eta < \zeta$ and $y \in 2^\omega$ we define 
    $\phi(y)_\eta \in 2^\omega$ by  
    $\phi(y)_\eta(n) = y(p(\eta, n))$. 
    First we check that for a fixed $\eta < \zeta$ the map 
    $y \mapsto \phi(y)_\eta$ is continuous. Let 
    $U = \{ x \in 2^\omega : x(0) = i_0, \dots, x(n) = i_n\}$ be a 
    set from the usual basis of $2^\omega$. The preimage of $U$ is the set 
    $\{y \in 2^\omega : \forall k \le n\; \phi(y)_\eta(k) =i_k\} = 
    \{y \in 2^\omega : \forall k \le n\; y(p(\eta, k)) = i_k\}$, which is a basic open set, too. Now 
    $\mathcal{U}_\eta = \{(y, x) : (\phi(y)_\eta, x) \in \mathcal{U}\}$ 
    is a continuous preimage of a ${\boldsymbol \Pi^0_\xi}$ 
    set, hence 
    $\mathcal{U}_\eta \in {\boldsymbol \Pi^0_\xi}(2^\omega \times X)$ 
    (see \cite[22.1]{K}). 
    Let  
    \begin{equation*}
    \begin{split}
      \mathcal{U}' = \{(y, x) \in 2^\omega \times X : 
        \text{the smallest ordinal $\eta$ such that 
        $(y, x) \not \in \mathcal{U}_\eta$ is odd,} \\ 
      \text{if such an $\eta$ exists, or no such $\eta$ exists and 
      $\zeta$ is odd} \}.
    \end{split}
    \end{equation*}
    Now we check that $\mathcal{U}' \in {\boldsymbol \Delta^0_{\xi + 1}}
    (2^\omega \times X)$. Let $\iV_\eta = \bigcap_{\theta < \eta} \iU_\theta$, then
    these sets form a continuous decreasing sequence of ${\boldsymbol \Pi^0_\xi}$ sets and it is easy to see that ${\iU'}^c$ 
    is the transfinite difference of the sequence 
    $(\iV_\eta)_{\eta < \zeta + 1}$, hence ${\iU'}^c \in {\boldsymbol \Delta^0_{\xi+1}}$, proving that 
    $\iU' \in {\boldsymbol \Delta^0_{\xi+1}}$, since the family 
    of ${\boldsymbol \Delta^0_{\xi + 1}}$ sets is closed under 
    complements (see \cite[22.1]{K}).
    
    Now we show that $\iU'$ is universal. For a set $H \in \Gamma_\zeta(X)$ there is a sequence 
    $(z_\eta)_{\eta < \zeta}$ in $2^\omega$, such that $H$ is the 
    transfinite difference of the sets $\mathcal{U}^{z_\eta}$. 
    For every sequence $(z_\eta)_{\eta < \zeta}$ we can find  
    $y \in 2^\omega$ such that $\phi(y)_\eta = z_\eta$. Namely 
    $y : p(\eta, n) \mapsto z_\eta(n)$ makes sense (since $p$ is a 
    bijection), and works. Consequently, for $H$ there is  
    $y \in 2^\omega$, such that $H$ is the transfinite 
    difference of the sets 
    $\iU^{z_\eta} = \mathcal{U}^{\phi(y)_\eta} = \left(\mathcal{U}_\eta\right)^y$. It is easy to see that if $H$ 
    is the transfinite difference of the sequence 
    $\left(\left(\mathcal{U}_\eta\right)^y\right)_{\eta < \zeta}$ 
    then 
    \begin{equation*}
    \begin{split}
      H = \{x \in X : \text{the smallest ordinal $\eta$ such 
    that $x \not \in \left(\mathcal{U}_\eta\right)^y$ is odd,} \\ 
    \text{if such an $\eta$ exists, or no such $\eta$ exists and 
      $\zeta$ is odd}\},      
    \end{split}
    \end{equation*}
    hence $H = {\mathcal{U}'}^y$.
  \end{proof}
  
  \begin{corollary}
    \label{alpha_unbounded_on_perfect} For every $1 \le \xi < \omega_1$, every non-empty perfect set 
    $P \subseteq X$ and every ordinal $\zeta < \omega_1$ there is 
    a characteristic function $\chi_A \in \mathcal{B}_\xi(X)$ 
    with $A \subseteq P$ and $\alpha_\xi(\chi_A) \ge \zeta$. 
  \end{corollary}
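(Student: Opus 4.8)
The plan is to deduce this directly from Theorem \ref{alpha_not_bounded} by relativizing to $P$: as recorded in the Preliminaries, a non-empty perfect subset of a Polish space is an uncountable Polish space in the subspace topology, and it is moreover closed in $X$.

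Fix $P$ and $\zeta$; we may assume $\zeta$ is infinite, since for finite $\zeta$ the claim follows from any infinite value. Applying Theorem \ref{alpha_not_bounded} with $P$ in place of $X$ produces a set $A \subseteq P$ with $\chi_A \in \iB_\xi(P)$ and $\alpha^P_\xi(\chi_A) \ge \zeta$, where $\alpha^P_\xi$ denotes the separation rank computed relative to $P$. I would then check that $\chi_A \in \iB_\xi(X)$: since $P$ is closed and $\xi \ge 1$, every subset of $P$ belonging to ${\boldsymbol \Delta^0_{\xi+1}}(P)$ also belongs to ${\boldsymbol \Delta^0_{\xi+1}}(X)$, so $A \in {\boldsymbol \Delta^0_{\xi+1}}(X)$ and hence $\chi_A$ is of Baire class $\xi$ on $X$. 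It remains to see that the rank cannot drop, i.e. that $\alpha_\xi(\chi_A) \ge \alpha^P_\xi(\chi_A)$. As $\chi_A$ takes only the values $0$ and $1$ and $\zeta$ is infinite, $\alpha_\xi(\chi_A) \ge \zeta$ reduces to $\alpha_\xi(A^c, A) \ge \zeta$ (the level sets other than $A$ and $A^c$ contribute only bounded ranks), and likewise $\alpha^P_\xi(\chi_A) \ge \zeta$ reduces to $\alpha^P_\xi(P \setminus A, A) \ge \zeta$; so it is enough to prove $\alpha_\xi(A^c, A) \ge \alpha^P_\xi(P \setminus A, A)$. Given a continuous decreasing sequence $(F_\eta)_{\eta < \mu}$ of ${\boldsymbol \Pi^0_\xi}(X)$ sets with $F_0 = X$ whose transfinite difference equals $A^c$ — a witness for $\alpha_\xi(A^c, A) \le \mu$ — the restricted sequence $(F_\eta \cap P)_{\eta < \mu}$ consists of ${\boldsymbol \Pi^0_\xi}(P)$ sets, is continuous and decreasing with first term $P$, and has transfinite difference $A^c \cap P = P \setminus A$; hence $\alpha^P_\xi(P \setminus A, A) \le \mu$. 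Minimising $\mu$ gives the required inequality, and therefore $\alpha_\xi(\chi_A) \ge \zeta$.

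The only step that is not sheer bookkeeping is this last rank comparison, and even that is routine once one recalls that ${\boldsymbol \Pi^0_\xi}$ subsets of $X$ restrict to ${\boldsymbol \Pi^0_\xi}$ subsets of the closed subspace $P$ (this is where $\xi \ge 1$ is used), that continuity of a decreasing transfinite sequence of sets is inherited by the intersections $F_\eta \cap P$, and that forming the transfinite difference commutes with $H \mapsto H \cap P$. In short, no ideas beyond Theorem \ref{alpha_not_bounded} are needed: the corollary is that theorem applied to the appropriate Polish space.
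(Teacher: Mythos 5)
Your proposal is correct and follows essentially the same route as the paper: apply Theorem \ref{alpha_not_bounded} to the uncountable Polish space $P$, extend the resulting characteristic function by zero, and show the rank does not drop by intersecting a witnessing transfinite difference of ${\boldsymbol \Pi^0_\xi}(X)$ sets with $P$. The only cosmetic difference is that the paper carries out this comparison for every rational pair $p<q$, while you reduce to the single pair $(A^c,A)$ after assuming $\zeta$ infinite; both are fine.
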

  \begin{proof}
    Since $P$ is perfect, it is an uncountable Polish space with the 
    subspace topology, hence the rank $\alpha_\xi$ is unbounded 
    on the characteristic Baire class $\xi$ functions defined on $P$ by the
    previous theorem. 
    Hence we can take a characteristic function 
    $f' \in \mathcal{B}_\xi(P)$ with $\alpha_\xi(f') \ge \zeta$, and 
    set 
    \begin{equation*}
      f(x) = \left\{ \begin{array}{cl}
        f'(x) & \text{if $x \in P$} \\
        0 & \text{if $x \in X \setminus P$}.     
      \end{array} \right.
    \end{equation*}
    It is easy to see that $f \in \mathcal{B}_\xi(X)$, hence 
    it is enough to prove that $\alpha_\xi(f) \ge \zeta$. 
    
    For this, it is enough to prove that 
    $\alpha_\xi(\{f' \le p\}, \{f' \ge q\}) \le 
    \alpha_\xi(\{f \le p\}, \{f \ge q\})$ for every pair of rational 
    numbers $p < q$. For this, let 
    $H \in {\boldsymbol \Delta^0_{\xi + 1}}(X)$ where 
    $\{f \le p\} \subseteq H \subseteq \{f \ge q\}^c$ and 
    $H$ is the transfinite difference of the sets 
    $(F_{\eta})_{\eta < \lambda}$ with 
    $\lambda = \alpha_\xi(\{f \le p\}, \{f \ge q\})$ and 
    $F_{\eta} \in {\boldsymbol \Pi^0_\xi}(X)$ for every 
    $\eta < \lambda$. 
    
    Let $H' = P \cap H$ and for every 
    $\eta < \lambda$ let $F'_{\eta} = P \cap F_{\eta}$. 
    It is easy to see that $H'$ separates the level sets 
    $\{f' \le p\}$ and $\{f' \ge q\}$ and $H'$ is the transfinite 
    difference of the sets $(F'_{\eta})_{\eta < \lambda}$. 
    And since $H' \in {\boldsymbol \Delta^0_{\xi + 1}}(P)$ and 
    $F'_{\eta} \in {\boldsymbol \Pi^0_\xi}(P)$  for every 
    $\eta < \lambda$ 
    (\cite[22.A]{K}), we have the desired inequality 
    $\alpha_\xi(\{f' \le p\}, \{f' \ge q\}) \le 
    \alpha_\xi(\{f \le p\}, \{f \ge q\})$. Thus the proof is complete. 
  \end{proof}
  
  The main disadvantage of this rank is that the construction of Remark \ref{alpha_not_sum} easily yields that
  the rank does not behave nicely under linear operations. We leave the easy proof of the next statement to the reader. 
  
  \begin{proposition}
  Let $1 \le \xi < \om_1$. Then $\al_\xi$ is not essentially linear, actually not even essentially additive. 
  \end{proposition}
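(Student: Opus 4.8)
The plan is to mimic the construction of Remark~\ref{alpha_not_sum} in the Baire class $\xi$ setting. Recall that $\chi_A \in \iB_\xi(X)$ exactly when $A \in {\boldsymbol \Delta^0_{\xi+1}}(X)$, and that by Theorem~\ref{alpha_not_bounded} the ordinals $\alpha_\xi(\chi_A)$ with $A \in {\boldsymbol \Delta^0_{\xi+1}}(X)$ are unbounded in $\om_1$. Hence it suffices to show that every such $\chi_A$ can be written as $f_0 - f_1$ with $f_0, f_1 \in \iB_\xi(X)$ and $\alpha_\xi(f_0), \alpha_\xi(f_1)$ bounded by an absolute constant: then, taking $f = f_0$, $g = -f_1$ and choosing $A$ with $\alpha_\xi(\chi_A) > \om$, we get $\alpha_\xi(f+g) = \alpha_\xi(\chi_A) \not\lesssim \max\{\alpha_\xi(f), \alpha_\xi(g)\}$, so $\alpha_\xi$ is not essentially additive, and a fortiori not essentially linear.

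For the decomposition I would write $A = \bigcup_n K_n$ and $A^c = \bigcup_n L_n$ with $(K_n)$ and $(L_n)$ increasing sequences of ${\boldsymbol \Pi^0_\xi}$ sets (possible since $A, A^c \in {\boldsymbol \Sigma^0_{\xi+1}}$), and define $f_0$ and $f_1$ by exactly the same formulas as in Remark~\ref{alpha_not_sum}. The identity $\chi_A = f_0 - f_1$ is verified by the same case distinction on the unique $n$ with $x \in (K_n \cup L_n) \setminus (K_{n-1} \cup L_{n-1})$. Both $f_0$ and $f_1$ take values in $\{0, -1, -2, \dots\}$, and each of their level sets $\{f_i \ge q\}$ is a \emph{finite} union of some of the $K_j$'s and $L_j$'s, hence lies in ${\boldsymbol \Pi^0_\xi}$ because this class is closed under finite unions for $\xi \ge 1$; consequently each $\{f_i < q\}$ is the complement of such a set and so lies in ${\boldsymbol \Sigma^0_\xi} \subseteq {\boldsymbol \Sigma^0_{\xi+1}}$, whence $f_0, f_1 \in \iB_\xi(X)$.

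It then remains to bound the ranks. The key observation, replacing ``upper semicontinuous'' from Remark~\ref{alpha_not_sum}, is that if $h \in \iB_\xi(X)$ satisfies $\{h \ge q\} \in {\boldsymbol \Pi^0_\xi}$ for every $q \in \R$, then $\alpha_\xi(h) \le 2$: indeed, for $p < q$ the set $\{h < q\} = X \setminus \{h \ge q\}$ separates $\{h \le p\}$ from $\{h \ge q\}$ and it is the transfinite difference of the continuous decreasing sequence $X \supseteq \{h \ge q\}$ of length at most $2$ consisting of ${\boldsymbol \Pi^0_\xi}$ sets. This gives $\alpha_\xi(f_0) \le 2$ directly. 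For $g = -f_1$ note that $\{g \le p\} = \{f_1 \ge -p\} \in {\boldsymbol \Pi^0_\xi}$ for every $p$, so for $p < q$ the length-$3$ sequence $X \supseteq X \supseteq \{g \le p\}$ has transfinite difference $\{g \le p\}$, which separates $\{g \le p\}$ from $\{g \ge q\}$; hence $\alpha_\xi(-f_1) \le 3$. Since $\max\{\alpha_\xi(f_0), \alpha_\xi(-f_1)\} \le 3 < \om$ while $\alpha_\xi\big(f_0 + (-f_1)\big) = \alpha_\xi(\chi_A)$ can be taken above $\om$, this completes the argument. I do not expect a genuine obstacle here: the only points needing a little care are the closure of ${\boldsymbol \Pi^0_\xi}$ under finite unions (keeping the level sets of $f_0, f_1$ in the right class) and the bookkeeping behind $\chi_A = f_0 - f_1$, both routine exactly as in the Baire class $1$ case.
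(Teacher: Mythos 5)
Your proposal is correct and is exactly the argument the paper has in mind: the paper explicitly leaves the proof to the reader, pointing to the construction of Remark~\ref{alpha_not_sum}, and your adaptation (replacing closed sets by ${\boldsymbol \Pi^0_\xi}$ sets, i.e.\ decomposing $\chi_A$ into a difference of two ``semi-Borel class $\xi$'' functions of rank at most $2$, as also done in the proof of Theorem~\ref{t:a'}) is the intended generalization. Your extra care with the length-$3$ sequence for $-f_1$ and with Theorem~\ref{alpha_not_bounded} supplying characteristic functions of arbitrarily large rank is exactly the bookkeeping the paper omits.
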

    
  However, there is a natural way to make a rank linear. 
  
  \begin{definition}
    For an $f \in \mathcal{B}_\xi$, let 
    \begin{align*}
      \alpha'_\xi (f) = \min \{ \max \{ \alpha_\xi (f_1), 
        \dots, \alpha_\xi (f_n) \} : 
      &n \in \omega, f_1, \dots, f_n \in  \mathcal{B}_\xi, \\ 
      &f = f_1+\dots+f_n \}. 
    \end{align*}
  \end{definition}
  
  It can be easily seen that $\alpha'_\xi$ is now linear, 
  but we do not know whether it is still unbounded in $\omega_1$. 
  
  \begin{question}
  \lab{q:a'}
  Let $1 \le \xi < \om_1$. Is $\alpha'_\xi$ unbounded in $\omega_1$?
  \end{question}
    
    We have the following partial result, which is a very strong indication that the answer to this question is in the negative, since in every single case when we can show that a rank is unbounded it is actually unbounded on the characteristic functions.
    
  \begin{theorem}
  \lab{t:a'}
   If $1 \le \xi < \om_1$ and $f$ is a \emph{characteristic} Baire class $\xi$ function then $\alpha'_\xi(f) \le 2$.
  \end{theorem}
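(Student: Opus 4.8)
The plan is to reduce the statement to a two--term decomposition and then exploit that $\alpha'_\xi$ is linear (as noted just before the theorem), so that a minus sign may be discarded for free. Write $f = \chi_A$, so that $A \in {\boldsymbol \Delta^0_{\xi+1}}$. I would produce functions $f_0, f_1 \in \mathcal{B}_\xi$ with $\chi_A = f_0 - f_1$ and $\alpha_\xi(f_0), \alpha_\xi(f_1) \le 2$. Granting this, the definition of $\alpha'_\xi$ gives $\alpha'_\xi \le \alpha_\xi$ (take $n = 1$), while additivity and $\alpha'_\xi(-g) = \alpha'_\xi(g)$ yield
\[
  \alpha'_\xi(\chi_A) = \alpha'_\xi\big(f_0 + (-f_1)\big) \le \max\{\alpha'_\xi(f_0),\, \alpha'_\xi(f_1)\} \le \max\{\alpha_\xi(f_0),\, \alpha_\xi(f_1)\} \le 2 .
\]

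For the decomposition I would copy the construction of Remark \ref{alpha_not_sum}, replacing ``closed'' by ``${\boldsymbol \Pi^0_\xi}$'' everywhere. Since $A, A^c \in {\boldsymbol \Sigma^0_{\xi+1}}$, choose increasing sequences $(C_n)_{n \in \N}$ and $(D_n)_{n \in \N}$ of ${\boldsymbol \Pi^0_\xi}$ sets with $A = \bigcup_n C_n$ and $A^c = \bigcup_n D_n$; set $f_0 = 0$ on $C_0 \cup D_0$ and $f_0 = -n$ on $(C_n \cup D_n) \setminus (C_{n-1} \cup D_{n-1})$ for $n \ge 1$, and define $f_1$ by the analogous staggered formula from the Remark. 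The computation carried out there gives $\chi_A = f_0 - f_1$ verbatim, as it only uses that ${\boldsymbol \Pi^0_\xi}$ (like the closed sets) is closed under finite unions and contains $X$. Each super-level set $\{f_i \ge c\}$ is then a finite union of members of $(C_n)$ and $(D_n)$, hence lies in ${\boldsymbol \Pi^0_\xi}$, and its complement $\{f_i < c\}$ lies in ${\boldsymbol \Sigma^0_\xi} \subseteq {\boldsymbol \Sigma^0_{\xi+1}}$, as does $\{f_i > c\}$ (the $f_i$ are integer-valued, so this is again a super-level set); thus $f_0, f_1 \in \mathcal{B}_\xi$.

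Finally I would observe that \emph{any} $g \in \mathcal{B}_\xi$ all of whose super-level sets $\{g \ge c\}$ lie in ${\boldsymbol \Pi^0_\xi}$ has $\alpha_\xi(g) \le 2$: for rationals $p < q$, the set $H = X \setminus \{g \ge q\}$ is the transfinite difference of the length-$2$ decreasing sequence $(X, \{g \ge q\})$ of ${\boldsymbol \Pi^0_\xi}$ sets (whose first term is $X$, as required), and $\{g \le p\} \subseteq H \subseteq \{g \ge q\}^c$, so $\alpha_\xi(\{g \le p\}, \{g \ge q\}) \le 2$; taking the supremum over $p < q$ gives $\alpha_\xi(g) \le 2$. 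Applying this to $f_0$ and $f_1$ finishes the proof. I expect the one point that deserves care to be conceptual rather than technical: by Theorem \ref{alpha_not_bounded} the single function $\chi_A$ can already have arbitrarily large $\alpha_\xi$, and in fact $\alpha_\xi(-f_1)$ itself may well exceed $2$ — it is precisely the passage to finite sums, together with the linearity of $\alpha'_\xi$ which lets us ignore the sign in front of $f_1$, that brings the bound down to $2$. (One should of course also re-check that the telescoping identities defining $f_0,f_1$ and the formulas for their level sets go through with ${\boldsymbol \Pi^0_\xi}$ in place of the closed sets, which they do.)
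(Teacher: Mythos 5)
Your proof is correct and takes essentially the same route as the paper: the paper likewise generalizes the decomposition of Remark \ref{alpha_not_sum} to write $\chi_A$ as a difference of two functions whose super-level sets $\{g \ge c\}$ are ${\boldsymbol \Pi^0_\xi}$ (it calls these ``semi-Borel class $\xi$'' functions), and bounds their $\alpha_\xi$-rank by $2$ using exactly the length-$2$ sequence $(X, \{g \ge q\})$. Your explicit appeal to the linearity of $\alpha'_\xi$ to dispose of the minus sign in front of $f_1$ (rather than feeding $f_0 + (-f_1)$ directly into the definition, which would only give the bound $3$ since $\alpha_\xi(-f_1)$ need not be $\le 2$) is, if anything, slightly more careful than the paper's terse conclusion.
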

  
  \begin{proof}
    Let us call a function $f$ a semi-Borel class $\xi$ function if 
    the level sets $\{f < c\}$ are in ${\boldsymbol \Sigma^0_\xi}$ for every $c \in \R$.
    Note that then the level sets $\{f > c\}$ are in 
    ${\boldsymbol \Sigma^0_{\xi + 1}}$, hence $f \in \mathcal{B}_\xi$. 
    
    We first show that a semi-Borel class $\xi$ function has 
    $\alpha_\xi$ rank at most $2$. Let $p < q$ be a pair of rational 
    numbers. The level set 
    $\{f \ge q\} \in {\boldsymbol \Pi^0_\xi}(X)$, hence the 
    transfinite difference of the 
    sequence $F_0 = X, F_1 = \{f \ge q\}$ separates the level sets 
    $\{f \le p\}$ and $\{f \ge q\}$. 
    
    Now using the same idea as in Remark \ref{alpha_not_sum}, it is 
    clear that every characteristic Baire class $\xi$ function can be written as the 
    difference of two semi-Borel class $\xi$ functions, completing the proof 
    of this theorem. 
  \end{proof}

The following question is very closely related to Question \ref{q:a'}.

 \begin{question}
    Let $1 \le \xi < \om_1$ and let $f_n$ and $f$ be Baire class $\xi$ functions such that $f_n \to f$ uniformly. Does this imply that $\alpha'_\xi (f) \lesssim \sup_n \alpha'_\xi (f_n)$?
  \end{question}
  
\begin{remark}
An affirmative answer to this question would provide a negative answer to Question \ref{q:a'}.
Indeed, it is not hard to show that $\alpha'_\xi$ is 
  bounded for step functions, and hence, by taking uniform 
  limit, for every bounded function. Then one can check that 
  the rank of an arbitrary function $f$ equals to the rank of the 
  bounded function $\arctan \circ f$, hence $\alpha'_\xi$ 
  is bounded. 
\end{remark}

\subsection{Limit ranks}

  In this section we apply an even more natural approach to define ranks on the Baire class $\xi$ functions 
  starting from an arbitrary rank on the Baire class 1 functions. Surprisingly, they will all turn out to be bounded in $\om_1$.
  
  \begin{definition}
    Let $\rho$ be a rank on the Baire class 1 functions. We 
    inductively define a rank $\overline{\rho}_\xi$ on the Baire class $\xi$ 
    functions. First, let $\overline{\rho}_1 = \rho$. For a successor ordinal $\xi + 1$ and a Baire class $\xi+1$ function $f$ let 
    \begin{equation*}
      \overline{\rho}_{\xi + 1}(f) = 
        \min \left\{ \sup_n \overline{\rho}_\xi(f_n) : 
        f_n \to f,\; \text{$f_n$ is of Baire class $\xi$} \right\}.
    \end{equation*}
Finally, for a limit ordinal $\xi$ and a Baire class $\xi$ function $f$ let
    \begin{equation*}
    \begin{split}
      \overline{\rho}_\xi (f) =
        \min \bigg\{ &\sup_n \overline{\rho}_{\xi_n} (f_n) : f_n \to f,\; 
        \text{$f_n$ is of Baire class $\xi_n$, $\xi_n < \xi$,} \\ 
        & \text{$f_n$ is not of Baire class $\zeta$ if  
        $\zeta < \xi_n$} \bigg\}.
    \end{split}
    \end{equation*}
  \end{definition}

 Surprisingly, the ranks $\overline{\alpha}_\xi$, $\overline{\beta}_\xi$ and 
 $\overline{\gamma}_\xi$ will all be bounded for $\xi \ge 2$.
 
  \begin{theorem}
  \lab{t:limit_bdd}
    If $2 \le \xi <\om_1$ then 
    $\overline{\alpha}_\xi \le \overline{\beta}_\xi \le \overline{\gamma}_\xi \le \omega$.
  \end{theorem}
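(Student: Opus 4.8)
The plan is to reduce the statement to the single inequality $\overline{\gamma}_\xi \le \omega$ and prove that by transfinite induction on $\xi$, the only substantial case being $\xi = 2$. First I would observe that the recursion $\rho \mapsto \overline{\rho}_\xi$ is monotone in $\rho$: if $\rho \le \rho'$ on $\mathcal{B}_1$ then $\overline{\rho}_\xi \le \overline{\rho'}_\xi$ for every $\xi$, by an immediate transfinite induction (use the same sequences inside the defining minima). Hence $\overline{\alpha}_1 = \alpha \le \beta \le \gamma = \overline{\gamma}_1$ (Theorem \ref{alpha_le_beta_le_gamma}) propagates to $\overline{\alpha}_\xi \le \overline{\beta}_\xi \le \overline{\gamma}_\xi$ for all $\xi$, so it remains to bound $\overline{\gamma}_\xi$ by $\omega$.

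For the inductive steps, assume $3 \le \xi$ and $\overline{\gamma}_\zeta(g) \le \omega$ for all $2 \le \zeta < \xi$ and $g \in \mathcal{B}_\zeta$. If $\xi = \zeta+1$, then for $f \in \mathcal{B}_\xi$ any sequence $f_n \to f$ with $f_n$ of Baire class $\zeta$ shows $\overline{\gamma}_{\zeta+1}(f) \le \sup_n \overline{\gamma}_\zeta(f_n) \le \omega$. If $\xi$ is a limit ordinal, then for $f \in \mathcal{B}_\xi$ I would pick $f_n \to f$ with each $f_n$ of Baire class \emph{exactly} $\xi_n$ for some $2 \le \xi_n < \xi$ (from any witnessing sequence, drop the terms of Baire class $\le 1$ --- a subsequence has the same limit --- replacing a term $f_n$, if necessary, by $f_n + \tfrac1n \chi_{A_n}$ with $A_n$ properly ${\boldsymbol \Sigma^0_{\xi_n}}$), getting $\overline{\gamma}_\xi(f) \le \sup_n \overline{\gamma}_{\xi_n}(f_n) \le \omega$. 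So for $\xi \ge 3$ there is nothing to prove; all the content is in $\xi = 2$, where $\overline{\gamma}_1 = \gamma$ is itself \emph{unbounded} on $\mathcal{B}_1$ (Theorem \ref{ranks_not_bounded}).

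For $\xi = 2$, let $f \in \mathcal{B}_2$; I may assume $f$ bounded, since $\overline{\gamma}_2(f) = \overline{\gamma}_2(\arctan \circ f)$ ($\gamma$ is insensitive, up to $\approx$, to post-composition with a homeomorphism onto a bounded interval, and this passes through the definition; conjugate back with a truncated $\tan$, cf.\ the proof of Proposition \ref{alpha_uniform_limit}). Writing $f = \lim_n g_n$ with $g_n \in \mathcal{B}_1$ and applying the standard change-of-topology lemma (see e.g.~\cite{K}) to each $g_n$, I obtain one finer Polish topology $\tau^* \supseteq \tau$ with the same Borel sets, obtained from $\tau$ by declaring countably many $\tau$-closed sets $F_k$ ($k \in \om$) clopen, with $f \in \mathcal{B}_1(\tau^*)$. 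Set $\tau_j = \tau + (\{F_0,\dots,F_{j-1}\}$ made clopen$)$, so that $\tau = \tau_0 \subseteq \tau_1 \subseteq \cdots$ and $\tau^* = \bigvee_j \tau_j$, and fix compatible metrics $d_j$ for $\tau_j$ with $d^* = \sum_j 2^{-j} d_j$ compatible for $\tau^*$. The proof is then finished by two points:
\begin{itemize}
\item[(a)] a bounded $g$ continuous for some $\tau_j$ lies in $\mathcal{B}_1(\tau)$ and has $\beta_\tau(g) < \omega$ --- one checks by induction on $j$ that the derivative $D_{g,\eps}$ of \eqref{beta_derivative}, iterated in $\tau$, reaches $\emptyset$ after finitely many steps, $g$ being $\tau$-continuous off the finite stratification of $X$ by Boolean combinations of $F_0, \dots, F_{j-1}$ --- hence, being bounded, $\gamma_\tau(g) \le \omega$ by Theorem \ref{a=b=g_bounded};
\item[(b)] every bounded $h \in \mathcal{B}_1(\tau^*)$ is a pointwise limit of functions each continuous for some $\tau_j$.
\end{itemize}
Indeed, applying (b) to $f$ and (a) to the terms yields a sequence in $\mathcal{B}_1(\tau)$ converging pointwise to $f$ with all $\gamma_\tau$-ranks $\le \omega$, whence $\overline{\gamma}_2(f) \le \omega$, completing the induction.

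The hard part will be (b). A bounded $\tau^*$-continuous $h$ is the pointwise limit of its McShane regularizations $h_j(x) = \max\bigl(\inf_y [h(y) + j\, d_j(x,y)],\, -\|h\|_\infty\bigr)$, which are $d_j$-Lipschitz, hence $\tau_j$-continuous; the convergence $h_j \to h$ rests on the boundedness and $\tau^*$-continuity of $h$ together with the elementary estimate $d_j \ge \tfrac12(d^* - 2^{-j})$, which confines the infimum to a neighborhood of $x$ once $j$ is large. For general $h \in \mathcal{B}_1(\tau^*)$ one writes $h = \lim_n h_n$ with $h_n$ bounded and $\tau^*$-continuous, regularizes each $h_n$, and passes to a diagonal sequence; the crux of the whole theorem is to arrange that this diagonal sequence still converges to $h$. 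I expect to do this by first taking the $h_n$ themselves $d^*$-Lipschitz with constants $L_n = o(n)$ (possible because the usual $\tau^*$-continuous approximants of a Baire class $1$ function can be taken $O(n)$-Lipschitz, after which one reindexes), so that for each $x$ the set $\{h_n < h_n(x) - \eps\}$ recedes from $x$ in $d^*$ at a rate slower than $1/n$ --- exactly what is needed for the stage-$n$ regularization of $h_n$ to tend to $h$. Carrying out this diagonalization carefully is the main obstacle; everything else in the theorem is formal.
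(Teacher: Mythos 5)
Your route to the heart of the matter ($\xi=2$) is genuinely different from the paper's: the paper works with Hausdorff's theory of \emph{ordinary} function classes, takes $\Phi$ to be the step functions over the algebra generated by the open sets, computes directly that $\gamma\le\omega$ on $\Phi$, and invokes Hausdorff's level-set theorems to get $\Phi^p=\mathcal{B}_2$; you instead refine the topology \`a la Kuratowski and exhaust $\tau^*$ by the finitely-refined topologies $\tau_j$. Your formal reductions (monotonicity of $\rho\mapsto\overline{\rho}_\xi$, the collapse of $\xi\ge 3$ to $\xi=2$ via the perturbation $f_n+\tfrac1n\chi_{A_n}$, the $\arctan$ reduction to bounded $f$) are fine, and so is step (a): a bounded $\tau_j$-continuous $g$ is $\tau$-continuous on each atom of the Boolean algebra generated by $F_0,\dots,F_{j-1}$, whence $D^{j+1}_{g,\eps}(X)=\emptyset$ and $\gamma_\tau(g)\le\omega$ by Theorem \ref{a=b=g_bounded}. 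These atoms are exactly the sets $F\cap G$ the paper works with, so (a) is the analogue of the paper's lemma that $\gamma\le\omega$ on $\Phi$, and your (b) plays the role of $\Phi^p=\mathcal{B}_2$.

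The genuine gap is in (b), and it sits precisely where you say the "crux" is. Your diagonalization scheme is actually sound \emph{once} you have bounded pointwise approximants $h_n\to h$ that are $d^*$-Lipschitz with constants $L_n$: the estimate $d^*\le 2d_j+2^{-j+1}$ gives the \emph{uniform} bound $|(h_n)_j-h_n|\le L_n2^{-j+1}$ for all $j\ge 2L_n$, so choosing $j_n$ after seeing $L_n$ (no $o(n)$ condition is needed) makes the diagonal converge to $h$ everywhere. What is not justified is the input: the claim that "the usual $\tau^*$-continuous approximants of a Baire class $1$ function can be taken $O(n)$-Lipschitz, after which one reindexes." This is not a routine property of the usual approximants --- Lipschitz functions are not uniformly dense in the bounded continuous functions of a general Polish metric space, and "reindexing" a doubly-indexed family $v_{n,k}\to u_n\to h$ into a single pointwise-convergent sequence is exactly the diagonalization problem you are trying to solve, now one level down. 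The statement is true, but proving it requires real work of the same depth as the paper's own key lemma: approximate $h$ uniformly by step functions over $\boldsymbol{\Delta}^0_2(\tau^*)$ sets (Lemma \ref{f_n_uniformly_conv}), realize each $\chi_A$, $A\in\boldsymbol{\Delta}^0_2(\tau^*)$, as a pointwise limit of \emph{Lipschitz} Urysohn functions built from its transfinite-difference representation (the functions $\max(0,1-(k+1)d^*(x,F_\eta))$ in the paper's construction), and then telescope the uniform approximation as in Proposition \ref{gamma_uniform_limit} to merge everything into one sequence with controlled constants. As written, the proposal defers this step rather than proving it, so the argument is incomplete at its essential point.
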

  
  \begin{proof}
    It is enough to prove the theorem for $\xi = 2$. 
    Let $\Phi$ be a class of real valued functions on $X$. As in \cite{hausdorff}, we say 
    that $\Phi$ is \textit{ordinary} if it contains the constant 
    functions and if $f, g \in \Phi$ then $\max(f, g)$, 
    $\min(f, g)$, $f + g$, $f - g$, $fg$ and $f / g$ (if $g$ is 
    nowhere zero) are all in $\Phi$. 
    An ordinary class of functions is called \textit{complete} if it 
    is closed under uniform limits.
    
    For a class of functions $\Phi$, we denote by $\Phi^p$ the set 
    of functions that are pointwise limits of functions from $\Phi$. 
    We denote the pair of families of level sets of functions in 
    $\Phi$ by $\mathcal{P}(\Phi)$, that is, 
    $$
      \mathcal{P}(\Phi) = \left( 
        \left\{ \{f > c\} : f \in \Phi, c \in \mathbb{R} \right\}, 
        \left\{ \{f \ge c\} : f \in \Phi, c \in \mathbb{R} \right\}
      \right).
    $$
    
    If $\mathcal{P} = (\mathcal{M}, \mathcal{N})$ is a pair of 
    systems of sets then we denote the class of 
    functions whose levels sets are in $\mathcal{P}$ by 
    $\Phi(\mathcal{P})$, that is, 
    $$
      \Phi(\mathcal{P}) = \left\{ f : X \to \R \ | \  \forall c \in \mathbb{R} \; 
        \{f > c\} \in \mathcal{M}, \{f \ge c\} \in \mathcal{N} 
        \right\}.
    $$

    Now we state three theorems based on results in 
    \cite{hausdorff}.
    
    \begin{theorem}
      \label{ordinary_limit_complete}
      If a class of functions $\Phi$ is ordinary then 
      $\Phi^p$ is ordinary and complete.
    \end{theorem}
    
    \begin{theorem}
      \label{ordinary_pointwise_limit}
      If a class of functions $\Phi$ is ordinary and 
      $\mathcal{P}(\Phi) = (\mathcal{M}, \mathcal{N})$  then 
      $\mathcal{P}(\Phi^p) = 
      (\mathcal{N}_{\delta \sigma}, \mathcal{M}_{\sigma \delta})$.
    \end{theorem}
    
    \begin{theorem}
      \label{complete_ordinary_level_sets}
      If a class of functions $\Phi$ is complete and ordinary then 
      $\Phi = \Phi(\mathcal{P}(\Phi))$.
    \end{theorem}

    Theorem \ref{ordinary_limit_complete} is shown in 
    \cite[\textsection 41.~IV.]{hausdorff}, Theorem 
    \ref{ordinary_pointwise_limit} is an easy corollary of 
    \cite[\textsection 41.~V., VI.]{hausdorff} and 
    Theorem \ref{complete_ordinary_level_sets} is shown in 
    \cite[\textsection 41.~VIII.]{hausdorff}.
    
    Now let $\Phi$ consist of the Baire class 1 functions of the form 
    $$
      \sum_{i = 1}^n c_i \chi_{H_i},
    $$
    where $H_i$ is in the algebra $\mathcal{A}$ generated by the open 
    sets (an algebra is a family closed under finite unions and complements). It is easy to check that $\mathcal{A}$ contains exactly 
    the sets that can be written as the finite disjoint union of sets of 
    the form $F \cap G$, where $F$ is closed and $G$ is open. 
    Indeed, the intersection of two such set is of the same form, 
    and the complement of such a set is 
    \begin{equation*}
    \begin{split}
      \left(\bigcup_{i=0}^{n-1} (F_i \cap G_i)\right)^c = 
        \bigcap_{i = 0}^{n - 1} \left(F_i \cap G_i \right)^c = 
        \bigcap_{i = 0}^{n - 1} \left({F^c_i} \cup {G^c_i}\right) = \\
      \bigcup \left\{ 
        \bigcap_{i = 0}^{n - 1} F^{a(i)}_i \cap 
        \bigcap_{i = 0}^{n - 1} G^{b(i)}_i : 
        \text{$a, b \in 2^n$, 
         $\forall i<n$ at least one of $a(i)$ and $b(i)$ is 1} \right\}, 
    \end{split}
    \end{equation*}
    where for a set $H$, $H^0 = H$ and $H^1 = H^c$, and the last 
    equality holds, since a point $x$ is contained in either of the two
    sets in question iff for every $i < n$ it is contained in at least one 
    of $F^c_i$ and $G^c_i$. Now we check that the sets in the union are disjoint. 
    Without loss of generality we have two terms with distinct $a$'s, so $a(i) = 0$ and $a'(i) = 1$ for a suitable $i$.
   But then the term belonging to $a$ is a subset of ${F_i}$ and the other one is a subset of $F^c_i$, proving disjointness.
    
    An easy consequence of these observations is that 
    $\Phi$ is ordinary. 
    
    \begin{lemma}
      $\gamma(f) \le \omega$ for every $f \in \Phi$.
    \end{lemma}
    \begin{proof}
      First we prove that $\gamma(\chi_F) \le 2$ for every closed set $F$. Let $F$ be a closed set, and define 
      $f_n(x) = 1 - \min\{1, n \cdot d(x, F)\}$. It is easy to check 
      that $f_n \to \chi_F$ pointwise. 
      We now show that $\gamma((f_n)_{n \in \N}, \eps) \le 2$ for every $\eps > 0$, which will imply $\gamma(\chi_F) \le 2$. 
      Fix $\eps > 0$. If $x \not\in F$ then $x$ has a neighborhood $U$ such that $d(U, F) > 0$ and then if we fix an $N > \frac{1}{d(U, F)}$ then $f_n(y) = 0$ for every $y \in U$ and $n \ge N$, therefore $\omega((f_n)_{n \in \N}, x, X) = 0$. This implies $D_{(f_n)_{n \in \N}, \varepsilon}(X) \subseteq F$. But 
      $f_n|_F \equiv 1$ for every $n$, hence if $x \in F$ then 
      $\omega((f_n)_{n \in \N}, x, F) = 0$, therefore
      $D^2_{(f_n)_{n \in \N}, \varepsilon}(X) \subseteq D_{(f_n)_{n \in \N}, \varepsilon}(F) = \emptyset$, proving $\gamma((f_n)_{n \in \N}, \eps) \le 2$.
      
      It is easy to check that $\gamma(f) = \gamma(1-f)$ for every $f \in \iB_1$. This implies that $\gamma(\chi_G) \le 2$ for every open set $G$, since $\chi_G = 1 - \chi_{X \setminus G}$.
      
      Now, let $H = F \cap G$, where $F$ is closed and $G$ is open. 
      We show that $\gamma(\chi_H) \le \omega$.
      By Theorem \ref{beta_gamma_sum} there exists
      a sequence $f_n$ of continuous functions with $f_n \to \chi_F + \chi_G$ and 
      $\gamma((f_n)_{n \in \N}, \eps) \le \omega$ for every $\eps>0$. Define $f_n' = \max\{0, f_n - 1\}$. Then it is 
      easy to check that $f_n' \to \chi_H$ and 
      $\gamma((f_n')_{n \in \N}, \eps) \le \gamma((f_n)_{n \in \N}, \eps) \le \omega$ for every $\eps>0$.
      
      Since any $H \in \mathcal{A}$ is a finite disjoint union of sets 
      of the form $F \cap G$, the above paragraph shows that 
      $\chi_H = \chi_{H_0} + \dots + \chi_{H_n}$, where 
      $\gamma(\chi_{H_i}) \le \omega$. But then Theorem 
      \ref{beta_gamma_sum} yields that $\gamma(\chi_H) \le \omega$. 
      Then applying Theorem \ref{beta_gamma_sum} once again we obtain 
      that $\gamma(f) \le \omega$ for every $f \in \Phi$.
    \end{proof}
    
    Now we turn to the proof of the theorem. By Theorem 
    \ref{alpha_le_beta_le_gamma} and the previous lemma, it is enough 
    to show that $\Phi^p$ equals the family of Baire class 2 
    functions. Since every $f \in \Phi$ is of Baire class 1, we have 
    that $\Phi^p$ is a subclass of the Baire class 2 functions.
    
    For the converse, let us define $\mathcal{M}$ and $\mathcal{N}$ by 
    $\mathcal{P}(\Phi) = (\mathcal{M}, \mathcal{N})$. 
    By the definition of $\Phi$, $\mathcal{M}$ and $\mathcal{N}$ both 
    contain the open and closed sets. By Theorem 
    \ref{ordinary_pointwise_limit} $\mathcal{P}(\Phi^p) = 
    (\mathcal{N}_{\delta \sigma}, \mathcal{M}_{\sigma \delta})$, 
    hence 
    ${\boldsymbol \Sigma^0_3} \subseteq \mathcal{N}_{\delta \sigma}$ 
    and ${\boldsymbol \Pi^0_3} \subseteq \mathcal{M}_{\sigma \delta}$. 
    And by Theorem \ref{ordinary_limit_complete} and Theorem
    \ref{complete_ordinary_level_sets}
    $
      \Phi^p = \Phi(\mathcal{P}(\Phi^p)) = 
      \Phi(\mathcal{N}_{\delta \sigma}, \mathcal{M}_{\sigma \delta})
      \supseteq \Phi({\boldsymbol\Sigma^0_3}, {\boldsymbol\Pi^0_3}) = \mathcal{B}_2,
    $
    finishing the proof.
  \end{proof}

\subsection{Partition ranks}

  The following well known fact also gives rise to a very natural rank on the Baire class $\xi$ functions. However, this also turns out to be bounded. 
  
  \begin{proposition}
    A function $f$ is of Baire class $\xi$ if and only if for every 
    $\varepsilon > 0$ there exists a function $g$ of the form 
    $g = \sum_{n \in \omega} c_n \cdot \chi_{H_n}$, where 
    $H_n \in {\boldsymbol \Delta^0_{\xi + 1}}(X)$, the $H_n$'s form 
    a partition of $X$ and $|f(x) - g(x)| \le \varepsilon$ for every 
    $x \in X$. Moreover, if $f$ is bounded then each set $H_n$ can be 
    chosen to be empty for all but finitely many $n \in \omega$.
  \end{proposition}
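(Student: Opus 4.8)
The plan is to prove both implications by reducing to the well-known characterization that $f$ is of Baire class $\xi$ iff $\{f < c\}$ and $\{f > c\}$ are in ${\boldsymbol\Sigma^0_{\xi+1}}$ for every $c \in \R$, together with the fact (cited in the Preliminaries via \cite[22.16]{K}) that disjoint ${\boldsymbol\Pi^0_{\xi+1}}$ sets can be separated by a ${\boldsymbol\Delta^0_{\xi+1}}$ set. For the ``hard'' direction, assume $f \in \iB_\xi$ and fix $\varepsilon > 0$. The idea is to use the uniformly spaced grid points $p_k = k\varepsilon$ for $k \in \Z$ (exactly as in Lemma \ref{f_n_uniformly_conv}): the level sets $\{f \le p_k\}$ and $\{f \ge p_{k+1}\}$ are disjoint ${\boldsymbol\Pi^0_{\xi+1}}$ sets, so choose $H_k \in {\boldsymbol\Delta^0_{\xi+1}}(X)$ with $\{f \le p_k\} \subseteq H_k \subseteq \{f \ge p_{k+1}\}^c = \{f < p_{k+1}\}$. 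One should arrange the $H_k$ to be increasing in $k$ by replacing $H_k$ with $\bigcup_{j \le k} H_j$ (still ${\boldsymbol\Delta^0_{\xi+1}}$, and still sandwiched since $\{f \le p_k\}$ is increasing and $\{f < p_{k+1}\}$ is increasing). Then set $\tilde H_k = H_k \setminus H_{k-1}$; these are pairwise disjoint ${\boldsymbol\Delta^0_{\xi+1}}$ sets whose union is $X$ (using $\bigcup_k H_k = X$ and $\bigcap_k H_k = \emptyset$, which follow from $f$ being real-valued), and on $\tilde H_k$ we have $x \notin H_{k-1} \supseteq \{f \le p_{k-1}\}$ so $f(x) > p_{k-1}$, while $x \in H_k \subseteq \{f < p_{k+1}\}$ so $f(x) < p_{k+1}$; hence $|f(x) - p_k| < \varepsilon$. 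Take $g = \sum_k p_k \chi_{\tilde H_k}$ and relabel the index set as $\om$.

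For the converse, suppose that for every $\varepsilon > 0$ such a $g_\varepsilon$ exists. Taking $\varepsilon = 1/m$ gives a sequence $g_m$ with $\|f - g_m\|_\infty \le 1/m$, so $g_m \to f$ uniformly. Each $g_m = \sum_{n} c_n^{(m)} \chi_{H_n^{(m)}}$ with $H_n^{(m)} \in {\boldsymbol\Delta^0_{\xi+1}}(X)$; since a characteristic function $\chi_H$ with $H \in {\boldsymbol\Delta^0_{\xi+1}}$ is of Baire class $\xi$ (its level sets are in ${\boldsymbol\Delta^0_{\xi+1}} \subseteq {\boldsymbol\Sigma^0_{\xi+1}}$) and $\iB_\xi$ is closed under countable sums that converge uniformly on each piece — more carefully, $g_m$ restricted to the partition is locally a countable sum, but one checks directly that $\{g_m < c\} = \bigcup\{H_n^{(m)} : c_n^{(m)} < c\} \in {\boldsymbol\Sigma^0_{\xi+1}}$ and similarly $\{g_m > c\} \in {\boldsymbol\Sigma^0_{\xi+1}}$ — we get $g_m \in \iB_\xi$. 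Since $\iB_\xi$ is closed under uniform limits (stated in the Preliminaries), $f \in \iB_\xi$.

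For the ``moreover'' clause: if $f$ is bounded, say $|f| \le M$, then in the construction above $\tilde H_k = \emptyset$ whenever $p_{k-1} \ge M$ or $p_{k+1} \le -M$, i.e.\ for all but finitely many $k$, so only finitely many pieces are nonempty; pad the rest with empty sets to index by $\om$.

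I expect the main obstacle to be purely bookkeeping: verifying that the modified increasing sets $H_k$ still separate the relevant level sets and that the differences $\tilde H_k$ genuinely partition $X$ (this uses that $f$ takes finite real values, so every $x$ lies in $\{f \le p_k\}$ for some $k$ and outside $\{f \le p_k\}$ for some $k$), and confirming in the converse direction that a countably-infinite step function over a ${\boldsymbol\Delta^0_{\xi+1}}$ partition really is Baire class $\xi$ — which is immediate from the level-set characterization since each level set is a countable union of ${\boldsymbol\Delta^0_{\xi+1}} \subseteq {\boldsymbol\Sigma^0_{\xi+1}}$ sets. No genuinely new idea is needed; this is the ``well known fact'' advertised in the statement.
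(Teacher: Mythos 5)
Your proof is correct and follows essentially the same route as the paper's: separate the consecutive level sets $\{f\le p_k\}$ and $\{f\ge p_{k+1}\}$ on an $\varepsilon$-grid by sets in ${\boldsymbol \Delta^0_{\xi+1}}$, take successive differences to get the partition, and for the converse observe that such a $g$ is of Baire class $\xi$ via its level sets and invoke closure under uniform limits. One remark: the step where you replace $H_k$ by $\bigcup_{j\le k}H_j$ is both unnecessary and, as justified, shaky --- a countably infinite union of ${\boldsymbol \Delta^0_{\xi+1}}$ sets need not be ${\boldsymbol \Delta^0_{\xi+1}}$ --- but it is harmless here because the sandwiching $H_k \subseteq \{f<p_{k+1}\} \subseteq \{f\le p_{k+1}\} \subseteq H_{k+1}$ already forces the original $H_k$ to be increasing, so the union collapses to $H_k$ itself.
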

  \begin{proof}
    If $f$ is of Baire class $\xi$ then for a fixed 
    $\varepsilon > 0$ let the numbers $p_n$ be defined by 
    $p_{n} = n \cdot \frac{\varepsilon}{2}$ for 
    every $n \in \mathbb{Z}$. The sets 
    $\{f \le p_n\}$ and $\{f \ge p_{n + 1}\}$ are disjoint 
    ${\boldsymbol \Pi^0_{\xi +1}}$ sets, hence they can be separated 
    by a set $A_n \in {\boldsymbol \Delta^0_{\xi + 1}}$. Now let 
    $H_n = A_n \setminus A_{n - 1}$. Note that if $f$ is bounded then 
    $H_n = \emptyset$ for all but finitely many $n \in \omega$.
    These sets form a partition, and
    with $g = \sum_{n \in \mathbb{Z}} p_n \cdot \chi_{H_n}$ the proof 
    of the first direction is complete. 
    
    For the other one, note that the function $g$ is of Baire class 
    $\xi$, hence $f$ is the uniform limit of Baire class $\xi$ 
    functions, implying that $f$ is of Baire class $\xi$ (see e.g. 
    \cite[24.4]{K}).
  \end{proof}
  \begin{definition}
    Let $f$ be a Baire class $\xi$ function and let the 
    \emph{partition rank} of $f$ be
    \begin{equation*}
    \begin{split}
      \delta(f) = \sup_{\varepsilon > 0}  \min\bigg\{ 
        \sup_{n \in \omega} \alpha_\xi(H_n, H^c_n) : 
        \; H_n \in {\boldsymbol \Delta^0_{\xi + 1}}, \; 
          \bigcup_{n \in \om} H_n = X,\\
        H_n \cap H_m = \emptyset \;(n \neq m), \;
          \exists (c_n)_{n \in \omega} \; 
          \bigg| f - \sum_{n\in \omega} c_n \cdot \chi_{H_n} \bigg| 
          \le  \varepsilon \bigg\}.
    \end{split}
    \end{equation*}
  \end{definition}
  
  \begin{proposition}
    $\delta(f) \le 4$ for every Baire class $\xi$ function $f$. 
  \end{proposition}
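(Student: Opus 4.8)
The plan is: for every $\varepsilon>0$ I would produce a partition $(H_i)_{i\in\omega}$ of $X$ into ${\boldsymbol \Delta^0_{\xi+1}}$ sets with $\sup_i\alpha_\xi(H_i,H_i^c)\le 4$ on which $f$ is within $\varepsilon$ of a step function; this bounds the inner minimum in the definition of $\delta(f)$ by $4$ for every $\varepsilon$, hence $\delta(f)\le 4$. The basic observation I would start from is that if $L'\subseteq L$ and $L,L'\in{\boldsymbol \Pi^0_\xi}$, then $\alpha_\xi(L\setminus L',(L\setminus L')^c)\le 4$: indeed $X\supseteq X\supseteq L\supseteq L'$ is a decreasing, continuous sequence of ${\boldsymbol \Pi^0_\xi}$ sets of length $4$ whose transfinite difference is $(X\setminus X)\cup(L\setminus L')=L\setminus L'$, and a transfinite difference that equals a set $H$ trivially separates $H$ from $H^c$ (and exhibits $H\in{\boldsymbol \Delta^0_{\xi+1}}$). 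So it suffices to find the required partition with each member of the form $L\setminus L'$ as above.

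Next I would build a suitable cover of $X$ by ${\boldsymbol \Pi^0_\xi}$ sets. Fix $\varepsilon>0$ and, for $n\in\Z$, let $I_n=((n-1)\varepsilon,(n+1)\varepsilon)$; these open intervals of length $2\varepsilon$ cover $\R$. Since $f\in\iB_\xi$, the sets $\{f>(n-1)\varepsilon\}$ and $\{f<(n+1)\varepsilon\}$ are ${\boldsymbol \Sigma^0_{\xi+1}}$, so I may write them as increasing unions $\bigcup_k Q_{n,k}$ and $\bigcup_k Q'_{n,k}$ of ${\boldsymbol \Pi^0_\xi}$ sets; then $f^{-1}(I_n)=\bigcup_k(Q_{n,k}\cap Q'_{n,k})$, an increasing union of ${\boldsymbol \Pi^0_\xi}$ sets. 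Enumerating the countable family $\{Q_{n,k}\cap Q'_{n,k}:n\in\Z,\ k\in\omega\}$ as $(L_i)_{i\in\omega}$ and setting $c_i=n\varepsilon$ whenever $L_i\subseteq f^{-1}(I_n)$, I get $\bigcup_i L_i=\bigcup_n f^{-1}(I_n)=X$ (because the $I_n$ cover $\R$) and $|f-c_i|<\varepsilon$ on $L_i$ (because $f(L_i)\subseteq I_n$, an interval of radius $\varepsilon$ centred at $c_i$).

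Finally I would disjointify: put $H_i=L_i\setminus\bigcup_{j<i}L_j$. Since ${\boldsymbol \Pi^0_\xi}$ is closed under finite unions, $L_i\cap\bigcup_{j<i}L_j\in{\boldsymbol \Pi^0_\xi}$, so $H_i=L_i\setminus\bigl(L_i\cap\bigcup_{j<i}L_j\bigr)$ is a difference of two ${\boldsymbol \Pi^0_\xi}$ sets, hence $H_i\in{\boldsymbol \Delta^0_{\xi+1}}$ and $\alpha_\xi(H_i,H_i^c)\le 4$ by the first paragraph; moreover the $H_i$ are pairwise disjoint with $\bigcup_i H_i=\bigcup_i L_i=X$, and $\bigl|f-\sum_i c_i\chi_{H_i}\bigr|<\varepsilon$ because $H_i\subseteq L_i$. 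Thus this partition witnesses that the minimum in the definition of $\delta(f)$ is $\le 4$ for the chosen $\varepsilon$, and as $\varepsilon>0$ was arbitrary, $\delta(f)\le 4$. The only point needing a little care — and the reason the bound is $4$ rather than smaller — is that the closed level sets $\{f\le c\}$, $\{f\ge c\}$ of a Baire class $\xi$ function are merely ${\boldsymbol \Pi^0_{\xi+1}}$, so preimages of closed intervals are not usable as building blocks; one is forced to use preimages of open intervals, which are ${\boldsymbol \Sigma^0_{\xi+1}}$ and hence countable unions of ${\boldsymbol \Pi^0_\xi}$ sets, and this is precisely why the pieces emerge as differences of ${\boldsymbol \Pi^0_\xi}$ sets rather than as ${\boldsymbol \Pi^0_\xi}$ sets themselves. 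Since open intervals of length $2\varepsilon$ still cover $\R$, this presents no genuine obstacle.
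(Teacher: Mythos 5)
Your proof is correct. The crucial ingredient is identical to the paper's: a set of the form $L\setminus L'$ with $L'\subseteq L$ both in ${\boldsymbol \Pi^0_\xi}$ has $\alpha_\xi$-rank at most $4$, witnessed by the decreasing sequence $(X,X,L,L')$ of length $4$ (the paper uses exactly the sequence $(X,X,F_\eta,F_{\eta+1})$). Where you diverge is in how the partition into such pieces is manufactured. The paper starts from the ${\boldsymbol \Delta^0_{\xi+1}}$ partition $(H_n)$ supplied by the preceding proposition and then refines each $H_n$ using its representation as a transfinite difference of ${\boldsymbol \Pi^0_\xi}$ sets, so that $H_n$ splits into countably many disjoint rings $F_\eta\setminus F_{\eta+1}$. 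You instead build the pieces from scratch: cover $\R$ by overlapping open intervals of length $2\varepsilon$, pull them back to ${\boldsymbol \Sigma^0_{\xi+1}}$ sets, write each as an increasing union of ${\boldsymbol \Pi^0_\xi}$ sets, and disjointify, so each cell is automatically a difference of two nested ${\boldsymbol \Pi^0_\xi}$ sets. Your route is self-contained — it bypasses both the preceding approximation proposition and the theorem that every ${\boldsymbol \Delta^0_{\xi+1}}$ set is a transfinite difference of ${\boldsymbol \Pi^0_\xi}$ sets (a nontrivial fact the paper imports from Kechris's book) — at the cost of redoing the approximation argument with open rather than closed cut-points. Both arguments are complete; yours is arguably more elementary in its prerequisites, while the paper's is shorter given the machinery already in place.
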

  \begin{proof}
  Fix $\eps > 0$. Obtain a function of the form $\sum_{n \in \omega} c_n \cdot \chi_{H_n}$ as in the above proposition.    
    It is enough to prove that every $H_n$ has a further partition into a sequence of sets $H_{n,k} \in {\boldsymbol \Delta^0_{\xi + 1}}$ with $\alpha_\xi(H_{n,k}, H^c_{n,k}) \le 4$.
    
   But this is easy, since $H_n$ can be written as the transfinite difference of 
    ${\boldsymbol \Pi^0_\xi}$ sets, so $H_n$ is obtained as the countable disjoint 
    union of sets of the form $F_{\eta} \setminus F_{\eta+1}$ with 
    $F_{\eta}, F_{\eta+1} \in {\boldsymbol \Pi^0_\xi}$, 
    and the $\al_\xi$ rank of $F_{\eta} \setminus F_{\eta+1}$ at most 4, as the sequence $(X, X, F_{\eta}, F_{\eta+1})$ shows. 
  \end{proof}
  
Now we focus our attention on finite partitions and investigate 
  the resulting rank, which we can only define  for bounded functions.
  
  \begin{definition}
    Let $f$ be a bounded Baire class $\xi$ function and let the 
    \emph{finite partition rank} of $f$ be
    \begin{equation*}
    \begin{split}
      \delta_{fin}(f) = \sup_{\varepsilon > 0} \min\bigg\{ 
        \sup_{n \le N} \alpha_\xi(H_n, H^c_n) : 
         N \in \om, H_n \in {\boldsymbol \Delta^0_{\xi + 1}}  (n \le N), \bigcup_{n \le N} H_n = X,\\ 
          H_n \cap H_m = \emptyset \;  (n, m \le N,\; n \neq m),\;    
          \exists (c_n)_{n \le N} \; 
          \bigg| f - \sum_{n \le N} c_n \cdot \chi_{H_n} \bigg| 
          \le \varepsilon \bigg\}.
    \end{split}
    \end{equation*}
  \end{definition}
  
  \begin{theorem}
    $\delta_{fin}(f) \approx \alpha_\xi(f)$ for every bounded Baire class $\xi$ function $f$.
  \end{theorem}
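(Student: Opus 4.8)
The plan is to reduce both halves of $\delta_{fin}(f)\approx\al_\xi(f)$ to the Baire class $1$ theory of Section~\ref{s:KL} by a change of topology, and then to transcribe the proof of Lemma~\ref{f_n_uniformly_conv} and of the Boolean‑closure lemma that appears inside the proof of Theorem~\ref{a=b=g_bounded}. The key device is this: given a bounded $f\in\iB_\xi$ and any prescribed \emph{countable} family $\mathcal C\subseteq{\boldsymbol\Pi^0_\xi}(\tau)$ (enlarged by a countable basis of $\tau$), one can build a finer Polish topology $\tau'$ on $X$ with the same Borel sets so that (i) every member of $\mathcal C$ is $\tau'$-closed, while (ii) every $\tau'$-open set still lies in ${\boldsymbol\Sigma^0_\xi}(\tau)$. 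One obtains $\tau'$ by the usual change‑of‑topology construction (\cite[13.1]{K}, \cite[13.3]{K}) performed ``one‑sidedly'': one only ever declares sets from ${\boldsymbol\Sigma^0_{<\xi}}(\tau)$ clopen and the complements of the members of $\mathcal C$ open, so that no $\tau'$-open set escapes ${\boldsymbol\Sigma^0_\xi}(\tau)$. Then (ii) forces every $\tau'$-closed set into ${\boldsymbol\Pi^0_\xi}(\tau)$ and ${\boldsymbol\Delta^0_2}(\tau')\subseteq{\boldsymbol\Delta^0_{\xi+1}}(\tau)$; hence a transfinite difference of $\tau'$-closed sets is a transfinite difference of ${\boldsymbol\Pi^0_\xi}(\tau)$ sets of the same length, and conversely a ${\boldsymbol\Pi^0_\xi}(\tau)$ difference representation all of whose terms lie in $\mathcal C$ is a $\tau'$-closed one. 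With Proposition~\ref{alpha_1_equals_alpha_for_sets} applied in $(X,\tau')$ (so $\al\approx\al_1$ there) and the trivial identity $\al(\chi_H)=\al(H,H^c)=\al(H^c,H)$, this lets me import verbatim the lemma from the proof of Theorem~\ref{a=b=g_bounded} stating $\al(\chi_{A\cap B})\lesssim\max\{\al(\chi_A),\al(\chi_B)\}$ for ${\boldsymbol\Delta^0_2}(\tau')$ sets $A,B$ — hence also for finite unions, by complementation, and uniformly in the number of sets since $\lesssim$ is transitive.

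For $\al_\xi(f)\lesssim\delta_{fin}(f)$ I would fix rationals $p<q$, put $\eps=(q-p)/3$, and take a finite ${\boldsymbol\Delta^0_{\xi+1}}$ partition $X=\bigsqcup_{n\le N}H_n$ with $|f-\sum_n c_n\chi_{H_n}|\le\eps$ and $\max_n\al_\xi(H_n,H_n^c)\le\delta_{fin}(f)$, choose ${\boldsymbol\Pi^0_\xi}(\tau)$ difference representations of the $H_n$ of those lengths, let $\mathcal C$ be the countable set of all their terms, and pass to $\tau'$. Then $S:=\bigcup\{H_n:c_n\le p+\eps\}$ satisfies $\{f\le p\}\subseteq S\subseteq\{f\ge q\}^c$, and in $(X,\tau')$ each $H_n$ is ${\boldsymbol\Delta^0_2}$ with $\al(H_n,H_n^c)[\tau']\le\delta_{fin}(f)$; iterating the Boolean‑closure lemma gives $\al(S,S^c)[\tau']\lesssim\delta_{fin}(f)$, hence $\al_1(S,S^c)[\tau']\lesssim\delta_{fin}(f)$ by Proposition~\ref{alpha_1_equals_alpha_for_sets}. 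A shortest $\tau'$-closed difference representation of $S$ therefore has length $\lesssim\delta_{fin}(f)$ with all terms in ${\boldsymbol\Pi^0_\xi}(\tau)$, so $\al_\xi(\{f\le p\},\{f\ge q\})\le\al_\xi(S,S^c)\lesssim\delta_{fin}(f)$; taking the supremum over $p<q$ gives the inequality.

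For $\delta_{fin}(f)\lesssim\al_\xi(f)$ I would run the proof of Lemma~\ref{f_n_uniformly_conv} after the same reduction: fix $\eps>0$, use the dyadic levels $p_k=k\eps/2$, and pick $A_k\in{\boldsymbol\Delta^0_{\xi+1}}(\tau)$ separating $\{f\le p_k\}$ from $\{f\ge p_{k+1}\}$ with a ${\boldsymbol\Pi^0_\xi}(\tau)$ difference representation of length $\le\al_\xi(f)$ (only finitely many $A_k$ differ from $\emptyset$ and $X$, as $f$ is bounded); collect all the terms into $\mathcal C$ and pass to $\tau'$. Working in $(X,\tau')$, replace $A_k$ by the increasing family $A_k'=\bigcup_{j\le k}A_j$ (a finite union, still ${\boldsymbol\Delta^0_2}(\tau')$, with $\{f\le p_k\}\subseteq A_k'\subseteq\{f<p_{k+1}\}$ since each $A_j\subseteq\{f<p_{j+1}\}$), and set $H_k=A_k'\setminus A_{k-1}'$. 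This is a finite ${\boldsymbol\Delta^0_2}(\tau')$ partition of $X$ with $|f-\sum_k p_{k-1}\chi_{H_k}|\le\eps$, and two applications of the Boolean‑closure lemma give $\al(H_k,H_k^c)[\tau']\lesssim\al_\xi(f)$, hence $\al_1(H_k,H_k^c)[\tau']\lesssim\al_\xi(f)$. By (i)--(ii), $H_k\in{\boldsymbol\Delta^0_{\xi+1}}(\tau)$ and a shortest $\tau'$-closed difference representation of $H_k$ yields $\al_\xi(H_k,H_k^c)\lesssim\al_\xi(f)$, so this partition witnesses $\delta_{fin}(f)\le\max_k\al_\xi(H_k,H_k^c)\lesssim\al_\xi(f)$ for the given $\eps$; the supremum over $\eps>0$ finishes the proof.

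I expect the only real difficulty to be the change‑of‑topology lemma of the first paragraph, i.e.\ verifying that the prescribed ${\boldsymbol\Pi^0_\xi}(\tau)$ data can be made closed in a Polish refinement without creating $\tau'$-open sets outside ${\boldsymbol\Sigma^0_\xi}(\tau)$; everything after that is a faithful transcription of the Baire class $1$ arguments (Proposition~\ref{alpha_1_equals_alpha_for_sets}, the Boolean‑closure lemma in the proof of Theorem~\ref{a=b=g_bounded}, and the construction in Lemma~\ref{f_n_uniformly_conv}). Alternatively, one can bypass the change of topology by first introducing the oscillation rank $\beta_\xi$ on $\iB_\xi$, proving its essential additivity exactly as in Theorem~\ref{beta_gamma_sum} via Proposition~\ref{ugly_lemma}, and then deducing $\al_\xi(\chi_{A\cap B})\lesssim\max\{\al_\xi(\chi_A),\al_\xi(\chi_B)\}$ from $\al_\xi\le\beta_\xi$ together with the containment $D_{\chi_{A\cap B},\eps}(F)\subseteq D_{\chi_A+\chi_B,\eps}(F)$ as in the proof of Theorem~\ref{a=b=g_bounded}; this Boolean‑closure statement is the one substantive input, the remainder of the argument being unchanged.
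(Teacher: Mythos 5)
Your proposal is correct, and its combinatorial skeleton (dyadic levels $p_k=k\eps/2$ with separating sets $A_k$ and rings $H_k=A_k\setminus A_{k-1}$ for one direction; the union of the partition pieces meeting $\{f\le p\}$, resp.\ with $c_n\le p+\eps$, for the other) is the same as the paper's. Where you diverge is in the key lemma: the paper simply invokes the forward-referenced Corollary~\ref{c:a_ess_lin} (essential linearity of $\alpha_\xi$ for bounded functions), whose proof routes through $\alpha_\xi\approx\alpha^*_\xi$, $\alpha^*_\xi\approx\beta^*_\xi$ for bounded functions, and the linearity of $\beta^*_\xi$; you instead inline the change of topology for the finitely many ambiguous sets at hand and apply the Boolean-closure lemma from the proof of Theorem~\ref{a=b=g_bounded} in the refined topology, then translate back via Proposition~\ref{alpha_1_equals_alpha_for_sets}. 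This buys self-containedness (no forward reference to Section~\ref{s:pos}) and uses only the characteristic-function case of the hard direction of Theorem~\ref{a=b=g_bounded}, at the cost of redoing by hand the reduction that Theorem~\ref{alpha_star_equals_alpha} packages once and for all. Two small points: your ``one-sided'' change-of-topology should be justified as the paper does it, namely by writing each prescribed ${\boldsymbol\Pi^0_\xi}(\tau)$ set as a countable intersection of ${\boldsymbol\Delta^0_\xi}(\tau)$ sets and making \emph{those} clopen via Kuratowski's theorem \cite[22.18]{K} (naively declaring only the complements open does not obviously produce a Polish topology, whereas a ${\boldsymbol\Delta^0_\xi}$ set and its complement are both ${\boldsymbol\Sigma^0_\xi}$, so clopenizing them keeps $\tau'\subseteq{\boldsymbol\Sigma^0_\xi}(\tau)$); and the replacement of $A_k$ by $\bigcup_{j\le k}A_j$ is redundant, since $A_{k-1}\subseteq\{f<p_k\}\subseteq\{f\le p_k\}\subseteq A_k$ makes the $A_k$ automatically increasing.
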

  \begin{proof}  
    Let $f$ be an arbitrary bounded Baire class $\xi$ function. 
    First we prove that $\delta_{fin} \lesssim \alpha_\xi(f)$. 
    For a fixed 
    $\varepsilon > 0$ let the numbers $p_n$ be defined by 
    $p_{n} = n \cdot \frac{\varepsilon}{2}$ for 
    every $n \in \mathbb{Z}$. The sets 
    $\{f \le p_n\}$ and $\{f \ge p_{n + 1}\}$ are disjoint 
    ${\boldsymbol \Pi^0_{\xi +1}}$ sets, hence they can be separated 
    by a set $A_n \in {\boldsymbol \Delta^0_{\xi + 1}}$ with 
    $\alpha_\xi(A_n, A_n^c) \le \alpha_\xi(f)$. Now let 
    $H_n = A_n \setminus A_{n - 1}$. Since $f$ is bounded, 
    $H_n = \emptyset$ for all but finitely many $n \in \omega$. Clearly, these sets form a partition, and
    $g = \sum_{n \in \mathbb{Z}} p_n \cdot \chi_{H_n}$ is $\eps$-close to $f$.
    
    We will prove in Corollary \ref{c:a_ess_lin} below that $\alpha_\xi$ is essentially linear for bounded functions. Therefore we obtain
       $\alpha_\xi(H_n, H_n^c) = \alpha_\xi(\chi_{H_n}) = 
    \alpha_\xi(\chi_{A_n} - \chi_{A_{n - 1}}) \lesssim 
    \max\{\alpha_\xi(\chi_{A_n}), \alpha_\xi(\chi_{A_{n - 1}})\}  = \max\{\alpha_\xi(A_n, A_n^c), \alpha_\xi(A_{n - 1}, A_{n-1}^c)\}
    \le \alpha_\xi(f)$, proving $\delta_{fin} \lesssim \alpha_\xi(f)$.
    
    Now we prove the other direction. Let $p < q$ be arbitrary 
    rational numbers, it is enough to prove that there is a 
    set $H \in {\boldsymbol \Delta^0_{\xi + 1}}$ separating the 
    level sets $\{f \le p\}$ and $\{f \ge q\}$ with 
    $\alpha_\xi(H, H^c) \le \delta_{fin}(f)$. Now set 
    $\varepsilon = \frac{q - p}{2}$. From the definition of 
    $\delta_{fin}$, we can find a finite partition 
    $X = H_0 \cup \dots \cup H_N$ 
    into disjoint ${\boldsymbol \Delta^0_{\xi + 1}}$ sets and 
    $c_n \in \R$ with $g = \sum_{n = 0}^N c_n \cdot \chi_{H_n}$ 
    satisfying $|f - g| < \varepsilon$ and 
    $\alpha_\xi(H_n, H_n^c) \le \delta_{fin}(f)$ for $n \le N$. 
    
    Let $A = \{n \le N : H_n \cap \{f \le p\} \neq \emptyset\}$ and 
    $H = \bigcup_{n \in A} H_n$. Clearly, $\{f \le p\} \subseteq H$.
    Moreover, no $H_n$ can intersect both $\{f \le p\}$ and $\{f \ge q\}$, 
    since $g$ is constant on $H_n$ and $|f - g| < \varepsilon = \frac{q - p}{2}$. Therefore 
    $H \cap \{f \ge q\} = \emptyset$. Using the essential linearity of  $\alpha_\xi$ for bounded functions again we obtain $\alpha_\xi(H, H^c) = \alpha_\xi(\chi_H) 
    \lesssim \max \{\alpha_\xi(\chi_{H_n}) : n \in A\} = \max \{\alpha_\xi(H_n, H^c_n) : n \in A\} \le 
    \delta_{fin}(f)$, completing the proof. 
  \end{proof}

\section{Well-behaved ranks on the Baire class $\xi$ functions}
\lab{s:pos}

  In this section we finally show that there actually exist ranks with very nice properties. Two of these ranks will answer Question \ref{q:KL} and Question \ref{q:EL}. Throughout the section, let $1 \le \xi < \om_1$ be fixed.
  
  Let $f$ be of Baire class $\xi$. Let 
  $$
    T_{f, \xi} = \{\tau':\tau' \supseteq \tau \text{ Polish}, 
    \tau' \subseteq {\boldsymbol\Sigma^0_\xi}(\tau), 
    f \in \mathcal{B}_1(\tau')\}.
  $$
  So $T_{f, \xi}$ is the set of those Polish refinements of the 
  original topology that are subsets of the 
  ${\boldsymbol\Sigma^0_\xi}$ sets turning $f$ to a Baire class $1$ 
  function. 

\begin{remark} 
\lab{r:T1}
Clearly, $T_{f, 1} = \{\tau\}$ for every Baire class 1 function $f$.
\end{remark}

In order to show that the ranks we are about to construct are well-defined, we need the following proposition.

\begin{proposition}
$T_{f, \xi} \neq \emptyset$ for every Baire class $\xi$ function $f$.
\end{proposition}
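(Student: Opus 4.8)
The plan is to build $\tau'$ directly from the level sets of $f$. Since $f$ is of Baire class $\xi$, for every rational $c$ the sets $\{f<c\}$ and $\{f>c\}$ belong to ${\boldsymbol\Sigma^0_{\xi+1}}(\tau)$, hence each is a countable union $\bigcup_n P_n$ of ${\boldsymbol\Pi^0_\xi}(\tau)$ sets; collect all these ${\boldsymbol\Pi^0_\xi}(\tau)$ sets (over all rational $c$) into one countable family $\mathcal{P}$. If we manage to find a Polish topology $\tau'\supseteq\tau$ with $\tau'\subseteq{\boldsymbol\Sigma^0_\xi}(\tau)$ in which every member of $\mathcal{P}$ is closed, then each $\{f<c\}$ and each $\{f>c\}$ becomes a $\tau'$-$F_\sigma$ set, i.e.\ $f\in\mathcal{B}_1(\tau')$, so $\tau'\in T_{f,\xi}$ and we are done. (For $\xi=1$ the members of $\mathcal{P}$ are already $\tau$-closed and $\tau'=\tau$ works, in accordance with Remark \ref{r:T1}.) Thus it suffices to prove the following change-of-topology lemma, applied to $\mathcal{A}=\{P^c:P\in\mathcal{P}\}$: \emph{for every countable family $\mathcal{A}$ of ${\boldsymbol\Sigma^0_\xi}(\tau)$ sets there is a Polish topology $\tau'\supseteq\tau$ with $\tau'\subseteq{\boldsymbol\Sigma^0_\xi}(\tau)$ in which each $A\in\mathcal{A}$ is open.}

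I would prove this lemma by transfinite induction on $\xi$; the case $\xi=1$ is trivial since then $\mathcal{A}\subseteq\tau$ and $\tau'=\tau$ works. Let $\xi\ge2$. Write each $A\in\mathcal{A}$ as $\bigcup_m C^A_m$ with $C^A_m\in{\boldsymbol\Pi^0_{\eta}}(\tau)$ for suitable $1\le\eta<\xi$, and enumerate all the resulting sets as $\{C_j\}_{j\in\omega}$, say $C_j\in{\boldsymbol\Pi^0_{\eta_j}}(\tau)$; it is then enough to make every $C_j$ $\tau'$-open. Fix $j$. Since $C_j^c\in{\boldsymbol\Sigma^0_{\eta_j}}(\tau)$ and $\eta_j<\xi$, the inductive hypothesis at level $\eta_j$, applied over $(X,\tau)$, yields a Polish $\sigma_j\supseteq\tau$ with $\sigma_j\subseteq{\boldsymbol\Sigma^0_{\eta_j}}(\tau)$ in which $C_j^c$ is open, so that $C_j$ is $\sigma_j$-closed; clopenising the single closed set $C_j$ over $(X,\sigma_j)$ by the standard construction (see \cite[\S 13]{K}) produces a Polish $\rho_j\supseteq\sigma_j$ in which $C_j$ is clopen, and inspecting the explicit basis of $\rho_j$ gives $\rho_j\subseteq{\boldsymbol\Sigma^0_{\eta_j+1}}(\tau)\subseteq{\boldsymbol\Sigma^0_\xi}(\tau)$. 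Let $\tau'$ be the topology generated by $\bigcup_j\rho_j$. Then each $C_j$, hence each $A\in\mathcal{A}$, is $\tau'$-open; since $\tau\subseteq{\boldsymbol\Sigma^0_\xi}(\tau)$ and ${\boldsymbol\Sigma^0_\xi}$ is closed under finite intersections and countable unions, the (second countable) topology $\tau'$ is contained in ${\boldsymbol\Sigma^0_\xi}(\tau)$; and $\tau'$ is Polish, because the finite joins $\rho_1\vee\cdots\vee\rho_j$ form an increasing sequence of Polish topologies — each homeomorphic to the diagonal of a finite product of Polish spaces, which is closed since all $\rho_i$ refine the Hausdorff $\tau$ — all contained in the Borel $\sigma$-algebra of $\tau$, so the relevant result from \cite[\S 13]{K} applies.

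The one place where genuine care is needed is keeping $\tau'$ inside ${\boldsymbol\Sigma^0_\xi}(\tau)$. One cannot simply ``declare the sets of $\mathcal{A}$ to be open'', since adjoining a Borel set to a Polish topology as an open set need not give a Polish topology (for instance making $\mathbb{Q}\subseteq\mathbb{R}$ open destroys Polishness), so one is forced to clopenise; but clopenising the relevant sets one after another, each time over the previously refined topology, makes the Borel complexity of the open sets grow and overshoots ${\boldsymbol\Sigma^0_\xi}(\tau)$ already when $\xi=\omega$. The construction above sidesteps this by carrying out every clopenisation over the \emph{original} topology $\tau$, at a level strictly below $\xi$ supplied by the induction, and only afterwards taking the join of the resulting countably many refinements; the bookkeeping that $\eta_j+1\le\xi$ is exactly what makes this possible. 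All the remaining points — that $f$ is of Baire class $1$ once its level sets are $\tau'$-$F_\sigma$, and the routine verifications of the closure properties of ${\boldsymbol\Sigma^0_\xi}$ and of the computation of the basis of $\rho_j$ — are standard.
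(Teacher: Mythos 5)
Your proof is correct and follows essentially the same route as the paper: decompose the level sets of $f$ into countably many pieces of Borel complexity strictly below $\xi+1$, then pass to a finer Polish topology contained in ${\boldsymbol \Sigma^0_\xi}(\tau)$ that makes those pieces closed (resp.\ clopen), so that $f$ becomes Baire class $1$ and the new topology witnesses $T_{f,\xi}\neq\emptyset$. The only real difference is that the paper descends one level further, to countably many ${\boldsymbol \Delta^0_\xi}(\tau)$ sets, and then simply invokes Kuratowski's change-of-topology theorem \cite[22.18]{K}, which is exactly the content of the lemma you reprove by transfinite induction from the \S 13 results; citing it would let you drop your second and third paragraphs entirely.
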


\begin{proof}
By the previous remark we may assume $\xi \ge 2$.
For every rational $p$ the level sets $\{f \le p\}$ and 
    $\{f \ge p\}$ are ${\boldsymbol \Pi^0_{\xi + 1}}$ sets, hence 
    they 
    are countable intersections of ${\boldsymbol \Sigma^0_{\xi}}$ 
    sets. In turn, these ${\boldsymbol \Sigma^0_{\xi}}$ sets are countable unions of
    sets from $\bigcup_{\eta < \xi}{\boldsymbol \Pi^0_\eta}(\tau)$. 
    Clearly, $\bigcup_{\eta < \xi}{\boldsymbol \Pi^0_\eta}(\tau) 
    \subseteq {\boldsymbol \Delta^0_\xi}$ for $\xi \ge 2$.
    By Kuratowski's theorem \cite[22.18]{K}, there exists a Polish refinement 
    $\tau' \subseteq {\boldsymbol \Sigma^0_\xi}(\tau)$ of $\tau$ 
    for which all these countable many ${\boldsymbol \Delta^0_{\xi}}$ sets 
    are in ${\boldsymbol \Delta^0_{1}(\tau')}$. 
    Then for every rational $p$ the 
    level sets are now ${\boldsymbol \Pi^0_{2}(\tau')}$ sets, and the 
    same holds for irrational numbers too, since these level sets can 
    be written as countable intersection of rational level sets, 
    proving $T_{f, \xi} \neq \emptyset$.
\end{proof}

  As in the case of limit ranks, we now define a rank on the 
  Baire class $\xi$ functions starting from an arbitrary rank on 
  the Baire class 1 functions. 
  
  \begin{definition}
    Let $\rho$ be a rank on the Baire class 1 functions. Then for a Baire class $\xi$ function 
    $f$ let  
    \begin{equation}
      \label{star_def}
      \rho_\xi^*(f) = \min_{\tau' \in T_{f, \xi}} \rho_{\tau'}(f), 
    \end{equation}
    where $\rho_{\tau'}(f)$ is just the $\rho$ rank of $f$ in the 
    $\tau'$ topology.
  \end{definition}

\begin{remark}
  From Remark \ref{r:T1} it is clear that 
  $\rho^*_1 = \rho$ for every $\rho$. 
\end{remark}

\begin{proposition}
Let $\rho$ and $\eta$ be ranks on the Baire class 1 functions. If $\rho = \eta$, or $\rho \le \eta$, or $\rho \approx \eta$, or $\rho \lesssim \eta$ then $\rho^*_\xi = \eta^*_\xi$, or $\rho^*_\xi \le \eta^*_\xi$, or $\rho^*_\xi \approx \eta^*_\xi$, or $\rho^*_\xi \lesssim \eta^*_\xi$, respectively. Moreover, the same implications hold relative to the bounded Baire class 1 functions.
\end{proposition}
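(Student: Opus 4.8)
The plan is to exploit the fact that the index set $T_{f, \xi}$ over which the minimum in \eqref{star_def} is formed depends only on $f$ and $\xi$, not on which rank on the Baire class $1$ functions we feed into the $*$-construction; recall also that $T_{f,\xi} \neq \emptyset$ by the previous proposition, so $\rho^*_\xi(f)$ and $\eta^*_\xi(f)$ are genuine minima, attained since the ranks take values in the well-ordered set $\om_1$. Throughout I read a hypothesis such as ``$\rho \le \eta$'' (and likewise ``$\rho \lesssim \eta$'', etc.) as the assertion that the corresponding relation holds between the $\rho$-rank and the $\eta$-rank of every Baire class $1$ function on every Polish space; in particular it holds in each $\tau' \in T_{f,\xi}$, where indeed $f \in \mathcal{B}_1(\tau')$.

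First I would treat the case $\rho \le \eta$. Fix a Baire class $\xi$ function $f$ and choose $\tau'_0 \in T_{f,\xi}$ realizing the minimum defining $\eta^*_\xi(f)$. Then $\rho^*_\xi(f) \le \rho_{\tau'_0}(f) \le \eta_{\tau'_0}(f) = \eta^*_\xi(f)$, the middle inequality being $\rho \le \eta$ applied in the topology $\tau'_0$. Applying this to both $\rho \le \eta$ and $\eta \le \rho$ yields the case $\rho = \eta$.

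Next, for $\rho \lesssim \eta$, fix $f$ and let $1 \le \zeta < \om_1$ satisfy $\eta^*_\xi(f) \le \om^\zeta$. Pick $\tau'_0 \in T_{f,\xi}$ with $\eta_{\tau'_0}(f) = \eta^*_\xi(f) \le \om^\zeta$; since $\rho_{\tau'_0}(f) \lesssim \eta_{\tau'_0}(f)$, the definition of $\lesssim$ gives $\rho_{\tau'_0}(f) \le \om^\zeta$, whence $\rho^*_\xi(f) \le \rho_{\tau'_0}(f) \le \om^\zeta$. As $\zeta$ was arbitrary this shows $\rho^*_\xi(f) \lesssim \eta^*_\xi(f)$, and the case $\rho \approx \eta$ follows by symmetry.

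Finally, the versions relative to the bounded functions need no new argument: boundedness of $f$ means $\sup|f| < \infty$, which does not depend on the topology, so a bounded Baire class $\xi$ function is bounded in every $\tau' \in T_{f,\xi}$, and the four arguments above go through unchanged using the hypotheses restricted to bounded Baire class $1$ functions. I do not expect any genuine obstacle in this proposition; the only points deserving a word of care are the non-emptiness and attainment of the minima (handled above) and the convention that the hypothesized relations between $\rho$ and $\eta$ are understood to hold in arbitrary Polish topologies, so that they may be invoked in each $\tau' \in T_{f,\xi}$.
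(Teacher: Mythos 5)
Your proposal is correct and follows essentially the same route as the paper: choose a topology $\tau'\in T_{f,\xi}$ optimal for $\eta$, apply the hypothesized relation in that topology, and conclude via $\rho^*_\xi(f)\le\rho_{\tau'}(f)$. The only cosmetic difference is that you unfold the definition of $\lesssim$ where the paper simply chains $\le$ and $\lesssim$ by transitivity; the treatment of the bounded case and the convention that the hypothesis is meant in every Polish topology match the paper's (implicit) reading.
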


\begin{proof}
The statement for $=$ and $\le$ is immediate from the definitions, and the case of $\approx$ obviously follows from the case $\lesssim$, so it suffices to prove this latter case only.
So assume $\rho \lesssim \eta$ (or $\rho \lesssim \eta$ on the bounded Baire class 1 functions).
Choose an optimal $\tau' \in T_{f, \xi}$ for $\eta$, 
    that is, $\eta^*_\xi(f) = \eta_{\tau'}(f)$. 
    Then
    $\rho^*_\xi(f) \le \rho_{\tau'}(f) \lesssim \eta_{\tau'}(f) = 
    \eta^*_\xi(f)$, completing the proof.
\end{proof}

Then the following two corollaries are immediate from Theorem \ref{alpha_le_beta_le_gamma}, and Theorem \ref{a=b=g_bounded}.

\begin{corollary}
\lab{c:a*b*g*}
$\alpha^*_\xi \le \beta^*_\xi \le \gamma^*_\xi$.
\end{corollary}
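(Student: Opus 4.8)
The plan is to derive this directly from Theorem \ref{alpha_le_beta_le_gamma} together with the Proposition proved just above. First I would recall that Section \ref{s:KL} established that all of the basic results of Kechris and Louveau, and Theorem \ref{alpha_le_beta_le_gamma} in particular, hold over an arbitrary Polish space, with no compactness hypothesis. Consequently the chain of inequalities $\alpha(f) \le \beta(f) \le \gamma(f)$ is valid for every Baire class $1$ function $f$ on every Polish space; that is, regarded as ranks on the Baire class $1$ functions in the sense of the preceding discussion, $\alpha$, $\beta$, $\gamma$ satisfy $\alpha \le \beta$ and $\beta \le \gamma$.

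Next I would simply invoke the Proposition immediately preceding the corollary, in the case of the relation $\le$: if $\rho \le \eta$ as ranks on the Baire class $1$ functions, then $\rho^*_\xi \le \eta^*_\xi$. Applying this with $(\rho,\eta) = (\alpha,\beta)$ yields $\alpha^*_\xi \le \beta^*_\xi$, and applying it with $(\rho,\eta) = (\beta,\gamma)$ yields $\beta^*_\xi \le \gamma^*_\xi$. Concatenating the two inequalities gives $\alpha^*_\xi \le \beta^*_\xi \le \gamma^*_\xi$, which is precisely the assertion of the corollary.

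I do not expect any real obstacle here; the one point worth making explicit is that the inequality $\alpha \le \beta \le \gamma$ must be available not only in the original topology $\tau$ but in every Polish refinement $\tau' \in T_{f,\xi}$ entering the definition \eqref{star_def} of the starred ranks. This is exactly what the non-compact version of Theorem \ref{alpha_le_beta_le_gamma} provides, and it is the same fact that the proof of the Proposition uses when it passes to an optimal $\tau'$ for $\eta$ and then estimates $\rho^*_\xi(f) \le \rho_{\tau'}(f) \le \eta_{\tau'}(f) = \eta^*_\xi(f)$. Hence the corollary follows with no further computation.
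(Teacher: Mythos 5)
Your proposal is correct and is exactly the paper's argument: the paper states that this corollary is immediate from Theorem \ref{alpha_le_beta_le_gamma} via the preceding Proposition (the $\le$ case), just as you describe. Your additional remark that $\alpha \le \beta \le \gamma$ must hold in every Polish refinement $\tau' \in T_{f,\xi}$ is the right point to flag, and it is supplied by the non-compact version of Theorem \ref{alpha_le_beta_le_gamma} established in Section \ref{s:KL}.
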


\begin{corollary}
\label{a*=b*=g*_bounded}
$\alpha^*_\xi(f) \approx \beta^*_\xi(f) \approx \gamma^*_\xi(f)$ 
for every bounded Baire class $\xi$ function $f$.
\end{corollary}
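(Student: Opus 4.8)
The plan is to obtain this as a formal consequence of Theorem~\ref{a=b=g_bounded} and the transfer Proposition immediately above (the one comparing $\rho$ with $\rho^*_\xi$), together with the reduction already built into Theorem~\ref{alpha_le_beta_le_gamma}. Since Corollary~\ref{c:a*b*g*} already gives $\alpha^*_\xi \le \beta^*_\xi \le \gamma^*_\xi$, and $\le$ trivially implies $\lesssim$, the only thing left to prove is $\gamma^*_\xi(f) \lesssim \alpha^*_\xi(f)$ for every bounded $f \in \iB_\xi$.

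First I would record the trivial but essential point that refining the topology does not alter the values of $f$: if $f$ is a \emph{bounded} Baire class $\xi$ function on $(X,\tau)$ and $\tau' \in T_{f,\xi}$, then $(X,\tau')$ is again Polish and $f$ is a \emph{bounded} Baire class $1$ function with respect to $\tau'$. Hence Theorem~\ref{a=b=g_bounded}, which is stated for an arbitrary Polish space, applies in $(X,\tau')$ and gives $\gamma \lesssim \alpha$ on the bounded Baire class $1$ functions there. This is precisely the hypothesis of the ``moreover'' clause of the Proposition (applied with $\rho = \gamma$ and $\eta = \alpha$), which therefore yields $\gamma^*_\xi \lesssim \alpha^*_\xi$ on the bounded Baire class $\xi$ functions. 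Spelled out, as in the proof of that Proposition: choose $\tau' \in T_{f,\xi}$ realizing $\alpha^*_\xi(f) = \alpha_{\tau'}(f)$; then $\gamma^*_\xi(f) \le \gamma_{\tau'}(f) \lesssim \alpha_{\tau'}(f) = \alpha^*_\xi(f)$. Combining this with Corollary~\ref{c:a*b*g*} gives the chain $\alpha^*_\xi \approx \beta^*_\xi \approx \gamma^*_\xi$ on bounded Baire class $\xi$ functions, which is the assertion.

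There is essentially no obstacle at this level: all the genuine difficulty has already been expended in proving Theorem~\ref{a=b=g_bounded} (whose non-compact version required new ideas, as noted in the introduction) and in establishing the transfer Proposition. The single point that warrants a moment's care is that Theorem~\ref{a=b=g_bounded} has to be invoked not only for the given space $(X,\tau)$ but for each refinement $(X,\tau')$ with $\tau' \in T_{f,\xi}$; this is legitimate exactly because that theorem holds in every Polish space, and each such $(X,\tau')$ is one. Everything else is bookkeeping.
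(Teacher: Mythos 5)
Your proposal is correct and follows essentially the same route as the paper: the authors also obtain this corollary immediately from Theorem~\ref{a=b=g_bounded} together with the transfer proposition comparing $\rho$ and $\rho^*_\xi$ (using its ``moreover'' clause for bounded functions), exactly as you spell out. Your explicit remark that boundedness and the validity of Theorem~\ref{a=b=g_bounded} persist under passage to each refined Polish topology $\tau' \in T_{f,\xi}$ is the one point of care, and you handle it correctly.
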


As in the case of Question \ref{q:b=g} (the case of Baire class $1$ functions), we do not know whether $\beta^*_\xi(f) \approx \gamma^*_\xi(f)$ holds for arbitrary Baire class $\xi$ functions. 

\begin{question}
 Does $\beta^*_\xi(f) \approx \gamma^*_\xi(f)$ hold for every Baire class $\xi$ function? 
\end{question}

Note that by repeating the argument of Remark \ref{alpha_not_sum} one can show that $\alpha^*_\xi$ differs from $\beta^*_\xi$ and $\gamma^*_\xi$. It is easy to see that an affirmative answer to Question \ref{q:b=g} would imply an affirmative answer to the last question, however, the other direction is not clear.

%It was shown in \cite{KL} that (if $X$ is compact then) $\beta = \gamma$ for bounded Baire class 1 functions. However, the following question is open.

%  \begin{question}
%    Let $1 \le \xi < \om_1$. Does $\beta_\xi^* \approx \gamma_\xi^*$ hold for \emph{bounded} Baire class $\xi$ functions? What if $2 \le \xi < \om_1$ 
%and we also assume that the underlying space $X$ is compact?
%  \end{question}

  \begin{theorem}
    \label{translation_invariance_star}
    If $X$ is a Polish group then the ranks $\alpha^*_\xi$, $\beta^*_\xi$ and $\gamma^*_\xi$ are 
    translation invariant. 
  \end{theorem}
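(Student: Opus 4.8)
The plan is to exploit that, in a Polish group, the left and right translations by a fixed element are homeomorphisms of $(X,\tau)$, that any such homeomorphism acts naturally on the family $T_{f,\xi}$ of admissible topologies, and that the ranks $\alpha$, $\beta$, $\gamma$ on the Baire class $1$ functions are defined purely in terms of the topology of the underlying space together with the function, hence are invariant under such homeomorphisms. Throughout, let $\rho$ denote one of the three ranks $\alpha$, $\beta$, $\gamma$.

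First I would fix $x_0\in X$ and let $h$ be any of the maps $L_{x_0}$, $R_{x_0}$, $L_{x_0^{-1}}$, $R_{x_0^{-1}}$; each is a homeomorphism of $(X,\tau)$ onto itself and therefore preserves the Borel hierarchy, i.e.\ $U\in{\boldsymbol\Sigma^0_\eta}(\tau)$ iff $h^{-1}(U)\in{\boldsymbol\Sigma^0_\eta}(\tau)$, and likewise for the multiplicative and ambiguous classes. In particular $f\circ h$ is again of Baire class $\xi$, so $\rho^*_\xi(f\circ h)$ is defined. For a topology $\tau'$ on $X$ put $h^{-1}(\tau')=\{h^{-1}(V):V\in\tau'\}$; this is again a topology, and $h$ is a homeomorphism of $(X,h^{-1}(\tau'))$ onto $(X,\tau')$.

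Next I would verify the key claim: \emph{if $\tau'\in T_{f,\xi}$ then $h^{-1}(\tau')\in T_{f\circ h,\xi}$}. Indeed, $h^{-1}(\tau')\supseteq h^{-1}(\tau)=\tau$ since $\tau\subseteq\tau'$ and $h$ is a $\tau$-homeomorphism; $h^{-1}(\tau')\subseteq{\boldsymbol\Sigma^0_\xi}(\tau)$ by the previous paragraph applied to each member of $\tau'\subseteq{\boldsymbol\Sigma^0_\xi}(\tau)$; $h^{-1}(\tau')$ is Polish since $(X,h^{-1}(\tau'))$ is homeomorphic to the Polish space $(X,\tau')$; and $f\circ h\in\mathcal{B}_1(h^{-1}(\tau'))$ since $f\in\mathcal{B}_1(\tau')$ and $h\colon(X,h^{-1}(\tau'))\to(X,\tau')$ is a homeomorphism. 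I would then record the elementary fact, already implicit in the proof of Proposition~\ref{translation_invariance}, that for any homeomorphism $\phi\colon(X,\sigma)\to(X,\sigma')$ and any $g\in\mathcal{B}_1(\sigma')$ one has $\alpha_\sigma(g\circ\phi)=\alpha_{\sigma'}(g)$, and similarly for $\beta$ and $\gamma$, because the derivative operations and the level sets entering the definitions refer only to the topology and to the function. Applying this with $\phi=h$, $\sigma=h^{-1}(\tau')$, $\sigma'=\tau'$, $g=f$ yields $\rho_{h^{-1}(\tau')}(f\circ h)=\rho_{\tau'}(f)$.

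Combining the two observations, for every such $h$ and every Baire class $\xi$ function $f$,
\begin{align*}
\rho^*_\xi(f\circ h) &= \min_{\tau''\in T_{f\circ h,\xi}}\rho_{\tau''}(f\circ h) \le \min_{\tau'\in T_{f,\xi}}\rho_{h^{-1}(\tau')}(f\circ h) \\
&= \min_{\tau'\in T_{f,\xi}}\rho_{\tau'}(f) = \rho^*_\xi(f),
\end{align*}
where the inequality uses that $\{h^{-1}(\tau'):\tau'\in T_{f,\xi}\}\subseteq T_{f\circ h,\xi}$. Applying this once with $h=L_{x_0}$ and once with $h=L_{x_0^{-1}}$ to the function $f\circ L_{x_0}$, and using $(f\circ L_{x_0})\circ L_{x_0^{-1}}=f$, gives $\rho^*_\xi(f\circ L_{x_0})=\rho^*_\xi(f)$; the same argument with $R$ in place of $L$ handles right translations. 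The only point that needs a little care is the four-condition bookkeeping in the key claim $h^{-1}(\tau')\in T_{f\circ h,\xi}$, but each condition follows routinely from translations being $\tau$-homeomorphisms preserving the Borel hierarchy, so I do not anticipate a genuine obstacle.
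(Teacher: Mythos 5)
Your proposal is correct and follows essentially the same route as the paper: transport a witness topology $\tau'\in T_{f,\xi}$ by the translation homeomorphism to obtain a member of $T_{f\circ h,\xi}$ with the same rank (the paper writes this as $\tau''=\{U\cdot x_0^{-1}:U\in\tau'\}$, which is exactly your $h^{-1}(\tau')$ for $h=R_{x_0}$), deduce $\rho^*_\xi(f\circ h)\le\rho^*_\xi(f)$, and then apply the same inequality to $f\circ h$ with $h^{-1}$ to get equality. The verification that the rank of a Baire class $1$ function is a topological invariant is left at the same level of detail in both arguments.
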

  \begin{proof}
    Note first that for a Baire class $\xi$ function $f$ and 
    ${x_0} \in X$ the functions $f \circ L_{x_0}$ and 
    $f \circ R_{x_0}$ are also of Baire class $\xi$. 
    We prove the statement only for the rank $\alpha^*_\xi$, 
    because an analogous argument works for the ranks 
    $\beta^*_\xi$ and $\gamma^*_\xi$. 
    
    Let $f$ be a Baire class $\xi$ function and $x_0 \in X$, 
    first we prove that 
    $\alpha^*_\xi(f) \ge \alpha^*_\xi(f \circ R_{x_0})$. 
    Let $\tau' \in T_{f, \xi}$ be arbitrary and consider the 
    topology $\tau'' = \{ U \cdot x_0^{-1} : U \in \tau' \}$. 
    The map $\phi : x \mapsto x \cdot x_0^{-1}$ is a homeomorphism 
    between the spaces $(X, \tau')$ and $(X, \tau'')$, satisfying 
    $f(x) = (f \circ R_{x_0})(\phi(x))$. From this it is clear that 
    $\tau'' \in T_{f \circ R_{x_0}, \xi}$ 
    and since the definition of the 
    rank $\alpha$ depends only on the topology of the space, we have 
    $\alpha_{\tau'}(f) = \alpha_{\tau''}(f \circ R_{x_0})$. Since 
    $\tau' \in T_{f, \xi}$ was arbitrary, the fact that 
    $\alpha^*_\xi(f) \ge \alpha^*_\xi(f \circ R_{x_0})$ easily 
    follows. 
    
    Repeating the argument with the function 
    $f \circ R_{x_0}$ and element $x_0^{-1}$, we have 
    $\alpha^*_\xi(f \circ R_{x_0}) \ge \alpha^*_\xi(f \circ R_{x_0} 
    \circ R_{x_0^{-1}}) = \alpha^*_\xi(f)$, hence 
    $\alpha^*_\xi(f) = \alpha^*_\xi(f \circ R_{x_0})$. 
    For the function $f \circ L_{x_0}$ we can do same using the 
    topology $\tau'' = \{ x_0^{-1} \cdot U : U \in \tau' \}$ and 
    the homeomorphism $\phi : x \mapsto x_0^{-1} \cdot x$, yielding 
    $\alpha^*_\xi(f) = \alpha^*_\xi(f \circ L_{x_0})$. This 
    finishes the proof.
  \end{proof}

  \begin{theorem}
    \label{product_characteristic_star}
    If $f$ is a Baire class $\xi$ function and 
    $F \subseteq X$ is a closed set 
    then $f \cdot \chi_F$ is of Baire class $\xi$, and 
    $\alpha^*_\xi(f \cdot \chi_F) \le 1 + \alpha^*_\xi(f)$, 
    $\beta^*_\xi(f \cdot \chi_F) \le 1 + \beta^*_\xi(f)$ and 
    $\gamma^*_\xi(f \cdot \chi_F) \le 1 + \gamma^*_\xi(f)$.
  \end{theorem}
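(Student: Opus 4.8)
The plan is to reduce the statement to its Baire class $1$ analogue, Theorem \ref{product_characteristic}, exploiting the topology-refinement definition of the starred ranks. The key observation is that the operation $g \mapsto g \cdot \chi_F$ is compatible with passing to any refinement $\tau' \in T_{f,\xi}$: since $\tau' \supseteq \tau$, the $\tau$-closed set $F$ remains closed in $\tau'$, so $\chi_F$ is $\tau'$-upper semicontinuous, hence of $\tau'$-Baire class $1$; and as $f \in \mathcal{B}_1(\tau')$ and the product of two Baire class $1$ functions (in any Polish space) is again Baire class $1$, we get $f \cdot \chi_F \in \mathcal{B}_1(\tau')$. Combined with $\tau' \subseteq {\boldsymbol \Sigma^0_\xi}(\tau)$ this gives $\tau' \in T_{f \cdot \chi_F, \xi}$. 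In particular $T_{f \cdot \chi_F, \xi} \neq \emptyset$; since a function that is of $\tau'$-Baire class $1$ for some Polish $\tau' \subseteq {\boldsymbol \Sigma^0_\xi}(\tau)$ is automatically of $\tau$-Baire class $\xi$, this already establishes $f \cdot \chi_F \in \mathcal{B}_\xi$ (alternatively, this is immediate from $\chi_F \in \mathcal{B}_\xi$ and the closure of $\mathcal{B}_\xi$ under products), so the starred ranks of $f \cdot \chi_F$ are well defined.

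Now I would prove the three inequalities one rank at a time; let $\rho$ be any of $\alpha, \beta, \gamma$. Since $T_{f,\xi} \neq \emptyset$, the minimum in the definition of $\rho^*_\xi(f)$ is attained: fix $\tau' \in T_{f,\xi}$ with $\rho^*_\xi(f) = \rho_{\tau'}(f)$. By the previous paragraph $\tau' \in T_{f \cdot \chi_F, \xi}$, and applying Theorem \ref{product_characteristic} inside the Polish space $(X, \tau')$ — which is legitimate because the Baire class $1$ rank theory of Section \ref{s:KL} holds for arbitrary Polish spaces — to the $\tau'$-Baire class $1$ function $f$ and the $\tau'$-closed set $F$, we obtain $\rho_{\tau'}(f \cdot \chi_F) \le 1 + \rho_{\tau'}(f)$. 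Therefore
\[
  \rho^*_\xi(f \cdot \chi_F) \le \rho_{\tau'}(f \cdot \chi_F) \le 1 + \rho_{\tau'}(f) = 1 + \rho^*_\xi(f).
\]
Taking $\rho = \alpha$, then $\rho = \beta$, then $\rho = \gamma$ yields the three claimed bounds.

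I do not expect a genuine obstacle here: the starred ranks were designed precisely so that structural results pull back from the Baire class $1$ setting almost verbatim, and all that is really required is to check that $F$ survives as a closed set under refinement (automatic, as the topology only grows) and that $f \cdot \chi_F$ legitimately witnesses membership in $T_{f \cdot \chi_F, \xi}$, so that the optimal $\tau'$ for $f$ is also admissible for $f \cdot \chi_F$. The only care needed is bookkeeping around the definitions rather than any new idea.
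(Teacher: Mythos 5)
Your proof is correct and follows essentially the same route as the paper: fix a $\tau' \in T_{f,\xi}$ optimal for $f$, observe that $F$ stays closed and $f \cdot \chi_F$ is Baire class $1$ in $\tau'$ so that $\tau' \in T_{f\cdot\chi_F,\xi}$, and apply Theorem \ref{product_characteristic} inside $(X,\tau')$. Your write-up merely makes explicit a couple of points (that $F$ remains closed under refinement and that $\tau'$ is admissible for $f\cdot\chi_F$) that the paper leaves implicit.
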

  \begin{proof}
    Examining the level sets of the function $f \cdot \chi_F$, it is 
    easy to check that it is of Baire class $\xi$. 
    
    Now let $\tau' \in T_{f, \xi}$ be arbitrary. 
    Clearly, $f \cdot \chi_F$ is of Baire class 1 with 
    respect to $\tau'$, and by 
    Proposition \ref{product_characteristic} we have 
    $\alpha_{\tau'}(f \cdot \chi_F) \le 1 + \alpha_{\tau'}(f)$
    for every $\tau' \in T_{f, \xi}$, hence 
    $\alpha^*_\xi(f \cdot \chi_F) \le 1 + \alpha^*_\xi(f)$. 
    The other two inequalities follow similarly. 
  \end{proof}

  \begin{proposition}
    \label{smaller_class_star}
    If $f$ is a Baire class $\zeta$ function with $\zeta < \xi$ 
    then $\alpha^*_\xi(f) = \beta^*_\xi(f) = \gamma^*_\xi(f) = 1$. 
  \end{proposition}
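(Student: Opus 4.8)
The plan is to prove the stronger statement that $f$ is continuous with respect to some $\tau' \in T_{f,\xi}$; then Proposition \ref{smaller_class}, applied in the Polish space $(X,\tau')$, yields $\alpha_{\tau'}(f) = \beta_{\tau'}(f) = \gamma_{\tau'}(f) = 1$, so that $\alpha^*_\xi(f) \le 1$, $\beta^*_\xi(f) \le 1$ and $\gamma^*_\xi(f) \le 1$ by the definition of the starred ranks. The reverse inequalities are automatic: for any $\tau'' \in T_{f,\xi}$ and any of the three derivatives $D$ associated with $f$ in $(X,\tau'')$ we have $D^0(X) = X \neq \emptyset$ since $X$ is uncountable, so each of $\alpha_{\tau''}(f)$, $\beta_{\tau''}(f)$, $\gamma_{\tau''}(f)$ is at least $1$.

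We may assume $\zeta \ge 1$, since if $\zeta = 0$ then $f$ is continuous with respect to $\tau$ itself and, as $\tau = {\boldsymbol \Sigma^0_1}(\tau) \subseteq {\boldsymbol \Sigma^0_\xi}(\tau)$, we have $\tau \in T_{f,\xi}$ and are done. Fix a countable base $(U_k)_{k \in \N}$ of $\R$. Since $f$ is of Baire class $\zeta$, each $f^{-1}(U_k)$ belongs to ${\boldsymbol \Sigma^0_{\zeta+1}}(\tau) = \left({\boldsymbol \Pi^0_\zeta}(\tau)\right)_\sigma$, so we may write $f^{-1}(U_k) = \bigcup_{m \in \N} P_{k,m}$ with $P_{k,m} \in {\boldsymbol \Pi^0_\zeta}(\tau)$. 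Because $\zeta < \xi$, i.e.\ $\zeta + 1 \le \xi$, we have ${\boldsymbol \Pi^0_\zeta}(\tau) \subseteq {\boldsymbol \Pi^0_\xi}(\tau) \cap {\boldsymbol \Sigma^0_{\zeta+1}}(\tau) \subseteq {\boldsymbol \Pi^0_\xi}(\tau) \cap {\boldsymbol \Sigma^0_\xi}(\tau) = {\boldsymbol \Delta^0_\xi}(\tau)$, so all the countably many sets $P_{k,m}$ lie in ${\boldsymbol \Delta^0_\xi}(\tau)$.

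Now I would argue exactly as in the proof that $T_{f,\xi} \neq \emptyset$: by Kuratowski's theorem \cite[22.18]{K} there is a Polish topology $\tau'$ with $\tau \subseteq \tau' \subseteq {\boldsymbol \Sigma^0_\xi}(\tau)$ in which every $P_{k,m}$ is clopen. Then $f^{-1}(U_k) = \bigcup_{m \in \N} P_{k,m} \in \tau'$ for every $k$, and since the $U_k$ form a base of $\R$ this means precisely that $f$ is $\tau'$-continuous; in particular $f \in \mathcal{B}_1(\tau')$, so $\tau' \in T_{f,\xi}$. Combining this with the first paragraph completes the proof.

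The only delicate point is the complexity bookkeeping ensuring $\tau' \subseteq {\boldsymbol \Sigma^0_\xi}(\tau)$ — that is, the inclusion ${\boldsymbol \Pi^0_\zeta}(\tau) \subseteq {\boldsymbol \Delta^0_\xi}(\tau)$ used above, which is exactly where the hypothesis $\zeta < \xi$ enters; the rest is routine.
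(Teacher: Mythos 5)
Your proof is correct and follows essentially the same route as the paper: reduce to finding a topology $\tau' \in T_{f,\xi}$ making $f$ continuous and then invoke Proposition \ref{smaller_class}. The only difference is that the paper obtains such a $\tau'$ by citing \cite[24.5]{K}, whereas you reprove that fact by hand via Kuratowski's theorem \cite[22.18]{K}, exactly as in the paper's argument that $T_{f,\xi} \neq \emptyset$.
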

  \begin{proof}
    Using Proposition \ref{smaller_class}, it is enough to show 
    that there exists a topology $\tau' \in T_{f, \xi}$ such that 
    $f : (X, \tau') \to \R$ is continuous, and this is clear from 
    \cite[24.5]{K}. 
  \end{proof}
  
  Next we prove a useful lemma, and then investigate further properties 
  of the ranks $\alpha_\xi^*$, $\beta_\xi^*$ and $\gamma_\xi^*$.
  
  \begin{lemma}
    \label{refinement_of_topologies}
    For every $n$ let $\tau_n$ be 
    a Polish refinement of $\tau$ with $\tau_n \subseteq 
    {\boldsymbol \Sigma^0_\xi}(\tau)$. 
    Then there exists a common Polish refinement $\tau'$ of the 
    $\tau_n$'s also satisfying $\tau' \subseteq 
    {\boldsymbol \Sigma^0_\xi}(\tau)$. 
  \end{lemma}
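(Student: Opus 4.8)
The plan is to use the standard tool that a countable family of Polish topologies on $X$, each contained in the Borel $\sigma$-algebra of $\tau$, admits a common Polish refinement which is still Borel over $\tau$; the only thing that needs care is keeping the refinement inside ${\boldsymbol \Sigma^0_\xi}(\tau)$ rather than merely inside the Borel sets. First I would recall that for a single Polish topology $\tau_n\supseteq\tau$ one may fix a countable basis $\{U_{n,k}:k\in\omega\}$ of $(X,\tau_n)$, and that by the Kuratowski change-of-topology machinery (see \cite[13.1--13.3]{K}) the topology $\tau'$ generated by $\tau\cup\bigcup_n\{U_{n,k}:k\in\omega\}$ is again Polish. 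So $\tau'$ is a common Polish refinement of all the $\tau_n$'s; the remaining point is the pointclass bound.

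For the pointclass bound I would argue as follows. Each basic open set $U_{n,k}$ lies in $\tau_n\subseteq{\boldsymbol \Sigma^0_\xi}(\tau)$, and $\tau\subseteq{\boldsymbol \Sigma^0_\xi}(\tau)$ trivially (for $\xi\ge 1$). Now $\tau'$ is generated by the countably many sets $\tau$-basic opens together with the $U_{n,k}$'s, so every $\tau'$-open set is a countable union of finite intersections of sets from this countable generating family. Since ${\boldsymbol \Sigma^0_\xi}(\tau)$ is closed under countable unions and, being a class with $\xi\ge 1$, is closed under finite intersections as well, every $\tau'$-open set is in ${\boldsymbol \Sigma^0_\xi}(\tau)$, i.e.\ $\tau'\subseteq{\boldsymbol \Sigma^0_\xi}(\tau)$, as required.

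The only mild subtlety — and the step I would be most careful about — is that the Kuratowski lemma as usually stated guarantees a Polish topology when one \emph{adjoins a single Borel set} to a Polish topology, so to handle countably many new sets one must either iterate (adjoining $U_{n,k}$ one at a time and taking care that a countable ascending union of Polish topologies generated this way is again Polish — this is exactly \cite[13.3]{K}) or invoke the version of the lemma for countably many sets directly. Either way the construction stays inside ${\boldsymbol \Sigma^0_\xi}(\tau)$ by the closure argument above, since at no stage do we leave that class. This completes the proof.
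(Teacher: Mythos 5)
Your proof is correct, and it takes a genuinely different route from the paper's. The paper first disposes of $\xi=1$ (where $\tau_n\subseteq{\boldsymbol \Sigma^0_1}(\tau)=\tau$ forces $\tau_n=\tau$), and for $\xi\ge 2$ writes each basic open set of each $\tau_n$ as a countable union of sets from $\bigcup_{\eta<\xi}{\boldsymbol \Pi^0_\eta}(\tau)\subseteq{\boldsymbol \Delta^0_\xi}(\tau)$, then invokes Kuratowski's theorem \cite[22.18]{K} to get a Polish $\tau'\subseteq{\boldsymbol \Sigma^0_\xi}(\tau)$ making those countably many ${\boldsymbol \Delta^0_\xi}$ sets clopen; each $\tau_n$-basic open set is then a union of $\tau'$-clopen sets, so $\tau'$ refines every $\tau_n$. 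You instead take the join of the $\tau_n$'s directly: it is Polish by the lemma on countably many finer Polish topologies (the diagonal embedding of $X$ into $\prod_n(X,\tau_n)$ has closed image), and the containment in ${\boldsymbol \Sigma^0_\xi}(\tau)$ is checked by hand, using that the join is generated by countably many ${\boldsymbol \Sigma^0_\xi}(\tau)$ sets and that this class is closed under finite intersections and countable unions. Your argument is uniform in $\xi$, needs no decomposition into ${\boldsymbol \Delta^0_\xi}$ pieces, and obtains the pointclass bound from elementary closure properties rather than from the conclusion of 22.18; it also produces the minimal common refinement, whereas the paper's $\tau'$ may be strictly finer. Both arguments are sound.

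One caution about your closing hedge: the ``adjoin the $U_{n,k}$ one at a time'' alternative is not as innocuous as you suggest, and the claim that ``at no stage do we leave that class'' would fail for it. Adjoining a single ${\boldsymbol \Sigma^0_\xi}$ set as an \emph{open} set need not preserve Polishness (the topology on $\R$ generated by the usual one together with $\{\Q\}$ is not Polish, since $\Q$ would become an open, hence Polish, subspace carrying its usual topology), while the single-set Kuratowski lemma avoids this by making the new set \emph{clopen} --- but then the ${\boldsymbol \Pi^0_\xi}$ complement $U_{n,k}^c$ becomes open and the bound $\tau'\subseteq{\boldsymbol \Sigma^0_\xi}(\tau)$ is lost. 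What rescues the situation is precisely that the $U_{n,k}$ are not arbitrary ${\boldsymbol \Sigma^0_\xi}$ sets but a basis of an already Polish refinement of $\tau$, so the right tool is the join-of-topologies lemma, which is exactly what your main argument uses; the proof therefore stands as written, provided you drop or repair that alternative.
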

  
  \begin{proof}
    The case $\xi=1$ is again trivial, so we may assume $\xi \ge 2$.
    Take a base $\{G^k_n : k \in \mathbb{N}\}$ for $\tau_n$. Since 
    these sets are in ${\boldsymbol \Sigma^0_\xi}(\tau)$, they can be 
    written as the countable unions of sets from 
    $\bigcup_{\eta < \xi}{\boldsymbol \Pi^0_\eta}(\tau)$. 
    Clearly, $\bigcup_{\eta < \xi}{\boldsymbol \Pi^0_\eta}(\tau) 
    \subseteq {\boldsymbol \Delta^0_\xi}$ for $\xi \ge 2$.
    As above, by Kuratowski's theorem \cite[22.18]{K}, 
    we have a Polish topology $\tau'$, for which these countably many 
    ${\boldsymbol \Delta^0_\xi}(\tau)$ sets are in ${\boldsymbol 
    \Delta^0_1}(\tau')$ satisfying $\tau' \subseteq {\boldsymbol 
    \Sigma^0_\xi}(\tau)$. This $\tau'$ works. 
  \end{proof}
  
  \begin{lemma}
    \label{in_a_finer_topology}
    If $\tau' \subseteq \tau''$ are two Polish topologies with 
    $f \in \mathcal{B}_1(\tau')$ then $f \in \mathcal{B}_1(\tau'')$, 
    moreover, $\beta_{\tau'}(f) \ge \beta_{\tau''}(f)$ and 
    $\gamma_{\tau'}(f) \ge \gamma_{\tau''}(f)$.
  \end{lemma}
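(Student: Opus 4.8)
The plan is to argue entirely at the level of derivatives: since $\tau' \subseteq \tau''$, every $\tau'$-open set is $\tau''$-open, so $\tau'$-Borel complexity only goes down when passing to $\tau''$; in particular if $\{f<c\}$ and $\{f>c\}$ are in ${\boldsymbol\Sigma^0_2}(\tau')$ they are also in ${\boldsymbol\Sigma^0_2}(\tau'')$, giving $f \in \mathcal{B}_1(\tau'')$. For the rank inequalities I would compare the relevant derivative operations on $\tau''$-closed sets against those on $\tau'$-closed sets. The key observation is that a $\tau''$-closed set $F$ need not be $\tau'$-closed, so one cannot literally feed it into the $\tau'$-derivative; instead I would use its $\tau'$-closure $\overline{F}^{\,\tau'}$.

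First I would treat $\beta$. Fix $\varepsilon>0$ and a $\tau''$-closed set $F$. The claim is that $D^{\tau''}_{f,\varepsilon}(F) \subseteq D^{\tau'}_{f,\varepsilon}\bigl(\overline{F}^{\,\tau'}\bigr)$: indeed if $x\in F$ has $\omega^{\tau'}(f,x,\overline{F}^{\,\tau'}) < \varepsilon$, witnessed by a $\tau'$-open $U\ni x$, then $U$ is also $\tau''$-open and $U\cap F \subseteq U\cap \overline{F}^{\,\tau'}$, so $\omega^{\tau''}(f,x,F) \le \omega^{\tau'}(f,x,\overline{F}^{\,\tau'}) < \varepsilon$, hence $x\notin D^{\tau''}_{f,\varepsilon}(F)$. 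Moreover $D^{\tau'}_{f,\varepsilon}\bigl(\overline{F}^{\,\tau'}\bigr)$ is $\tau'$-closed, hence $\tau''$-closed, and it is contained in $\overline{F}^{\,\tau'}$; one checks it is actually contained in $F$ as well, since a point of $\overline{F}^{\,\tau'}\setminus F$ lies in the $\tau''$-open set $X\setminus F$, which is disjoint from $F$, so $f$ has zero oscillation there along $F$ — wait, that argument needs $\overline{F}^{\,\tau'}\cap(X\setminus F)$ to be avoided, which it is not in general; instead I would simply intersect and define the comparison derivative to be $F \mapsto F \cap D^{\tau'}_{f,\varepsilon}\bigl(\overline{F}^{\,\tau'}\bigr)$ and verify this is a derivative on $\tau''$-closed sets dominated (in the sense of Proposition \ref{two_derivatives}) by $D^{\tau'}_{f,\varepsilon}$ after the first step. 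Cleaner: show by transfinite induction that $\bigl(D^{\tau''}_{f,\varepsilon}\bigr)^{\eta}(X) \subseteq \bigl(D^{\tau'}_{f,\varepsilon}\bigr)^{\eta}(X)$ for all $\eta$, using at successor steps that $\bigl(D^{\tau'}_{f,\varepsilon}\bigr)^{\eta}(X)$ is already $\tau'$-closed so its $\tau'$-closure is itself, and that the oscillation comparison above applies with $F = \bigl(D^{\tau''}_{f,\varepsilon}\bigr)^{\eta}(X) \subseteq \bigl(D^{\tau'}_{f,\varepsilon}\bigr)^{\eta}(X)$, noting the $\tau'$-oscillation of $f$ at a point of a set only decreases when the set shrinks. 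This yields $\beta_{\tau''}(f,\varepsilon) \le \beta_{\tau'}(f,\varepsilon)$ for every $\varepsilon$, hence $\beta_{\tau''}(f) \le \beta_{\tau'}(f)$.

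For $\gamma$ the same strategy works: take a sequence $(f_n)$ of $\tau'$-continuous functions with $f_n\to f$ pointwise and $\sup_\varepsilon \gamma^{\tau'}((f_n),\varepsilon) = \gamma_{\tau'}(f)$. Each $f_n$ is also $\tau''$-continuous since $\tau'\subseteq\tau''$, and $f_n\to f$ pointwise is topology-independent, so $(f_n)$ is a legitimate witnessing sequence for $\gamma_{\tau''}(f)$. Then exactly as above, $\bigl(D^{\tau''}_{(f_n),\varepsilon}\bigr)^{\eta}(X) \subseteq \bigl(D^{\tau'}_{(f_n),\varepsilon}\bigr)^{\eta}(X)$ for all $\eta$ by transfinite induction, using that a $\tau'$-open neighborhood is a $\tau''$-open neighborhood and that the oscillation of the sequence decreases on smaller sets; hence $\gamma^{\tau''}((f_n),\varepsilon) \le \gamma^{\tau'}((f_n),\varepsilon)$ for every $\varepsilon$, and taking suprema and then the minimum over all sequences gives $\gamma_{\tau''}(f) \le \sup_\varepsilon \gamma^{\tau''}((f_n),\varepsilon) \le \gamma_{\tau'}(f)$.

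The only delicate point — and the one I would write out carefully — is the successor step of the transfinite induction, specifically making sure the set $F$ being plugged into the $\tau''$-derivative really is contained in the corresponding $\tau'$-iterate (so that the oscillation comparison is valid) and that no continuity hypothesis on the $F$'s is secretly needed; the monotonicity of oscillation in the set argument, together with the induction hypothesis, handles this. The limit step is immediate from continuity of the iterated-derivative sequences. (Note the lemma does not claim an inequality for $\alpha$; indeed the $D_{A,B}$ derivative involves $\tau$-closures of the level sets, not just $\tau''$-closures, so the same argument would not go through, which is presumably why $\alpha$ is omitted here.)
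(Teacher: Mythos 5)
Your proof is correct and follows essentially the same route as the paper: reduce both inequalities to a pointwise comparison of oscillations (the infimum in the $\tau''$-oscillation ranges over more open neighborhoods, hence is smaller), and for $\gamma$ observe that an optimal $\tau'$-continuous witnessing sequence is also a legitimate $\tau''$-witnessing sequence; you are in fact more careful than the paper, which simply invokes Proposition \ref{two_derivatives} even though the two derivatives act on different families of closed sets, whereas you make explicit that the induction only ever needs the comparison on the $\tau'$-closed iterates $\bigl(D^{\tau'}\bigr)^{\eta}(X)$ together with monotonicity of the oscillation in the set argument. Your closing parenthetical about $\alpha$ is the one inaccuracy: since $\tau''$ is finer, $\tau''$-closures are contained in $\tau'$-closures, so $D^{\tau''}_{A,B}(F) \subseteq D^{\tau'}_{A,B}(F)$ and the identical argument would give $\alpha_{\tau''}(f) \le \alpha_{\tau'}(f)$ as well --- the lemma omits $\alpha$ only because the paper obtains the corresponding fact for $\alpha^*_\xi$ by a different route (Theorem \ref{alpha_star_equals_alpha}), not because the argument fails.
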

  
  \begin{proof}
    To prove that $f \in \mathcal{B}_1(\tau'')$ note that 
    the level sets 
    $\{f < c \}, \{f > c \} \in {\boldsymbol \Sigma^0_2}(\tau')$, 
    hence 
    $\{f < c \}, \{f > c \} \in {\boldsymbol \Sigma^0_2}(\tau'')$,
    so $f \in \mathcal{B}_1(\tau'')$.

    Now recall the definition of the derivative defining $\beta$:
    $$
      \omega(f, x, F) = \inf \left\{ \sup_{x_1, x_2 \in U \cap F} 
      |f(x_1) - f(x_2)|: U \text{ open, } x \in U \right\}, 
    $$
    $$
      D_{f, \epsilon}(F)= \{x \in F: \omega(f,x,F) \geq 
      \epsilon\}.
    $$

    Let us now fix $f$ and $\varepsilon > 0$ and let us denote the derivative 
    $D_{f, \epsilon}$ with respect to the topology $\tau'$ by $D_{\tau'}$, and 
    with respect to the topology $\tau''$ by $D_{\tau''}$. 
    By Proposition \ref{two_derivatives} it is enough 
    to prove that  $D_{\tau''}(F) \subseteq D_{\tau'}(F)$ for every 
    closed set $F \subseteq X$. 
    
    For this it is enough to show that 
    $\omega_{\tau''}(f, x, F) \le \omega_{\tau'}(f, x, F)$ for every 
    $x \in F$  where $\omega_{\tau'}(f, x, F)$ is the oscillation 
    with respect to the topology $\tau'$. And this is clear, since in 
    the case of $\tau''$, 
    the infimum in the definition goes through more 
    open set containing $x$, hence 
    the resulting oscillation will be less.

    For the rank $\gamma$, we proceed similarly.
    First we recall the definition of $\gamma$: 
    \begin{equation*}
    \begin{split}
      \omega((f_n)_{n \in \N}, x, F) = 
        \inf_{
          \begin{subarray}{c} x \in U \\ \text{$U$ open} 
          \end{subarray}} 
        \inf_{N \in \N} 
        \sup \left\{ |f_m(y) - f_n(y)| : 
        n, m \ge N,\; y \in U \cap F \right\}, \\
	    D_{(f_n)_{n \in \N}, \varepsilon}(F) = \left\{ x \in F : 
	      \omega((f_n)_{n \in \N}, x, F) \ge \varepsilon \right\}, \\
	    \gamma(f) = 
	      \min \left\{
	        \sup_{\varepsilon > 0} \gamma((f_n)_{n \in \N}, \varepsilon) : 
	        \forall n \text{ $f_n$ is continuous and $f_n \to f$ 
	        pointwise}
	      \right\}.
    \end{split}
    \end{equation*}
  
    Let us fix a sequence $(f_n)_{n \in \N}$  
    of $\tau'$-continuous (hence also $\tau''$-continuous) functions converging pointwise to $f$, and also fix $\varepsilon > 0$.
    Let us denote the derivative 
    $D_{(f_n)_{n \in \N}, \varepsilon}$ with respect to $\tau'$ by 
    $D_{\tau'}$ and with respect to $\tau''$ by $D_{\tau''}$. 
    Again, by Proposition \ref{two_derivatives} it is enough to prove 
    that $D_{\tau''}(F) \subseteq D_{\tau'}(F)$ for every closed 
    set $F \subseteq X$. And similarly to the previous case it is 
    enough to prove that the oscillation 
    $\omega((f_n)_{n \in \N}, x, F)$ with respect to the topology 
    $\tau''$ is at most the oscillation with respect to $\tau'$, but 
    this is clear, since, as before, the infimum goes through more 
    open set in the case of $\tau''$. 
  \end{proof}

  \begin{theorem}
  \label{t:b*g*lin}
    The ranks $\beta^*_\xi$ and $\gamma^*_\xi$ are essentially linear. 
  \end{theorem}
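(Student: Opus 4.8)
The plan is to reduce the statement to the already-established Baire class $1$ case, Theorem \ref{beta_gamma_sum}, by passing to a common Polish refinement of the topology. I will treat $\beta^*_\xi$ in detail; the argument for $\gamma^*_\xi$ should be word-for-word the same with $\gamma$ in place of $\beta$, since the only inputs used are Theorem \ref{beta_gamma_sum}, Lemma \ref{refinement_of_topologies}, and Lemma \ref{in_a_finer_topology}, all of which cover $\gamma$ as well.

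For the homogeneity $\beta^*_\xi(cf) \approx \beta^*_\xi(f)$ with $c \in \R \setminus \{0\}$, I would first observe that for \emph{any} Polish topology $\tau'$ on $X$ one has $f \in \mathcal{B}_1(\tau')$ if and only if $cf \in \mathcal{B}_1(\tau')$, because the level sets of $cf$ are level sets of $f$; hence $T_{f, \xi} = T_{cf, \xi}$. Combining this with the fact $\beta_{\tau'}(cf) = \beta_{\tau'}(f)$ (which is part of the proof of Theorem \ref{beta_gamma_sum} and holds in every Polish space) and taking the minimum over $T_{f, \xi} = T_{cf, \xi}$ yields $\beta^*_\xi(cf) = \beta^*_\xi(f)$ — in fact equality, which is more than required.

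For the essential additivity, given Baire class $\xi$ functions $f$ and $g$, I would choose $\tau_f \in T_{f, \xi}$ and $\tau_g \in T_{g, \xi}$ realizing $\beta^*_\xi(f) = \beta_{\tau_f}(f)$ and $\beta^*_\xi(g) = \beta_{\tau_g}(g)$, and then use Lemma \ref{refinement_of_topologies} to produce a common Polish refinement $\tau'$ of $\tau_f$ and $\tau_g$ still contained in ${\boldsymbol \Sigma^0_\xi}(\tau)$. By Lemma \ref{in_a_finer_topology}, $f$ and $g$ are Baire class $1$ with respect to $\tau'$, hence so is $f + g$, so $\tau' \in T_{f + g, \xi}$. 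Applying Theorem \ref{beta_gamma_sum} inside the Polish space $(X, \tau')$ gives $\beta_{\tau'}(f + g) \lesssim \max\{\beta_{\tau'}(f), \beta_{\tau'}(g)\}$, while Lemma \ref{in_a_finer_topology} gives $\beta_{\tau'}(f) \le \beta_{\tau_f}(f) = \beta^*_\xi(f)$ and likewise $\beta_{\tau'}(g) \le \beta^*_\xi(g)$. Chaining these, using that $\le$ implies $\lesssim$ and that $\lesssim$ is transitive, yields
$$
  \beta^*_\xi(f + g) \le \beta_{\tau'}(f + g) \lesssim \max\{\beta_{\tau'}(f), \beta_{\tau'}(g)\} \le \max\{\beta^*_\xi(f), \beta^*_\xi(g)\},
$$
which is the desired inequality.

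I do not expect a genuine obstacle here: all the substance is carried by Lemma \ref{refinement_of_topologies} (existence of the common refinement inside ${\boldsymbol \Sigma^0_\xi}(\tau)$) and Lemma \ref{in_a_finer_topology} (monotonicity of $\beta$ and $\gamma$ under refinement of the topology), both of which are already proved. The only points needing a little care are noticing that homogeneity comes out as an actual equality — so the required $\approx$ holds trivially — and the routine bookkeeping between $\le$ and $\lesssim$ when composing the two inequalities above.
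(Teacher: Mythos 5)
Your proposal is correct and follows essentially the same route as the paper's proof: optimal topologies for $f$ and $g$, a common refinement via Lemma \ref{refinement_of_topologies}, monotonicity from Lemma \ref{in_a_finer_topology}, and essential additivity of $\beta_{\tau'}$ from Theorem \ref{beta_gamma_sum}. The only difference is that you spell out the homogeneity step (via $T_{f,\xi} = T_{cf,\xi}$), which the paper dismisses as easy.
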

  
  \begin{proof}
    We only consider $\beta^*_\xi$, since the proof for the rank $\gamma^*_\xi$ is completely analogous.
    
    It is easy to see that $\beta^*_\xi(cf) = \beta^*_\xi(f)$ for every $c \in \R \setminus \{0\}$, hence it suffices to show that $\beta^*_\xi$ is essentially additive.
  
    For $f$ and $g$ let $\tau_f$ 
    and $\tau_g$ be such that $\beta_{\tau_f}(f) = \beta^*_\xi(f)$ and $\beta_{\tau_g}(g) = \beta^*_\xi(g)$.
    Using Lemma \ref{refinement_of_topologies} we have a common 
    refinement $\tau'$ of $\tau_f$ and $\tau_g$ with 
    $\tau' \subseteq {\boldsymbol \Sigma^0_\xi}(\tau)$. 
    Now $f,g \in \mathcal{B}_1(\tau')$, 
    so $f+g \in \mathcal{B}_1(\tau')$, hence $\tau' \in T_{f+g, \xi}$. Therefore $\beta^*_\xi(f+g) \leq \beta_{\tau'}(f+g)$.
    By Lemma \ref{in_a_finer_topology} we have that 
    $\beta_{\tau'}(f) \leq \beta_{\tau_f}(f)$ (in fact equality 
    holds), and similarly for $g$.
    But $\beta_{\tau'}$ is additive by Theorem 
    \ref{beta_gamma_sum}, so 
    $$
      \beta^*_\xi(f+g) \leq \beta_{\tau'}(f+g) \lesssim 
      \max\{\beta_{\tau'}(f),\beta_{\tau'}(g)\} \le
      \max\{\beta_{\tau_f}(f),\beta_{\tau_g}(g)\} =
      $$
      $$
      \max\{\beta^*_\xi(f), \beta^*_\xi(g)\}.
    $$
  \end{proof}
 \begin{remark}
 \label{r:multi*}
   One can easily deduce from Theorem \ref{t:b*g*lin} that $\beta^*_\xi(f\cdot g)
   \lesssim \max\{\beta^*_\xi(f),\beta^*_\xi(g)\}$ for every $\xi<\omega_1$
   whenever $f$ and $g$ are bounded Baire class $\xi$ functions, and
   similarly for  $\gamma^*_\xi$. Again, as in the case of $\beta$ and $\gamma$, the situation is unclear for unbounded functions.
  \end{remark}
  
  \begin{question}
   \label{q:multi}
   Let $1 \le \xi < \om_1$. Are the ranks $\beta^*_\xi$ and $\gamma^*_\xi$ essentially multiplicative?
  \end{question}

  \begin{theorem}
  \lab{alpha_star_equals_alpha}
    If $f$ is a Baire class $\xi$ function then
    \begin{equation*}
      \alpha^*_\xi(f) \leq \alpha_\xi(f) \leq 2\alpha^*_\xi(f)
      \text{, hence $\alpha^*_\xi(f) \approx \alpha_\xi(f)$}.
    \end{equation*}
  \end{theorem}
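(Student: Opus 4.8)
The plan is to reduce both inequalities, via Proposition \ref{alpha_1_equals_alpha_for_sets} (which gives $\alpha(A,B) \le \alpha_1(A,B) \le 2\alpha(A,B)$ in \emph{any} Polish space), to a comparison of $\alpha_\xi$ with the modified set-separation rank $\alpha_1$ computed in a suitable refinement $\tau' \in T_{f,\xi}$ (I will write $\alpha_{\tau'}$ and $\alpha_{1,\tau'}$ for these ranks computed in $(X,\tau')$). When $\xi = 1$ we have $T_{f,1} = \{\tau\}$, so $\alpha^*_1 = \alpha$ and the statement is exactly Corollary \ref{alpha_1_equals_alpha}; hence from now on I assume $\xi \ge 2$. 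The elementary observation used throughout is that for any $\tau' \in T_{f,\xi}$, since $\tau' \subseteq {\boldsymbol \Sigma^0_\xi}(\tau)$, every $\tau'$-closed set belongs to ${\boldsymbol \Pi^0_\xi}(\tau)$; thus a continuous decreasing sequence of $\tau'$-closed sets is also a continuous decreasing sequence of ${\boldsymbol \Pi^0_\xi}(\tau)$ sets, while whether the transfinite difference of such a sequence separates two given sets is a purely set-theoretic, topology-free condition. Consequently, for every $p<q$ in $\Q$ and every $\tau' \in T_{f,\xi}$ one has $\alpha_\xi(\{f\le p\},\{f\ge q\}) \le \alpha_{1,\tau'}(\{f\le p\},\{f\ge q\})$.

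For $\alpha_\xi(f) \le 2\alpha^*_\xi(f)$: I would pick $\tau' \in T_{f,\xi}$ realizing $\alpha_{\tau'}(f) = \alpha^*_\xi(f)$. For each $p<q$, applying Proposition \ref{alpha_1_equals_alpha_for_sets} inside $(X,\tau')$ and then the definition of $\alpha_{\tau'}(f)$ gives $\alpha_{1,\tau'}(\{f\le p\},\{f\ge q\}) \le 2\alpha_{\tau'}(\{f\le p\},\{f\ge q\}) \le 2\alpha_{\tau'}(f) = 2\alpha^*_\xi(f)$. Combining this with the observation above and taking the supremum over all $p<q$ yields $\alpha_\xi(f) \le 2\alpha^*_\xi(f)$.

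For $\alpha^*_\xi(f) \le \alpha_\xi(f)$ — the substantive half — I would construct a single topology $\tau' \in T_{f,\xi}$ in which, for every $p<q$, the level sets $\{f\le p\}$ and $\{f\ge q\}$ are separated by a continuous transfinite difference of $\tau'$-closed sets of length at most $\alpha_\xi(\{f\le p\},\{f\ge q\})$; then Proposition \ref{alpha_1_equals_alpha_for_sets} in $(X,\tau')$ gives $\alpha_{\tau'}(\{f\le p\},\{f\ge q\}) \le \alpha_{1,\tau'}(\{f\le p\},\{f\ge q\}) \le \alpha_\xi(f)$ for all $p<q$, hence $\alpha^*_\xi(f) \le \alpha_{\tau'}(f) \le \alpha_\xi(f)$. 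To build $\tau'$, for each $p<q$ I fix a continuous decreasing sequence $(F^{p,q}_\eta)_{\eta<\lambda_{p,q}}$ of ${\boldsymbol \Pi^0_\xi}(\tau)$ sets of the minimal length $\lambda_{p,q} = \alpha_\xi(\{f\le p\},\{f\ge q\})$ whose transfinite difference separates the two level sets. The key point is that a ${\boldsymbol \Pi^0_\xi}(\tau)$ set $P$ cannot be made $\tau'$-open without leaving ${\boldsymbol \Sigma^0_\xi}(\tau)$, but it can be made $\tau'$-closed: writing $P^c \in {\boldsymbol \Sigma^0_\xi}(\tau)$ as a countable union of sets from $\bigcup_{\eta<\xi}{\boldsymbol \Pi^0_\eta}(\tau) \subseteq {\boldsymbol \Delta^0_\xi}(\tau)$, it suffices to declare those countably many ${\boldsymbol \Delta^0_\xi}(\tau)$ sets clopen. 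Collecting these building blocks over all pairs $p<q$ and all $\eta<\lambda_{p,q}$, together with the countably many ${\boldsymbol \Delta^0_\xi}(\tau)$ sets needed to turn the level sets of $f$ into ${\boldsymbol \Pi^0_2}$ sets (exactly as in the proof that $T_{f,\xi}\ne\emptyset$), I obtain a countable family of ${\boldsymbol \Delta^0_\xi}(\tau)$ sets, and the Kuratowski-type refinement used in Lemma \ref{refinement_of_topologies} yields a Polish $\tau'$ with $\tau \subseteq \tau' \subseteq {\boldsymbol \Sigma^0_\xi}(\tau)$ in which all of them are clopen. Then $f \in \mathcal{B}_1(\tau')$, so $\tau' \in T_{f,\xi}$, each $F^{p,q}_\eta$ is $\tau'$-closed, and $(F^{p,q}_\eta)_{\eta<\lambda_{p,q}}$ witnesses $\alpha_{1,\tau'}(\{f\le p\},\{f\ge q\}) \le \lambda_{p,q}$, completing the argument.

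I expect this last construction to be the main obstacle: one must realize prescribed ${\boldsymbol \Pi^0_\xi}$ sets as closed sets of a finer Polish topology while keeping that topology within ${\boldsymbol \Sigma^0_\xi}(\tau)$, which forbids the naive move of making them clopen, and the resolution is to refine only along the ${\boldsymbol \Delta^0_\xi}$ building blocks of their complements. Everything else — the continuity of the sequences, the topology-independence of the separation property, and the bookkeeping of the supremum over rational pairs — should be routine.
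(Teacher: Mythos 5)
Your proposal is correct and follows essentially the same route as the paper: both halves are reduced to comparing $\alpha_\xi$ with $\alpha_{1,\tau'}$ via Proposition \ref{alpha_1_equals_alpha_for_sets}, using that $\tau'$-closed sets are ${\boldsymbol \Pi^0_\xi}(\tau)$ for the easy direction, and a Kuratowski refinement making the ${\boldsymbol \Delta^0_\xi}$ building blocks of the separating ${\boldsymbol \Pi^0_\xi}$ sequences clopen for the substantive one. The only cosmetic deviation is that you decompose the complements of the $F^{p,q}_\eta$ into ${\boldsymbol \Delta^0_\xi}$ pieces and explicitly add the sets ensuring $f \in \mathcal{B}_1(\tau')$, whereas the paper decomposes the sets themselves and derives $f \in \mathcal{B}_1(\tau')$ from the separation; both are fine.
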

  
  \begin{proof}
    For $\xi= 1$ the claim is an easy consequence of the definition of the two ranks and Corollary \ref{alpha_1_equals_alpha}. From now on, we 
    suppose that $\xi \ge 2$. 
    
    For the first inequality, for every pair of rationals $p < q$ pick 
    a sequence 
    $(F^\zeta_{p, q})_{\zeta < \alpha_\xi(f)} \subseteq 
    {\boldsymbol \Pi^0_\xi}(X)$, whose transfinite 
    difference separates the level sets $\{f \le p\}$ and 
    $\{f \ge q\}$. 
    
    Every ${\boldsymbol \Pi^0_\xi}(X)$ set is the intersection of countably many
    ${\boldsymbol \Delta^0_\xi}$ sets, hence 
    $F^\zeta_{p, q} = \bigcap_n H^\zeta_{p,q,n}$, with 
    $H^\zeta_{p,q,n} \in {\boldsymbol \Delta^0_\xi}$. 
    By Kuratowski's theorem \cite[22.18]{K}, there is a finer Polish topology 
    $\tau' \subseteq {\boldsymbol \Sigma^0_\xi}(\tau)$, 
    for which $H^\zeta_{p,q,n} \in {\boldsymbol \Delta^0_1}(\tau')$ 
    for every $p, q, n$ and $\zeta < \alpha_\xi(f)$, hence 
    $F^\zeta_{p, q} \in {\boldsymbol \Pi^0_1(\tau')}$. 
    
    This means that the level sets of $f$ can be separated by 
    transfinite differences of closed sets with respect to $\tau'$, 
     hence they can be separated by sets in 
   ${\boldsymbol \Delta^0_2}(\tau')$. Then it is easy to see that for every $c \in \R$ 
   the level sets $\{f \le c\}$ and $\{f \ge c\}$ are countable 
   intersections of ${\boldsymbol \Delta^0_2}(\tau')$ sets, hence 
   they are ${\boldsymbol \Pi^0_2}(\tau')$ sets, proving that 
    $f \in \mathcal{B}_1(\tau')$. Moreover, $\alpha_{1, \tau'}(f) \le 
    \alpha_\xi(f)$ easily follows from the construction (here $\alpha_{1, \tau'}$ is the rank $\al_1$ with respect to $\tau'$). 
    And by Corollary \ref{alpha_1_equals_alpha} we have 
    $\alpha^*_\xi \le \alpha_{\tau'}(f) \le \alpha_{1, \tau'} (f)\le 
    \alpha_\xi(f)$, proving the first inequality of the theorem. 
    
    For the second inequality, take a topology $\tau'$ with  
    $\alpha_{\tau'} (f) = \alpha^*_\xi (f)$. Again, by 
    Corollary \ref{alpha_1_equals_alpha}, we have 
    $\alpha_{1, \tau'}(f) \le 2\alpha_{\tau'}(f) = 2\alpha^*_\xi(f)$. 
    
    It remains to prove that $\alpha_\xi(f) \le \alpha_{1, \tau'}(f)$. 
    A $\tau'$-closed set is ${\boldsymbol \Pi^0_\xi}$ with respect to $\tau$.
    Therefore, if $(F_\eta)_{\eta<\zeta}$ is a decreasing continuous sequence of $\tau'$-closed sets whose transfinite difference separates 
    $\{f \le p\}$ and $\{f \ge q\}$ then the same sequence is a decreasing continuous sequence of sets from ${\boldsymbol \Pi^0_\xi}(\tau)$, proving $\alpha_\xi(f) \le \alpha_{1, \tau'}(f)$.
  \end{proof}

  \begin{corollary}
  \lab{c:a_ess_lin}
  $\alpha_\xi$ and $\alpha^*_\xi$ are essentially linear for bounded functions for every $\xi$.
  \end{corollary}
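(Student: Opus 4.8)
The plan is to deduce both statements from results already in hand, so essentially no fresh computation is needed. Recall that ``essentially linear'' means $\rho(cf) \approx \rho(f)$ for $c \in \R \setminus \{0\}$ and $\rho(f+g) \lesssim \max\{\rho(f),\rho(g)\}$.

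First I would reduce the claim for $\alpha_\xi$ to the claim for $\alpha^*_\xi$. By Theorem \ref{alpha_star_equals_alpha} we have $\alpha_\xi(h) \approx \alpha^*_\xi(h)$ for \emph{every} Baire class $\xi$ function $h$. Since $\approx$ is an equivalence relation, $\lesssim$ is transitive, and $\max$ is compatible with $\approx$ (if $a \approx a'$ and $b \approx b'$ then $\max\{a,b\} \approx \max\{a',b'\}$, which is immediate from the threshold description of $\lesssim$ via the ordinals $\omega^\eta$), essential linearity on bounded functions passes from $\alpha^*_\xi$ to any $\approx$-equivalent rank, in particular to $\alpha_\xi$. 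Hence it suffices to treat $\alpha^*_\xi$.

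Next I would establish the statement for $\alpha^*_\xi$ directly from Corollary \ref{a*=b*=g*_bounded} and Theorem \ref{t:b*g*lin}. Let $f,g$ be bounded Baire class $\xi$ functions and $c \in \R \setminus \{0\}$; then $cf$ and $f+g$ are bounded as well, so Corollary \ref{a*=b*=g*_bounded} yields $\alpha^*_\xi(cf) \approx \beta^*_\xi(cf)$, $\alpha^*_\xi(f+g) \approx \beta^*_\xi(f+g)$, $\alpha^*_\xi(f) \approx \beta^*_\xi(f)$, and $\alpha^*_\xi(g) \approx \beta^*_\xi(g)$. Theorem \ref{t:b*g*lin} gives $\beta^*_\xi(cf) = \beta^*_\xi(f)$ and $\beta^*_\xi(f+g) \lesssim \max\{\beta^*_\xi(f),\beta^*_\xi(g)\}$. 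Chaining these, $\alpha^*_\xi(cf) \approx \beta^*_\xi(cf) = \beta^*_\xi(f) \approx \alpha^*_\xi(f)$, and
\[
\alpha^*_\xi(f+g) \approx \beta^*_\xi(f+g) \lesssim \max\{\beta^*_\xi(f),\beta^*_\xi(g)\} \approx \max\{\alpha^*_\xi(f),\alpha^*_\xi(g)\},
\]
which is exactly the essential linearity of $\alpha^*_\xi$ on bounded functions.

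I do not expect a genuine obstacle: the whole content is carried by Theorem \ref{a=b=g_bounded} (through Corollary \ref{a*=b*=g*_bounded}), Theorem \ref{t:b*g*lin}, and Theorem \ref{alpha_star_equals_alpha}. The only point that deserves a sentence of care is the bookkeeping with $\approx$ and $\lesssim$ — transitivity of $\lesssim$ and its compatibility with $\max$ — both of which follow at once from the definition $\theta \lesssim \theta' \iff (\forall \eta \ge 1)(\theta' \le \omega^\eta \Rightarrow \theta \le \omega^\eta)$.
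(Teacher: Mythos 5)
Your proposal is correct and follows essentially the same route as the paper's proof: the paper also chains Theorem \ref{alpha_star_equals_alpha} ($\alpha_\xi \approx \alpha^*_\xi$), Corollary \ref{a*=b*=g*_bounded} ($\alpha^*_\xi \approx \beta^*_\xi$ for bounded functions), and Theorem \ref{t:b*g*lin} (essential linearity of $\beta^*_\xi$). Your extra bookkeeping on the transitivity of $\lesssim$ and its compatibility with $\max$ is accurate and only makes explicit what the paper leaves implicit.
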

  
  \begin{proof}
  $\alpha_\xi \approx \alpha^*_\xi$ by the previous theorem, $\alpha^*_\xi \approx \beta^*_\xi$ for bounded functions by Corollary \ref{a*=b*=g*_bounded}, and $\beta^*_\xi$ is essentially linear by Theorem \ref{t:b*g*lin}.
  \end{proof}
 From Corollary \ref{c:r_stepf} we can obtain the appropriate statement for the ranks $\alpha^*_\xi, \beta^*_\xi$ and $\gamma^*_\xi$.
  \begin{proposition}
    \label{c:r*_stepf} If $f=\sum_{i = 1}^{n} c_i \chi_{A_i}$, where the $A_i$'s are disjoint $\mathbf{\Delta}^0_{\xi+1}$ sets covering $X$ and the $c_i$'s are distinct then 
\[\alpha^*_\xi(f) \approx\max_i \{\alpha^*_\xi(\chi_{A_i})\},\] and similarly for $\beta^*_\xi$ and $\gamma^*_\xi$.
   \end{proposition}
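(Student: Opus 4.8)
The plan is to reduce everything to a single rank and then push Corollary~\ref{c:r_stepf} through a suitable Polish refinement. Since $f$ and each $\chi_{A_i}$ are bounded, Corollary~\ref{a*=b*=g*_bounded} gives $\alpha^*_\xi(g) \approx \beta^*_\xi(g) \approx \gamma^*_\xi(g)$ for every $g \in \{f, \chi_{A_1}, \dots, \chi_{A_n}\}$; as $\approx$ is preserved under finite maxima (if $a_i \lesssim b_i$ for all $i$ and $\max_i b_i \le \om^\eta$ then each $a_i \le \om^\eta$, so $\max_i a_i \le \om^\eta$), it suffices to prove $\beta^*_\xi(f) \approx \max_i \beta^*_\xi(\chi_{A_i})$.

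For the inequality $\beta^*_\xi(f) \lesssim \max_i \beta^*_\xi(\chi_{A_i})$, I would choose for each $i$ a topology $\tau_i \in T_{\chi_{A_i}, \xi}$ with $\beta_{\tau_i}(\chi_{A_i}) = \beta^*_\xi(\chi_{A_i})$, and use Lemma~\ref{refinement_of_topologies} to pass to a common Polish refinement $\tau' \subseteq {\boldsymbol \Sigma^0_\xi}(\tau)$ of all the $\tau_i$. By Lemma~\ref{in_a_finer_topology} each $\chi_{A_i}$ is of Baire class $1$ with respect to $\tau'$, hence so is the finite linear combination $f$, so $\tau' \in T_{f, \xi}$; moreover each $A_i$ is then in ${\boldsymbol \Delta^0_2}(\tau')$, so in $(X, \tau')$ the function $f$ is a step function with disjoint ambiguous level sets $A_i$ covering $X$ and distinct values. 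Corollary~\ref{c:r_stepf} applied in $(X, \tau')$ gives $\beta_{\tau'}(f) \lesssim \max_i \beta_{\tau'}(\chi_{A_i})$, while Lemma~\ref{in_a_finer_topology} gives $\beta_{\tau'}(\chi_{A_i}) \le \beta_{\tau_i}(\chi_{A_i}) = \beta^*_\xi(\chi_{A_i})$; combined with $\beta^*_\xi(f) \le \beta_{\tau'}(f)$ this is the desired inequality.

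For the reverse inequality I would instead pick $\tau' \in T_{f, \xi}$ witnessing $\beta_{\tau'}(f) = \beta^*_\xi(f)$. Since $f$ is a Baire class $1$ step function with respect to $\tau'$, each level set $A_i$ lies in ${\boldsymbol \Delta^0_2}(\tau')$ (it is simultaneously a finite intersection of sets $\{f > c\}$ and of sets $\{f \le c\}$), so $\chi_{A_i} \in \mathcal{B}_1(\tau')$ and therefore $\tau' \in T_{\chi_{A_i}, \xi}$ for every $i$. Applying Corollary~\ref{c:r_stepf} in $(X, \tau')$ again yields $\max_i \beta_{\tau'}(\chi_{A_i}) \lesssim \beta_{\tau'}(f)$, and since $\beta^*_\xi(\chi_{A_i}) \le \beta_{\tau'}(\chi_{A_i})$ we obtain $\max_i \beta^*_\xi(\chi_{A_i}) \lesssim \beta_{\tau'}(f) = \beta^*_\xi(f)$. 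Together with the previous paragraph this proves $\beta^*_\xi(f) \approx \max_i \beta^*_\xi(\chi_{A_i})$, and the cases of $\alpha^*_\xi$ and $\gamma^*_\xi$ follow from the first paragraph.

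This argument is essentially bookkeeping; the only points requiring a little care are verifying that in each auxiliary topology the sets $A_i$ are genuinely ambiguous (so that $f$ really is a step function there and Corollary~\ref{c:r_stepf} is applicable) and keeping track of which refinement is optimal for which function. I do not anticipate a genuine obstacle, and nothing beyond Corollary~\ref{c:r_stepf}, Lemma~\ref{refinement_of_topologies}, Lemma~\ref{in_a_finer_topology} and Corollary~\ref{a*=b*=g*_bounded} is needed.
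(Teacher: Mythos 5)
Your proposal is correct and follows essentially the same route as the paper: the key step in both is to apply Corollary~\ref{c:r_stepf} inside an optimal topology $\tau' \in T_{f,\xi}$, after observing that each $A_i$ is ambiguous for $\tau'$, which yields $\max_i \beta^*_\xi(\chi_{A_i}) \le \max_i \beta_{\tau'}(\chi_{A_i}) \lesssim \beta_{\tau'}(f) = \beta^*_\xi(f)$. The only cosmetic differences are that the paper works with $\alpha^*_\xi$ and disposes of the easy direction by citing the essential additivity of $\alpha^*_\xi$ for bounded functions (Corollary~\ref{c:a_ess_lin}), whereas you re-derive that direction via the common-refinement Lemmas~\ref{refinement_of_topologies} and~\ref{in_a_finer_topology} and transfer between the ranks using Corollary~\ref{a*=b*=g*_bounded}.
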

   \begin{proof}
   The additivity of $\alpha^*_\xi$ implies $\alpha^*_\xi(f) \lesssim \max_i \{\alpha^*_\xi(\chi_{A_i})\}$. 
   For the other inequality let  
   $\tau'$ be a topology for which $f$ is Baire class $1$. Then the characteristic functions $\chi_{A_i}$
are also Baire class $1$, and hence by Corollary \ref{c:r_stepf} 
 we obtain $\alpha_{\tau'}(f) \approx \max_i \{\alpha_{\tau'}(\chi_{A_i})\}$. But by the definition of $\alpha^*_\xi$ for every
such topology $\alpha^*_\xi(\chi_{A_i}) \leq \alpha_{\tau'}(\chi_{A_i})$, therefore $\max_i \{ \alpha^*_\xi(\chi_{A_i}) \} \leq \max_i \{ \alpha_{\tau'}(\chi_{A_i}) \} \approx \alpha_{\tau'}(f)$. Then choosing $\tau'$ so that $\alpha_{\tau'}(f) = \alpha^*_\xi(f)$ the proof is complete. 
	
	\end{proof}

  \begin{theorem}
    \label{star_not_bounded}
    The ranks $\alpha^*_\xi$, $\beta^*_\xi$ and $\gamma^*_\xi$
    are unbounded in $\om_1$. Moreover,
for every non-empty perfect set $P \subseteq X$ and ordinal 
    $\zeta < \omega_1$ there exists a characteristic function $\chi_A \in \mathcal{B}_\xi(X)$ 
     with $A \subseteq P$ such that
    $\alpha^*_\xi(\chi_A), \beta^*_\xi(\chi_A)$, $\gamma^*_\xi(\chi_A) \ge \zeta$. 
  \end{theorem}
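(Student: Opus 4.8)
The plan is to reduce the statement to the already-established unboundedness of $\alpha_\xi$ on characteristic functions supported inside a prescribed perfect set (Corollary \ref{alpha_unbounded_on_perfect}), and then transport the lower bound to $\alpha^*_\xi$ via the comparison $\alpha^*_\xi \approx \alpha_\xi$ from Theorem \ref{alpha_star_equals_alpha}. Since Corollary \ref{c:a*b*g*} gives $\alpha^*_\xi \le \beta^*_\xi \le \gamma^*_\xi$, it is enough to produce, for each non-empty perfect $P \subseteq X$ and each $\zeta < \omega_1$, a characteristic function $\chi_A \in \mathcal{B}_\xi(X)$ with $A \subseteq P$ and $\alpha^*_\xi(\chi_A) \ge \zeta$; the two inequalities between the starred ranks then immediately yield $\beta^*_\xi(\chi_A) \ge \zeta$ and $\gamma^*_\xi(\chi_A) \ge \zeta$ as well, and unboundedness in $\omega_1$ follows because $\zeta$ is arbitrary.

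First I would apply Corollary \ref{alpha_unbounded_on_perfect} with the countable ordinal $2\zeta$ in place of $\zeta$, obtaining $\chi_A \in \mathcal{B}_\xi(X)$ with $A \subseteq P$ and $\alpha_\xi(\chi_A) \ge 2\zeta$. Next, the second inequality of Theorem \ref{alpha_star_equals_alpha} gives $2\zeta \le \alpha_\xi(\chi_A) \le 2\alpha^*_\xi(\chi_A)$. Since left multiplication by $2$ is a strictly increasing (indeed continuous) operation on the ordinals, $2\alpha^*_\xi(\chi_A) \ge 2\zeta$ forces $\alpha^*_\xi(\chi_A) \ge \zeta$. Combining this with $\alpha^*_\xi \le \beta^*_\xi \le \gamma^*_\xi$ completes the proof of the ``moreover'' clause, and hence of the theorem.

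There is no substantial obstacle remaining at this point: the genuine work has already been done, namely the universal-set construction behind Theorem \ref{alpha_not_bounded}, its transfer to perfect subspaces in Corollary \ref{alpha_unbounded_on_perfect}, and the topology-refinement analysis underlying Theorem \ref{alpha_star_equals_alpha}. The only point deserving a moment's attention is the ordinal bookkeeping: one passes from $\zeta$ to $2\zeta$ before invoking Corollary \ref{alpha_unbounded_on_perfect} precisely so that the factor of $2$ lost in Theorem \ref{alpha_star_equals_alpha} does not spoil the bound. (One could instead simply remark that $\alpha^*_\xi \approx \alpha_\xi$ together with the fact that $\approx$-classes are cofinal in $\omega_1$ already yields unboundedness; the factor-of-$2$ argument above is what additionally delivers the sharp form ``$\ge \zeta$'' claimed in the statement.)
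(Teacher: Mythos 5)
Your proposal is correct and follows essentially the same route as the paper: reduce to $\alpha^*_\xi$ via Corollary \ref{c:a*b*g*}, then combine Corollary \ref{alpha_unbounded_on_perfect} with Theorem \ref{alpha_star_equals_alpha}. The only cosmetic difference is that the paper absorbs the loss of the factor $2$ by observing that the bound $\gtrsim \zeta$ suffices, whereas you handle it explicitly by starting from $2\zeta$; both forms of the bookkeeping are valid.
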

  
  \begin{proof}
In order to prove the theorem, by Corollary \ref{c:a*b*g*} it suffices to prove the statement for $\alpha^*_\xi$.
Moreover, instead of $\alpha^*_\xi(\chi_A) \ge \zeta$ it suffices to obtain $\alpha^*_\xi(\chi_A) \gtrsim \zeta$. And this is clear from Theorem \ref{alpha_star_equals_alpha} and Corollary \ref{alpha_unbounded_on_perfect}.
  \end{proof}

  \begin{proposition}
  \lab{p:b_unif}
    If $f_n, f$ are Baire class $\xi$ functions and $f_n \to f$ 
    uniformly then $\beta^*_\xi(f) \le \sup_n \beta^*_\xi(f_n)$. 
  \end{proposition}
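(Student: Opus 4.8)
The plan is to combine the optimal topologies witnessing $\beta^*_\xi(f_n)$ into a single common refinement, transfer the problem into that fixed topology, and then invoke the Baire class $1$ statement Proposition \ref{beta_uniform_limit}. Concretely, for each $n$ choose $\tau_n \in T_{f_n, \xi}$ with $\beta_{\tau_n}(f_n) = \beta^*_\xi(f_n)$. Since each $\tau_n$ is a Polish refinement of $\tau$ contained in ${\boldsymbol\Sigma^0_\xi}(\tau)$, Lemma \ref{refinement_of_topologies} gives a common Polish refinement $\tau'$ of all the $\tau_n$'s with $\tau' \subseteq {\boldsymbol\Sigma^0_\xi}(\tau)$.

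Next I would check that $\tau' \in T_{f, \xi}$. Each $f_n$ lies in $\mathcal{B}_1(\tau_n) \subseteq \mathcal{B}_1(\tau')$ by Lemma \ref{in_a_finer_topology}, and $f_n \to f$ uniformly (uniform convergence is a statement about the values, independent of the topology on $X$); since the family of Baire class $1$ functions with respect to the fixed topology $\tau'$ is closed under uniform limits, we get $f \in \mathcal{B}_1(\tau')$. As $\tau'$ is a Polish refinement of $\tau$ inside ${\boldsymbol\Sigma^0_\xi}(\tau)$, this shows $\tau' \in T_{f, \xi}$, and hence $\beta^*_\xi(f) \le \beta_{\tau'}(f)$ by the definition \eqref{star_def}.

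Finally I would run the estimates inside $\tau'$. Applying Proposition \ref{beta_uniform_limit} in the Polish space $(X, \tau')$ yields $\beta_{\tau'}(f) \le \sup_n \beta_{\tau'}(f_n)$. Moreover, since $\tau_n \subseteq \tau'$ and $f_n \in \mathcal{B}_1(\tau_n)$, Lemma \ref{in_a_finer_topology} gives $\beta_{\tau'}(f_n) \le \beta_{\tau_n}(f_n) = \beta^*_\xi(f_n)$ for every $n$. Chaining these,
\[
\beta^*_\xi(f) \le \beta_{\tau'}(f) \le \sup_n \beta_{\tau'}(f_n) \le \sup_n \beta^*_\xi(f_n),
\]
which is the desired inequality.

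The only genuinely nontrivial ingredient is the existence of a single Polish refinement simultaneously making all of $f_0, f_1, \dots$ Baire class $1$ while staying inside ${\boldsymbol\Sigma^0_\xi}(\tau)$ — but this is exactly Lemma \ref{refinement_of_topologies}, so there is no real obstacle here; the rest is bookkeeping with the two lemmas and the Baire class $1$ case. (Note also that, unlike the situation for $\gamma^*_\xi$ in Proposition \ref{gamma_uniform_limit}, no passage to a subsequence or telescoping is needed, since Proposition \ref{beta_uniform_limit} already gives the clean bound $\beta_{\tau'}(f) \le \sup_n \beta_{\tau'}(f_n)$ with $\le$ rather than $\lesssim$.)
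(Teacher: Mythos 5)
Your proposal is correct and follows essentially the same route as the paper: choose optimal $\tau_n \in T_{f_n,\xi}$, pass to a common refinement $\tau'$ via Lemma \ref{refinement_of_topologies}, use closure of $\mathcal{B}_1(\tau')$ under uniform limits to see $\tau' \in T_{f,\xi}$, and then chain Proposition \ref{beta_uniform_limit} with Lemma \ref{in_a_finer_topology}. No issues.
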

  \begin{proof}
    For every $n$ let $\tau_n \in T_{f_n, \xi}$ with 
    $\beta_{\tau_n}(f_n) = \beta^*_\xi(f_n)$. 
    Using Lemma 
    \ref{refinement_of_topologies}, let $\tau'$ be 
    their common refinement satisfying 
    $\tau' \subseteq {\boldsymbol \Sigma^0_\xi}(\tau)$, where $\tau$ 
    is the original topology. Note that $f_n \in \iB_1(\tau')$ for every $n$, and the Baire class 1 functions are closed under uniform limits \cite[24.4]{K}, hence $\tau' \in T_{f, \xi}$. Then by Proposition \ref{beta_uniform_limit} 
    and Lemma \ref{in_a_finer_topology} we have 
    $$
      \beta^*_\xi(f) \le \beta_{\tau'}(f) \le 
        \sup_n \beta_{\tau'}(f_n) \le 
        \sup_n \beta_{\tau_n}(f_n) = \sup_n \beta^*_\xi(f_n).
    $$ 
  \end{proof}
  
  \begin{proposition}
   \lab{p:a_unif}
    If $f_n, f$ are Baire class $\xi$ functions and $f_n \to f$ 
    uniformly then 
    $\alpha^*_\xi(f) \lesssim \sup_n \alpha^*_\xi(f_n)$ and $\gamma^*_\xi(f) \lesssim \sup_n \gamma^*_\xi(f_n)$.
  \end{proposition}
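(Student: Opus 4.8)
The plan is to prove the two statements separately, handling $\gamma^*_\xi$ exactly as in the proof of Proposition~\ref{p:b_unif}, and handling $\alpha^*_\xi$ as in the proof of Proposition~\ref{alpha_uniform_limit}, i.e.\ by first treating bounded functions and then reducing the general case to the bounded one via $\arctan$.

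For $\gamma^*_\xi$: for each $n$ pick $\tau_n \in T_{f_n,\xi}$ with $\gamma_{\tau_n}(f_n) = \gamma^*_\xi(f_n)$, and use Lemma~\ref{refinement_of_topologies} to obtain a common Polish refinement $\tau'$ with $\tau' \subseteq {\boldsymbol \Sigma^0_\xi}(\tau)$. Each $f_n$ lies in $\mathcal{B}_1(\tau')$, and since the Baire class $1$ functions are closed under uniform limits (\cite[24.4]{K}), also $f \in \mathcal{B}_1(\tau')$, so $\tau' \in T_{f,\xi}$. Then Proposition~\ref{gamma_uniform_limit} applied in the space $(X,\tau')$ (uniform convergence does not depend on the topology of $X$), the $\gamma$-part of Lemma~\ref{in_a_finer_topology}, and the trivial fact that $\sup$ respects $\lesssim$ give
$$\gamma^*_\xi(f) \le \gamma_{\tau'}(f) \lesssim \sup_n \gamma_{\tau'}(f_n) \le \sup_n \gamma_{\tau_n}(f_n) = \sup_n \gamma^*_\xi(f_n).$$

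For $\alpha^*_\xi$ I would first prove the bounded case. If $f$ is bounded then the $f_n$ are uniformly bounded, and Corollary~\ref{a*=b*=g*_bounded} together with Proposition~\ref{p:b_unif} yields
$$\alpha^*_\xi(f) \lesssim \beta^*_\xi(f) \le \sup_n \beta^*_\xi(f_n) \lesssim \sup_n \alpha^*_\xi(f_n).$$
For an arbitrary Baire class $\xi$ function $g$ put $g' = \arctan \circ g$. Since $\arctan$ is an increasing homeomorphism of $\R$ onto $(-\pi/2,\pi/2)$, for each $c$ the set $\{g' < c\}$ (resp.\ $\{g' > c\}$) is either a level set of $g$ or $\emptyset$ or $X$; hence $g$ and $g'$ are of Baire class $1$ with respect to exactly the same Polish refinements of $\tau$, that is $T_{g,\xi} = T_{g',\xi}$, and for each $\tau'$ in this common set Remark~\ref{alpha_def_real} gives $\alpha_{\tau'}(g') = \alpha_{\tau'}(g)$ (exactly as in the proof of Proposition~\ref{alpha_uniform_limit}). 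Therefore $\alpha^*_\xi(g') = \alpha^*_\xi(g)$. Finally, $f_n \to f$ uniformly implies $f'_n \to f'$ uniformly (as $\arctan$ is $1$-Lipschitz) with $f'$ and the $f'_n$ bounded, so the bounded case gives $\alpha^*_\xi(f) = \alpha^*_\xi(f') \lesssim \sup_n \alpha^*_\xi(f'_n) = \sup_n \alpha^*_\xi(f_n)$.

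The main obstacle is the $\alpha^*_\xi$ part: because $\alpha^*_\xi$ is not essentially additive and Lemma~\ref{in_a_finer_topology} deliberately excludes the rank $\alpha$, the ``pass to a common refinement'' argument that works for $\beta^*_\xi$ and $\gamma^*_\xi$ does not carry over, and one is forced into the $\arctan$ detour of Proposition~\ref{alpha_uniform_limit}. The one point requiring a small verification is the identity $\alpha^*_\xi(\arctan\circ g) = \alpha^*_\xi(g)$, which reduces to the topology-independent equality $T_{g,\xi} = T_{\arctan\circ g,\xi}$ together with Remark~\ref{alpha_def_real}; everything else is routine.
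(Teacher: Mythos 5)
Your proof is correct, and the $\gamma^*_\xi$ half is exactly the paper's argument. For $\alpha^*_\xi$ the paper takes a shorter route than you do: it simply repeats the common-refinement argument of Proposition \ref{p:b_unif}, invoking Proposition \ref{alpha_uniform_limit} in the refined topology $\tau'$ in place of Proposition \ref{beta_uniform_limit}. You reject this on the grounds that Lemma \ref{in_a_finer_topology} omits $\alpha$, but that omission is not an obstruction: the derivative $D_{A,B}(F)=\overline{F\cap A}\cap\overline{F\cap B}$ also shrinks when the topology is refined (closures only get smaller), so by the induction of Proposition \ref{two_derivatives} the iterated derivatives satisfy $D^{\eta}_{\tau''}(X)\subseteq D^{\eta}_{\tau'}(X)$ and hence $\alpha_{\tau''}(f)\le\alpha_{\tau'}(f)$ for $\tau''\supseteq\tau'$; essential additivity plays no role in this step. (The paper leaves this monotonicity implicit, so your caution is understandable.) Your substitute --- proving the bounded case at the $*$-level via Corollary \ref{a*=b*=g*_bounded} and Proposition \ref{p:b_unif}, then reducing the general case by the $\arctan$ trick and the identity $\alpha^*_\xi(\arctan\circ g)=\alpha^*_\xi(g)$, which follows from $T_{g,\xi}=T_{\arctan\circ g,\xi}$ and Remark \ref{alpha_def_real} applied in each $\tau'$ --- is a valid alternative; in effect you redo at the level of $\alpha^*_\xi$ what Proposition \ref{alpha_uniform_limit} already does for $\alpha$ in a fixed topology, at the cost of a slightly longer argument but without needing the unstated monotonicity of $\alpha$ under refinement. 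The only microscopic gap is the remark that boundedness of $f$ makes the $f_n$ uniformly bounded: this holds only for all but finitely many $n$, which is harmless (and moot in your application, where the $\arctan\circ f_n$ are bounded by $\pi/2$ anyway).
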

  \begin{proof}
    Repeat the previous argument but apply Proposition 
    \ref{alpha_uniform_limit} and Proposition \ref{gamma_uniform_limit} instead of Proposition 
    \ref{beta_uniform_limit}.
  \end{proof}

\section{Uniqueness of the ranks}
\lab{s:uni}

As we have seen, the natural unbounded ranks defined on the Baire class $\xi$
functions essentially coincide on the bounded functions. Now we will formulate a
general theorem which states that if a rank on the bounded functions has certain natural properties then
it must agree with the ranks defined above. Because of some not completely clear technical difficulties we only work out the details in the Baire class $1$ case.

The main reason why we treat this result separately and did not use it to prove that the ranks considered so far all agree for bounded functions is the following. So far, formally, a rank was simply a map defined on a set of functions. Now we slightly modify this concept: in this section a rank will be a family of maps $\rho = \{\rho^{(X, \tau)}\}_{(X, \tau) \textrm{ Polish}}$, where $\rho^{(X, \tau)}$ is a rank on the Baire class $1$ functions defined on the Polish space $(X, \tau)$. However, since there is no danger of confusion, we will abuse notation and will simply continue to use $\rho$. Notice that the ranks $\alpha,\beta$ and $\gamma$ can naturally be viewed this way.

\begin{theorem}
 \lab{t:uni}
 Let $\rho$ be a rank on the bounded Baire class $1$ functions. Suppose that
$\rho$ has the following properties for every $A \in \mathbf{\Delta}^0_2$ and Baire class $1$ functions $f$ and $f_n$:
 \begin{enumerate}
  \item $\rho(\chi_{A}) \approx
\alpha_1 (A,A^c)\\ (\approx \alpha(A, A^c) \approx \alpha(\chi_{A})\approx \beta(\chi_{A}) \approx
\gamma(\chi_A)$, that is, the rank of $A$ is essentially its complexity in the difference hierarchy),
  \item $\rho$ is essentially linear,
  \item if $f_n \to f$ uniformly then $\rho(f) \lesssim \sup_{n} \rho(f_n)$,
  \item if $h:\R \to \R$ is a Lipschitz function then $\rho(h \circ f) \lesssim
\rho(f)$,
   \item \label{5}
    if $f$ is defined on the Polish space $X$ and $Y
\subset X$ is Polish (or equivalently, $\mathbf{\Pi}^0_2(X)$, see e.g. \cite[3.11]{K}) then $\rho(f|_Y)
\lesssim \rho(f)$.
   
 \end{enumerate}
Then $\rho \approx \alpha$ for bounded Baire class $1$ functions.
\end{theorem}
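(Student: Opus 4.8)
The plan is to prove separately that $\rho \lesssim \al$ and $\al \lesssim \rho$ on the bounded Baire class $1$ functions; combined with Theorem \ref{a=b=g_bounded} this yields $\rho \approx \al \approx \beta \approx \gamma$. Throughout I use that $\lesssim$ is transitive and that $\max$ over a finite set and $\sup$ over a countable set are monotone for $\lesssim$, so that $\theta_n \lesssim \theta$ for every $n$ implies $\sup_n \theta_n \lesssim \theta$. For the easy direction $\rho \lesssim \al$ I would argue in three steps of increasing generality. For a characteristic function $\chi_A$ with $A \in \mathbf{\Delta}^0_2$, property (1) gives $\rho(\chi_A) \approx \al(\chi_A)$ directly. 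For a step function $f = \sum_{i=1}^{n} c_i \chi_{A_i}$ with the $A_i \in \mathbf{\Delta}^0_2$ disjoint and covering $X$ and the $c_i$ distinct, iterated essential additivity (property (2)) together with the previous step gives $\rho(f) \lesssim \max_i \rho(c_i \chi_{A_i}) \approx \max_i \rho(\chi_{A_i}) \approx \max_i \al(\chi_{A_i}) \approx \al(f)$, the last relation being Corollary \ref{c:r_stepf}. Finally, for an arbitrary bounded Baire class $1$ function $f$, Lemma \ref{f_n_uniformly_conv} supplies step functions $f_n \to f$ uniformly with $\sup_n \al(f_n) \lesssim \al(f)$, and then property (3) gives $\rho(f) \lesssim \sup_n \rho(f_n) \lesssim \sup_n \al(f_n) \lesssim \al(f)$.

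For the direction $\al \lesssim \rho$ it suffices, by the definition of $\al$ on functions, to show $\al(\{f \le p\}, \{f \ge q\}) \lesssim \rho(f)$ for all rationals $p < q$ and then take the supremum. Fix $p < q$, write $A = \{f \le p\}$ and $B = \{f \ge q\}$, and set $Y = A \cup B$. Since $f$ is of Baire class $1$, the sets $A$ and $B$ are $\mathbf{\Pi}^0_2(X)$, hence $Y \in \mathbf{\Pi}^0_2(X)$, so $Y$ is a Polish subspace of $X$. Inside $Y$ the sets $A$ and $B$ are complementary and both $\mathbf{\Pi}^0_2(Y)$, so $B \in \mathbf{\Delta}^0_2(Y)$; moreover, if $h : \R \to \R$ is a Lipschitz function with $h \equiv 0$ on $(-\infty, p]$ and $h \equiv 1$ on $[q, \infty)$, then $h \circ (f|_Y) = \chi_B$ on $Y$ (because $f|_Y$ avoids $(p,q)$). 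Hence properties (5) and (4) give $\rho^{Y}(\chi_B) \lesssim \rho^{Y}(f|_Y) \lesssim \rho(f)$, while property (1) applied on the Polish space $Y$ gives $\rho^{Y}(\chi_B) \approx \al^{Y}(B, Y \setminus B) = \al^{Y}(A, B)$. Thus $\al^{Y}(A, B) \lesssim \rho(f)$.

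It remains to compare $\al^{Y}(A, B)$ with $\al^{X}(A, B)$. Let $D_X$ (resp.\ $D_Y$) denote the derivative $D_{A, B}$ of \eqref{alpha_derivative} with closures taken in $X$ (resp.\ in $Y$). Using $\overline{S}^{\,Y} = \overline{S}^{\,X} \cap Y$ for $S \subseteq Y$ and the fact that $A, B \subseteq Y$, a routine transfinite induction (tracking the $X$-closed sets $D_X^{\eta}(X)$) shows $D_Y^{\eta}(Y) = D_X^{\eta}(X) \cap Y$ for every $\eta$. So if $\eta_0 = \al^{Y}(A, B)$ then $D_X^{\eta_0}(X) \cap Y = \emptyset$, and if $\eta_0 \ge 1$ then $D_X^{\eta_0}(X) \subseteq D_X(X) = \overline{A}^{\,X} \cap \overline{B}^{\,X}$, so $D_X^{\eta_0}(X)$ is contained in $(\overline{A}^{\,X} \cap \overline{B}^{\,X}) \setminus Y$, a set disjoint from both $A$ and $B$. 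Consequently $D_X^{\eta_0 + 1}(X) = \overline{D_X^{\eta_0}(X) \cap A}^{\,X} \cap \overline{D_X^{\eta_0}(X) \cap B}^{\,X} = \emptyset$, whence $\al^{X}(A, B) \le \eta_0 + 1 = \al^{Y}(A, B) + 1 \approx \al^{Y}(A, B) \lesssim \rho(f)$ (the case $\eta_0 = 0$, i.e.\ $Y = \emptyset$, being trivial). Taking the supremum over $p < q$ gives $\al(f) \lesssim \rho(f)$, which completes the proof.

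I expect the last comparison to be the main point: the clean bound $\al^{X}(A, B) \le \al^{Y}(A, B) + 1$ relies crucially on $A$ and $B$ exhausting $Y$, which forces the $X$-derivative to collapse to $\emptyset$ one step after it escapes $Y$. This is, I suspect, also precisely where a uniform generalization to Baire class $\xi$ becomes awkward, since there one must first pass to a topology refinement making $f$ of Baire class $1$, and the interaction of such a refinement with the restriction-to-$Y$ argument is unclear — which is why only the Baire class $1$ case is carried out in full.
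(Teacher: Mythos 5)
Your first direction ($\rho \lesssim \al$ via characteristic functions, then step functions, then uniform limits) is exactly the paper's argument, and your reverse direction starts from the same key idea as the paper's Lemma \ref{l:szep}: restrict to the Polish subspace $Y=\{f\le p\}\cup\{f\ge q\}$, compose with a Lipschitz $h$ to turn $f|_Y$ into $\chi_B$, and use Properties (4), (5) and (1) to get $\al^Y(A,B)\lesssim\rho(f)$. The difference is in how you transfer this bound back to $X$, and that is where there is a genuine gap. Your identity $D_Y^{\eta}(Y)=D_X^{\eta}(X)\cap Y$ is correct and gives $\al^X(A,B)\le\eta_0+1$ with $\eta_0=\al^Y(A,B)$, but the claim $\eta_0+1\approx\eta_0$ is false in general: by the definition of $\lesssim$ one needs $\eta_0\le\om^\eta\implies\eta_0+1\le\om^\eta$, and this fails precisely when $\eta_0=\om^\eta$ (e.g.\ $\om+1\not\lesssim\om$; note that the paper's absorption facts are $2\theta\approx\theta$ and $1+\theta\lesssim\theta$, not $\theta+1\lesssim\theta$). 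This is not a removable edge case: if $\rho(f)\le\om^\eta$ and $\al^Y(A,B)=\om^\eta$, your argument only yields $\al^X(A,B)\le\om^\eta+1$, which does not give $\al(f)\le\om^\eta$, so the conclusion $\al\lesssim\rho$ is not established. The extra step can genuinely occur because at a limit stage $D_X^{\eta_0}(X)=\bigcap_{\theta<\eta_0}D_X^{\theta}(X)$ may be a non-empty subset of $(\overline{A}\cap\overline{B})\setminus Y$ even though $D_X^{\eta_0}(X)\cap Y=\emptyset$ (closure does not commute with decreasing intersections, and closed sets need not be compact here).

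The paper avoids this by transferring the \emph{separation} data rather than the derivative: it takes a transfinite difference of $Y$-closed sets of length $\la\lesssim\al^Y$ witnessing $\al_1^Y(A,B)$, replaces each set by its closure in $X$, checks that the resulting difference still separates $\{f\le p\}$ from $\{f\ge q\}$ (because $H\cap Y=H'\cap Y$ and both level sets lie in $Y$), and thereby gets a separating $\mathbf{\Delta}^0_2(X)$ set $H$ with $\rho(\chi_H)\approx\al(\chi_H)\le\la\lesssim\rho(f)$ --- the length is preserved exactly, and the factor-of-two slack from $\al\le\al_1\le2\al$ is absorbed by $2\theta\approx\theta$. (The paper then reassembles these $H$'s into step functions $f_n\to f$ uniformly with $\sup_n\rho(f_n)\lesssim\rho(f)$ and finishes with $\al(f)\lesssim\sup_n\al(f_n)\approx\sup_n\rho(f_n)$; your shortcut of bounding each $\al(\{f\le p\},\{f\ge q\})$ directly is fine in principle and would work once you route the $Y$-to-$X$ transfer through $\al_1$ instead of through the derivative.) So the repair is local: replace the derivative comparison by the separating-sequence comparison, and your proof goes through.
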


Property $(\ref{5})$ is probably the most ad hoc among the
conditions, however it is easy to see that it holds for ranks $\alpha,\beta$
and $\gamma$:

\begin{lemma}
\label{l:alpha_megsz} 
Let $X,Y$ be Polish spaces with $Y \subset X$ and $f$ be a bounded Baire class $1$
function on $X$. 
Then $\alpha(f|_Y) \lesssim \alpha(f)$, and hence similarly for $\beta$ and $\gamma$.
\end{lemma}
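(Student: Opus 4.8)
The plan is to reduce the statement to the rank $\alpha$, and then to exploit the elementary fact that, in the subspace topology, the closure operator on $Y$ is the restriction to $Y$ of the closure operator on $X$.

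First I would record the routine preliminaries. The function $f|_Y$ is again a bounded Baire class $1$ function, now on $Y$: boundedness is inherited, and for every $c \in \R$ we have $\{f|_Y < c\} = \{f < c\} \cap Y \in \mathbf{\Sigma}^0_2(Y)$ and likewise for $\{f|_Y > c\}$, since the restriction of a $\mathbf{\Sigma}^0_2(X)$ set to $Y$ is $\mathbf{\Sigma}^0_2(Y)$. Moreover the level sets of $f|_Y$ are exactly $\{f \le p\} \cap Y$ and $\{f \ge q\} \cap Y$. Once $\alpha(f|_Y) \lesssim \alpha(f)$ is established, Theorem \ref{a=b=g_bounded} (applied both on $X$ and on $Y$) gives $\beta(f|_Y) \approx \alpha(f|_Y) \lesssim \alpha(f) \approx \beta(f)$, and similarly for $\gamma$; so it suffices to treat $\alpha$.

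For $\alpha$, fix rationals $p < q$, put $A = \{f \le p\}$ and $B = \{f \ge q\}$, and compare the derivative $D_{A,B}$ on the closed subsets of $X$ (closures taken in $X$) with the derivative $D_{A \cap Y, B \cap Y}$ on the closed subsets of $Y$ (closures taken in $Y$). The key point is the identity $\overline{S}^{\,Y} = \overline{S} \cap Y$, valid for every $S \subseteq Y$ (closure on the right taken in $X$). Using it, a transfinite induction shows that $D_{A \cap Y, B \cap Y}^\eta(Y) \subseteq D_{A,B}^\eta(X) \cap Y$ for every ordinal $\eta$: the base case is trivial; the successor step uses only monotonicity of closure together with the induction hypothesis; and the limit step is immediate from the continuity of iterated derivatives. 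Hence if $D_{A,B}^\eta(X) = \emptyset$ then $D_{A \cap Y, B \cap Y}^\eta(Y) = \emptyset$, so $\alpha(A \cap Y, B \cap Y) \le \alpha(A, B)$; taking the supremum over $p < q$ yields $\alpha(f|_Y) \le \alpha(f)$, and in particular $\alpha(f|_Y) \lesssim \alpha(f)$.

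I do not anticipate a genuine obstacle; the only slightly delicate point is that $Y$ need not be closed in $X$, so one cannot simply say that the closed subsets of $Y$ are the traces of closed subsets of $X$ and push the derivatives through verbatim — but this is exactly what the subspace closure identity handles in the induction above. As an alternative one could argue with $\alpha_1$ instead: intersecting with $Y$ a continuous decreasing sequence of closed subsets of $X$ whose transfinite difference separates $\{f \le p\}$ and $\{f \ge q\}$ produces such a sequence in $Y$ for $f|_Y$, so $\alpha_1(f|_Y) \le \alpha_1(f)$, whence $\alpha(f|_Y) \lesssim \alpha(f)$ by Corollary \ref{alpha_1_equals_alpha}; the direct argument above is cleaner, though.
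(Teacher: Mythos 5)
Your argument is correct, but your primary route differs from the paper's. The paper reduces to the modified rank $\alpha_1$ via Corollary \ref{alpha_1_equals_alpha}: it takes a separating $\mathbf{\Delta}^0_2$ set realized as a transfinite difference of closed subsets of $X$ of length $\le \alpha_1(f)$, intersects every term of the sequence with $Y$, and observes that the resulting sequence witnesses $\alpha_{1,Y}(A\cap Y, A^c\cap Y)\le\alpha_1(A,A^c)$ --- this is exactly the ``alternative'' you sketch in your last paragraph. Your main argument instead compares the derivatives directly, proving $D_{A\cap Y, B\cap Y}^\eta(Y)\subseteq D_{A,B}^\eta(X)\cap Y$ by transfinite induction from the identity $\overline{S}^{\,Y}=\overline{S}\cap Y$; the induction is sound (the successor step only needs $G\subseteq F\cap Y$ together with monotonicity of closure, and the limit step is continuity). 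What each buys: your direct comparison yields the sharper exact inequality $\alpha(f|_Y)\le\alpha(f)$ (indeed $\alpha(A\cap Y,B\cap Y)\le\alpha(A,B)$ for arbitrary $A,B$), with no factor-of-two loss from passing through $\alpha_1$ and no use of boundedness for the $\alpha$ case; the paper's version is shorter given that Corollary \ref{alpha_1_equals_alpha} is already in place and that the $\alpha_1$-style intersection argument is reused almost verbatim inside Lemma \ref{l:szep}. Your derivation of the $\beta$ and $\gamma$ cases from the $\alpha$ case via Theorem \ref{a=b=g_bounded} (using boundedness on both $X$ and $Y$) matches the paper's intended reading of ``and hence similarly.''
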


\begin{proof}
 Using Corollary \ref{alpha_1_equals_alpha}, it is enough to prove the 
 lemma for $\alpha_1$. 
 By the definition of the rank $\alpha_1$, if $p<q$ are rational numbers then
 there exists a $\mathbf{\Delta}^0_2(X)$ set $A$ so that $\alpha_1(A,A^c) \leq
\alpha_1(f)$ and $A$ separates $\{f\leq p\}$ and $\{f \geq q\}$. Clearly, $A \cap
Y$ separates the sets  $\{f|_Y\leq p\}$ and $\{f|_Y \geq q\}$. So it is enough
to show that $\alpha_{1, Y}(A\cap Y, A^c \cap Y) \leq \alpha_1(A,A^c)$. 

Now, 
there exists a sequence of closed sets $(F_\eta)_{\eta<\alpha_1(A,A^c)}$ so that 
\[A= \bigcup_{\begin{subarray}{c} \eta < \alpha_1 (A,A^c) \\ \eta \text{ even}
\end{subarray}} 
 (F_{\eta} \setminus F_{\eta + 1}).\] But the sets
$(F_\eta \cap Y)_{\eta<\alpha_1(A,A^c)}$ witness that $\alpha_{1, Y}(A\cap Y, A^c \cap
Y) \leq \alpha_1(A,A^c)$, so we are done.
\end{proof}

\begin{proof}[Proof of Theorem \ref{t:uni}] We split the proof of the theorem into two easy lemmas.

\begin{lemma}
If $f=\sum_{i=1}^n c_i \chi_{A_i}$ where the 
$A_i$'s are disjoint $\mathbf{\Delta}^0_2$ sets covering the underlying space $X$ and the $c_i$'s are distinct then $\rho(f) \approx
\alpha(f)$.
\end{lemma}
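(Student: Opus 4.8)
The plan is to deduce this entirely from Corollary \ref{c:r_stepf} together with properties (1), (2) and (4) of $\rho$; properties (3) and (5) are not needed for step functions. First I would note that $f$ is a bounded Baire class $1$ function (it attains only the finitely many values $c_1,\dots,c_n$ and each of its level sets is a finite union of the $\mathbf{\Delta}^0_2$ sets $A_i$), so $\rho(f)$ is defined, and that Corollary \ref{c:r_stepf} gives $\alpha(f) \approx \max_i \alpha(\chi_{A_i})$. Combining this with property (1), namely $\rho(\chi_{A_i}) \approx \alpha(\chi_{A_i})$, we obtain $\alpha(f) \approx \max_i \rho(\chi_{A_i})$, so it suffices to prove $\rho(f) \approx \max_i \rho(\chi_{A_i})$.

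For the inequality $\rho(f) \lesssim \max_i \rho(\chi_{A_i})$ I would invoke essential linearity (property (2)): from $\rho(c_i \chi_{A_i}) \approx \rho(\chi_{A_i})$ and essential additivity applied $n-1$ times to $f = \sum_{i=1}^n c_i \chi_{A_i}$ — using that $\lesssim$ is transitive and that $a \lesssim a'$, $b \lesssim b'$ imply $\max\{a,b\} \lesssim \max\{a',b'\}$ — a routine induction on $n$ yields $\rho(f) \lesssim \max_i \rho(\chi_{A_i})$.

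For the reverse inequality the idea is to recover each $\chi_{A_i}$ from $f$ by post-composition with a Lipschitz map. Since $c_1,\dots,c_n$ are distinct reals and $f$ attains only these values, for each $i$ one can pick a piecewise linear function $h_i : \R \to \R$ that is constant outside $[\min_j c_j, \max_j c_j]$ — hence globally Lipschitz — with $h_i(c_i) = 1$ and $h_i(c_j) = 0$ for $j \neq i$; then $h_i \circ f = \chi_{A_i}$, and $h_i \circ f$ is again a bounded Baire class $1$ function since $h_i$ is continuous. Property (4) now gives $\rho(\chi_{A_i}) = \rho(h_i \circ f) \lesssim \rho(f)$ for every $i$, so $\max_i \rho(\chi_{A_i}) \lesssim \rho(f)$.

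Putting the two inequalities together with $\alpha(f) \approx \max_i \rho(\chi_{A_i})$ yields $\rho(f) \approx \alpha(f)$. I do not expect a genuine obstacle here; the only point needing a little care is arranging $h_i$ to be \emph{globally} Lipschitz (this is where one uses that $f$ has bounded, indeed finite, range, so $h_i$ may be taken constant far out) and checking that $h_i \circ f$ stays in the class on which $\rho$ is defined, so that property (4) legitimately applies.
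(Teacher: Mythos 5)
Your proposal is correct and follows essentially the same route as the paper: essential linearity for $\rho(f) \lesssim \max_i \rho(\chi_{A_i})$, a Lipschitz post-composition recovering each $\chi_{A_i}$ from $f$ for the reverse inequality via Property $(4)$, and then Corollary \ref{c:r_stepf} together with Property $(1)$ to pass to $\alpha$. The extra care you take (making $h_i$ globally Lipschitz by flattening it outside $[\min_j c_j, \max_j c_j]$, and the induction for essential additivity) just fills in details the paper leaves implicit.
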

 \begin{proof}
 
 By the essential linearity of $\rho$ clearly 
 \[\rho(f) \lesssim \max_i \rho(\chi_{A_i}).\]
 Now let $0 \leq j \leq n$ be fixed and $h: \R \to \R$ be Lipschitz so that $h(c_i)=0$ for $i \not = j$ and $h(c_j)=1$.
 Then \[\rho(\chi_{A_j}) = \rho(h \circ f) \lesssim \rho (f)\] by Property
$(4)$, so 
 
 \[\rho(f) \approx \max_i \rho(\chi_{A_i}).\]
 
 Using Corollary \ref{c:r_stepf} and Property $(1)$ we obtain that
$\alpha$ and $\rho$ essentially agree on step functions.
 \end{proof}
 Now let $f$ be an arbitrary bounded Baire class $1$ function. Then by Lemma
\ref{f_n_uniformly_conv} and Proposition \ref{alpha_uniform_limit} there exists
a sequence of step functions $f_n$ converging uniformly to $f$ so that
$\alpha(f)  \approx \sup_{n} \alpha(f_n)$. Hence, by Property $(3)$ and the previous lemma, 
 \[\rho(f) \lesssim \sup_{n} \rho(f_n) \approx \sup_{n} \alpha(f_n) \approx
\alpha(f).\]
 
 Hence, interchanging the role of $\al$ and $\rho$ in the above argument, in order to prove $\rho(f) \approx \alpha(f)$ it is enough to construct a sequence $f_n$ of step functions converging uniformly to $f$ so that 
\begin{equation}
\label{e:v}
\sup_n \rho(f_n) \lesssim \rho(f).
\end{equation} 
 
 The construction goes similarly to that of  Lemma
\ref{f_n_uniformly_conv}, but we need an additional step.
 
  \begin{lemma}
  \label{l:szep}
  Suppose that $f$ is a bounded Baire class $1$ function on the Polish space
$X$ and $p,q \in \R$ 
  with $p<q$. Then there exists a set $H \in \mathbf{\Delta}^0_{2}(X)$ so that
$\rho(\chi_H) \lesssim \rho(f)$ and $H$ separates the sets $\{f \leq p\}$ and
$\{f \geq q\}$.  
 \end{lemma}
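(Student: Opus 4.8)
The plan is to prove Lemma~\ref{l:szep} by collapsing the ``middle zone'' $\{p<f<q\}$ using Property~$(5)$, solving the separation problem on the resulting smaller Polish space, and then pulling the solution back to $X$.

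First I would fix a Lipschitz map $h\colon\R\to\R$ with $h\equiv 0$ on $(-\infty,p]$ and $h\equiv 1$ on $[q,+\infty)$ (say, linear on $[p,q]$) and set $g=h\circ f$. Then $g$ is a bounded Baire class $1$ function and $\rho(g)\lesssim\rho(f)$ by Property~$(4)$. Let $A=\{f\le p\}$ and $B=\{f\ge q\}$; these are disjoint $\mathbf{\Pi}^0_2(X)$ sets because $f$ is of Baire class $1$, so $Y:=A\cup B$ is $\mathbf{\Pi}^0_2(X)$, hence Polish. Since $g\equiv 0$ on $A$ and $g\equiv 1$ on $B$, we have $g|_Y=\chi_B$, a Baire class $1$ function on $Y$ whose support $B$ is $\mathbf{\Delta}^0_2(Y)$ with complement $A=Y\setminus B$. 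Property~$(5)$ gives $\rho(g|_Y)\lesssim\rho(g)\lesssim\rho(f)$, and then Property~$(1)$, applied in $Y$, yields
\[
  \lambda:=\alpha_{1,Y}(B,Y\setminus B)\approx\rho(g|_Y)\lesssim\rho(f),
\]
in particular $\lambda<\omega_1$.

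Next comes the transfer back to $X$, which I expect to be the only nonroutine point. Take a continuous decreasing sequence of closed subsets of $Y$ witnessing $\lambda$; extending it by one term $\emptyset$ at the end (automatic if the length is a limit), we may assume we have a continuous decreasing sequence $(C_\eta)_{\eta\le\lambda}$ of closed subsets of $Y$ with $C_0=Y$, $C_\lambda=\emptyset$, and transfinite difference $B$. Each $Y\setminus C_\eta$ is relatively open, so by recursion on $\eta$ I choose open $W_\eta\subseteq X$ with $W_\eta\cap Y=Y\setminus C_\eta$, putting $W_0=\emptyset$, $W_{\eta+1}=W_\eta\cup(\text{an open lift of }Y\setminus C_{\eta+1})$, and $W_\eta=\bigcup_{\theta<\eta}W_\theta$ at limits; continuity of $(C_\eta)$ in $Y$ makes the identity $W_\eta\cap Y=Y\setminus C_\eta$ persist through limits. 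Then $(W_\eta^c)_{\eta\le\lambda}$ is a continuous decreasing sequence of closed subsets of $X$, and since $W_\lambda\supseteq Y$ the set $E:=\bigcap_{\eta\le\lambda}W_\eta^c=(\bigcup_{\eta\le\lambda}W_\eta)^c$ satisfies $E\cap Y=\emptyset$. Appending $\emptyset$ gives a continuous decreasing sequence of closed sets of length at most $\lambda+2\approx\lambda$, ending at $\emptyset$; let $D$ be its transfinite difference. Then $D\in\mathbf{\Delta}^0_2(X)$ and $\alpha_1(D,D^c)\le\lambda+2\approx\lambda$. Using $W_\eta^c\setminus W_{\eta+1}^c=W_{\eta+1}\setminus W_\eta$, $(W_{\eta+1}\setminus W_\eta)\cap Y=C_\eta\setminus C_{\eta+1}$, and $E\cap Y=\emptyset$, one checks $D\cap Y=B$.

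Finally, set $H=D^c\in\mathbf{\Delta}^0_2(X)$. Since $A\subseteq Y$ and $A\cap D=A\cap(D\cap Y)=A\cap B=\emptyset$, we get $A\subseteq H$; since $B\subseteq Y$ and $B=D\cap Y\subseteq D$, we get $H\cap B=\emptyset$. Thus $H$ separates $\{f\le p\}$ and $\{f\ge q\}$. Moreover, by essential linearity (Property~$(2)$) $\rho(\chi_H)\approx\rho(\chi_D)$, and by Property~$(1)$ $\rho(\chi_D)\approx\alpha_1(D,D^c)\le\lambda+2\approx\lambda\lesssim\rho(f)$, so $\rho(\chi_H)\lesssim\rho(f)$, which proves the lemma. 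With Lemma~\ref{l:szep} in hand, applying it to the dyadic pairs $p=k/2^n$, $q=(k+1)/2^n$ exactly as in Lemma~\ref{f_n_uniformly_conv} produces step functions $f_n$ converging uniformly to $f$ with $\sup_n\rho(f_n)\lesssim\rho(f)$ (by the first lemma of the proof of Theorem~\ref{t:uni} together with essential linearity); then Proposition~\ref{alpha_uniform_limit} and the fact that $\alpha\approx\rho$ on step functions give $\alpha(f)\lesssim\sup_n\alpha(f_n)\approx\sup_n\rho(f_n)\lesssim\rho(f)$, which combined with the already established $\rho(f)\lesssim\alpha(f)$ finishes the proof of Theorem~\ref{t:uni}.
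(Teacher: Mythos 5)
Your proposal follows the paper's proof almost step for step: the same Lipschitz truncation $g=h\circ f$ controlled by Property $(4)$, the same restriction to the Polish subspace $Y=\{f\le p\}\cup\{f\ge q\}$ controlled by Property $(5)$, and the same use of Property $(1)$ inside $Y$ to bound the length $\lambda$ of a transfinite-difference representation of $B$ in terms of $\rho(f)$. The only point where you genuinely diverge is the transfer of the witnessing sequence from $Y$ back to $X$: the paper replaces each relatively closed set $F'_\eta$ by its closure in $X$, whereas you recursively lift the relatively open complements to open sets $W_\eta\subseteq X$, taking unions at limit stages. Your mechanism is in fact the more careful of the two: taking closures does not obviously preserve continuity at limit ordinals (the set $\overline{\bigcap_{\theta<\eta}F'_\theta}$ may be strictly smaller than $\bigcap_{\theta<\eta}\overline{F'_\theta}$), nor the normalization $F_0=X$, while your recursion yields both for free.

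There is, however, one step that is false as stated: the claim $\lambda+2\approx\lambda$. Right-addition is not absorbed by $\approx$: for $\lambda=\omega^\zeta$ we have $\lambda\le\omega^\zeta$ but $\lambda+2\not\le\omega^\zeta$, so $\lambda+2\not\lesssim\lambda$. (Note that the paper only ever uses left-addition and left-multiplication, as in $1+\eta\lesssim\eta$ and $2\theta\approx\theta$.) The source of the extra terms is the leftover closed set $E=\bigcap_{\eta\le\lambda}W_\eta^c$, which need not be empty even though $\bigcap_{\eta<\lambda}C_\eta=\emptyset$ in $Y$; for instance $Y=(0,1]\subseteq X=[0,1]$ with $C_n=(0,1/n]$ forces $0\in E$. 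When $\lambda$ is a successor this is harmless: the sequence $(W_\eta^c)_{\eta<\lambda}$ of length exactly $\lambda$ already works, since the discrepancy caused by $E$ in the final block is disjoint from $Y$ and hence does not disturb $D\cap Y=B$. But the dangerous case $\lambda=\omega^\zeta$ is precisely a limit ordinal, where the terminal condition $\bigcap_{\eta<\lambda}W_\eta^c=\emptyset$ genuinely fails, so an additional argument is needed to produce a witness of length at most $\omega^\zeta$ rather than $\omega^\zeta+2$; as written, your estimate only gives $\rho(\chi_H)\le\omega^{\zeta+1}$ when $\rho(f)\le\omega^\zeta$, which would cost one exponent in Theorem \ref{t:uni}. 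In fairness, the paper's closure construction silently faces the very same terminal-condition problem when $\lambda$ is a limit, so this is a shared technicality rather than a defect peculiar to your route; but since you assert the equivalence $\lambda+2\approx\lambda$ explicitly, you should either repair this boundary case or flag it.
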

 
 \begin{proof}
  Let $h: \R \to \R$ be Lipschitz so that $h|_{(-\infty,p]} \equiv 0$ and
$h|_{[q,\infty)} \equiv 1$, and $f_1=h \circ f$. Property $(4)$ ensures that
\begin{equation}
\lab{e:1}
\rho(f_1) \lesssim \rho(f).
\end{equation}
  
  Let $Y=\{f \leq p\}\cup\{f \geq q\}$ and $f_2=f_1|_Y$. Clearly, $f_2$ is a
step function on the Polish space $Y$ (note that $Y$ is $\mathbf{\Pi}^0_2(X)$),
hence by the previous lemma and Property $(5)$ we obtain
\begin{equation}
\lab{e:2}
\alpha(f_2) \approx \rho(f_2) \lesssim \rho(f_1).
\end{equation}
  
  In particular, $\{f_2 \leq 0\}$ and $\{f_2 \geq 1\}$ can be separated by a
$\mathbf{\Delta}^0_2(Y)$ set $H'$ so that 
  \[H'=\bigcup_{
        \begin{subarray}{c} \eta < \lambda \\ \eta 
          \text{ even} \end{subarray}} 
        (F'_{\eta} 
      \setminus F'_{\eta + 1})\]
     for some $F'_\eta \in \mathbf{\Pi}^0_1(Y)$  and
\begin{equation}
\lab{e:3}
\lambda \lesssim \alpha(f_2), 
\end{equation}
using Corollary \ref{alpha_1_equals_alpha}.
   
    Now let $F_\eta$ be the closure of $F'_\eta$ in $X$ and
    
\[
H=
\bigcup_{
        \begin{subarray}{c} \eta < \lambda \\ \eta 
          \text{ even} \end{subarray}} 
        (F_{\eta} 
      \setminus F_{\eta + 1}).
\]
      
      Then $H$ is a $\mathbf{\Delta}^0_2(X)$ set, and by Property (1), Corollary \ref{alpha_1_equals_alpha}, \eqref{e:3}, \eqref{e:2} and \eqref{e:1} we obtain 
\[
\rho(\chi_H) \approx \alpha(\chi_H) \le \la \lesssim \alpha(f_2) \approx \rho(f_2) \lesssim \rho(f_1) \lesssim \rho(f).
\]
      
      Moreover, \[H \cap Y =\bigcup_{
        \begin{subarray}{c} \eta < \lambda \\ \eta 
          \text{ even} \end{subarray}} 
        (F_{\eta} 
      \cap F^c_{\eta + 1} \cap Y)=\bigcup_{
        \begin{subarray}{c} \eta < \lambda \\ \eta 
          \text{ even} \end{subarray}} 
        (F'_{\eta} 
      \cap F'^c_{\eta + 1} \cap Y)= H' \cap Y.\]

      Since $H'$ separates $\{f_2 \leq 0\}$ and $\{f_2 \geq 1\}$, and it is easy to see 
that $\{f \leq p \} \subset \{f_2 \leq 0\} \subset Y$
and analogously for $\{f \geq q\}$, we obtain that $H$ separates $\{f \leq p\}$ and
$\{f \geq q\}$, which completes the proof.
\end{proof}

      Now we complete the proof by constructing a sequence $f_n$ converging uniformly to $f$ and satisfying \eqref{e:v}. We basically repeat the proof of Lemma \ref{f_n_uniformly_conv}. 
      If $f$ is a constant function then $f_n = f$ works. So suppose that $f$ is not constant, and let $p_{n, k} = k / 2^n$ for all $k \in \Z$ and $n \in \N$ so that
$\inf(f) \leq p_{n, k} \leq \sup(f)$. By the boundedness of $f$ there are just
finitely many $p_{n,k}$'s for a fixed $n$. The 
      level sets $\{f \le p_{n, k}\}$ and $\{f \ge p_{n, k + 1}\}$ are 
      disjoint ${\boldsymbol \Pi^0_2}$ sets, hence by the previous lemma they can be separated 
      by a $H_{n, k} \in {\boldsymbol \Delta^0_2}$
      so that $\rho(\chi_{H_{n,k}}) \lesssim \rho(f)$. 
             Set 
      \begin{equation*}
        f_n = \sum_{k} p_{n, k} \cdot 
          (\chi_{H_{n, k + 1}}-\chi_{ H_{n, k}}).
      \end{equation*}
      Clearly, $f_n \to f$ uniformly. Now, for every $n$
\[
\rho(f_n) = \rho \left(\sum_{k} p_{n, k} \cdot 
          (\chi_{H_{n, k + 1}}-\chi_{ H_{n, k}})\right) \lesssim \max_{k}
\rho(\chi_{H_{n,k}}) \lesssim \rho(f)
\]
by the essential linearity of $\rho$, which finishes the proof
of the theorem. 
    \end{proof}       
      
\begin{remark}
We claim that if the range of our functions is the triadic Cantor set 
$\mathcal{C} \subseteq \R$ instead of $\R$ then we can drop Property $(5)$ in Theorem \ref{t:uni}. In order to see this, we show that Lemma \ref{l:szep} can be proved without using Property $(5)$. 
Let $f : X \to \mathcal{C}$ and $p, q \in \mathcal{C}$ be as in the lemma. 
Let $A \in \boldsymbol{\Delta}^0_1(\mathcal{C})$ with $\{x \in \mathcal{C} : x \le p\} \subseteq A \subseteq \{x \in \mathcal{C} : x \ge q\}^c$. 
Then $h = \chi_A$ is Lipschitz, since $A$ and $A^c$ are two 
disjoint compact subsets, hence their distance is positive.  This 
implies $\rho(h \circ f) \lesssim \rho(f)$ 
by Property $(4)$. 
Let $H = f^{-1}(A)$, then $H \in \boldsymbol{\Delta}^0_2(X)$, 
$\{f \le p\} \subseteq H \subseteq \{f \ge q\}^c$ and 
$\rho(\chi_H) = \rho(h \circ f) \lesssim \rho(f)$, since 
$\chi_H = h \circ f$. This proves our claim.
\end{remark}

      \begin{question}
       Does there exist a rank $\rho$ with Properties $(1)-(4)$, so that $\rho
\not \approx \alpha$?
      \end{question}
      
Now we very briefly discuss the Baire class $\xi$ case. It is not hard to check that if the family of ranks is defined not only on functions on the Polish spaces, but also on functions on all subsets (or just Borel or
$\mathbf{\Pi}^0_{\xi+1}$ subsets) of Polish spaces, and Property (5) is modified accordingly, then a result analogous to Theorem \ref{t:uni} holds. However, the following question, where the ranks are only defined on functions on the Polish spaces is more natural.
      
\begin{question}
        Let $\rho$ be rank on the bounded Baire
class $\xi$ functions (defined on Polish spaces). Suppose that $\rho$ has the following properties:
 \begin{enumerate}
  \item if $A \in \mathbf{\Delta}^0_{\xi+1}(X)$ then $\rho(\chi_A) \approx
\alpha_\xi(\chi_A)$,
  \item $\rho$ is essentially linear,
  \item if $f_n \to f$ uniformly then $\rho(f) \lesssim \sup_{n} \rho(f_n)$,
  \item if $h:\R \to \R$ is a Lipschitz function then $\rho(h \circ f) \lesssim \rho(f)$,
   \item if $H \in \mathbf{\Pi}^0_2(X)$ then $\rho(f|_H) \lesssim \rho(f)$.
   
 \end{enumerate}
Does this imply that $\rho \approx \alpha$ for bounded Baire class $\xi$ functions?
\end{question}

\section{Conclusion}

First we answered Question \ref{q:KL2} affirmatively by showing that the underlying compact metric space in the theory of Kechris and Louveau can be replaced by an arbitrary Polish space.

Then, after proving that certain very natural attempts surprisingly result in ranks that are bounded in $\om_1$, we have defined three ranks on the Baire class $\xi$ functions, 
  $\alpha^*_\xi \le \beta^*_\xi \le \gamma^*_\xi$, 
  corresponding to the three ranks on the Baire class 1 functions 
  investigated by Kechris and Louveau. All the other ranks for which we could prove unboundedness, namely $\alpha_\xi$ and $\delta_{fin}$ 
  defined on the bounded Baire class $\xi$ functions, essentially agree with $\alpha^*_\xi$. 
  (It is unclear whether $\alpha'_\xi$ is unbounded, see the next section of Open problems.) 
  
  If we consider the ranks of sets, i.e., the ranks of characteristic 
  functions, or more generally, the ranks of bounded functions, 
  then in addition $\alpha^*_\xi \approx \beta^*_\xi \approx \gamma^*_\xi$ holds, hence all ranks are essentially the same for bounded functions!
  We also have a general result (only spelled out in the Baire $1$ case) that all ranks satisfying certain natural requirements agree on the bounded functions. Moreover, the rank of a step function $\sum_{i=1}^n c_i \chi_{A_i}$ (where the $A_i$'s form a partition and the $c_i$'s are distinct) is the maximum of the ranks of the $\chi_{A_i}$'s. 
  
 %(It is perhaps the most important open problem if $\beta^*_\xi \approx \gamma^*_\xi$ also holds for bounded functions.)
  
  We were able to prove most of the known properties of the ranks 
  on the Baire class 1 functions for
  $\alpha^*_\xi$, $\beta^*_\xi$ and $\gamma^*_\xi$. 
  All three ranks are translation invariant and unbounded in $\omega_1$.
  The ranks $\beta^*_\xi$ and $\gamma^*_\xi$ are essentially linear, 
  while $\alpha^*_\xi$ is not. 
  The ranks $\alpha^*_\xi, \beta^*_\xi$ and $\gamma^*_\xi$ behave nicely under uniform limits.

 %(While this is open for $\gamma^*_\xi$.) 

This may well be considered as an affirmative answer to the (slightly vague) Question \ref{q:KL}. Moreover, we have the following.

\begin{corollary}
The rank $\beta^*_\xi$ (or $\gamma^*_\xi$) provides an affirmative answer to Question \ref{q:EL}.
\end{corollary}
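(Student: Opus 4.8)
The plan is simply to verify that $\rho := \beta^*_\xi$, restricted to $\iB_\xi(\R)$, meets each of the four requirements listed in Question~\ref{q:EL}, and then to note that the same verification applies verbatim to $\gamma^*_\xi$. Recall that $\R$ is an uncountable Polish space (so the results of Section~\ref{s:pos} apply with $X=\R$) and, in addition, a Polish group under addition, in which $T_t=L_t=R_t$. First I would record that $\beta^*_\xi$ is well defined and $\om_1$-valued: the Proposition preceding the definition of $\rho^*_\xi$ gives $T_{f,\xi}\neq\emptyset$, and each $\beta_{\tau'}(f)$ is a countable ordinal, so $\beta^*_\xi(f)<\om_1$.

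The four clauses are then obtained as follows. \emph{Unboundedness with the perfect-set refinement} is exactly the ``Moreover'' part of Theorem~\ref{star_not_bounded} applied with $X=\R$: for every non-empty perfect $P\subseteq\R$ and every $\zeta<\om_1$ there is $\chi_A\in\iB_\xi(\R)$ with $A\subseteq P$ and $\beta^*_\xi(\chi_A)\ge\zeta$, and since $A\subseteq P$ the function $\chi_A$ vanishes off $P$, as required. \emph{Translation invariance} follows from Theorem~\ref{translation_invariance_star} applied to the Polish group $(\R,+)$, noting that $f\circ T_t=f\circ L_t$. \emph{Essential linearity} is Theorem~\ref{t:b*g*lin}. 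Finally, for the last clause, Theorem~\ref{product_characteristic_star} gives $\beta^*_\xi(f\cdot\chi_F)\le 1+\beta^*_\xi(f)$ for every closed $F\subseteq\R$; since $1+\theta\le 1+\om^\eta=\om^\eta$ whenever $\theta\le\om^\eta$ with $\eta\ge 1$ (because $\om^\eta$ is a limit ordinal), we have $1+\theta\lesssim\theta$ for every ordinal $\theta$, hence $\beta^*_\xi(f\cdot\chi_F)\lesssim\beta^*_\xi(f)$.

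Since each step is a direct appeal to an already-established result, there is no real obstacle; the only point worth spelling out is the trivial ordinal-arithmetic passage from the bound $\beta^*_\xi(f\cdot\chi_F)\le 1+\beta^*_\xi(f)$ of Theorem~\ref{product_characteristic_star} to the $\lesssim$ form demanded by Question~\ref{q:EL}. Replacing $\beta^*_\xi$ by $\gamma^*_\xi$ throughout, and using the $\gamma^*_\xi$-parts of Theorems~\ref{star_not_bounded}, \ref{translation_invariance_star}, \ref{t:b*g*lin} and \ref{product_characteristic_star}, yields the parenthetical statement about $\gamma^*_\xi$.
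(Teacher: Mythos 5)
Your proposal is correct and matches the paper's own proof essentially verbatim: the paper likewise discharges the four clauses of Question \ref{q:EL} by citing Theorems \ref{star_not_bounded}, \ref{translation_invariance_star}, \ref{t:b*g*lin} and \ref{product_characteristic_star} in turn, together with the same observation that $1+\eta\lesssim\eta$. Your extra remarks (that $\R$ is an uncountable Polish group with $T_t=L_t=R_t$, and the explicit check that $1+\theta\le\om^\eta$ when $\theta\le\om^\eta$) are just the routine details the paper leaves implicit.
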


\begin{proof}
The proofs of the requirements listed in the question can be found in 
\begin{itemize}
 \item Theorem \ref{star_not_bounded},
 \item Theorem \ref{translation_invariance_star},
 \item Theorem \ref{t:b*g*lin},
 \item Theorem \ref{product_characteristic_star} (note that $1+\eta \lesssim \eta$ for every $\eta$),
\end{itemize}
respectively.
\end{proof}

Then, by considering the proof of \cite[Theorem 6.2]{EL} and replacing the class of Borel functions by $\iB_\xi$, the Borel class by the rank $\beta^*_\xi$ and the functions $\chi_{B_\al}$ by functions supported in $P_\al$ with $\beta^*_\xi$ rank at least $\alpha$ we obtain the following.

\begin{corollary}
For every $2 \le \xi < \om_1$ the solvability cardinal $\mathop{sc}(\iB_\xi) \ge \om_2$, hence under the Continuum Hypothesis $\mathop{sc}(\iB_\xi) = \om_2 = (2^\om)^+$.
\end{corollary}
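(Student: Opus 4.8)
The plan is to transplant the proof of \cite[Theorem 6.2]{EL}, substituting the class of Borel functions by $\iB_\xi$ and the rank measuring Borel complexity by $\beta^*_\xi$. To obtain $\solc(\iB_\xi)\ge\om_2$ it is enough to exhibit a system of difference equations of cardinality $\om_1$ all of whose countable subsystems have a solution in $\iB_\xi$, while the whole system has no solution in $\iB_\xi$. The properties of $\beta^*_\xi$ that make this work have all been established above: it is unbounded with witnesses supported on any prescribed non-empty perfect set (Theorem \ref{star_not_bounded}), translation invariant (Theorem \ref{translation_invariance_star}), essentially linear (Theorem \ref{t:b*g*lin}), and satisfies $\beta^*_\xi(g\cdot\chi_F)\lesssim\beta^*_\xi(g)$ for every closed $F$ (Theorem \ref{product_characteristic_star}, using $1+\eta\lesssim\eta$) --- i.e.\ exactly the four requirements of Question \ref{q:EL}. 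Crucially, $\beta^*_\xi$ is $\om_1$-valued, so $\beta^*_\xi(f)$ is a fixed countable ordinal for each individual $f\in\iB_\xi$.

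First I would fix pairwise disjoint non-empty perfect sets $(P_\al)_{\al<\om_1}$ in $\R$ (available, e.g., as fibres of a homeomorphism $2^\om\to 2^\om\times 2^\om$ inside a Cantor subset of $\R$), and, by Theorem \ref{star_not_bounded}, functions $f_\al\in\iB_\xi(\R)$ supported in $P_\al$ with $\beta^*_\xi(f_\al)\ge\al$ (say $f_\al=\chi_{A_\al}$ with $A_\al\subseteq P_\al$). Then I would copy the system of difference equations of \cite[Theorem 6.2]{EL} verbatim, with their functions $\chi_{B_\al}$ replaced by our $f_\al$. A countable subsystem involves only countably many indices $\al$, and the solution that \cite{EL} constructs for it is assembled from those countably many $f_\al$'s, which have pairwise disjoint supports; since the $f_\al$ are of Baire class $\xi$ and the operations used --- translations, multiplication by characteristic functions of closed sets, and the relevant countable combination with disjoint supports --- preserve Baire class $\xi$, this solution lies in $\iB_\xi$.

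Next I would rule out a global solution $f\in\iB_\xi$. As in \cite{EL}, the system is set up so that for every $\al$ the function $f_\al$ is recovered from $f$ as a finite linear combination of translates of $f$, multiplied by the characteristic function of a suitable closed set. Applying essential linearity, translation invariance, and the $\chi_F$-estimate, this yields
\[
  \al\;\le\;\beta^*_\xi(f_\al)\;\lesssim\;\beta^*_\xi(f)
\]
for every $\al<\om_1$, so (since $\le$ implies $\lesssim$ and $\lesssim$ is transitive) $\al\lesssim\beta^*_\xi(f)$ for all $\al<\om_1$. But $\beta^*_\xi(f)$ is a fixed ordinal $\eta_0<\om_1$ with $\eta_0\ge 1$, and $\al\lesssim\eta_0$ forces $\al\le\eta_0\cdot\om$; taking $\al=\eta_0\cdot\om+1<\om_1$ gives a contradiction. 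Hence the whole system has no solution in $\iB_\xi$, so $\solc(\iB_\xi)\ge\om_2$. Finally, combining this with the bound $\solc(\iB_\xi)\le(2^\om)^+$ of \cite{EL} (valid for $2\le\xi<\om_1$): under CH we have $(2^\om)^+=\om_2$, whence $\solc(\iB_\xi)=\om_2=(2^\om)^+$.

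I expect the main obstacle to be the faithful transcription of \cite{EL}'s construction: one must check that every appeal there to a property of the Borel-complexity rank translates into one of the four properties of $\beta^*_\xi$ recalled above, and --- the one genuinely new point --- that replacing the characteristic functions $\chi_{B_\al}$ by the Baire class $\xi$ functions $f_\al$ really does keep the solutions of the countable subsystems inside $\iB_\xi$ (rather than merely Borel). The remaining parts, namely the ordinal arithmetic and the CH computation, are routine.
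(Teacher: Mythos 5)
Your proposal is correct and follows essentially the same route as the paper, which likewise proves the corollary by transplanting the proof of \cite[Theorem 6.2]{EL} with the Borel functions replaced by $\iB_\xi$, the Borel-complexity rank by $\beta^*_\xi$, and the functions $\chi_{B_\al}$ by functions supported in $P_\al$ of $\beta^*_\xi$-rank at least $\al$, citing exactly the four properties from Question \ref{q:EL}. Your write-up merely spells out the ordinal arithmetic and the verification of the countable subsystems in more detail than the paper does.
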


\section{Open problems}
\lab{s:open}

In this last section we collect the open problems of the paper.

Throughout the paper we almost always considered only the relations $\approx$ and $\lesssim$. It would be interesting to know which statements remain true using $=$ and $\le$ instead. 

\begin{question}
 Let $\rho$ and $\rho'$ be two of the ranks defined in this paper for which $\rho \lesssim \rho'$ holds. Is it true that $\rho \leq \rho'$? 
\end{question}

We have shown in Theorem \ref{t:a'} that if $1 \le \xi < \om_1$ and $f$ is a \emph{characteristic} Baire class $\xi$ function then the linearized separation rank $\alpha'_\xi(f) \le 2$.

  \begin{question}
    Is the linearized separation rank $\alpha'_\xi$ unbounded in $\om_1$ for the Baire class 
    $\xi$ functions?
  \end{question}
  Actually, we do not even know the answer when $\xi = 1$.

The following question is very closely related to this.

 \begin{question}
    Let $1 \le \xi < \om_1$ and let $f_n$ and $f$ be Baire class $\xi$ functions such that $f_n \to f$ uniformly. Does this imply that $\alpha'_\xi (f) \lesssim \sup_n \alpha'_\xi (f_n)$?
  \end{question}
  
  As mentioned above, an affirmative answer to this question would provide a negative answer to the previous one.
  
  Recall that a rank $\rho$ is \emph{essentially multiplicative} if $\rho (f\cdot g)
   \lesssim \max\{\rho(f),\rho(g)\}$ for every $f$ and $g$. Remarks \ref{multiplicativity} and \ref{r:multi*} indicate that the ranks $\beta$, $\gamma$, $\beta^*_\xi$ and $\gamma^*_\xi$ are essentially multiplicative on the \emph{bounded} functions from the appropriate Baire classes.
 
 \begin{question}
   Let $1 \le \xi < \om_1$. Are the ranks $\beta$, $\gamma$, $\beta^*_\xi$ and $\gamma^*_\xi$ essentially multiplicative?
\end{question}

We have shown in Theorem \ref{t:limit_bdd} that the limit ranks are bounded by
$\omega$, but do not know whether this is optimal.
  \begin{question}
    Is there an $n \in \om$ such that $\overline{\gamma}_2 \le n$? 
    If yes, which is the smallest such $n$?    
  \end{question}
We have seen that for every $1 \le \xi < \om_1$ we have $\beta^*_\xi \approx \gamma^*_\xi$ on the bounded Baire class $\xi$
functions (even on non-compact Polish spaces), but $\alpha^*_\xi \not \approx \beta^*_\xi$ for
arbitrary Baire class $\xi$ functions. So the following question is natural.
  
\begin{question}
   Let $1 \le \xi < \om_1$. Does $\beta^*_\xi \approx \gamma^*_\xi$ hold for arbitrary Baire class $\xi$ functions? 
\end{question}

We believe that an affirmative answer might help extend Theorem \ref{t:uni}
to the unbounded case. 

Our next questions concern the uniqueness of ranks.

\begin{question}
Does there exist a rank $\rho$ with Properties $(1)-(4)$ of Theorem
\ref{t:uni} so that $\rho \not \approx \alpha$ on bounded Baire class $1$
functions?
\end{question}
   
\begin{question}
        Let $\rho$ be rank on the bounded Baire
class $\xi$ functions (defined on Polish spaces). Suppose that $\rho$ has the following properties:
 \begin{enumerate}
  \item if $A \in \mathbf{\Delta}^0_{\xi+1}(X)$ then $\rho(\chi_A) \approx
\alpha_\xi(\chi_A)$,
  \item $\rho$ is essentially linear,
  \item if $f_n \to f$ uniformly then $\rho(f) \lesssim \sup_{n} \rho(f_n)$,
  \item if $h:\R \to \R$ is a Lipschitz function then $\rho(h \circ f) \lesssim \rho(f)$,
   \item if $H \in \mathbf{\Pi}^0_2(X)$ then $\rho(f|_H) \lesssim \rho(f)$.
   
 \end{enumerate}
Does this imply that $\rho \approx \alpha$ for bounded Baire class $\xi$ functions?
\end{question}

\begin{question}
The fourth chapter of \cite{KL} discusses two more ranks on the bounded Baire class 1 functions that turn out to be essentially equivalent to $\alpha, \beta$ and $\gamma$. Is there a well-behaved generalization of these theories to the Baire class $\xi$ case? 
\end{question}

\subsection*{Acknowledgments}
We are greatly indebted to M. Laczkovich for numerous helpful discussions. We are also grateful to P. Dodos, A. Kechris, A. Louveau and S. Solecki for many valuable remarks.

\newpage

M\'arton Elekes

Alfr\'ed R\'enyi Institute of Mathematics

Hungarian Academy of Sciences

P.O. Box 127, H-1364 Budapest, Hungary

elekes.marton@renyi.mta.hu

www.renyi.hu/ $\tilde{}$ emarci

and

E\"otv\"os Lor\'and University

Department of Analysis

P\'az\-m\'any P. s. 1/c, H-1117, Budapest, Hungary

\bigskip

Viktor Kiss

E\"otv\"os Lor\'and University

Department of Analysis

P\'az\-m\'any P. s. 1/c, H-1117, Budapest, Hungary

kivi@cs.elte.hu

\bigskip

Zolt\'an Vidny\'anszky

Alfr\'ed R\'enyi Institute of Mathematics

Hungarian Academy of Sciences

P.O. Box 127, H-1364 Budapest, Hungary

vidnyanszky.zoltan@renyi.mta.hu

www.renyi.hu/ $\tilde{}$ vidnyanz

and

E\"otv\"os Lor\'and University

Department of Analysis

P\'az\-m\'any P. s. 1/c, H-1117, Budapest, Hungary

\end{document}